\documentclass[11pt,letterpaper]{amsart}

\usepackage[T1]{fontenc}
\usepackage{lmodern}

\usepackage{amsmath, amsthm, amssymb, amsfonts, mathtools}
\usepackage{mathrsfs}
\usepackage[mathcal]{eucal}
\usepackage{bm}
\usepackage{bbm}
\usepackage{comment}

\usepackage{graphicx,float}
\usepackage{tikz-cd}
\usepackage[all]{xy}
\usepackage{wasysym}

\usepackage{longtable,array}
\usepackage{enumitem}

\usepackage{cite}

\usepackage{tikz}
\usetikzlibrary{tikzmark}

\usepackage{hyperref}
\hypersetup{
    colorlinks=true,
    linkcolor=blue,
    filecolor=blue,
    urlcolor=blue,
    citecolor=red,
}

\numberwithin{equation}{section}

\newtheorem{theorem}{Theorem}[section]

\theoremstyle{definition}

\theoremstyle{remark}
\newtheorem{remark}[theorem]{Remark}

\numberwithin{equation}{section}

\theoremstyle{plain}
\newtheorem{thmx}{Theorem}
\newtheorem{corx}[thmx]{Corollary}

\renewcommand{\arraystretch}{1.1}

\begin{document}

\title{Artin twists of Drinfeld modules and Goss $L$-series}

\author{Jing Ye}
\address{Department of Mathematics, Texas A\&M University,
College Station, Texas, 77843, United States}
\email{yej@tamu.edu}
\date{\today}
\keywords{Drinfeld modules, Artin representations, global function fields, Goss $L$-series, Anderson modules, Anderson motives}
\dedicatory{}

\commby{}

\theoremstyle{definition}
\newtheorem{Def}{Definition}[section]
\newtheorem{eg}[Def]{Example}
\newtheorem*{ex}{Exercise}
\newtheorem*{nota}{Notation}
\newtheorem{prob}[Def]{Problem}
\newtheorem*{conv}{Convention}
\newtheorem{asp}[Def]{Assumption}

\theoremstyle{theorem}
\newtheorem{thm}[Def]{Theorem}
\newtheorem{cor}[Def]{Corollary}
\newtheorem{prop}[Def]{Proposition}
\newtheorem{propdef}[Def]{Proposition-Definition}
\newtheorem{lem}[Def]{Lemma}
\newtheorem*{clm}{Claim}
\newtheorem{app}[Def]{Application}
\newtheorem{cjt}[Def]{Conjecture}
\newtheorem*{obs}{Observation}

\newtheorem{rev}{Review}
\newenvironment{pf}{{\noindent\it Proof.}\quad}{\hfill $\qed$\par}

\theoremstyle{remark}
\newtheorem{rmk}[Def]{\normalfont\textit{Remark}}

\newcommand{\rad}[1]{\operatorname{rad}(#1)}
\newcommand{\supp}[1]{\operatorname{supp}(#1)}
\newcommand{\spec}[1]{\operatorname{Spec}(#1)}
\newcommand{\Spec}{\operatorname{Spec}}
\newcommand{\spl}{\textnormal{spec}}
\newcommand{\mspec}[1]{\operatorname{MaxSpec}(#1)}
\newcommand{\ann}[2]{\operatorname{ann}_{#1}(#2)}
\newcommand{\ass}[1]{\operatorname{Ass}(#1)}
\newcommand{\assb}[1]{\operatorname{Ass}\Big(#1\Big)}

\newcommand{\z}[1]{\mathbb{Z}/#1\mathbb{Z}}
\newcommand{\dis}{\displaystyle}
\newcommand{\mor}[2]{\operatorname{Hom}({#1},{#2})}
\newcommand{\obj}[1]{\operatorname{obj}(#1)}
\newcommand{\homm}[3]{\operatorname{Hom}_{#1}({#2},{#3})}

\newcommand{\udl}[1]{\underline{#1}}
\newcommand{\sat}{\textnormal{sat}}

\newcommand{\fra}[1]{\operatorname{Frac}(#1)}
\newcommand{\im}{\operatorname{im}}
\newcommand{\conj}[1]{\overline{#1}}
\newcommand{\cok}{\operatorname{coker}}
\newcommand{\img}{\operatorname{Im}}
\newcommand{\coim}{\operatorname{coim}}

\newcommand{\inlim}{\varprojlim}
\newcommand{\dlim}{\varinjlim}

\newcommand{\eps}{\varepsilon}

\renewcommand{\AA}{\mathbb{A}}

\newcommand{\CC}{\mathbb{C}}
\newcommand{\EE}{\mathbb{E}}
\newcommand{\FF}{\mathbb{F}}
\newcommand{\GG}{\mathbb{G}}
\newcommand{\HH}{\mathbb{H}}
\newcommand{\II}{\mathbb{I}}
\newcommand{\MM}{\mathbb{M}}
\newcommand{\NN}{\mathbb{N}}
\newcommand{\PP}{\mathbb{P}}
\newcommand{\QQ}{\mathbb{Q}}
\newcommand{\RR}{\mathbb{R}}
\renewcommand{\SS}{\mathbb{S}}
\newcommand{\KK}{\mathbb{K}}
\newcommand{\LL}{\mathbb{L}}
\newcommand{\TT}{\mathbb{T}}
\newcommand{\ZZ}{\mathbb{Z}}
\newcommand{\bfm}{\mathbf{m}}
\newcommand{\mcA}{\mathcal{A}}
\newcommand{\mcB}{\mathcal{B}}
\newcommand{\mcC}{\mathcal{C}}
\newcommand{\mcD}{\mathcal{D}}
\newcommand{\mcE}{\mathcal{E}}
\newcommand{\mcF}{\mathcal{F}}
\newcommand{\mcG}{\mathcal{G}}
\newcommand{\mcH}{\mathcal{H}}
\newcommand{\mcL}{\mathcal{L}}
\newcommand{\mcI}{\mathcal{I}}
\newcommand{\mcJ}{\mathcal{J}}
\newcommand{\mcW}{\mathcal{W}}

\newcommand{\Div}{\operatorname{Div}}
\newcommand{\Prin}{\operatorname{Prin}}
\newcommand{\Fitt}{\operatorname{Fitt}}
\newcommand{\Char}{\operatorname{Char}}
\newcommand{\ord}{\operatorname{ord}}
\newcommand{\Reg}{\operatorname{Reg}}

\newcommand{\mcM}{\mathcal{M}}
\newcommand{\mcN}{\mathcal{N}}
\newcommand{\mcO}{\mathcal{O}}
\newcommand{\mcP}{\mathcal{P}}
\newcommand{\mcU}{\mathcal{U}}
\newcommand{\mcR}{\mathcal{R}}
\newcommand{\mcS}{\mathcal{S}}
\newcommand{\mcQ}{\mathcal{Q}}
\newcommand{\mcZ}{\mathcal{Z}}
\newcommand{\mfa}{\mathfrak{a}}
\newcommand{\mfA}{\mathfrak{A}}
\newcommand{\mfb}{\mathfrak{b}}
\newcommand{\mfB}{\mathfrak{B}}
\newcommand{\mfC}{\mathfrak{C}}
\newcommand{\mfD}{\mathfrak{D}}
\newcommand{\mfF}{\mathfrak{F}}
\newcommand{\mfL}{\mathfrak{L}}
\newcommand{\mff}{\mathfrak{f}}
\newcommand{\mfj}{\mathfrak{j}}
\newcommand{\mfl}{\mathfrak{l}}
\newcommand{\mfM}{\mathfrak{M}}
\newcommand{\mfm}{\mathfrak{m}}
\newcommand{\mfN}{\mathfrak{N}}
\newcommand{\mfn}{\mathfrak{n}}
\newcommand{\mfo}{\mathfrak{o}}
\newcommand{\mfO}{\mathfrak{O}}
\newcommand{\mfP}{\mathfrak{P}}
\newcommand{\mfQ}{\mathfrak{Q}}
\newcommand{\mfR}{\mathfrak{R}}
\newcommand{\mfS}{\mathfrak{S}}
\newcommand{\mfT}{\mathfrak{T}}
\newcommand{\mfU}{\mathfrak{U}}
\newcommand{\mfV}{\mathfrak{V}}
\newcommand{\mfW}{\mathfrak{W}}
\newcommand{\mfX}{\mathfrak{X}}
\newcommand{\mfY}{\mathfrak{Y}}
\newcommand{\mfZ}{\mathfrak{Z}}
\newcommand{\mfp}{\mathfrak{p}}
\newcommand{\mfq}{\mathfrak{q}}
\newcommand{\mfz}{\mathfrak{z}}
\newcommand{\bfa}{\mathbf{A}}
\newcommand{\bfp}{\mathbf{P}}
\newcommand{\AGL}{\mathbb{A}\GL}
\newcommand{\Qbar}{\overline{\QQ}}
\newcommand{\dmn}{\trianglerighteq}
\renewcommand{\qedsymbol}{$\blacksquare$}
\newcommand{\abs}[1]{\left|#1\right|}
\newcommand{\pphi}{\varphi}
\newcommand{\upto}[1]{\overset{#1}{\to}}

\newcommand{\op}{\textnormal{op}}
\newcommand{\nat}[2]{\textnormal{Nat}(#1,#2)}
\newcommand{\comr}{\textnormal{\textbf{ComRings}}}
\newcommand{\Mod}{\textnormal{\textbf{Mod}}}
\newcommand{\lmod}{\sideset{_R}{}{\mathop{\Mod}}}
\newcommand{\rmod}{\sideset{}{_R}{\mathop{\Mod}}}
\newcommand{\mmod}[1]{\sideset{_{#1}}{}{\mathop{\Mod}}}
\newcommand{\set}{\textnormal{\textbf{Sets}}}
\newcommand{\grp}{\textnormal{\textbf{Grp}}}
\newcommand{\Ab}{\textnormal{{\textbf{Ab}}}}
\newcommand{\Top}{\textnormal{\textbf{Top}}}
\newcommand{\en}[2]{\textnormal{End}_{#1}(#2)}
\newcommand{\hgt}[1]{\textnormal{ht}(#1)}
\newcommand{\gr}[2]{\textnormal{gr}_{#1}(#2)}
\newcommand{\grd}[3]{\textnormal{gr}_{#1}^{#2}(#3)}
\newcommand{\br}{\operatorname{Br}}
\newcommand{\ab}{\operatorname{ab}}
\newcommand{\gal}{\operatorname{Gal}}
\newcommand{\rig}{\textnormal{rig}}

\newcommand{\N}{\operatorname{N}}
\newcommand{\h}{\operatorname{H}}
\newcommand{\disc}[1]{\textnormal{disc}({#1})}
\newcommand{\norm}[1]{|\!|#1|\!|}
\newcommand{\bignorm}[1]{\bigg|\!\bigg|#1\bigg|\!\bigg|}
\renewcommand{\mod}{\operatorname{mod}}
\renewcommand{\Re}{\operatorname{Re}}
\renewcommand{\Im}{\operatorname{Im}}
\newcommand{\Gal}{\operatorname{Gal}}
\newcommand{\cov}{\operatorname{Cov}}
\newcommand{\cat}{\operatorname{Cat}}
\renewcommand{\op}{\operatorname{op}}
\newcommand{\eff}{\textnormal{eff}}
\newcommand\nn             {\nonumber \\}

\newcommand{\tdiv}{\operatorname{\mid\!\mid}}

\newcommand\be            {\begin{equation}}
\newcommand\ee            {\end{equation}}
\newcommand\bea           {\begin{eqnarray}}
\newcommand\eea         {\end{eqnarray}}
\newcommand\bnu          {\begin{enumerate}}
\newcommand\enu          {\end{enumerate}}
\newcommand{\Ad}{\operatorname{Ad}}

\newcommand\id            {\mathrm{id}}
\newcommand\ob          {\operatorname{Ob}}
\renewcommand\hom         {\operatorname{Hom}}
\newcommand\ev          {\mathrm{ev}}
\newcommand\coev      {\mathrm{coev}}
\newcommand\edo    {\mathrm{End}}
\newcommand\funend {\EuScript{E}\mathrm{nd}}
\newcommand\aut      {\mathrm{Aut}}

\newcommand\out      {\mathrm{Out}}
\newcommand\Aut      {\mathcal{A}\mathrm{ut}}
\newcommand\hilb   {\mathrm{Hilb}}
\newcommand\vect    {\mathrm{Vect}}
\newcommand\Mat  {\EuScript{M}\mathrm{at}}
\renewcommand{\div}{\operatorname{div}}
\newcommand\rep     {\mathrm{Rep}}
\newcommand\fun     {\mathrm{Fun}}
\newcommand\Fun    {\EuScript{F}\mathrm{un}}
\newcommand\LMod  {\mathrm{LMod}}
\newcommand\RMod  {\mathrm{RMod}}
\newcommand\BMod {\mathrm{BMod}}
\newcommand\bk       {\mathbb{k}}
\newcommand\forget  {\mathbf{f}}

\newcommand{\sep}{\textnormal{sep}}

\newcommand\alg     {\EuScript{A}\mathrm{lg}}
\newcommand\cTop {\mathrm{Top}}
\newcommand\mfd    {\EuScript{M}\mathrm{fd}}
\newcommand\Set    {\EuScript{S}\mathrm{et}}
\newcommand\sSet    {\mathbf{sSet}}
\newcommand\Topo    {\EuScript{T}\mathrm{op}}
\newcommand\Ring  {\EuScript{R}\mathrm{ing}}
\newcommand\abel {\EuScript{A}\mathrm{bel}}
\newcommand\cring {\EuScript{C}\mathrm{ring}}

\newcommand\one    {\mathbf{1}}
\newcommand\coker  {\mathrm{Coker}}
\newcommand\image     {\mathrm{Im}}

\newcommand{\auto}[1]{\operatorname{Aut}(#1)}
\newcommand{\inv}[1]{\operatorname{Inv}(#1)}
\newcommand{\ch}[1]{\operatorname{char}(#1)}
\newcommand{\colim} {\varinjlim}
\newcommand{\Lim}  {\varprojlim}
\newcommand{\Lan}  {\mathrm{Lan}}
\newcommand{\Ran}  {\mathrm{Ran}}
\newcommand{\Lie}{\operatorname{Lie}}
\newcommand{\BB}{\mathbb{B}}
\newcommand{\Max}{\operatorname{Max}}
\renewcommand{\sp}{\operatorname{Sp}}
\newcommand{\End}{\operatorname{End}}
\renewcommand{\Mat}{\operatorname{Mat}}

\newcommand{\andm}{\textnormal{-}\textsf{AndMod}}
\newcommand{\abm}{\textnormal{-}\textsf{AbMod}}
\newcommand{\rk}{\operatorname{rank}}
\newcommand{\Frac}{\operatorname{Frac}}

\newcommand\CA           {\EuScript{A}}
\newcommand\CB           {\EuScript{B}}
\newcommand\CCC           {\EuScript{C}}
\newcommand\CDD           {\EuScript{D}}
\newcommand\CE          {\EuScript{E}}
\newcommand\CF          {\EuScript{F}}
\newcommand\CG         {\EuScript{G}}
\newcommand\CH         {\EuScript{H}}
\newcommand\CI           {\EuScript{I}}
\newcommand\CJ           {\EuScript{J}}
\newcommand\CK         {\EuScript{K}}
\newcommand\CL          {\EuScript{L}}
\newcommand\CM          {\EuScript{M}}
\newcommand\CN         {\EuScript{N}}
\newcommand\CO         {\EuScript{O}}
\newcommand\CP         {\EuScript{P}}
\newcommand\CQ         {\EuScript{Q}}
\newcommand\CR         {\EuScript{R}}
\newcommand\CS         {\EuScript{S}}
\newcommand\CT         {\EuScript{T}}
\newcommand\CU        {\EuScript{U}}
\newcommand\CV        {\EuScript{V}}
\newcommand\CW        {\EuScript{W}}
\newcommand\CX         {\EuScript{X}}
\newcommand\CY         {\EuScript{Y}}
\newcommand\CZ         {\EuScript{Z}}
\newcommand{\plim}{\varprojlim}

\newcommand{\fA}{\mathfrak{A}}
\newcommand{\fB}{\mathfrak{B}}

\newcommand{\uparr}[1]{\stackrel{\to}{#1}}

\newcommand{\SL}{\operatorname{SL}}
\newcommand{\GL}{\operatorname{GL}}
\newcommand{\sgn}{\operatorname{sgn}}
\newcommand{\ksym}[2]{\left(\frac{#1}{#2}\right)}

\renewcommand{\bfm}{\mathbf{m}}
\newcommand{\bfn}{\mathbf{n}}
\newcommand{\bfi}{\mathbf{i}}
\newcommand{\bfj}{\mathbf{j}}

\newcommand{\bfu}{\mathbf{u}}
\newcommand{\bfv}{\mathbf{v}}

\newcommand{\fitt}{\operatorname{Fitt}}
\newcommand{\Cl}{\operatorname{Cl}}

\newcommand{\bA}{\mathbf{A}}
\newcommand{\bB}{\mathbf{B}}
\newcommand{\bC}{\mathbf{C}}
\newcommand{\bD}{\mathbf{D}}
\newcommand{\bE}{\mathbf{E}}
\newcommand{\bF}{\mathbf{F}}
\newcommand{\bG}{\mathbf{G}}
\newcommand{\bH}{\mathbf{H}}
\newcommand{\bI}{\mathbf{I}}
\newcommand{\bJ}{\mathbf{J}}
\newcommand{\bK}{\mathbf{K}}
\newcommand{\bL}{\mathbf{L}}
\newcommand{\bM}{\mathbf{M}}
\newcommand{\bN}{\mathbf{N}}
\newcommand{\bO}{\mathbf{O}}
\newcommand{\bP}{\mathbf{P}}
\newcommand{\bQ}{\mathbf{Q}}
\newcommand{\bR}{\mathbf{R}}
\newcommand{\bS}{\mathbf{S}}
\newcommand{\bT}{\mathbf{T}}
\newcommand{\bU}{\mathbf{U}}
\newcommand{\bV}{\mathbf{V}}
\newcommand{\bW}{\mathbf{W}}
\newcommand{\bX}{\mathbf{X}}
\newcommand{\bY}{\mathbf{Y}}
\newcommand{\bZ}{\mathbf{Z}}

\newcommand{\sfA}{\mathsf{A}}
\newcommand{\sfB}{\mathsf{B}}
\newcommand{\sfC}{\mathsf{C}}
\newcommand{\sfD}{\mathsf{D}}
\newcommand{\sfE}{\mathsf{E}}
\newcommand{\sfF}{\mathsf{F}}
\newcommand{\sfG}{\mathsf{G}}
\newcommand{\sfH}{\mathsf{H}}
\newcommand{\sfI}{\mathsf{I}}
\newcommand{\sfJ}{\mathsf{J}}
\newcommand{\sfK}{\mathsf{K}}
\newcommand{\sfL}{\mathsf{L}}
\newcommand{\sfM}{\mathsf{M}}
\newcommand{\sfN}{\mathsf{N}}
\newcommand{\sfO}{\mathsf{O}}
\newcommand{\sfP}{\mathsf{P}}
\newcommand{\sfQ}{\mathsf{Q}}
\newcommand{\sfR}{\mathsf{R}}
\newcommand{\sfS}{\mathsf{S}}
\newcommand{\sfT}{\mathsf{T}}
\newcommand{\sfU}{\mathsf{U}}
\newcommand{\sfV}{\mathsf{V}}
\newcommand{\sfW}{\mathsf{W}}
\newcommand{\sfX}{\mathsf{X}}
\newcommand{\sfY}{\mathsf{Y}}
\newcommand{\sfZ}{\mathsf{Z}}

\newcommand{\sA}{\mathscr{A}}
\newcommand{\sB}{\mathscr{B}}
\newcommand{\sC}{\mathscr{C}}
\newcommand{\sD}{\mathscr{D}}
\newcommand{\sE}{\mathscr{E}}
\newcommand{\sF}{\mathscr{F}}
\newcommand{\sG}{\mathscr{G}}
\newcommand{\sH}{\mathscr{H}}
\newcommand{\sI}{\mathscr{I}}
\newcommand{\sJ}{\mathscr{J}}
\newcommand{\sK}{\mathscr{K}}
\newcommand{\sL}{\mathscr{L}}
\newcommand{\sM}{\mathscr{M}}
\newcommand{\sN}{\mathscr{N}}
\newcommand{\sO}{\mathscr{O}}
\newcommand{\sP}{\mathscr{P}}
\newcommand{\sQ}{\mathscr{Q}}
\newcommand{\sR}{\mathscr{R}}
\newcommand{\sS}{\mathscr{S}}
\newcommand{\sT}{\mathscr{T}}
\newcommand{\sU}{\mathscr{U}}
\newcommand{\sV}{\mathscr{V}}
\newcommand{\sW}{\mathscr{W}}
\newcommand{\sX}{\mathscr{X}}
\newcommand{\sY}{\mathscr{Y}}
\newcommand{\sZ}{\mathscr{Z}}

\newcommand{\St}{\textnormal{St}}
\newcommand{\codim}{\operatorname{codim}}
\newcommand{\Frob}{\operatorname{Frob}}
\newcommand{\Pic}{\operatorname{Pic}}
\newcommand{\res}[3]{\left(\frac{#1}{#2}\right)_{#3}}
\newcommand{\qhom}{\operatorname{Hom^\circ}}
\newcommand{\mot}{\textnormal{-}\textsf{Mot}}
\newcommand{\motI}{\textnormal{-}\textsf{MotI}}
\newcommand{\effmot}{\textnormal{-}\textsf{Mot}^{\eff}}
\newcommand{\fgeffmot}{\textnormal{-}\textsf{Mot}^{\eff}_{<\infty}}
\newcommand{\imot}{\textnormal{-}\textsf{Mot}^{\circ}}
\newcommand{\armot}{\textnormal{-}\textsf{Mot}_{\textnormal{Artin}}^{\circ}}
\newcommand{\reps}{\textnormal{-}\textsf{Reps}}

\newcommand{\bx}{\mathbf{x}}
\newcommand{\by}{\mathbf{y}}
\newcommand{\Span}{\operatorname{Span}}
\newcommand{\et}{\textnormal{\'et}}
\newcommand{\Rep}{\mathbf{Rep}}
\newcommand{\Tr}{\operatorname{Tr}}
\newcommand{\Ind}{\operatorname{Ind}}
\newcommand{\resi}[3]{\left(\frac{#1}{#2}\right)_{#3}}

\newcommand{\bsalpha}{\boldsymbol{\alpha}}
\newcommand{\bsbeta}{\boldsymbol{\beta}}
\newcommand{\bsi}{\boldsymbol{i}}
\newcommand{\bslambda}{\boldsymbol{\lambda}}
\newcommand{\bsmu}{\boldsymbol{\mu}}
\newcommand{\bsnu}{\boldsymbol{\nu}}
\newcommand{\bspi}{\boldsymbol{\pi}}

\newcommand{\oD}{\mkern2.5mu\overline{\mkern-2.5mu D}}
\newcommand{\oE}{\mkern2.5mu\overline{\mkern-2.5mu E}}
\newcommand{\oF}{\mkern2.5mu\overline{\mkern-2.5mu F}}
\newcommand{\og}{\overline{g}}
\newcommand{\oh}{\overline{h}}
\newcommand{\ok}{\overline{k}}
\newcommand{\oK}{\mkern2.5mu\overline{\mkern-2.5mu K}}
\newcommand{\oL}{\overline{L}}
\newcommand{\oM}{\mkern2.5mu\overline{\mkern-2.5mu M}}
\newcommand{\on}{\overline{n}}
\newcommand{\oalpha}{\overline{\alpha}}
\newcommand{\obeta}{\overline{\beta}}
\newcommand{\oGamma}{\overline{\Gamma}}
\newcommand{\ochi}{\overline{\chi}}
\newcommand{\oeta}{\overline{\eta}}
\newcommand{\okappa}{\overline{\kappa}}
\newcommand{\ophi}{\mkern2.5mu\overline{\mkern-2.5mu \phi}}
\newcommand{\opsi}{\mkern2.5mu\overline{\mkern-2.5mu \psi}}
\newcommand{\otheta}{\mkern2.5mu\overline{\mkern-2.5mu \theta}}
\newcommand{\oPhi}{\overline{\Phi}}
\newcommand{\oTheta}{\overline{\Theta}}
\newcommand{\oUpsilon}{\overline{\Upsilon}}
\newcommand{\oP}{\mkern2.5mu\overline{\mkern-2.5mu P}}
\newcommand{\oQ}{\overline{Q}}
\newcommand{\oR}{\mkern2.5mu\overline{\mkern-2.5mu R}}
\newcommand{\oS}{\mkern2.5mu\overline{\mkern-2.5mu S}}
\newcommand{\oT}{\overline{T}}
\newcommand{\oU}{\overline{U}}
\newcommand{\oW}{\overline{W}}
\newcommand{\ox}{\overline{x}}
\newcommand{\oX}{\overline{X}}
\newcommand{\oZ}{\mkern2.5mu\overline{\mkern-2.5mu Z}}
\newcommand{\oxi}{\overline{\xi}}
\newcommand{\oXi}{\overline{\Xi}}
\newcommand{\ozeta}{\overline{\zeta}}
\newcommand{\obn}{\overline{\bn}}
\newcommand{\ui}{\underline{i}}
\newcommand{\ufp}{\underline{\fp}}
\newcommand{\ufq}{\underline{\fq}}
\newcommand{\ut}{\underline{t}}
\newcommand{\htheta}{\hat{\theta}}
\newcommand{\hzeta}{\hat{\zeta}}
\newcommand{\ulM}{\underline{M}}
\newcommand{\inn}{\mathrm{in}}
\newcommand{\si}{\mathrm{si}}
\newcommand{\sr}{\mathrm{sr}}
\newcommand{\perf}{\mathrm{perf}}
\newcommand{\Koh}{\operatorname{H}}
\newcommand{\isoto}{\stackrel{\sim}{\to}}
\newcommand{\tors}{\mathrm{tors}}
\newcommand{\nr}{\mathrm{nr}}
\newcommand{\tB}{\widetilde{B}}
\newcommand{\tC}{\widetilde{C}}
\newcommand{\teps}{\widetilde{\varepsilon}}
\newcommand{\tE}{\widetilde{E}}
\newcommand{\tcE}{\widetilde{\cE}}
\newcommand{\tbe}{\widetilde{\be}}
\newcommand{\tg}{\widetilde{g}}
\newcommand{\tilh}{\widetilde{h}}
\newcommand{\tM}{\widetilde{M}}
\newcommand{\tiota}{\widetilde{\iota}}
\newcommand{\tfp}{\widetilde{\fp}}
\newcommand{\tfq}{\widetilde{\fq}}
\newcommand{\tpi}{\widetilde{\pi}}
\newcommand{\tphi}{\widetilde{\phi}}
\newcommand{\tPhi}{\widetilde{\Phi}}
\newcommand{\tPsi}{\widetilde{\Psi}}
\newcommand{\trho}{\widetilde{\rho}}
\newcommand{\ttheta}{\widetilde{\theta}}
\newcommand{\tP}{\widetilde{P}}
\newcommand{\tQ}{\widetilde{Q}}
\newcommand{\tS}{\widetilde{S}}
\newcommand{\tT}{\widetilde{T}}
\newcommand{\tU}{\widetilde{U}}
\newcommand{\tV}{\widetilde{V}}
\newcommand{\tx}{\widetilde{x}}
\newcommand{\ty}{\widetilde{y}}
\newcommand{\tX}{\widetilde{X}}
\newcommand{\btau}{\bar{\tau}}
\newcommand{\Ga}{\GG_{\mathrm{a}}}
\newcommand{\Gm}{\GG_{\mathrm{m}}}
\newcommand{\LLhat}{\widehat{\LL}}
\newcommand{\C}{\CC_{\infty}}

\newcommand{\oFq}{\overline{\FF}_q}
\newcommand{\oFqt}{\overline{\FF_q(t)}}
\newcommand{\oEE}{\overline{\EE}}
\newcommand{\soEE}{\mkern1mu\overline{\mkern-1mu \EE}}
\newcommand{\obE}{\overline{\bE}}
\newcommand{\sobE}{\mkern1mu\overline{\mkern-1mu \bE}}
\newcommand{\oFF}{\overline{\FF}}
\newcommand{\oinfty}{\overline{\infty}}
\newcommand{\Fqts}{\FF_q[\ut_s]}

\newcommand{\tauid}{{\tau=\mathrm{id}}}
\newcommand{\sigmaid}{{\sigma=\mathrm{id}}}
\newcommand{\iso}{\stackrel{\sim}{\longrightarrow}}
\newcommand{\mayeq}{\stackrel{?}{=}}
\newcommand{\power}[2]{{#1 [[ #2 ]]}}
\newcommand{\laurent}[2]{{#1 (( #2 ))}}
\newcommand{\brac}[2]{\genfrac{\{}{\}}{0pt}{}{#1}{#2}}

\newcommand{\dnorm}[1]{\lVert #1 \rVert}
\newcommand{\cdnorm}[1]{\lVert #1 \rVert}
\newcommand{\inorm}[1]{{\lvert #1 \rvert}_{\infty}}
\newcommand{\idnorm}[1]{{\lVert #1 \rVert}_{\infty}}
\newcommand{\diam}[1]{\langle #1 \rangle}
\newcommand{\smod}[1]{{\, (\mathrm{mod}\, #1)}}
\newcommand{\Aord}[2]{{[ #1 ]}_{#2}}
\newcommand{\bigAord}[2]{{\left[ #1 \right]}_{#2}}
\newcommand{\pd}{\partial}
\newcommand{\bdot}{\mathbin{.}}
\newcommand{\tr}{{\mathsf{T}}}
\newcommand{\assign}{\mathrel{\vcenter{\baselineskip0.5ex \lineskiplimit0pt
                     \hbox{\scriptsize.}\hbox{\scriptsize.}}}%
                     =}
\newcommand{\rassign}{=%
                     \mathrel{\vcenter{\baselineskip0.5ex \lineskiplimit0pt
                     \hbox{\scriptsize.}\hbox{\scriptsize.}}}%
                     }
\newcommand{\Res}{\operatorname{Res}}
\newcommand{\ind}{\textnormal{ind}}

\begin{abstract}
Motivated by the classical theory of twisted $L$-functions, this article introduces a motivic framework for studying twisted Goss $L$-series in the function field setting. For a Drinfeld module and an Artin representation, we construct an associated Anderson motive and prove that it arises from a uniformizable abelian Anderson $\bA$-module. Furthermore, we show that the $L$-series of these motives recover the norm of the twisted Goss $L$-values. Ultimately, applying Taelman’s class number formula, our framework provides a structural interpretation of twisted Goss $L$-values in terms of regulators of Anderson modules and gives an application in determining the transcendence of twisted special $L$-values.
\end{abstract}
\subjclass[2020]{Primary 11G09; Secondary 11J93, 11M38, 11R58}
\maketitle

\section{Introduction}
\subsection{Background and motivation}
    Let $E: y^2 = x^3 + Ax +B$ be an elliptic curve over $\QQ$. Its Hasse-Weil $L$-function admits an infinite series expansion
    $$L(E,s) = \sum_{n\geq 1} \frac{a_n}{n^s}.$$
    Let $D$ be a square-free integer and let $\chi_D = \left(\frac{D}{\cdot}\right)$ be a quadratic character. One may then form the twisted $L$-function
    $$L(E,\chi_D,s) = \sum_{n\geq 1}\frac{a_n\chi_D(n)}{n^s}.$$
    On the other hand, consider the quadratic twist
    $$E^D: Dy^2 = x^3 + Ax + B.$$
    Then
    $$L(E^D,s) = L(E,\chi_D,s).$$
    In particular, this twisted $L$-function arises from another elliptic curve. This leads to the question: given an arbitrary Dirichlet character $\chi$, does $L(E,\chi,s)$ also arise as the Hasse-Weil $L$-function of an elliptic curve?

    The answer is negative, since the twists of an elliptic curve $E$ are controlled by its automorphism group $\Aut(E)$. See, for example, \cite{silverman2009arithmetic}. 

    If a twist by a character $\chi$ does not come from an elliptic curve, one may still ask whether it comes from another geometric object. For instance, can one find an abelian variety $A$ such that $L(A,s)=L(E,\chi,s)$? More generally, what can be said when $\chi$ is replaced by an Artin representation $\rho: \Gal(\QQ^\sep/\QQ)\to \GL_n(\conj{\QQ})$? Following Dokchitser \cite{dokchitser2005l}, twists of elliptic curves by Artin representations should be viewed as motives in their own right. Such twisted $L$-functions also arise naturally in non-commutative Iwasawa theory, providing a family of special values at $s=1$ used to $p$-adically interpolate the (conjectural) analytic $p$-adic $L$-function of the main conjecture; see \cite{dokchitser2007,venjakob2005} for more details.

    Motivated by this perspective, we study Artin twists in the function field setting by investigating twists of Drinfeld modules. Using the theory of Anderson motives, we construct a motivic framework that incorporates higher-dimensional Artin representations, including genuinely non-abelian situations. This provides a function field analogue of Artin twists of elliptic curves, in which abelian varieties are replaced by abelian Anderson modules and Hasse-Weil $L$-functions are replaced by Goss $L$-series.

    This lies within a central theme of modern number theory: relating special values of $L$-functions to arithmetic geometry. In positive characteristic, Drinfeld modules and (abelian) Anderson motives play the role of elliptic curves and motives of abelian type.

    Our primary object is an Anderson motive $M = \MM(\varphi,\rho)$, constructed from a Drinfeld module $\varphi$ and an Artin representation $\rho$ of the absolute Galois group. A key outcome of the construction is the identity:
\begin{equation}
    L(M,s) = N_{K_\infty(\rho)/K_\infty}\big(L(\varphi^\vee,\rho,s)\big),
\end{equation}
which provides a precise motivic interpretation of twisted $L$-values, see Proposition \ref{motivic_L_series} and Theorem \ref{thmC}. To illustrate the consistency of this framework, we show that it recovers well-known objects in special cases; for instance, when $\varphi$ is the \emph{Carlitz module} $C$ and $\chi$ is a quadratic character, the identity yields $L(M,s-1) = L(\chi,s)$. Ultimately, this approach not only elucidates the structure of twisted $L$-functions in positive characteristic but also opens new avenues for exploring the arithmetic of special $L$-values over function fields.

\subsection{Main results.} 	Let $X/\FF_q$ be a smooth, geometrically connected projective curve over $\FF_q$ and $\infty\in X$ a fixed closed point of degree $d_\infty$, i.e. $d_\infty = [\FF_\infty: \FF_q]$, where $\FF_\infty:=\mcO_{X,\infty}/\mfm_\infty$ is the residue field at $\infty$. Let $\bA = \mcO_X(X-\infty,\mcO_X)$ be the ring of rational functions regular outside $\infty$. Let $\bK = \Frac(\bA)$ and $\bK_\infty$ the completion of $\bK$ with respect to the normalized valuation $v_\infty$ at $\infty$.

  Let $A, K, \text{and } K_\infty$ denote fixed isomorphic copies of $\bA, \bK, \text{and } \bK_\infty$, respectively. The natural inclusion $\iota:\bA\stackrel{\sim}{\to}A\hookrightarrow K$ makes $K$ into an $\bA$-field. Throughout this paper, symbols in boldface will represent operators, while elements of $A, K, \text{and } K_\infty$ will serve as constants. For example, if $X = \PP^1_{\FF_q}$, $\infty = [0:1]\in \PP^1(\FF_q)$. Then we usually have $\iota:\bA = \FF_q[t]\stackrel{\sim}{\longrightarrow}\FF_q[\theta] = A$, $t\mapsto \theta$. Let $E/K$ be a finite extension.
  
  Our main results focus on Artin representations $\rho: G_E\to \GL_n(\conj{\FF}_{q})$, where $G_E = \Gal(E^\sep/E)$. Let $\FF_q(\rho) = \FF_q(\rho(g)_{ij}: g\in G)$ and put $d=d_\rho = [\FF_q(\rho): \FF_q]$. Denote by $\rho^{(i)}$ the $i$-th Frobenius twist of $\rho$, i.e. $\rho^{(i)}(g) = (a_{ij}^{q^i})$ if $\rho(g)= (a_{ij})$.
\begin{thmx}[Theorem \ref{existsAnderson}]
	Let $\varphi/E$ be a Drinfeld $\bA$-module of rank $r$ and $\rho: G_E\to \GL_n(\conj{\FF}_{q})$ an $\conj{\FF}_{q}$-representation of dimension $n$. Then, there exists an abelian $\bA$-module $\mcE$ of dimension $N = nd$ and rank $rN$ over $E$ such that $$\MM(\varphi,\rho)\cong_E \MM(\mcE).$$
\end{thmx}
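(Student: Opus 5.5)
We first fix what $\MM(\varphi,\rho)$ should be, since the paper has not yet defined it. Let $L/E$ be a finite Galois extension through which $\rho$ factors, with $G=\Gal(L/E)$. Let $W=\FF_q(\rho)^n$ be the representation space. Because $\rho$ has coefficients in $\FF_q(\rho)$ rather than $\FF_q$, we regard $W$ as an $\FF_q$-vector space of dimension $N=nd$; this amounts to taking the direct sum of the Frobenius twists $\rho^{(0)},\dots,\rho^{(d-1)}$ after extending scalars to $\oFq$.

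Let $\MM(\varphi)=L\{\tau\}$ be the motive of $\varphi$ over $L$. Then
$$\MM(\varphi,\rho)=\bigl(\MM(\varphi)_L\otimes_{\FF_q}W\bigr)^{G},$$
where $G$ acts semilinearly on $L$ and through $\rho$ on $W$. This is an $E\otimes\bA$-module with a $\tau$-action $\tau\otimes\id$. The action commutes with $G$ because $\tau$ commutes with Galois on $L$ and $\rho$ takes values in matrices fixed by the relevant Frobenius after restriction of scalars.

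The plan has three steps: verify the motive axioms, then produce $\mcE$, then compare dimensions and ranks.

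\textbf{Step 1: descent.} Galois descent for vector spaces over $L$ (Hilbert 90 for $\GL$) gives
$$\MM(\varphi,\rho)\otimes_E L\cong \MM(\varphi)_L\otimes_{\FF_q}W\cong \MM(\varphi)_L^{\oplus N},$$
compatibly with $\tau$ and $\bA$. All properties of an effective Anderson motive can be checked after the faithfully flat base change $E\to L$:
\begin{itemize}
\item finite generation and projectivity over $E\otimes\bA$ of rank $rN$;
\item finite generation over $E\{\tau\}$, free of rank $N$;
\item $\coker\tau$ killed by a power of $J=\ker(E\otimes\bA\to E)$.
\end{itemize}
Over $L$ these follow because $\MM(\varphi)_L$ is a rank-$r$ motive, free of rank $1$ over $L\{\tau\}$.

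\textbf{Step 2: producing $\mcE$.} We need $\MM(\varphi,\rho)$ to be free over $E\{\tau\}$ itself, not merely after base change to $L$. The $E\{\tau\}$-module $P=\MM(\varphi,\rho)$ satisfies
$$L\{\tau\}\otimes_{E\{\tau\}}P\cong L\{\tau\}^N.$$
Finitely generated $\tau$-torsion-free modules over the left Euclidean ring $E\{\tau\}$ are free, so $P$ is free of some rank, and comparing ranks after base change to $L$ shows that rank is $N$.

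To construct $\mcE$ concretely, take the $E$-basis of $\bigl(L^N\bigr)^G$ twisted by $\rho$. Its existence follows from the normal basis theorem / Hilbert 90 applied to
$$W\otimes_{\FF_q}L=\bigl(L\otimes\FF_q(\rho)\bigr)^n.$$
Write $\Phi_a=$ the matrix of $\varphi_a$ acting on this $E\{\tau\}$-basis. This gives $\mcE:\bA\to\Mat_N(E\{\tau\})$. The equivalence between abelian Anderson modules and motives free and finitely generated over $E\{\tau\}$ then yields
$$\MM(\mcE)\cong \MM(\varphi,\rho).$$
The nilpotence condition $(\partial\Phi_a-\iota(a))^{\ell}=0$ descends from $L$, where $\mcE_L\cong\varphi_L^{\oplus N}$ has $\partial\Phi_a=\iota(a)\cdot I$.

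\textbf{Step 3: dimension and rank.} Over $L$ we have $\mcE_L\cong\varphi_L^{\oplus N}$. Since $\varphi$ has dimension $1$ and rank $r$, the dimension of $\mcE$ is $N$ and its rank is $rN$, and both are invariant under base change.

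\textbf{Main obstacle.} The subtle point is the twist by a non-$\FF_q$-rational $\rho$. The invariants must be taken so that $\tau$, which is $q$-Frobenius semilinear, commutes with the $G$-action. This requires working with the $\FF_q$-structure $W$, equivalently $\bigoplus_i\rho^{(i)}$, rather than with $\rho$ alone; this is exactly why the dimension is $nd$ rather than $n$. We expect Step 1 to hinge on checking this carefully, and would first handle the case $d=1$ and then reduce the general case to it by restriction of scalars from $\FF_q(\rho)$ to $\FF_q$.
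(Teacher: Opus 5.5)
Your strategy works and differs from the paper's route, but Step 2 rests on a lemma that is false here. The claim that finitely generated torsion-free left $E\{\tau\}$-modules are free holds over a perfect field, where $E\{\tau\}$ has a division algorithm on both sides. A global function field $E$ is not perfect, so $E\{\tau\}$ is only a principal left ideal domain and fails the right Ore condition. Concretely, take $\alpha\in E\setminus E^q$. Then $\tau E\{\tau\}\cap\alpha\tau E\{\tau\}=0$, since $\tau c=\alpha\tau d$ forces $c_i^q=\alpha d_i^q$ for every coefficient, hence $c=d=0$. The module $E\{\tau\}^2/E\{\tau\}(\tau,-\alpha\tau)$ embeds in the skew field of left fractions via $e_1\mapsto 1$, $e_2\mapsto(\alpha\tau)^{-1}\tau$, so it is torsion-free of rank one. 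It is not free: freeness would require $c,d$ with $\tau c=\alpha\tau d$ and $E\{\tau\}c+E\{\tau\}d=E\{\tau\}$, which is impossible.

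The conclusion you need is still true, and your ``concrete'' paragraph nearly contains the correct argument; you only need to supply the missing reason. Since $G$ acts only on the second factor of $\MM(\varphi)\otimes_E(L\otimes_{\FF_q}W)$, your $P$ equals $\MM(\varphi)\otimes_E\bD(W)$ with diagonal $\tau$; this is also how your definition matches the paper's $\MM(\varphi)\otimes\MM(\rho)$. Moreover $\bD(W)$ is \'etale by Lemma \ref{G-inv_bc_to_Esep}. So for any $E$-basis $v_1,\dots,v_N$ of $\bD(W)$, each family $\tau^iv_1,\dots,\tau^iv_N$ is again an $E$-basis. Hence $\{\tau^i(1\otimes v_k)\}_{i,k}$ is an $E$-basis of $P$, and $1\otimes v_1,\dots,1\otimes v_N$ is an $E\{\tau\}$-basis. (Alternatively, for separable $L/E$, $L\{\tau\}$ is a free left $E\{\tau\}$-module on any $E$-basis of $L$. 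Then $P\hookrightarrow L\otimes_E P\cong L\{\tau\}^N$ sits inside a free module over a principal left ideal domain, so $P$ is free.)

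With that repair, here is how your route compares with the paper's.
\begin{itemize}
\item \emph{The paper's route.} It chooses a fundamental solution $\vec{\mathbf{u}}$, so $\bD(W)$ has the cyclic basis $\vec{\mathbf{u}},\dots,\vec{\mathbf{u}}^{(N-1)}$ with companion matrix $\Phi$. It reduces to $\bA=\FF_q[t]$ via Lemma \ref{reduce_to_t-module} and writes the $\tau$-matrix on the explicit $E[t]$-basis $\{e_{ij}\}$. It then checks by hand that $e_{11},\dots,e_{1N}$ is an $E[\tau]$-basis and that $(t-\theta)\MM(\varphi,\rho)\subseteq\tau\MM(\varphi,\rho)$, and applies van der Heiden's criterion.
\item \emph{Your route.} You verify projectivity of rank $rN$ and the $\mfj$-power torsion of $\operatorname{coker}\tau$ by descent from $L$, where the motive is $\MM(\varphi)_L^{\oplus N}$. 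This treats general $\bA$ directly, with no passage to $\FF_q[t]$. It also shows at once that $\mcE$ becomes isomorphic to $\varphi^{\oplus N}$ over $L$ via a constant matrix, which the paper only obtains later in Corollary \ref{twsit_is_uniformizable}.
\item \emph{What the paper's route buys.} Its explicit computation yields the closed formula of Theorem \ref{thmB}. The later exponential, logarithm and transcendence computations depend on that formula.
\end{itemize}
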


In the case that $\bA = \FF_q[t]$, $A = \FF_q[\theta]$, we investigate further and construct an explicit model of the Artin twist $\mcE$ over $E$. This makes it possible to calculate its associated Goss $L$-series and its special $L$-values, especially using Taelman's class number theorem and extensions due to Angl\'es, Fang, Ngo Dac and Ribeiro. See \cite{angles2022class,fang2015special,taelman2012special} for details.

Let $\alpha_1,\cdots,\alpha_d$ be a fixed $\FF_{q}$-basis for $\FF_q(\rho)$ and $\boldsymbol{\vec{\alpha}} = (\alpha_1,\cdots,\alpha_d)$. By a fundamental solution, we mean a solution $\vec{\mathbf{u}}\in \Mat_{n\times d}(E^\sep)$  of the system of equations $\rho^{(\ell)}(g)g(\vec{\mathbf{u}})\boldsymbol{\vec{\alpha}}^{(\ell)} = \vec{\mathbf{u}}\boldsymbol{\vec{\alpha}}^{(\ell)}$, whose entries are linearly independent over $\FF_q$, for all $g\in G_E$ and $0\leq \ell\leq d-1$. See \S \ref{construction} for details.
\begin{thmx}[Theorem \ref{thmB}]
	Let $\varphi: \bA\to E[\tau]$ be a Drinfeld module over $E$ with $$\varphi_t = \theta + a_1\tau +\cdots + a_r\tau^r$$ and $\rho:G_E\to \GL_n(\conj{\FF}_q)$ be an Artin representation. Let $N = nd$ with $d = d_\rho = [\FF_q(\rho): \FF_q]$. Then, $\mcE$ has a model over $E$ given by
    $$\mcE_t = \theta I_N + a_1\Psi \tau +a_2 \Psi\Psi^{(1)}\tau^2+\cdots+ a_r\Psi\Psi^{(1)}\cdots \Psi^{(r-1)}\tau^r,$$ where $\Psi = (\Phi(\vec{\mathbf{u}})^T)^{-1}$ and $\vec{\mathbf{u}}\in \operatorname{Sol}_E(\rho)$ is a fundamental solution. See $(\ref{sol_space})$ and $(\ref{Phimatrix})$ for the definition of $\operatorname{Sol}_E(\rho)$ and $\Phi(\bm{\vec{u}})$ respectively.
\end{thmx}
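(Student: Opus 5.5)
The plan is to realize $\mcE$ as an explicit Galois descent of the $N$-fold power $\varphi^{\oplus N}$, and to read off the coefficients of $\mcE_t$ by conjugating with a matrix built from a fundamental solution.

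\emph{Step 1: reduce to a cocycle.} By Theorem \ref{existsAnderson} (Theorem A) and its proof, $\MM(\varphi,\rho)$ comes from an abelian $\bA$-module $\mcE$ over $E$. Over $E^\sep$, $\mcE$ becomes isomorphic to $\varphi^{\oplus N}$, where $(\varphi^{\oplus N})_t=\theta I_N+\sum_i a_iI_N\tau^i$. The descent datum is given by $\rho$ viewed over $\FF_q$: choosing the basis $\boldsymbol{\vec\alpha}$ of $\FF_q(\rho)$ yields $\tilde\rho:G_E\to\GL_N(\FF_q)$, the restriction of scalars of $\rho$. The Frobenius twists $\rho^{(\ell)}$, $0\le\ell\le d-1$, are exactly the $\conj{\FF}_q$-constituents of $\tilde\rho\otimes\conj{\FF}_q$. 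So it suffices to find $f\in\GL_N(E^\sep)$ with
\[
g(f)=f\,\tilde\rho(g)\quad\text{for all } g\in G_E,
\]
possibly up to transpose or inverse conventions. Then $f$ identifies the twisted Galois action on $\varphi^{\oplus N}$ with the standard one on the descended module.

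\emph{Step 2: build $f$ from $\vec{\mathbf u}$.} The defining equations of $\operatorname{Sol}_E(\rho)$ are
\[
\rho^{(\ell)}(g)\,g(\vec{\mathbf u})\,\boldsymbol{\vec\alpha}^{(\ell)}=\vec{\mathbf u}\,\boldsymbol{\vec\alpha}^{(\ell)},\qquad 0\le \ell\le d-1.
\]
These say precisely that the matrix $\Phi(\vec{\mathbf u})$ of (\ref{Phimatrix}), assembled from the vectors $\vec{\mathbf u}\boldsymbol{\vec\alpha}^{(\ell)}$ and their Frobenius twists, transforms under $g$ by $\tilde\rho(g)$. I would verify this entrywise.

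Invertibility of $\Phi(\vec{\mathbf u})$ should follow from the hypothesis that the entries of $\vec{\mathbf u}$ are $\FF_q$-linearly independent. I would argue via a Moore-determinant argument: a Moore matrix of $\FF_q$-independent elements is nonsingular, and $\Phi(\vec{\mathbf u})$ is a Moore-type matrix combined with the invertible Vandermonde-type matrix $\bigl(\alpha_j^{q^\ell}\bigr)$. Existence of such a $\vec{\mathbf u}$ is Hilbert 90 for $\GL_N$, and presumably is already established in \S\ref{construction}.

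\emph{Step 3: conjugate and check rationality.} Set $f=\Phi(\vec{\mathbf u})^T$, or its inverse, according to the convention fixed in Step 1. Conjugating inside $\Mat_N(E^\sep)[\tau]$, and using $\tau B=B^{(1)}\tau$, gives
\[
f^{-1}\Bigl(\theta+\sum_i a_i\tau^i\Bigr)f=\theta I_N+\sum_i a_i\,f^{-1}f^{(i)}\tau^i .
\]
Writing $\Psi=f^{-1}f^{(1)}$, the coefficient telescopes to $f^{-1}f^{(i)}=\Psi\Psi^{(1)}\cdots\Psi^{(i-1)}$, which is the claimed shape. The matrix $\Psi$ has entries in $E$ because, for $g\in G_E$,
\[
g(\Psi)=\tilde\rho(g)^{-1}f^{-1}f^{(1)}\tilde\rho(g)^{(1)}=\Psi,
\]
since $\tilde\rho(g)$ has entries in $\FF_q$ and so is Frobenius-fixed. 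It remains to identify $\Psi$ with $(\Phi(\vec{\mathbf u})^T)^{-1}$. Because the Frobenius twist permutes the blocks $\boldsymbol{\vec\alpha}^{(\ell)}$ cyclically, $f^{(1)}$ should differ from $f$ by a constant matrix absorbed into the chosen normalization. I would pin this down by a direct computation with the explicit form of $\Phi$.

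\emph{Step 4: compare motives.} Finally, I would check that the resulting $\mcE$ satisfies $\MM(\mcE)\cong_E\MM(\varphi,\rho)$. Its motive over $E^\sep$ is $\MM(\varphi)^{\oplus N}$ with the Galois action twisted by $\tilde\rho$ through $f$, which matches the construction of $\MM(\varphi,\rho)$ in Theorem A.

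The main obstacle is Step 3's bookkeeping: matching the transpose and inverse conventions in $\Phi$ so that the cocycle is $\tilde\rho$ and not its contragredient, and confirming that $\Psi$ comes out literally as $(\Phi^T)^{-1}$. I would resolve this first on the quadratic case $n=d=1$, where everything reduces to a Kummer-type twist by $u\in E^\sep$ with $u^{q-1}\in E$, and then generalize.
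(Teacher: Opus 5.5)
There is a genuine gap in Steps 2--3: you have misread what $\Phi(\vec{\mathbf u})$ is. In (\ref{Phimatrix}) it is the companion matrix of the $\tau$-action on $\operatorname{Sol}_E(\rho)$ in the basis $\vec{\mathbf u},\vec{\mathbf u}^{(1)},\dots,\vec{\mathbf u}^{(N-1)}$, with entries $0,1,f_0,\dots,f_{N-1}\in E$. It is not assembled from the vectors $\vec{\mathbf u}\boldsymbol{\vec\alpha}^{(\ell)}$, it is not of Moore type, and it is fixed by $G_E$. Hence $f=\Phi(\vec{\mathbf u})^T$ (or its inverse) satisfies $g(f)=f$ and carries no cocycle at all. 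Moreover $f^{-1}f^{(1)}$ is not $(\Phi^T)^{-1}$, and no ``constant matrix absorbed into the normalization'' repairs this. Your own test case $n=d=1$ already shows it: there $\Phi=(u^{q-1})$ lies in $E$, while the correct conjugator is $u^{-1}$.

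The matrix you need is the Moore matrix $M(\vec{\mathbf u})=[\vec{\mathbf u},\vec{\mathbf u}^{(1)},\dots,\vec{\mathbf u}^{(N-1)}]\in\GL_N(E^{\sep})$. The missing key identity is $M(\vec{\mathbf u})^{(1)}=M(\vec{\mathbf u})\,\Phi(\vec{\mathbf u})$ (Proposition \ref{Phi_in_terms_of_Mu}), which just restates that $\Phi$ is the matrix of $\tau$ in the basis $\vec{\mathbf u}^{(i)}$. With $P=(M(\vec{\mathbf u})^\top)^{-1}$ it gives at once $P^{-1}P^{(1)}=M^\top\bigl((M\Phi)^\top\bigr)^{-1}=(\Phi^\top)^{-1}=\Psi$. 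Your telescoping then yields $P^{-1}\varphi_t^{\oplus N}P=\mcE_t$, which is exactly the computation the paper records afterwards in Corollary \ref{twsit_is_uniformizable}.

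There are two further problems. First, your rationality check is wrong as written. From $g(f)=f\tilde\rho(g)$ one gets $g(f^{-1}f^{(1)})=\tilde\rho(g)^{-1}\Psi\,\tilde\rho(g)$, and Frobenius-fixedness of $\tilde\rho(g)$ does not remove this conjugation; for $f^{-1}(\cdot)f$ to descend, the constant must be on the left. That is what happens for $P$. The system defining $\operatorname{Sol}_E(\rho)$ is stable under Frobenius, which cycles $\ell$ modulo $d$, so $g(\vec{\mathbf u})=T_g\vec{\mathbf u}$ with $T_g\in\GL_N(\FF_q)$, whence $g(P)=(T_g^\top)^{-1}P$. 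In any case $\Psi=(\Phi^\top)^{-1}$ is visibly defined over $E$.

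Second, the real content of the theorem, namely that this $t$-module has motive $\MM(\varphi,\rho)$ over $E$, lives in your Step 4, which is only sketched. The paper's proof avoids descent entirely. It works in $\MM(\varphi,\rho)$ with the $E[\tau]$-basis $E_1=(e_{11},\dots,e_{1N})^\top$, where $e_{1j}=1\otimes\vec{\mathbf u}^{(j-1)}$. It uses $\tau E_i=\Phi^\top E_{i+1}$ (Proposition \ref{prop:te1j}) to get $E_i=\Psi\Psi^{(1)}\cdots\Psi^{(i-2)}\tau^{i-1}E_1$, and then substitutes into the relation $(t-\theta)\cdot 1=\sum_{m=1}^{r}a_m\tau^m$ in $\MM(\varphi)=E[\tau]$. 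The isomorphism with $\MM(\mcE)$ is therefore built in. If you keep your route, finish the same way over $E^{\sep}$. Write $E_1=M(\vec{\mathbf u})^\top\mathcal F$, where $\mathcal F$ is the column of the elements $1\otimes\epsilon_k$ for the $\tau$-fixed standard basis $\epsilon_k$ of $(E^{\sep})^N$. Since $t\mathcal F=\varphi_t^{\oplus N}\mathcal F$, you get $tE_1=P^{-1}\varphi_t^{\oplus N}P\,E_1$.
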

As a result, we have the following corollary.
\begin{corx}[Corollary \ref{twsit_is_uniformizable}]
    The Anderson $\bA$-module $\mcE$ over $E$ is uniformizable.
\end{corx}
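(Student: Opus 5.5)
We plan to derive uniformizability of $\mcE$ from that of the Drinfeld module $\varphi$, using that $\mcE$ is a twist of the $N$-fold direct sum $\varphi^{\oplus N}$ over a finite separable extension. Drinfeld modules are always uniformizable, and uniformizability is insensitive to isomorphism over $\C$. Concretely, let $F\subset E^\sep$ be the finite Galois extension of $E$ cut out by $\ker\rho$, enlarged to contain $\FF_q(\rho)$ and the entries of the fundamental solution $\vec{\mathbf{u}}$. Since $\rho$ has finite image, $F/E$ is finite, and $\Psi\in \GL_N(F)$.

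The key step is an explicit isomorphism over $F$. We look for $U\in\GL_N(F)$ with
$$U\,\mcE_t = \big(\theta I_N + a_1\tau+\cdots+a_r\tau^r\big)\,U,$$
where the right-hand operator is $(\varphi^{\oplus N})_t$. Comparing coefficients of $\tau^i$ forces $U\Psi\Psi^{(1)}\cdots\Psi^{(i-1)} = U^{(i)}$ for all $i\le r$. It therefore suffices to have $U^{(1)} = U\Psi$, i.e. $U = \Phi(\vec{\mathbf{u}})^{T}$ (up to the conventions of (\ref{Phimatrix})). Then $\Psi = (\Phi(\vec{\mathbf{u}})^T)^{-1}$ should satisfy exactly this Frobenius-difference relation, provided $\Phi(\vec{\mathbf{u}})$ is the Moore-type matrix built from the Frobenius twists of $\vec{\mathbf u}\boldsymbol{\vec\alpha}$. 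I expect this identity either to be how $\Psi$ was introduced in the proof of Theorem \ref{thmB}, or to follow from $\mcE$ being constructed there as the twist of $\varphi^{\oplus N}$ by the cocycle attached to $\rho$. The invertibility of $\Phi(\vec{\mathbf{u}})$ comes from the $\FF_q$-linear independence of the entries of $\vec{\mathbf u}$ (Moore determinant nonvanishing), and this is built into the definition of fundamental solution.

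Given such a $U$, $\mcE\cong \varphi^{\oplus N}$ over $F\subset\C$ as Anderson $\bA$-modules via the constant matrix $U$ (an isomorphism in $\Mat_N(F[\tau])$ with $\tau$-degree $0$). The exponential of $\varphi^{\oplus N}$ is $\exp_\varphi^{\oplus N}$, which is surjective on $\C^N$ because $\exp_\varphi$ is surjective for a Drinfeld module. Transporting along $U$ gives
$$\exp_\mcE = U^{-1}\exp_{\varphi}^{\oplus N}\,U\quad(\text{with }U\text{ acting linearly on }\Lie\text{ and on points}),$$
hence $\exp_\mcE$ is surjective, i.e. $\mcE$ is uniformizable. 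Alternatively, one could invoke Anderson's criterion that uniformizability is equivalent to the rigid analytic triviality of $\MM(\mcE)$, and use $\MM(\mcE)\cong\MM(\varphi,\rho)$ from Theorem \ref{existsAnderson} together with the triviality of $\MM(\varphi)$ after the finite base change trivializing $\rho$.

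The main obstacle is the precise verification $U^{(1)}=U\Psi$ with the paper's conventions for $\Phi(\vec{\mathbf u})$: transposes, the ordering of the $\alpha_j^{(\ell)}$, and whether $\Psi$ or $\Psi^{-1}$ appears. If the conventions give $U^{(1)}=\Psi U$ instead, we would adjust by taking $U=\Phi(\vec{\mathbf u})^{-1}$, and so on. Only the existence of some $U\in\GL_N(E^\sep)$ conjugating $\mcE_t$ to the diagonal Drinfeld module matters, not its exact form.
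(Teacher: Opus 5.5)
Your strategy is the paper's: conjugate $\mcE_t$ to $\varphi_t^{\oplus N}$ by a constant invertible matrix over $E^\sep$, then transport surjectivity of $\exp_\varphi^{\oplus N}$. However, the step that carries all the content, namely producing $U$ with $U^{-1}U^{(1)}=\Psi$, is not established, and the witness you propose fails.

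In the paper, $\Phi(\vec{\mathbf{u}})$ is not a Moore matrix. It is the companion matrix (\ref{Phimatrix}) of the $\tau$-action on $\operatorname{Sol}_E(\rho)$ in the basis $\vec{\mathbf{u}},\ldots,\vec{\mathbf{u}}^{(N-1)}$, and its entries lie in $E$. With $U=\Phi^\top$ you get $U\Psi=I$, so $U^{(1)}=U\Psi$ would force $\Phi^{(1)}=I$, which is false in general. No transpose or inverse adjustment rescues this: any conjugating matrix with entries in $E$ would make $\mcE\cong\varphi^{\oplus N}$ over $E$, and that fails in general.

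The Moore-type matrix you have in mind also cannot be built from the twists of $\vec{\mathbf{u}}\vec{\boldsymbol{\alpha}}$, since for $d>1$ these do not give an $N\times N$ matrix. The right object is $M(\vec{\mathbf{u}})=[\vec{\mathbf{u}},\vec{\mathbf{u}}^{(1)},\ldots,\vec{\mathbf{u}}^{(N-1)}]$, with $\vec{\mathbf{u}}$ flattened to a column of length $N$.

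The missing identity is Proposition \ref{Phi_in_terms_of_Mu}(1): $\Phi=M(\vec{\mathbf{u}})^{-1}M(\vec{\mathbf{u}})^{(1)}$. Transposing and inverting gives $\Psi=P^{-1}P^{(1)}$ for $P=(M(\vec{\mathbf{u}})^\top)^{-1}$. This $P$ is invertible precisely because $\vec{\mathbf{u}}$ is fundamental (Moore determinant). Taking $U=P$ and telescoping gives $\Psi\Psi^{(1)}\cdots\Psi^{(i-1)}=P^{-1}P^{(i)}$, hence $\mcE_t=P^{-1}\varphi_t^{\oplus N}P$. Your exponential transport (Corollary \ref{P_exp_log}) then finishes the argument exactly as in the paper.

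Your one-line motivic alternative does close the gap once made precise, and it is a genuinely different route. By Lemma \ref{G-inv_bc_to_Esep}, $\MM(\rho)\otimes_E E^\sep$ is the $N$-fold unit motive: the $\tau$-action is $\tau\otimes\mathrm{id}$ on $\bA_{E^\sep}\otimes_{\FF_q}V$. Hence $\MM(\mcE)_{E^\sep}\cong\MM(\varphi,\rho)_{E^\sep}\cong\MM(\varphi)_{E^\sep}^{\oplus N}=\MM(\varphi^{\oplus N})_{E^\sep}$. Theorem \ref{ThmAnderson} over $E^\sep$ then yields $\mcE\cong\varphi^{\oplus N}$ over $E^\sep$; alternatively, conclude via rigid analytic triviality and Anderson's criterion.

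This route proves existence of the conjugating isomorphism abstractly. It needs neither the explicit model of Theorem \ref{thmB} nor the reduction to $\bA=\FF_q[t]$. The paper's computation has the advantage of producing the explicit matrix $P$, which is used afterwards for $\exp_\mcE$, $\log_\mcE$ and the transcendence argument.
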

Furthermore, the following result establishes a precise connection between the $L$-functions of these objects:
\begin{thmx}[Theorem \ref{thmC}]
Let $\varphi:\bA\to E[\tau]$ be a Drinfeld $\bA$-module over $E$ and $\rho: G_E\to \GL_n(\conj{\FF}_q)$ an Artin representation. Let $\mcE = \mcE(\varphi,\rho)$ be an Artin twist of $\varphi$ by $\rho$. Then, there exists a finite set $S$ of places such that $$L_S(\mcE^\vee,s) = \prod_{i=0}^{d-1}L_S(\varphi^\vee,\rho^{(i)},s).$$ 
In the case $\bA = \FF_q[t]$, we have $$L_S(\mcE^\vee,s) = N_{K_\infty(\rho)/K_\infty}(L_S(\varphi^\vee,\rho,s))$$
\end{thmx}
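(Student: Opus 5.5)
The plan is to compare local Euler factors place by place, working through the Tate module (equivalently the dual motive) of $\mcE$. By Theorem \ref{existsAnderson} we have $\MM(\mcE)\cong_E \MM(\varphi,\rho)$. The motive $\MM(\varphi,\rho)$ is built from $\MM(\varphi)$ tensored with the $\FF_q$-structure underlying $\rho$; that is, $\rho$ is viewed as an $\FF_q$-linear representation of $G_E$ on $\FF_q(\rho)^n$, a space of dimension $N=nd$. Consequently, for a maximal ideal $\mfl\subset\bA$, the $\mfl$-adic Tate module of $\mcE$ should decompose $G_E$-equivariantly as
\[
T_\mfl(\mcE)\otimes_{\bA_\mfl}\conj{K}_\mfl\;\cong\;\bigoplus_{i=0}^{d-1} T_\mfl(\varphi)\otimes\rho^{(i)} .
\]
The key input is the standard fact that for a finite extension $\FF_q(\rho)/\FF_q$ we have $\FF_q(\rho)\otimes_{\FF_q}\conj{\FF}_q\cong\prod_{i=0}^{d-1}\conj{\FF}_q$. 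The $i$-th factor is given by $a\otimes b\mapsto a^{q^i}b$, and under it the scalar-restricted representation $\mathrm{Res}_{\FF_q(\rho)/\FF_q}\rho$ becomes $\bigoplus_i\rho^{(i)}$ after extending scalars. I will first establish this isomorphism on the level of motives, namely $\MM(\varphi,\rho)\otimes\conj{\FF}_q\cong\bigoplus_i \MM(\varphi)\otimes\rho^{(i)}$, and then pass to Tate modules via the comparison $T_\mfl(\mcE)\cong \Hom(\MM(\mcE),\ldots)^{\tau}$ (and its dual version, which accounts for the $\vee$).

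Next I choose $S$: the places of $E$ above $\infty$, the places of bad reduction of $\varphi$, and the places ramified in the fixed field of $\ker\rho$. I also enlarge $S$ to include the places where the explicit model or the isomorphism of Theorem \ref{existsAnderson} fails to have good reduction; only finitely many are needed, since the isomorphism and $\Psi$ involve finitely many denominators. For a place $v\notin S$, pick $\mfl$ coprime to $v$. Then $\rho$ is unramified at $v$ and $T_\mfl(\varphi)$ is unramified, so the local factor of $\mcE^\vee$ at $v$ is $\det(1-\Frob_v^{-1}X\mid T_\mfl(\mcE)^\vee\otimes\ldots)$ evaluated at the usual Goss variable. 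Because determinants are multiplicative over direct sums, this factors as $\prod_i\det(1-\Frob_v X\mid T_\mfl(\varphi)^\vee\otimes\rho^{(i)})$, which is exactly the product of the Euler factors of $L_S(\varphi^\vee,\rho^{(i)},s)$. I must check that these characteristic polynomials have coefficients independent of $\mfl$, lying in $A\otimes\FF_q(\rho)$. This follows from Gekeler/Takahashi integrality of the Frobenius characteristic polynomial for $\varphi$, combined with the fact that $\rho(\Frob_v)$ has entries in $\FF_q(\rho)$. Taking the product over $v\notin S$ then gives the first identity.

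For $\bA=\FF_q[t]$, I identify $K_\infty(\rho)=K_\infty\otimes_{\FF_q}\FF_q(\rho)$, which is the constant field extension of degree $d$ (unramified) whenever $\FF_q(\rho)\cap\FF_\infty=\FF_q$; here $d_\infty=1$, so this holds. Its Galois group over $K_\infty$ is generated by the $q$-Frobenius acting on coefficients, which sends the coefficients of $L_S(\varphi^\vee,\rho,s)$ to those of $L_S(\varphi^\vee,\rho^{(1)},s)$. Applying this to each Euler factor and using convergence of the Euler products in $\C$ for $s$ in the appropriate half-space, the product over $i$ of the conjugates equals the norm $N_{K_\infty(\rho)/K_\infty}$. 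The identity then extends to Goss's space $\mathbb{S}_\infty$ by analytic continuation/entireness.

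The main obstacle I expect is the first step. It requires making the $G_E$-equivariant decomposition of the Tate module (or motive) of $\mcE$ precise, including correctly matching $\rho$ versus $\rho^{(i)}$ and dual versus nondual. Frobenius twisting of coefficients interacts with the $\tau$-semilinear structure, so the indexing could come out as $\rho^{(-i)}$; this does not matter, since we take the product over all $i$ modulo $d$. I would handle this by working with the $\FF_q(\rho)\otimes\conj{\FF}_q$ idempotents directly on $\MM(\varphi,\rho)$.
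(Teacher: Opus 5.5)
Your overall route is the paper's. You compare the Galois module of $\mcE$ with that of $\MM(\varphi)\otimes\MM(\rho)$, split $(\Res_{\FF_q(\rho)/\FF_q}V_\rho)\otimes_{\FF_q}\FF_q(\rho)\cong\bigoplus_i V_\rho^{(i)}$ (Lemma~\ref{char_polys}), use multiplicativity of characteristic polynomials, and get the norm because $q$-Frobenius generates $\Gal(K_\infty(\rho)/K_\infty)$. The gap is in the duality bookkeeping, which is exactly what singles out $\mcE^\vee$ and $\rho^{(i)}$ rather than $\mcE$ or $(\rho^{(i)})^\vee$. It is wrong in two places, so the chain as written does not close.

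\emph{First error: the Tate module carries $\rho^\vee$, not $\rho$.} Your displayed decomposition should read $T_\mfl(\mcE)\otimes\conj{K}_\mfl\cong\bigoplus_i T_\mfl(\varphi)\otimes(\rho^{(i)})^\vee$. Your motive-level splitting is fine, because $\MM(\rho)$ is built so that its \emph{cohomology} is $K_\mfl\otimes_{\FF_q}\Res_{\FF_q(\rho)/\FF_q}V_\rho$ (Proposition~\ref{coh_rho}). But $T_\mfl(\mcE)$ is the \emph{dual} of $H^1_\mfl(\MM(\mcE))$ (Proposition~\ref{coh_and_mod}(3)), so $\rho$ becomes $\rho^\vee$ when you pass to the Tate module.

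The paper's $n$-th power example shows this concretely. There $d=1$ and $\mcE=\sqrt[n]{f}^{-1}C\sqrt[n]{f}$, so $\mcE[a]=\sqrt[n]{f}^{-1}C[a]$. Galois then acts with an extra factor $\chi_f(g)^{-1}$, where $\chi_f(g)=g(\sqrt[n]{f})/\sqrt[n]{f}$, so $T_\mfl(\mcE)\cong T_\mfl(C)\otimes\chi_f^{-1}$. This is not $T_\mfl(C)\otimes\chi_f$ once $\chi_f$ has order $>2$. You are right that reindexing by $\rho^{(-i)}$ is harmless, but a dual is not: $\rho^\vee$ is in general not a Frobenius twist of $\rho$, and for $d=1$ there is nothing to reindex.

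\emph{Second error: the Euler factor is double-inverted.} The Euler factor of $\mcE^\vee$ is $\det(1-X\Frob_v\mid V_\mfl(\mcE)^\vee)=\det(1-X\Frob_v^{-1}\mid V_\mfl(\mcE))$. Letting $\Frob_v^{-1}$ act on $T_\mfl(\mcE)^\vee$, as you wrote, inverts twice and gives the Euler factor of $\mcE$ itself. Taken literally, your two identifications yield $\prod_i L_S(\varphi,\rho^{(i)},s)$. Your next step, switching to $T_\mfl(\varphi)^\vee\otimes\rho^{(i)}$, does not follow from what precedes it.

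\emph{The fix.} Stay on the cohomology side, as the paper does. By (\ref{L-series_motivic_module}) and $\MM(\mcE)\cong\MM(\varphi,\rho)$, you have $L_S(\mcE^\vee,s)=L_S(\MM(\varphi,\rho),s)$. By Lemma~\ref{H1_tensorproduct_motives} and Proposition~\ref{coh_rho},
$H^1_\mfl(\MM(\varphi,\rho),K_\mfl)\cong V_\mfl(\varphi)^\vee\otimes_{\FF_q}\Res_{\FF_q(\rho)/\FF_q}V_\rho$.
On this space $\Frob_\wp$ has characteristic polynomial $\prod_i\bP^\vee_\wp(\varphi,\rho^{(i)},X)$ for $\wp\notin S$. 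With this correction, your choice of $S$ and your norm argument go through as in the paper.

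Finally, drop the closing extension to $\SS_\infty$. The statement concerns integers $s$. For general $s\in\SS_\infty$ the values lie in $\CC_\infty$ rather than $K_\infty(\rho)$, and you give no continuation argument.
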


By Taelman's class number formula, we obtain the following two corollaries. Note that although $\mcE$ in the theorem is defined over the field $E$, we may replace it by an integral model over $\mcO_E$ via clearing denominators. This modification only affects finitely many primes and introduces a rational factor in $K^*$.

\begin{corx}[Corollary \ref{corD}]
	Let $\varphi$ be a Drinfeld module over $E$ of rank $r$ and $\rho: G\to \GL_n(\conj{\FF}_q)$ be an Artin representation with $d = d_\rho\geq 1$. Let $\mcE$ be a model over $\mcO_E$ of the Artin twist $\mcE(\varphi,\rho)$. Then, there exists a constant $C = C(\varphi,\rho)\in K^*$ such that $$N_{K_\infty(\rho)/K_\infty}(L_S(\varphi^\vee, \rho, 0)) = C\cdot \Reg(\mcE/\mcO_E)\cdot h(\mcE/\mcO_E).$$
\end{corx}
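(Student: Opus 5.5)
The plan is to combine Theorem C with Taelman's class number formula, in the version for Anderson $\bA$-modules over $\mcO_E$ due to Fang and to Angl\'es--Ngo Dac--Ribeiro. First fix an integral model $\mcE$ over $\mcO_E$ of the Artin twist $\mcE(\varphi,\rho)$ constructed in Theorem A/B. For $\bA=\FF_q[t]$ this means clearing denominators of the coefficient matrices $a_i\Psi\Psi^{(1)}\cdots\Psi^{(i-1)}$ by a suitable conjugation $\mcE\mapsto c\,\mcE\,c^{-1}$ with $c\in \GL_N(E)$ a scalar or diagonal matrix. This is an isomorphism over $E$, so the model is still uniformizable by Corollary C.

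Next, apply the class number formula to this model. It gives
\begin{equation*}
L(\mcE^\vee/\mcO_E,0)=\prod_{\mfp}\frac{[\operatorname{Lie}\mcE(\mcO_E/\mfp)]_{\bA}}{[\mcE(\mcO_E/\mfp)]_{\bA}}=\Reg(\mcE/\mcO_E)\cdot h(\mcE/\mcO_E).
\end{equation*}
Here the left side is the Euler product over all maximal ideals, which converges in $K_\infty$. The formula needs the unit module $\mcE(\mcO_E)$ to sit as a lattice through $\exp_{\mcE}$ in $\Lie\mcE(E_\infty)$; this holds for any abelian $t$-module over $\mcO_E$ in Fang's setting.

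Then compare the Euler product with the one in Theorem C. Theorem C gives a finite set $S$ with $L_S(\mcE^\vee,s)=N_{K_\infty(\rho)/K_\infty}(L_S(\varphi^\vee,\rho,s))$. Enlarge $S$ to include the primes where the chosen integral model has bad reduction, or where its local factors differ from the motivic ones $\det(1-\Frob_\mfp\,\mfp^{-s}\mid \cdots)$. Then
\begin{equation*}
L(\mcE^\vee/\mcO_E,0)=\prod_{\mfp\in S}\frac{[\operatorname{Lie}\mcE(\mcO_E/\mfp)]_{\bA}}{[\mcE(\mcO_E/\mfp)]_{\bA}}\cdot L_S(\mcE^\vee,0).
\end{equation*}
Each factor in $S$ is a ratio of monic generators of Fitting ideals of finite $\bA$-modules, hence lies in $K^*$. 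Set $C$ to be the inverse of this finite product, multiplied by any correction coming from the local factors at $S$ in the definition of $L_S$. Then $C\in K^*$ and the identity follows by evaluating Theorem C at $s=0$. The norm is unchanged because it is taken on both sides after specialization.

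The main obstacle is the middle step: making sure that the Euler factors in the Taelman-type formula, which are defined via Fitting ideals of reductions $\mcE(\mcO_E/\mfp)$, agree at good primes with the local factors used in Theorem C, which are defined via the motive $\MM(\mcE)$ and Frobenius characteristic polynomials. I would handle this with the standard identity
\[
[\Lie\mcE(\FF_\mfp)]/[\mcE(\FF_\mfp)]=P_\mfp(1)^{-1}\cdot(\text{normalization}),
\]
valid for abelian $t$-modules with good reduction (Fang's/Gezmiş' lemma). I would also check that evaluation at $s=0$ corresponds to the correct Goss normalization of the dual $\mcE^\vee$. Any leftover discrepancies at bad primes are finite in number and rational, so they are absorbed into $C$.
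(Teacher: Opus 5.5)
Your proposal is correct and is essentially the paper's argument. The paper combines Theorem~\ref{thmC} at $s=0$ with Lemma~\ref{L-value_at0}, which is exactly your ``middle step'': at good primes, $L_\wp(\mcE^\vee,0)=P_\wp(\mcE_\wp,0)/P_\wp(\mcE_\wp,1)=[\Lie_{\mcE_\wp}(\FF_\wp)]_A/[\mcE_\wp(\FF_\wp)]_A$, with the unit normalizations shown to agree by a sign argument. It then applies Taelman's formula and absorbs the finitely many Fitting-ideal ratios at $\wp\in S$ into $C=C_1^{-1}\in K^*$. Your enlargement of $S$ to cover the primes where the chosen $\mcO_E$-model itself has bad reduction is a point the paper passes over, and it costs only one more factor in $K^*$.
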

Here, $\Reg(\mcE/\mcO_E)$ is Taelman's regulator and $h(\mcE/\mcO_E)\in A_+$ is a generator of the Fitting ideal of the class module. See \S \ref{Taelmans_class_number} for details.

We also have the following corollary.
\begin{corx}[Corollary \ref{corE}]
Let $\varphi: \bA\to \mcO_E[\tau]$ be a Drinfeld module over $\mcO_E$. Let $L/E$ be a Galois extension of degree $m$ with $p\nmid m$. Then there exists an element $c\in K^*$ such that
    $$\Reg(\varphi/\mcO_L) = c\cdot \prod_{[\rho]}\Reg(\mcE(\varphi,\rho)/\mcO_E)^{\dim\rho},$$ where $\mcE(\varphi,\rho)$ denotes an integral model over $\mcO_E$ and the product runs through all Frobenius orbits $[\rho]$ of irreducible representations of $\Gal(L/E)$.
\end{corx}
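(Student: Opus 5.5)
The plan is to deduce the regulator factorization from Corollary \ref{corD} applied to each Frobenius orbit, together with Taelman's class number formula for $\varphi$ over $\mcO_L$. The argument rests on the Artin formalism for Goss $L$-series, which gives an Euler-product identity on the $L$-series side.

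\textbf{Step 1 ($L$-series over $L$).} Apply Taelman's class number formula (as extended by Angl\'es--Fang--Ngo Dac--Ribeiro) to $\varphi/\mcO_L$:
$$L(\varphi^\vee/\mcO_L,0) = \Reg(\varphi/\mcO_L)\, h(\varphi/\mcO_L).$$

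\textbf{Step 2 (decompose the regular representation).} Since $p\nmid m$, Maschke's theorem applies over $\conj{\FF}_q$. Hence the regular representation of $G=\Gal(L/E)$ decomposes as $\bigoplus_\rho \rho^{\dim\rho}$, with $\rho$ running over the irreducible $\conj{\FF}_q$-representations. Inflating to $G_E$ and using induction invariance of Goss $L$-series (compare Euler factors place by place, outside a finite set $S$ containing the ramified places and the places of bad reduction), I obtain
$$L_S(\varphi^\vee/L,s)=\prod_\rho L_S(\varphi^\vee,\rho,s)^{\dim\rho}.$$
The Frobenius twist $\rho\mapsto\rho^{(1)}$ permutes the irreducibles and preserves dimension. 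Grouping each orbit $[\rho]$ and invoking Theorem \ref{thmC} turns the product over an orbit into $L_S(\mcE(\varphi,\rho)^\vee,s)$, or its norm form. This yields
$$L_S(\varphi^\vee/L,s)=\prod_{[\rho]}L_S(\mcE(\varphi,\rho)^\vee,s)^{\dim\rho}.$$
One must check that the orbit of $\rho$ has length exactly $d_\rho$, and that $\rho^{(i)}\not\cong\rho$ for $0<i<d_\rho$. If this fails, realize $\rho$ over its field of character values and adjust; I expect this bookkeeping to be routine. Each side is then a product of Euler factors in $K_\infty$, and the removed finite set of factors contributes elements of $K^*$ at $s=0$.

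\textbf{Step 3 (take regulators).} Evaluate at $s=0$ and apply Corollary \ref{corD} to each orbit. Combining with Step 1 gives
$$\Reg(\varphi/\mcO_L)\,h(\varphi/\mcO_L)=c'\prod_{[\rho]}\bigl(\Reg(\mcE(\varphi,\rho)/\mcO_E)\,h(\mcE(\varphi,\rho)/\mcO_E)\bigr)^{\dim\rho},$$
with $c'\in K^*$. The class module terms $h$ lie in $A_+\subset K^*$, so dividing them out gives the claim with $c\in K^*$. The Euler factors at the finitely many places of $S$ are rational functions in $q^{-s}$-type variables with coefficients in $K$, so their values at $s=0$ are in $K^*$, provided they are nonzero there. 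I expect this is guaranteed because the local factors are inverses of characteristic polynomials of Frobenius evaluated at nonzero points.

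\textbf{Main obstacle.} The main obstacle is Step 2: the precise matching of Euler factors. The local factor of $\varphi$ over $L$ at a place $v$ of $E$ must be compared with $\prod_\rho \det(1-\operatorname{Frob}_v\mid T\otimes\rho)^{\dim\rho}$. This is a tensor-product identity of characteristic polynomials via the Frobenius reciprocity $\Ind_{G_L}^{G_E}\mathbf{1}\cong\bigoplus\rho^{\dim\rho}$. It is valid at unramified places of good reduction, which is why I work with $S$ throughout.
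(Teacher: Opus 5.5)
Your proposal is correct and follows essentially the same route as the paper: Taelman's formula for $\varphi/\mcO_L$, then the Artin decomposition $L_S(\varphi^\vee/\mcO_L,0)=\prod_\rho L_S(\varphi^\vee,\rho,0)^{\dim\rho}$ grouped into Frobenius orbits via Theorem \ref{thmC}, then Corollary \ref{corD} for each orbit, with class numbers and the finitely many removed Euler factors absorbed into a constant in $K^*$. The orbit-length check you flag is genuinely needed (the paper passes over it silently) and is settled as you suggest: Schur indices over finite fields are trivial, so each $\rho$ can be realized over $\FF_q(\chi_\rho)$, and then $\rho^{(i)}\cong\rho$ exactly when $d_\rho\mid i$.
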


As an application, we have the following result.
\begin{thmx}[Theorem \ref{thm:transcendence_criterion}]
	Let $\varphi$ be a Drinfeld $\bA$-module over $K$ and let $\rho: G_K\to \GL_n(\conj \FF_q)$ be an Artin representation.  Then the special $L$-value $L(\varphi^\vee, \rho, 0)$ is transcendental over $K$.
\end{thmx}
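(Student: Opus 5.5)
The idea is to reduce transcendence of $L(\varphi^\vee,\rho,0)$ to transcendence of the norm $N_{K_\infty(\rho)/K_\infty}(L(\varphi^\vee,\rho,0))$, and then to use the class number formula of Corollary \ref{corD} to write that norm as an algebraic multiple of a regulator. For the reduction: $K_\infty(\rho)/K_\infty$ is a finite constant-field extension of degree $d$. Its Galois conjugates of $L(\varphi^\vee,\rho,0)$ are the values $L(\varphi^\vee,\rho^{(i)},0)$; for the defining Euler products this is visible, since Frobenius on the coefficients of $\rho$ is the same as applying $\sigma\in\Gal(K_\infty(\rho)/K_\infty)$ to the local factors. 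Hence if $L(\varphi^\vee,\rho,0)$ were algebraic over $K$, all its conjugates would be algebraic, and so would their product, which is the norm. We therefore only need the norm to be transcendental. The finitely many Euler factors removed in passing from $L$ to $L_S$ are nonzero elements of $\conj{K}$ (indeed of $K(\rho)$), so it is equivalent to prove transcendence of $N_{K_\infty(\rho)/K_\infty}(L_S(\varphi^\vee,\rho,0))$.

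By Theorem \ref{thmC} and Corollary \ref{corD}, applied with $E=K$, this norm equals $C\cdot\Reg(\mcE/\mcO_K)\cdot h(\mcE/\mcO_K)$ with $C\in K^*$ and $h\in A_+$. It therefore suffices to show that Taelman's regulator $\Reg(\mcE/A)$ is transcendental over $K$. By definition the regulator is, up to an element of $K^*$, the determinant of the matrix that expresses an $A$-basis of the unit module $U(\mcE/A)=\exp_{\mcE}^{-1}(\mcE(A))\subset\Lie\mcE(K_\infty)$ in terms of a $K_\infty$-basis of $\Lie\mcE(K_\infty)$. The entries of this matrix are logarithms of algebraic points on the uniformizable abelian $t$-module $\mcE$; Corollary \ref{twsit_is_uniformizable} gives uniformizability, and Theorem \ref{thmB} gives an explicit model defined over $\conj{K}$.

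The main step, and the main obstacle, is to prove that this determinant is transcendental. I would use Yu's theory of logarithms on $t$-modules, together with the Chang--Papanikolas/ABP linear independence results for periods and logarithms of $\mcE$. The plan is as follows.
\begin{enumerate}
\item Show that $\mcE$ is isogenous over $\conj{K}$ to $\varphi^{\oplus N}$. Over the splitting field of $\rho$ the twist becomes trivial, because $\Psi$ becomes a Frobenius coboundary.
\item Conclude that the coordinates of the logarithms are $\conj{K}$-linear combinations of values of $\log_\varphi$ at algebraic points.
\item Show that the regulator is a nonzero polynomial in these $\varphi$-logarithms that is homogeneous of degree $N\ge1$. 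It is nonzero because the class number formula makes it a unit multiple of a nonvanishing $L$-value: the Euler product converges to a nonzero value at $0$.
\item Apply the algebraic independence theorem for $\varphi$-logarithms (Chang--Papanikolas for Drinfeld modules): after choosing an $\End(\varphi)$-linearly independent subset, the logarithms $\log_\varphi(\alpha_j)$ are algebraically independent over $\conj{K}$. A nonconstant homogeneous polynomial in algebraically independent quantities, with algebraic coefficients, is transcendental.
\end{enumerate}
The delicate point is ruling out that homogeneity degenerates, that is, that the determinant collapses to a constant. Homogeneity of positive degree $N$ is exactly what prevents this. If the full independence theorem is unavailable in the needed generality, I would fall back on Yu's analytic subgroup theorem applied to $\mcE\times\Ga$, in order to show that the determinant is not in $\conj{K}$.
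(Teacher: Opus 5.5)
Your proposal is correct and follows essentially the same route as the paper. The paper also reduces to the norm via Corollary \ref{corD}, and uses $\mcE_t = P^{-1}\varphi_t^{\oplus N}P$ over $\conj{K}$ to turn the regulator matrix into $Z=QF$, whose entries have algebraic images under $\exp_\varphi$. It then applies Chang--Papanikolas to an $H_\varphi$-basis of these entries and uses that $\det Z$ is a nonzero homogeneous polynomial of degree $N$ in them. The only minor difference is in the nonvanishing step: the paper gets $\det Z=\det Q\,\det F\neq 0$ directly from invertibility of the change-of-basis matrix $F$, which is simpler than your appeal to nonvanishing of the $L$-value.
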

The paper is organized as follows. In section~\ref{sec:prelim}, we review background material on Anderson modules, Anderson motives, and their Tate modules. In section~\ref{sec:GossLseries}, we define Goss $L$-series attached to several arithmetic objects. In section~\ref{sec:twist}, we give an explicit construction of Artin twists of Drinfeld modules and prove the main theorems. Then, in section~\ref{sec:application}, we apply our main results to show that the special value $L(\varphi^\vee,\rho,0)$ is transcendental over $K$. Finally, in section \ref{sec:example} we discuss some examples of Artin twists of Drinfeld modules.
\subsection*{Acknowledgements}
The author would like to thank Matthew Papanikolas for many fruitful discussions and for his helpful comments on an earlier draft.
\section{Preliminaries}\label{sec:prelim}
\subsection{Notation} \label{SS:Notation}
Throughout the paper, we will use the following notation.
\begin{longtable}{l @{\hspace{1em}=\hspace{1em}} p{0.85\textwidth}}
$p$ & a prime number in $\ZZ$.\\
$\FF_q$ & a finite field with $q$ elements, where $q$ is a power of $p$.\\
$X$ & a smooth, geometrically irreducible projective curve over $\FF_q$, e.g. $X = \PP^1_{\FF_q}$.\\
$\infty$ & a fixed closed point of degree $d_\infty$ on $X$, e.g. $\infty = [1:0]\in \PP^1_{\FF_q}$.\\
$\bA$ & $\Gamma(X-\infty,\mcO_X)$, e.g. $\bA = \FF_q[t]$.\\
$\bK$ & $\Frac(\bA)$, e.g. $\bK = \FF_q(t)$.\\
$A$ & a fixed isomorphic copy of $\bA$, e.g. $A = \FF_q[\theta]$.\\
$K$ & a fixed isomorphic copy of $\bK$, e.g. $K = \FF_q(\theta)$.\\
$v_\infty$ & the normalized valuation at $\infty$.\\
$\inorm{\,\cdot\,}$ & the normalized $\infty$-adic norm on $\CC_\infty$ given by $\inorm{a} = q^{-d_\infty v_\infty(a)}$. \\
$K_{\infty}$ & the completion of $K$ with respect to $\inorm{\,\cdot\,}$, e.g. $\laurent{\FF_q}{1/\theta}$. \\
$\conj{K}_\infty$ & a fixed algebraic closure of $K_\infty$.\\
$\CC_\infty$ & the completion of $\conj{K}_{\infty}$ with respect to $\inorm{\cdot}$, which is also algebraically closed. \\
$\deg$ & $-d_\infty v_{\infty}$. \\
$\conj{K}$ & the algebraic closure of $K$ in $\CC_\infty$. \\
\end{longtable}

\subsubsection{Frobenius twist and twisted polynomial ring}
Let $R$ be an $\FF_q$-algebra. We denote by $\tau: R\to R$, $r\mapsto r^q$ the Frobenius map.
Denote by $\bA_R:= \bA\otimes_{\FF_q} R$. Then the Frobenius map $\tau$ extends $\bA$-linearly to $\bA_R$ in a unique way, i.e. $\tau_{\bA_R}: \bA_R\to \bA_R$, $$\sum_{i=0}^n a_i\otimes r_i\mapsto \sum_{i=0}^n a_i\otimes r_i^q.$$

For any positive integer $i$ and any element $x$ in $R$ or $A_R$, we will use $x^{(i)}$ to denote its image under $\tau^i$. We call it the $i$-th twist of $x$.

Let $R[\tau] = \left\{\dis \sum_{i=0}^n a_i\tau^i: a_i\in R\right\}$, which is closed under addition and scalar multiplication. We define multiplication by $\bigg(\sum\limits_{i = 0}^na_i\tau^i\bigg)\bigg(\sum\limits_{j = 0}^mb_j\tau^j\bigg) = \sum\limits_{k = 0}^{m+n}\sum\limits_{i+j = k} a_ib_j^{(i)}\tau^{k}$. We call $R[\tau]$ the twisted polynomial ring in $\tau$ with coefficients in $R$. For an element $f\in R[\tau]$, denote by $\partial f$ the constant term of $f$, i.e. $\partial f = a_0$ if $\dis f = \sum_i a_i\tau^i$.

Let $\GG_{a,R}$ be the additive group scheme over $R$. It is well known that $R[\tau] \cong \End_{\FF_q}(\GG_{a,R})$, where the latter is the ring of all $\FF_q$-linear endomorphisms of $\GG_{a,R}$.

\subsubsection{Twists of matrices}
Let $R$ be an $\FF_q$-algebra as above. Denote by $\Mat_{m\times n}(R)$ the set of $m$ by $n$ matrices with entries in $R$. Let $M = (m_{ij})\in \Mat_{m\times n}(R)$, then denote by $M^{(k)} = (m_{ij}^{(k)})$ the $k$-th twist of $M$. 

We will identify $\Mat_{m\times n}(R[\tau])$ with the twisted polynomial ring $\Mat_{m\times n}(R)[\tau]$ with matrix coefficients.
The multiplication is defined in a natural way, i.e. $$\left(\sum_{i=0}^mM_i\tau^i\right)\left(\sum_{j=0}^nN_j\tau^j\right) = \sum_{k=0}^{m+n}\sum_{i+j=k} M_iN_j^{(i)}\tau^k.$$

With some work, one can show that $$\hom_{\FF_q}(\GG_{a,R}^{\oplus n}, \GG_{a,R}^{\oplus m})\cong \Mat_{m\times n}(R)[\tau],$$ where the left-hand side denotes the set of all $\FF_q$-linear morphisms $\GG_{a,R}^{\oplus n}\to \GG_{a,R}^{\oplus m}$ of group schemes. See \cite[Lemma 5.1]{hartl2020pinks} for a proof. Let $M =\dis  \sum_{i=0}^n M_i\tau^i\in \Mat_{m\times n}(R)[\tau]$, we denote by $\partial M := M_0$.
\subsection{$\bA$-motives}
In this section, we briefly introduce the theory of $\bA$-motives, originally proposed by G.~Anderson in \cite{anderson1986t} for $\bA = \FF_q[t]$. Our exposition follows \cite[\S 2]{Gazda21motcoh} and \cite[\S 3]{hartl2020pinks}. No originality in this section is claimed.
\subsubsection{Definition of $\bA$-motives}
Let $R$ be an $\bA$-algebra containing $\FF_q$ with the structure map $\iota:\bA\to R$. Let $\mfj :=\mfj_\iota= (a\otimes1-1\otimes\iota(a): a\in \bA)$ be the ideal of $\bA_R$ generated by all elements of the form $a\otimes 1-1\otimes\iota(a)$. Then, $\mfj$ is the kernel of the surjective map $\bA_R\to R,$ $a\otimes r\mapsto \iota(a)r.$ 
It is easy to see that the above map induces an isomorphism of $R$-algebras $\bA_R/\mfj\cong R.$ Moreover, $\mfj$ is a maximal ideal of $\bA_R$ if and only if $R$ is a field; $\mfj$ is a prime ideal if and only if $R$ is an integral domain.

By \cite[Lemma 2.1]{Gazda21motcoh}, the ideal $\mfj$, as an $\bA_R$-module, is invertible. In particular, the ideal $\mfj^n$ is well-defined for any integer $n$ and we have inclusions $\mfj^{-n}\hookrightarrow \mfj^{-(n+1)}$ for each integer $n$.

For an $\bA_R$-module $M$, we define $$M[\mfj^{-1}] = \dlim M\otimes_{\bA_R}\mfj^{-n}.$$

We will also use the following notation throughout this article.
\begin{enumerate}
    \item Let $\tau^*M$ denote the pull-back of $M$ along $\tau$, i.e. the $\bA_R$-module $\bA_R\otimes_{\tau,\bA_R}M,$ where by $\otimes_\tau$ we mean the relation $a\otimes bm = a\tau(b)\otimes m = ab^{(1)}\otimes m$ holds for all $a,b\in \bA_R$ and the $\bA_R$-module structure on $\tau^*M$ is given by $b\cdot (a\otimes m) := ba\otimes m$.
	
	\item For $m\in M$, we denote by $\tau^*m = 1\otimes m$.
	
	\item Let $$\tau^*M[\mfj^{-1}]: = \tau^*M\otimes_{\bA_R}\bA_R[\mfj^{-1}]$$ and $$M[\mfj^{-1}] = M\otimes_{\bA_R}\bA_R[\mfj^{-1}].$$
\end{enumerate}

\begin{Def}
    $(1)$ An \textit{Anderson $\bA$-motive} or simply \textit{$\bA$-motive} over $R$ is a pair $(M,\tau_M)$, where $M$ is a projective $\bA_R$-module and $$\tau_M:\tau^*M[\mfj^{-1}]\stackrel{\sim}{\to}M[\mfj^{-1}]$$ is an isomorphism of $\bA_R[\mfj^{-1}]$-modules. We will simply write $M$ if there is no confusion from the context.
    
    $(2)$ We say an $\bA$-motive $(M,\tau_M)$ is \textit{abelian} if $M$ is finitely generated of constant rank over $\bA_R$.
    
    $(3)$ The \textit{rank} of an $\bA$-motive $(M,\tau_M)$ is defined to be the rank of the underlying $\bA_R$-module $M$.

    $(4)$ A \textit{morphism} $(M, \tau_M) \to (N, \tau_N)$ of $\bA$-motives over $R$ is a homomorphism $f: M \to N$ of $\bA_R$-modules such that the following diagram commutes $$\xymatrix{
	\tau^*M[\mfj^{-1}] \ar[r]^{(\tau^*f)_\mfj}\ar[d]_{\tau_M}& \tau^*N[\mfj^{-1}]\ar[d]^{\tau_N}\\
	M[\mfj^{-1}]\ar[r]_{f_\mfj} & N[\mfj^{-1}]
}$$ 

$(5)$ We say an $\bA$-motive $(M,\tau_M)$ is \textit{effective} if $\tau_M$ restricts to $\tau_M: \tau^*M\to M$.

\end{Def}
For an effective abelian $\bA$-motive over $R$, $\tau_M$ is an isomorphism if and only if $\operatorname{coker}(\tau_M)$ is a locally free $R$-module and annihilated by $\mfj^e$ for some integer $e$. See \cite[\S 2]{hartl2017isogenies} for details.

\begin{rmk}
    If $\bA = \FF_q[t]$, we shall use the term Anderson $t$-motive or simply $t$-motive instead of $\bA$-motive.
\end{rmk}
\begin{rmk}\label{rmk_motives}
    Note that we can define a $\tau$-action on $M$ via $m\mapsto \tau_M(\tau^\ast m)$. This is a semi-linear action in the sense that $\tau(\lambda m) = \lambda^{(1)}\tau(m)$ for all $\lambda\in \bA_R$ and $m\in M$. Let $\bA\mot(R)$ denote the category of $\bA$-motives over $R$ and $\bA\fgeffmot(R)$ denote the full subcategory of effective $\bA$-motives $(M,\tau_M)$ over $R$ such that there is a faithfully flat ring homomorphism $R\to R'$ and $M\otimes_RR'$ is a finite free left $R'[\tau]$-module.
\end{rmk}
If $M$ is free over $\bA_R$ and $\{m_1,\cdots,m_r\}$ is a basis over $\bA_R$. Let $\Phi = (a_{ij})\in \Mat_{r\times r}(\bA_R)$ be the matrix such that $$\tau (m_j) = \sum_{i=1}^r a_{ij}m_i,$$ or equivalently, $$\tau\mathbf{m} = \Phi^\top\mathbf{m},$$ where $\mathbf{m} = (m_1,\cdots,m_r)^\top\in \Mat_{r\times 1}(M)$. In this case, we say $\Phi$ is the matrix of the $\tau$-action with respect to $m_1,\cdots,m_r$.
\subsubsection{$\mfl$-adic cohomology of $\bA$-motives}
In this subsection, we let $L$ be a field containing $K$. Let $L^\sep$ be a separable closure of $L$ fixed once for all and denote by $G_L=\Gal(L^\sep/L)$ the absolute Galois group of $L$ equipped with the profinite topology. Note that we have an inclusion $\iota:\bA\to L$ as $L$ contains $K$ as a subfield.

Let $\mfl$ be a \emph{finite place} of $X$, i.e. $\mfl\in X-\infty$ is a closed point. Let $\bA_\mfl$ be the $\mfl$-adic completion of $\bA$, and let $\bK_\mfl$ be the fraction field of $\bA_\mfl$. Now, set $\bA_{L^\sep} = \bA\otimes_{\FF_q} L^\sep$. Then we can define an action of $G_L$ on $\bA_{L^\sep}$ by $$g\cdot (a\otimes x) = a\otimes g(x).$$

Then this action is $\bA$-linear. Furthermore, we can extend this action to an $\bA_\mfl$-linear action of $G_L$ on
$$\bA_{\mfl}(L^\sep):=\varprojlim_n~\bA_{L^\sep}/\mfl^n\bA_{L^\sep}$$ continuously. Also set $\bA_{\mfl}(L):=\varprojlim_n~\bA_{L}/\mfl^n\bA_{L}$.

Let $M$ be an $\bA$-motive over $L$ of rank $r$. Let $M_{L^\sep} : = M\otimes_L L^\sep$ be the $\bA$-motive over $L^\sep$ obtained from $M$ by base-change to $L^\sep$. For $R = L$ or $L^\sep$, we can further define the $R$-value points of $M$ at $\mfl$ by $$M_{\mfl}(R):=\varprojlim_{n}M_{R}/\mfl^n M_{R}.$$

Note that we have $M_\mfl(R) \cong M\otimes_{\bA_R}\bA_{\mfl}(R)$ canonically. Then, one immediately sees that $G_L$ acts $\bA_\mfl$-linearly on $M_\mfl(L^\sep)$ and the $G_L$-action on the submodule $M_{\mfl}(L)=M\otimes_{\bA_L}\bA_{\mfl}(L)$ is invariant. 

Now, define a $\tau$-action on $M_{L^\sep}$ by $$\tau\colon m\mapsto\tau_M(\tau^\ast m).$$ We then have an $L^\sep$-semilinear map $\tau: M_\mfl(L^\sep)\to M_\mfl(L^\sep)$ that is also $\bA_\mfl$-linear. Here, by $L^\sep$-linear, we mean $\tau(\lambda m) = \lambda^q\tau(m)$ for all $\lambda\in L^\sep$ and $m\in M_\mfl(L^\sep)$. We call the $\bA_\mfl$-module 
$$M_\mfl^\tau :=\{m\in M_\mfl(L^\sep)\;|\;\tau(m)=m\}$$ \emph{the submodule of $\tau$-invariants} of $M_\mfl(L^\sep)$.

Following \cite{anderson1986t}, \cite[\S 2.3]{Gazda21motcoh} and \cite[\S 3.5]{hartl2020pinks}, we define:
\begin{Def}\label{def:m-adic realization functor}
Let $M$ be an $\bA$-motive over $L$. The \emph{$\mfl$-adic cohomology realization} of $M$, denoted by $H^1_\mfl(M)$, is the $\bA_\mfl$-submodule of $\tau$-invariants of $M_\mfl(L^\sep)$. In other words, $$\operatorname{H}^1_{\mfl}(M): = H^1_\mfl(M,\bA_\mfl): = M_\mfl^\tau.$$ 

We also define the \emph{$\mfl$-adic cohomology realization} of $M$ with coefficients in $\bK_\mfl$ to be the $\bK_\mfl$-vector space $$H_\mfl^1(M,\bK_\mfl):= M_\mfl^\tau\otimes_{\bA_\mfl}\bK_\mfl.$$ 
\end{Def}

Note that $H^1_\mfl(M)$ and $H^1_\mfl(M,\bK_\mfl)$ admits a natural $G_L$-action in the following way. First, every element in $\bA_\mfl(L^\sep)\cong \bA_\mfl \otimes_{\FF_q} L^\sep$ has the form $\dis x = \sum_i a_i\otimes\lambda_i$ with $a_{i}\in \bA_\mfl$ and $\lambda_{i}\in L^\sep$. Thus, the $G_L$-action on $L^\sep$ extends to a $G_L$-action on $\bA_\mfl(L^\sep)$ via $$\left(g, \sum_i a_i\otimes \lambda_i\right) = \sum_i a_{i}\otimes g(\lambda_{i}).$$ Note that the $\tau$-action commutes with the $G_L$-action. This $G_L$-action can be further extended to $M_\mfl(L^\sep) = M\otimes_{\bA_L} \bA_\mfl(L^\sep)$ via $g\cdot (m\otimes \lambda) = m\otimes g(\lambda)$ for all $m\in M$ and $\lambda\in \bA_\mfl(L^\sep)$.

Since the $\tau$-action on $M_\mfl(L^\sep)$ commutes with the $G_L$-action, i.e. $g(\tau(m)) = \tau(g(m))$, if $m\in M_\mfl^\tau$, we then have $g(m) = g(\tau(m)) = \tau(g(m))$. In other words, $g(m)\in M_\mfl^\tau$. Thus, the $G_L$-action restricts to the $\bA_\mfl$-module $M_\mfl^\tau$.

Hence, we obtain an $\bA_\mfl$-representation of $G_L$: $$\rho_{M,\mfl}: \Gal(L^\sep/L)\to \aut_{\bA_\mfl}(H^1_\mfl(M))$$ and similarly a $\bK_\mfl$-representation of $G_L$: $$\rho_{M,\mfl}: \Gal(L^\sep/L)\to \GL(H^1_\mfl(M,\bK_\mfl)).$$
\begin{Def}
We define the \emph{$\mfl$-adic homology realizations} of $M$ with coefficients in $\bA_\mfl$ (resp. $\bK_\mfl$) by 
$$H_{1,\mfl}(M): = H_{1,\mfl}(M,\bA_\mfl) := \hom_{\bA_\mfl}(H^1_\mfl(M),\bA_\mfl) $$ and resp. $$H_{1,\mfl}(M,\bK_\mfl)  :=  \hom_{\bK_\mfl}(H^1_\mfl(M,\bK_\mfl),\bK_\mfl).$$
\end{Def}

We hence have dual Galois representations of $G_L$: $$\rho^\vee_{M,\mfl}: \Gal(L^\sep/L)\to \aut_{\bA_\mfl}(H_{1,\mfl}(M))$$ and $$\rho^\vee_{M,\mfl}: \Gal(L^\sep/L)\to \GL(H_{1,\mfl}(M,\bK_\mfl)).$$

We record the following result for future use. See \cite[Lemma 2.26]{Gazda21motcoh} and \cite[\S 3.5]{hartl2020pinks} for details.
\begin{prop} Let $M$ be an $\bA$-motive of rank $r$. Denote by $\Rep_{\bA_\mfl}(G_L)$ the category of continuous $\bA_{\mfl}$-linear $G_L$-representations. Then, 
\begin{enumerate}
    \item We have an isomorphism $$H^1_\mfl(M)\otimes_{\bA_\mfl}\bA_\mfl(L^\sep)\to M_\mfl(L^\sep),$$ $$v\otimes a\mapsto a\cdot v,$$ of $\bA_\mfl(L^\sep)$-modules.
    \item The $\bA_\mfl$-module $H^1_\mfl(M)$ is free of rank $r$.
    \item Moreover, the $G_L$-action on $H^1_\mfl(M,\bA_\mfl)$ is continuous.
    \item We have an exact sequence 
\begin{equation}
0\to H^1_\mfl(M)\longrightarrow M_{\mfl}(L^\sep)\xrightarrow{\id-\tau_M} M_{\mfl}(L^\sep) \longrightarrow 0. \nonumber
\end{equation}
of $\bA_{\mfl}[G_L]$-modules.
\item The $\mfl$-adic cohomology functor $$H^1_\mfl: \bA\mot(L)\to \Rep_{\bA_\mfl}(G_L),$$ given by $$M\mapsto \Koh^1_\mfl(M),$$is an exact faithful functor.
    \end{enumerate}
\end{prop}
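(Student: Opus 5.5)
The plan is to reduce everything to a statement over $L^\sep$ by a Lang-type argument, and then read off (1)--(5).

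\emph{Step 1: reduction mod $\mfl^n$.} Put $M_n := M_{L^\sep}/\mfl^n M_{L^\sep}$. This is a free module of rank $r$ over $\bA_{L^\sep}/\mfl^n \cong (\bA/\mfl^n)\otimes_{\FF_q} L^\sep$, since $M$ is projective of rank $r$ and this ring is a finite product of artinian local rings. Because $\mfj$ is coprime to $\mfl$ (indeed $\iota(\mfl)\neq 0$ in $L \supseteq K$), $\tau_M$ induces an isomorphism $\tau^*M_n \isoto M_n$. Hence $M_n$ is an étale $\tau$-module: a finite-dimensional $L^\sep$-vector space of dimension $r\cdot\dim_{\FF_q}\bA/\mfl^n$ with a bijective $\tau$-semilinear Frobenius.

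\emph{Step 2: Lang's theorem.} Over a separably closed field, the classical Lang--Katz statement applies to such a module. The $\tau$-fixed points $M_n^\tau$ form an $\FF_q$-space whose natural map $M_n^\tau\otimes_{\FF_q} L^\sep \to M_n$ is an isomorphism, and $\id-\tau$ is surjective on $M_n$. To prove this, pick a basis. Solving $\tau(x)=x$ amounts to the system $x^{(1)} = \Phi^{-1}x$ of separable equations, which defines a finite étale $\FF_q$-group scheme of the correct order; surjectivity of $\id-\tau$ is then Artin--Schreier for matrices. Since $M_n^\tau$ also carries the $\bA/\mfl^n$-action, comparing lengths over the local pieces shows that $M_n^\tau$ is free of rank $r$ over $\bA/\mfl^n$. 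The fixed points on $M_n$ are $\bA/\mfl^n$-free of rank $r$, with the transition maps surjective, because $\id-\tau$ is surjective on the kernel $\mfl^nM_{n+1}$, which is again étale.

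\emph{Step 3: passage to the limit.} Take $\varprojlim_n$. The surjectivity of the transition maps ensures $\varprojlim^1$ vanishes, so $H^1_\mfl(M)=\varprojlim M_n^\tau$ is free of rank $r$ over $\bA_\mfl$, which is (2). The comparison isomorphisms pass to the limit and give (1). The exact sequences $0\to M_n^\tau\to M_n \xrightarrow{\id-\tau} M_n\to 0$ also pass to the limit by Mittag-Leffler, giving (4). $G_L$-equivariance holds because $\tau$ commutes with $G_L$.

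\emph{Step 4: continuity and the functor.} For (3), each $M_n^\tau$ is a finite set of solutions of polynomial equations with coefficients in $L$. It is therefore fixed by an open subgroup $G_{L_n}$, so the action is continuous. For (5), exactness follows from (1): $\bA_\mfl(L^\sep)$ is faithfully flat over $\bA_\mfl$, and $M\mapsto M_\mfl(L^\sep)$ is exact for projective modules. Faithfulness follows because a morphism $f$ with $H^1_\mfl(f)=0$ is zero on $M_\mfl(L^\sep)$ by (1); as $M\hookrightarrow M_\mfl(L)$ is injective (Krull intersection, $M$ projective), we get $f=0$.

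The main obstacle is Step 2: precisely, the freeness of the fixed points over $\bA/\mfl^n$ rather than just the $\FF_q$-dimension count, together with the surjectivity of the transition maps. I would handle this by inducting on $n$ using the filtration by $\mfl^k M_n$, whose graded pieces are étale $\tau$-modules over $\bA/\mfl\otimes L^\sep$.
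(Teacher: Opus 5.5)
Your proposal is correct and is essentially the standard argument. The paper gives no proof of its own and defers to \cite[Lemma 2.26]{Gazda21motcoh} and \cite[\S 3.5]{hartl2020pinks}, where the statement is obtained as you do: apply the Lang--Katz theorem to the \'etale $\tau$-modules obtained modulo $\mfl^n$ over $L^\sep$ (using that $\mfj$ and $\mfl$ are coprime), then pass to the inverse limit. On the point you flag, the freeness of the fixed points over $\bA/\mfl^n$: length comparison alone would not force it, but faithfully flat descent along $\bA/\mfl^n\to(\bA/\mfl^n)\otimes_{\FF_q}L^\sep$ settles it at once, and your filtration induction also works.
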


\subsubsection{Base change and Weil restriction of $\bA$-motives}
Let $S$ be an $R$-algebra. Then we have a canonical isomorphism $$\begin{aligned}
    \operatorname{can}: \tau^*(\bA_S[\mfj^{-1}]\otimes_{\bA_R[\mfj^{-1}]}M[\mfj^{-1}])&\stackrel{\sim}{\to}\bA_S[\mfj^{-1}]\otimes_{\bA_R[\mfj^{-1}]}\tau^*M[\mfj^{-1}],\\
    1\otimes_\tau (s\otimes m)&\mapsto \tau(s)\otimes (1\otimes_\tau m)
\end{aligned}$$

We now consider the following two functors:

$(1)$ The base-change: $$S\otimes_R-: \bA\mot(R)\to \bA\mot(S),$$ $$M\mapsto M_S: = (\bA_S\otimes_{\bA_R}M,\tau_{M_S}:=(\id_{\bA_S}\otimes \tau_M)\circ \operatorname{can})$$ and 

$(2)$ The Weil restriction of scalars: $$\Res_{S/R}: \bA\mot(S)\to \bA\mot(R),$$ $$M\mapsto (\Res_{S/R}(M),\tau_M)$$ where $\Res_{S/R}(M)$ is considered as an $\bA_R$-module.

We have $$\hom_{\bA\mot(R)}(M,\Res_{S/R}(N)) = \hom_{\bA\mot(S)}(M_S,N).$$

Note that the $\tau$-action on $M_S$ is diagonal.

\subsection{Anderson $\bA$-modules} \label{SectDefAModules}
In this section, we review the concepts of abelian Anderson $\bA$-modules and their associated $\bA$-motives. These objects were originally introduced by Anderson in  \cite{anderson1986t} under the assumption that the base field is perfect and $\bA = \FF_q[t]$. Subsequently, Hartl \cite{hartl2017isogenies} generalized these notions by replacing the base field with a general commutative ring $R$ and considering a general $\bA$. We will mainly follow \cite{Gazda21motcoh}, \cite{hartl2017isogenies} and \cite{hartl2020pinks}. An interested reader can also check \cite{anderson1986t} and \cite{Goss} for more details.

Let $R$ be a commutative $\bA$-algebra with a structure map $\iota: \bA\to R$. Let $\mcE$ be a smooth commutative group scheme over $R$ and $\Lie_\mcE$ be its Lie algebra, i.e. the tangent space at the zero element $e:\Spec R\to \mcE$. See \cite[Chapter 6]{GortzWedhorn20} or \cite[\S 33.16]{stacks-project} for the definition of tangent space of a general scheme.

For each group $R$-scheme homomorphism $f: \mcE_1\to \mcE_2$, we have an induced homomorphism $\partial f:\Lie_{\mcE_1}\to\Lie_{\mcE_2}$. See \cite[Remark 6.3(3)]{GortzWedhorn20} or \cite[Lemma 33.16.6]{stacks-project} for details.

Denote by $\GG_{a,R}=\Spec R[X]$ the additive group scheme over $R$, which is an $R$-module scheme. By \cite[Proposition 2.3.]{Har}, for any element $r\in R$, the map $R[X]\to R[X], X\mapsto rX$ on global sections corresponds to the left multiplication on $\GG_{a,R}$, denoted by $r:\GG_{a,R}\to \GG_{a,R}$. Similarly, the arithmetic Frobenius map $R[X]\to R[X], X\mapsto X^q$ on global sections defines a map $\Frob_q: \GG_{a,R}\to \GG_{a,R}$, call the geometric Frobenius of $\GG_{a,R}$. See \cite[Remark/Definition 4.24.]{GortzWedhorn20} for details.

In order to introduce the definition of Anderson $\bA$-modules, we need the following result, whose proof can be found in \cite[Lemma 5.1]{hartl2020pinks} or \cite{vanderHeiden2003}.
\begin{lem}\label{LemmaLucas}
Define a map $$F:\Mat_{m\times n}(R[\tau])\to \hom_{\FF_q,R}(\GG_{a,R}^{\oplus n},\GG_{a,R}^{\oplus m})$$ of $R$-modules, given by $$(f_{ij})_{ij}\mapsto (f:\GG_{a,R}^{\oplus n}\to \GG_{a,R}^{\oplus m}),$$
where $f$ is defined by $\dis f^{\#}: R[Y_1,\cdots,Y_m]\to R[X_1,\cdots,X_n]$, $\dis Y_i\mapsto \sum_{j=0}^n f_{ij}(X_j)$ via the correspondence in \cite[Proposition 3.4]{GortzWedhorn20}. 

Then, $F$ is an isomorphism of $R$-modules.
\end{lem}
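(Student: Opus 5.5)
The plan is to show that $F$ is well defined, $R$-linear, injective and surjective, reducing everything to the case $m=n=1$. Since $\GG_{a,R}^{\oplus n}=\Spec R[X_1,\dots,X_n]$ and a morphism of schemes $\GG_{a,R}^{\oplus n}\to\GG_{a,R}^{\oplus m}$ over $R$ is the same as an $R$-algebra map $R[Y_1,\dots,Y_m]\to R[X_1,\dots,X_n]$, that is, a tuple $(P_1,\dots,P_m)$ of polynomials with $P_i=f^\#(Y_i)$, the statement becomes a statement about polynomials. Here $f_{ij}(X_j)$ means the $q$-polynomial $\sum_k c_kX_j^{q^k}$ attached to $f_{ij}=\sum_k c_k\tau^k$.

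First, well-definedness and linearity. A morphism $f$ is a homomorphism of group schemes exactly when each $P_i$ is additive, i.e. $P_i(X+X')=P_i(X)+P_i(X')$ in $R[X,X']$. It is $\FF_q$-linear exactly when also $P_i(\lambda X)=\lambda P_i(X)$ for $\lambda\in\FF_q$. Each $q$-polynomial $\sum_j f_{ij}(X_j)$ has both properties, because $(x+y)^{q^k}=x^{q^k}+y^{q^k}$ in characteristic $p$ and $\lambda^{q^k}=\lambda$ for $\lambda\in\FF_q$. Hence $F$ lands in $\hom_{\FF_q,R}$. $R$-linearity of $F$ is immediate, since the coefficients simply add and scale. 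Injectivity is also clear: distinct matrices give distinct tuples of polynomials, because the monomials $X_j^{q^k}$ are distinct.

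The main step is surjectivity. Take an $\FF_q$-linear homomorphism given by $(P_1,\dots,P_m)$ and fix one $P=P_i$.
\begin{enumerate}
\item Comparing the two sides of $P(X+X')=P(X)+P(X')$ shows that $P$ has no constant term and no mixed monomials. So $P=\sum_j g_j(X_j)$ with each $g_j$ an additive one-variable polynomial.
\item For a one-variable additive $g=\sum_e c_eX^e$, expand $(X+X')^e$. Additivity forces $c_e\binom{e}{a}=0$ in $R$ for all $0<a<e$. If $e$ is not a power of $p$, some such binomial coefficient is a unit mod $p$ (Lucas' theorem), so $c_e=0$. Hence $g$ is a $p$-polynomial $\sum_k c_{p^k}X^{p^k}$.
\item Next use $\FF_q$-linearity: $g(\lambda X)=\lambda g(X)$ gives $c_{p^k}(\lambda^{p^k}-\lambda)=0$ for all $\lambda\in\FF_q$. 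If $p^k$ is not a power of $q$, take a generator $\lambda$ of $\FF_q^\times$; then $\lambda^{p^k}-\lambda$ is a nonzero element of $\FF_q$, hence a unit in $R$, so $c_{p^k}=0$.
\end{enumerate}
Thus $g_j$ is a $q$-polynomial, i.e. $g_j=f_{ij}(X_j)$ for some $f_{ij}\in R[\tau]$, and this gives surjectivity.

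The only delicate point is step~2 when $R$ is not reduced or has zero divisors. The argument avoids any division and uses only that the relevant binomial coefficients and the elements $\lambda^{p^k}-\lambda$ are units, which is why I expect it to go through as written.
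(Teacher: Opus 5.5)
Your proof is correct and is the standard argument. The paper gives no proof of its own but defers to \cite[Lemma 5.1]{hartl2020pinks} and \cite{vanderHeiden2003}, which rest on the same coefficient comparison: additivity forces $p$-polynomials via a unit binomial coefficient $\binom{e}{p^k}\equiv m\not\equiv 0 \bmod p$, and $\FF_q$-linearity then removes the terms $X^{p^k}$ with $p^k$ not a power of $q$, since $\lambda^{p^k}-\lambda\in\FF_q^\times\subseteq R^\times$. Your remark that only elements of $\FF_p^\times$ and $\FF_q^\times$ are ever inverted, so no hypothesis on $R$ beyond being an $\FF_q$-algebra is needed, is exactly the right point.
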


For each $f\in \hom_{\FF_q,R}(\GG_{a,R}^{\oplus n},\GG_{a,R}^{\oplus m})$, it corresponds to an element of the form $\dis \sum_{i = 0}^N A_i\tau^i\in \Mat_{m\times n}(R[\tau])$. We define $\partial f\in \hom_{\FF_q,R}(\GG_{a,R}^{\oplus n},\GG_{a,R}^{\oplus m})$ to be the map corresponding to $A_0$.

\begin{Def}
Let $d$ be a positive integer.
\begin{enumerate}
\item 
An \emph{Anderson $\bA$-module of dimension $d$} over $R$ is a pair  $(\mcE,\phi)$ consisting of a smooth affine commutative group $R$-scheme $\mcE$ of relative dimension $d$ together with an $\FF_q$-algebra homomorphism $$\begin{aligned}
    \phi:\bA&\to \End_{\FF_q,R}(\mcE), \\a&\mapsto\phi_a
\end{aligned}$$ such that 
\begin{enumerate}
    \item there exists a faithfully flat ring homomorphism $R\to R'$ with $\mcE\times_{\Spec R}\Spec R'\cong \GG_{a,R'}^{\oplus d}$ as $\FF_q$-module schemes,
    \item on the Lie algebra $\Lie_\mcE$, we have 
$$
\left(\partial\phi_a-\iota(a)\cdot \operatorname{id}_{\Lie_\mcE}\right)^d=0.
$$
\end{enumerate}
For our own purpose, we will mainly restrict to the special case that $R' = R$ in this article.
\item 
A \emph{morphism} of Anderson $\bA$-modules $f\colon(\mcE,\phi)\to(\mcF,\psi)$ is a homomorphism of group $R$-schemes $f: \mcE\to \mcF$ such that $\psi_a\circ f = f\circ\phi_a$ for all $a\in \bA$.

\item  By a \emph{Drinfeld $\bA$-module} over $R$, we mean an Anderson $\bA$-module of dimension $1$ over $R$.
\item \label{DefAndersonAModule_D}
For $a\in \bA$, define $\mcE[a]:=\ker(\phi_a\colon \mcE\to \mcE)$. This is a closed subgroup scheme.
\end{enumerate}
\end{Def}
\begin{rmk}
    If $\bA = \FF_q[t]$, we say it is an Anderson $t$-module instead of Anderson $\bA$-module.
\end{rmk}

As usual, $\mcE$ defines an $\bA$-module structure on $\mcE(R) = R^\ell$ through $$a\cdot r: = \mcE_a(r), \quad\quad \forall a\in \mcA,\ r\in \mcE(R).$$ Correspondingly, the Lie algebra $\Lie_\mcE(R) = R^\ell$ has $\bA$-module structure via $\partial \mcE_a$, i.e. $$a\cdot r: = \partial \mcE_a(r), \quad\quad \forall a\in \bA,\ r\in \Lie_\mcE(R).$$

If we write $\dis \mcE_a = \sum_{k=0}^{\ell}E_k(a)\tau^k$ with $E_k(a)\in \Mat_n(R)$, then one has $\partial \mcE_a = E_0(a)$ and $(E_0(a)-\iota(a)\cdot \id)^n = 0$. One may write $\partial \mcE_a = E_0(a) = \iota(a)\cdot\id +N$, where $N:\Lie_\mcE\to \Lie_\mcE$ is a nilpotent homomorphism of Lie algebras over $R$.

As pointed out in \cite[\S 3]{hartl2017isogenies}, when $R$ is a perfect field and $\bA = \FF_q[t]$, this definition is equivalent to Anderson's original definition of $t$-modules.

Let $L$ be a perfect $\bA$-field containing $\FF_q$ with the structure map $\iota: \bA\to L$. By \cite[\S 5]{hartl2020pinks}, every Anderson $\bA$-module $(\mcE,\phi)$ of dimension $d$ over $L$ admits a unique exponential function (up to a constant multiple) $$\exp_\mcE\colon\Lie_\mcE(L)\to \mcE(L)$$ such that $\phi_a\circ \exp_\mcE = \exp_\mcE\circ \partial\phi_a$  for all $a\in \bA$.

Choose a coordinate system $v:\mcE\stackrel{\sim}{\to}\GG_{a,L}^d$ of $\FF_q$-module schemes and consider the induced isomorphism $\partial v:\Lie_\mcE(L)\stackrel{\sim}{\to} L^d$ on tangent spaces. Then the map $\exp_{\mcE,v}$ defined by the following commutative diagram $$\xymatrix{
\Lie_\mcE\ar[r]^{\exp_\mcE}\ar[d]_{\partial v} & \mcE(L)\ar[d]^v\\
L^d\ar[r]_{\exp_{\mcE,v}} & \GG_{a,L}^{\oplus d}
}$$
is of the form $$\exp_{\mcE,v}(\bm{x}) = \sum_{i=0}^\infty E_i \bm{x}^{(i)}$$ for some matrices $E_i\in \Mat_{d\times d}(L)$  and it converges for all $x\in L^d$. We will normalize $\exp_\mcE$ by setting $E_0 = \operatorname{I}_d$ throughout this article.

This entire function was constructed by Anderson in \cite[Theorem~3]{anderson1986t} for $\bA=\FF_q[t]$. See \cite[\S 5]{hartl2020pinks} for the passage to general $\bA$. 

We define the \textit{period lattice} of $\mcE$ to be $\Lambda_{\mcE}:=\ker(\exp_\mcE)\subseteq \Lie_\mcE$. It has an $\bA$-module structure via $\partial\phi_a$, i.e. $a\cdot \bm{x}: = \partial\phi_a(\bm{x})$ for all $a\in \bA$ and $\bm{x}\in \Lie_\mcE$. Moreover, as pointed out in \cite[\S 5]{hartl2020pinks}, the exponential map $\exp_\mcE$ and the period lattice $\Lambda_\mcE$ are covariant functorial in $\mcE$. 

For any Anderson $\bA$-module $(\mcE,\phi)$, Anderson attached the following $\bA_R$-module to it in \cite{anderson1986t}. See also \cite[Definition 3.4]{hartl2017isogenies}.

Let $M:=\MM(\mcE):=\hom_{\FF_q, R}(\mcE,\GG_{a, R})$ be the set of $\FF_q$-linear homomorphisms of group $R$-schemes. We can equip $M$ an $\bA_R$-action via
\[
(r\otimes a)\cdot m\mapsto r\cdot m\circ\phi_a 
\] for all $a\in \bA$ and $r \in R$.

Define a $\tau$-semilinear endomorphism $$\begin{aligned}
    \tau:M&\to M\\
m&\mapsto\Frob_{q,\GG_a}\circ m.
\end{aligned}$$ This map corresponds an $\bA_{R}$-linear homomorphism $\tau_M: \tau^* M\to M$. 

If $M = \MM(\mcE)$ is finite projective of rank $r$ as an $\bA_R$-module, then we say $\mcE$ is \emph{abelian}. The rank of $\MM(\mcE)$ is called the \emph{rank of $\mcE$} and is denoted $\rk(\mcE)$. One can show that (See \cite[Proof of Theorem 3.5]{hartl2017isogenies}) in this case $(M,\tau_M)$ is an abelian $\bA$-motive over $R$ and will be called the \emph{$\bA$-motive associated to $\mcE$}. Denote by $\bA\abm(R)$ the category of abelian Anderson $\bA$-modules over $R$.
\begin{eg}
    Let $X=\PP^1_{\FF_q}$, $\bA=\FF_q[t]$ and $K = \FF_q(\theta)$ with the structure map $\iota:\bA\to K$, $t\mapsto \theta$. Consider the Drinfeld module $(\GG_{a, K},C)$ with $C_t=\theta+\tau$. It is called the \emph{Carlitz module}, and usually denoted by $C$. Since the \emph{Carlitz $t$-motive} $\MM(C)=(K[t],\tau_M=t-\theta)$, we see that $C$ is abelian of rank $1$.
\end{eg}

We have the following result proved by Anderson in \cite{anderson1986t} for the case $R$ is a perfect field and $\bA = \FF_q[t]$. See \cite[Theorem 5.7]{hartl2020pinks} and \cite[Theorem 3.5]{hartl2017isogenies} for the proof of general case.

\begin{thm}[cf. {\cite[Theorem 3.5]{hartl2017isogenies}}]\label{ThmAnderson}
The contravariant functor $$\begin{aligned}
    \MM: \bA\abm(R) &\to \bA\fgeffmot(R)\\
    \mcE &\mapsto \MM(\mcE)
\end{aligned}$$
 is an anti-equivalence of categories, where $\bA\fgeffmot(R)$ is defined in Remark \ref{rmk_motives}.
\end{thm}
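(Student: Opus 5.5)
The plan is to construct functors in both directions, the motive functor $\MM$ and a quasi-inverse built from motives, and then check that they are mutually inverse and fully faithful. The basic dictionary comes from Lemma \ref{LemmaLucas}: after trivializing $\mcE\cong\GG_{a,R}^{\oplus d}$ (possibly after the faithfully flat base change $R\to R'$), we get
\[
\MM(\mcE)=\hom_{\FF_q,R}(\mcE,\GG_{a,R})\cong \Mat_{1\times d}(R[\tau]).
\]
Under this identification the $\tau$-action is left multiplication by $\tau$, and $a\in\bA$ acts by right composition with $\phi_a$. Hence $\MM(\mcE)$ is a free left $R[\tau]$-module of rank $d$, with basis the coordinate projections. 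Abelianness of $\mcE$ gives the $\bA_R$-projectivity. The condition $(\partial\phi_a-\iota(a))^d=0$ translates into the statement that $\operatorname{coker}(\tau_M)=M/\tau M\cong \Lie_\mcE^\vee$ is annihilated by $\mfj^d$. Since this cokernel is locally free over $R$, $\tau_M$ becomes an isomorphism after inverting $\mfj$. So $\MM$ lands in $\bA\fgeffmot(R)$.

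For the inverse, start with $(M,\tau_M)$ in $\bA\fgeffmot(R)$ and set $\mcE(M):=\hom_{R[\tau]}(M,R[\tau]\text{-}\GG_a)$. Concretely, after base change, choose an $R[\tau]$-basis $m_1,\dots,m_d$ of $M$ and put $\mcE=\GG_{a}^{\oplus d}$. Each $a\in\bA$ acts on $M$ as an $R[\tau]$-linear map, which is given by a matrix in $\Mat_d(R[\tau])$ in this basis. Its transpose-type composition then defines $\phi_a$, again via Lemma \ref{LemmaLucas}. The nilpotence condition on $\Lie$ follows by reversing the cokernel computation above. On morphisms, an $\bA_R[\tau]$-linear map $M\to N$ between free $R[\tau]$-modules is a matrix over $R[\tau]$. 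This matrix corresponds to a morphism of group schemes commuting with the $\bA$-actions. This gives full faithfulness directly from Lemma \ref{LemmaLucas}, since morphisms of $\FF_q$-module schemes are exactly such matrices.

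The main obstacle is the case $R'\neq R$, where everything is only locally free after a faithfully flat extension. Here I would use fpqc descent: both categories satisfy descent, since affine group schemes and quasi-coherent modules with semilinear $\tau$ both descend. The functor $\MM$ is compatible with base change, so the equivalence over $R'$ and $R'\otimes_R R'$ descends to $R$. A secondary technical point is showing that $\mcE(M)$ is abelian, i.e. that $M$ is finitely generated projective over $\bA_R$ rather than only over $R[\tau]$. This is built into the definition of $\bA\fgeffmot(R)$ via the requirement that $M$ be an $\bA$-motive. Conversely, projectivity of $\MM(\mcE)$ over $\bA_R$ is the definition of abelian. Essential surjectivity and the natural isomorphism $\MM(\mcE(M))\cong M$ then follow from the basis description.
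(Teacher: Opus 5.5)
Your route is the standard proof of the cited result; the paper itself gives no argument beyond the reference to Hartl. The ingredients are the dictionary $\MM(\mcE)\cong\Mat_{1\times d}(R[\tau])$ from Lemma \ref{LemmaLucas}, the identification $\operatorname{coker}\tau_M\cong\Lie_\mcE^\vee$, the inverse $S\mapsto\operatorname{Hom}_{R[\tau]}(M,S)$, full faithfulness via matrices over $R[\tau]$, and fpqc descent along $R\to R'$. Full faithfulness, descent and the direction $\mcE\mapsto\MM(\mcE)$ are fine, with two small corrections. First, you only need \emph{some} power of $\mfj$ to kill the cokernel. This holds because $\mfj$ is generated by finitely many $a_i\otimes 1-1\otimes\iota(a_i)$, each acting nilpotently. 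Second, injectivity of $\tau_M$ does not come from local freeness of the cokernel. It comes from $\tau_M$ identifying $\tau^*M$ with the elements of $\Mat_{1\times d}(R[\tau])$ having zero constant term.

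The genuine gap is in essential surjectivity, at ``the nilpotence condition on $\Lie$ follows by reversing the cokernel computation''. For $M$ in $\bA\fgeffmot(R)$ as defined in this paper, you only know that $\operatorname{coker}\tau_M$ is locally free of rank $d$ and killed by some $\mfj^e$. Reversing therefore only shows that $\partial\phi_a-\iota(a)$ is nilpotent on $\Lie_{\mcE(M)}$. The definition of an Anderson $\bA$-module, however, demands $(\partial\phi_a-\iota(a))^d=0$ with $d$ the dimension. Over a field, or any reduced $R$ (check at residue fields), nilpotence on a locally free module of rank $d$ forces the $d$-th power to vanish, but not in general.

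Here is a counterexample. Take $\bA=\FF_q[t]$, $R=K[\varepsilon]/(\varepsilon^2)$ with $\iota(t)=\theta$, and $M=R[\tau]$ with $t$ acting by right multiplication by $\phi_t:=\theta+\varepsilon+\tau$. Then $M$ is free of rank one over $R[\tau]$, and also over $\bA_R=R[t]$ on the element $1$, since $1,\phi_t,\phi_t^2,\dots$ is an $R$-basis of $R[\tau]$. The map $\tau_M$ is multiplication by $t-\theta-\varepsilon$, which becomes a unit after inverting $\mfj=(t-\theta)$ because $\varepsilon$ is nilpotent. Its cokernel is $\operatorname{coker}\tau_M\cong R$, killed by $\mfj^2$ but not by $\mfj$. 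So $M\in\bA\fgeffmot(R)$. Yet any $\mcE$ with $\MM(\mcE)\cong M$ has dimension $1$, and $\partial\phi_t-\theta$ acts on $\Lie_\mcE$ as $\varepsilon\neq 0$, even after any faithfully flat base change.

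Hence your step, and indeed the statement as literally formulated with this paper's definitions, needs an extra hypothesis. One option is to require $\mfj^d\cdot\operatorname{coker}\tau_M=0$ with $d=\rk_R\operatorname{coker}\tau_M$ in the target category; then $(a-\iota(a))^d\in\mfj^d$ gives the axiom. The other is to restrict to reduced $R$, which covers the fields used in the rest of the paper.
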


\subsubsection{Tate modules and $\mfl$-adic cohomology}
Now, we briefly review the theory of $\mfl$-adic Tate modules and $\mfl$-adic cohomology for Anderson modules, following \cite[\S 5.7]{hartl2020pinks} and \cite[Chapter 5]{Goss}.

Let $\mcE$ be an abelian Anderson $\bA$-module over $L$ with exponential function $\exp_\mcE\colon\Lie_\mcE\to \mcE(L)$ and let $\Lambda_\mcE:=\ker(\exp_\mcE)$.

Let $\mfl$ be a finite place of $X$, that is a closed point $\mfl\in X - \infty$ and let $\bA_\mfl$ be the $\mfl$-adic completion of $\bA$, and $\bK_\mfl$ the fraction field of $\bA_\mfl$. Let $$T_\mfl(\mcE):=\hom_\bA\bigl(\bK_\mfl/\bA_\mfl,\,\mcE(L)\bigr)$$ be the \emph{$\mfl$-adic Tate module} of $\mcE$. 
\begin{Def}
    The \emph{$\mfl$-adic homology realization} of $\mcE$ with coefficients in $\bA_\mfl$ (resp. $\bK_\mfl$) is defined to be $H_{1,\mfl}(\mcE,\bA_\mfl):=  T_\mfl(\mcE)$ (resp. $H_{1,\mfl}(\mcE,\bK_\mfl):= T_\mfl(\mcE)\otimes_{\bA_\mfl}\bK_\mfl$).

    The \emph{$\mfl$-adic cohomology realization} of $\mcE$ with coefficients in $\bA_\mfl$ (resp. $\bK_\mfl$) is defined to be $H^1_\mfl(\mcE,\bA_\mfl)  :=  \hom_{\bA_\mfl}(T_\mfl(\mcE),\bA_\mfl)$ (resp. $H^1_\mfl(\mcE,\bK_\mfl)  :=  \hom_{\bA_\mfl}(T_\mfl(\mcE),\bK_\mfl)$).
\end{Def}
We collect the following result in \cite{Goss} and \cite{hartl2020pinks} for future use.
\begin{prop}[cf. {\cite[Theorem 5.6.8]{Goss}}]\label{coh_and_mod}
Let $\mcE$ be an Anderson $\bA$-module over $L$ of rank $r$ and dimension $d$. Then,
    \begin{enumerate}
        \item $H_{1,\mfl}(\mcE,\bA_\mfl)$ and $H_\mfl^1(\mcE,\bA_\mfl)$ are free $\bA_\mfl$-modules of rank $r$; $H_{1,\mfl}(\mcE,\bK_\mfl)$ and $H_\mfl^1(\mcE,\bK_\mfl)$ are $\bK_\mfl$-vector spaces of dimension $r$.
        \item  Choose an integer $m$ such that $\mfl^m = (a)\subset \bA$ is principal, then we have isomorphisms $\bA[\tfrac{1}{a}]/\bA\isoto \bK_\mfl/\bA_\mfl$ and $$
T_\mfl(\mcE) \isoto \inlim_n \mcE[a^n](L).$$
\item We have an isomorphism of $\bA_\mfl[G_L]$-modules $$H_{1,\mfl}(\mcE,\bA_\mfl)\cong \hom_{\bA_\mfl}(H^1_\mfl(\MM(\mcE)),\bA_\mfl)\cong H_{1,\mfl}(\MM(\mcE),\bA_\mfl).$$
    \end{enumerate}
\end{prop}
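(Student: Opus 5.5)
The plan is to build everything from the finite-level torsion. For part (2), fix $m$ with $\mfl^m=(a)$; this is possible because the class group of $\bA$ is finite. Then $\bA[\tfrac1a]/\bA \cong \dlim_n a^{-n}\bA/\bA$, and each piece decomposes by CRT as $a^{-n}\bA/\bA\cong \bigoplus_{\mfq\mid a}\bA/\mfq^{nm_\mfq}$. Since $\mfl$ is the only prime dividing $a$, this identifies with $\bA_\mfl/\mfl^{nm}\bA_\mfl$. Passing to the limit gives $\bK_\mfl/\bA_\mfl$. Applying $\hom_\bA(-,\mcE(L))$ turns the direct limit into an inverse limit, and $\hom_\bA(a^{-n}\bA/\bA,\mcE(L))=\mcE[a^n](L)$. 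The transition maps are multiplication by $a$, so $T_\mfl(\mcE)\cong\inlim_n \mcE[a^n](L)$.

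For part (1), the key input is that $\mcE[a^n](L)$ is a free $\bA/a^n$-module of rank $r$. I would obtain this from Theorem \ref{ThmAnderson} together with the motive side.
\begin{itemize}
\item Since $\mcE$ is abelian, $M=\MM(\mcE)$ is projective of rank $r$ over $\bA_L$.
\item Every $x\in\mcE[a^n](L)$ gives the evaluation $m\mapsto m(x)$. This identifies $\mcE[a^n](L)$ with $\hom_{\bA_L,\tau}(M/a^nM,\,L)$, i.e.\ $\bA/a^n$-linear maps compatible with $\tau$, which are the $\tau$-invariants of the $L$-dual.
\item Because $a\in\bA$ is coprime to $\mfj$ (we work at $\mfl\neq$ characteristic, using that $L$ is a field over $K$), $\tau_M$ is an isomorphism modulo $a^n$.
\item Lang's theorem for finite étale $\tau$-modules (the $\tau$-invariants of an étale $\tau$-module over a separable closure span it) then gives freeness of rank $r$ over $\bA/a^n$. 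This is exactly statement (1) of the preceding Proposition on $H^1_\mfl(M)$, applied at finite level.
\end{itemize}
Taking the inverse limit, $T_\mfl(\mcE)$ is free of rank $r$ over $\inlim \bA/a^n=\bA_\mfl$. Its $\bA_\mfl$-dual $H^1_\mfl$ is then also free of rank $r$, and tensoring with $\bK_\mfl$ gives the dimension statements.

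For part (3), I would construct a pairing $T_\mfl(\mcE)\times H^1_\mfl(M)\to\bA_\mfl$ and show it is perfect and $G_L$-equivariant. At level $n$, $x\in\mcE[a^n](L^\sep)$ and $m\in M$ give $m(x)\in L^\sep$. Composing with the $\tau$-invariant elements of $M\otimes\bA_\mfl(L^\sep)$ and using Proposition part (4) (the $\id-\tau$ sequence) identifies $\mcE[a^n](L^\sep)$ with $\hom_{\bA/a^n}(H^1_\mfl(M)/a^n,\bA/a^n)$, in the spirit of Anderson's torsion–motive duality. Galois equivariance holds because $g$ acts only on $L^\sep$-coefficients and commutes with evaluation. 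Compatibility in $n$ then gives the isomorphism with $H_{1,\mfl}(\MM(\mcE),\bA_\mfl)$.

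The main obstacle is this finite-level duality. Concretely, one has to check that the $\tau$-invariant vectors of $M/a^n\otimes L^\sep$ separate torsion points perfectly. I would first prove it for Drinfeld modules, where $M$ is free over $L[\tau]$ with basis $1$ and the duality reduces to the separability of $\phi_{a^n}$. I would then reduce the general case to this by using a basis of $M$ over $L[\tau]$ to write $\mcE\cong\GG_a^d$.
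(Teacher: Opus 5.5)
The step you single out as the main obstacle is where the proposal breaks. Reducing the finite-level duality to Drinfeld modules ``by using a basis of $M$ over $L[\tau]$ to write $\mcE\cong\GG_a^d$'' does not work.

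\begin{itemize}
\item A coordinate system trivializes only the underlying $\FF_q$-module scheme. The $\bA$-action is still an arbitrary $\phi_a\in\Mat_d(L[\tau])$ with a nilpotent part on $\Lie_\mcE$.
\item An abelian Anderson module is in general neither a direct sum of Drinfeld modules nor isogenous to one. For $n\geq 2$, the tensor power $C^{\otimes n}$ has dimension $n$ and rank $1$, while a sum of $k$ Drinfeld modules has dimension $k$ and rank at least $k$.
\end{itemize}

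So the perfectness of the torsion pairing, which is the real content of (3), is not established by your plan. Also, all torsion must be taken in $\mcE(L^\sep)$. Lang's theorem only yields $\tau$-invariants after base change to $L^\sep$, and $\mcE[a^n](L)$ is usually not free of rank $r$. The paper's definition of $T_\mfl$ writes $\mcE(L)$ too, but (3), as a statement about $G_L$-modules, only makes sense with $L^\sep$.

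No reduction is needed: the mechanism you sketch in part (1) already proves the duality once made precise.

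\begin{itemize}
\item Put $N_n=M/a^nM$ and let $m_1,\dots,m_d$ be the coordinate functions. They generate $M\cong\Mat_{1\times d}(L[\tau])$ over $L[\tau]$.
\item Evaluation $x\mapsto(m\mapsto m(x))$ is a bijection from $\mcE[a^n](L^\sep)$ onto the $L$-linear maps $f\colon N_n\to L^\sep$ with $f(\tau m)=f(m)^q$. It is injective because the $m_i$ separate points. It is surjective because $x:=(f(m_1),\dots,f(m_d))$ satisfies $f(m)=m(x)$ for all $m\in M$, hence $0=f(a^nm_i)=m_i(\phi_{a^n}(x))$.
\item Since $N_n$ is \'etale (your observation that $(a)+\mfj=(1)$), Lang gives $N_n\otimes_LL^\sep=N_n^\tau\otimes_{\FF_q}L^\sep$. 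Such $f$ are then exactly the $L^\sep$-linear extensions of $\FF_q$-linear maps $N_n^\tau\to\FF_q$, because their values on $\tau$-invariants are Frobenius-fixed. This is where the separability of $\phi_{a^n}$ is hidden.
\item Part (1) of the preceding Proposition, reduced mod $a^n$, gives $N_n^\tau\cong H^1_\mfl(M)/a^n$.
\item Hence $\mcE[a^n](L^\sep)\cong\hom_{\FF_q}(H^1_\mfl(M)/a^n,\FF_q)$, $G_L$-equivariantly. Under this, the transition $x\mapsto ax$ corresponds to precomposition with multiplication by $a$.
\end{itemize}

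What remains is what you gloss as ``compatibility in $n$'': converting $\FF_q$-duals into $\bA/a^n$-duals compatibly with these transition maps. This needs the residue pairing between $\bK_\mfl/\bA_\mfl$ and $\widehat\Omega^1_{\bA_\mfl/\FF_q}$. The canonical outcome is
$T_\mfl(\mcE)\cong\hom_{\bA_\mfl}(H^1_\mfl(M),\widehat\Omega^1_{\bA_\mfl/\FF_q})$.
Choosing a generator of $\widehat\Omega^1_{\bA_\mfl/\FF_q}$, on which $G_L$ acts trivially, gives (3). The freeness of $T_\mfl(\mcE)$ in (1) rests on this same duality, not on Lang's theorem alone, so it should come after it.

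The paper proves none of this itself; it simply cites \cite{Goss} and \cite{hartl2020pinks}.
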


\subsection{Taelman class number formula}\label{Taelmans_class_number}c
The exploration of a function field analogue to the classical class number formula began with Carlitz in 1935 \cite{carlitzclass}, who related special values of the Goss zeta value $\zeta_\bA(1)$ to the Carlitz logarithm at $1$. However, a general formula for Drinfeld modules remained elusive until 2009, when Taelman proposed and subsequently proved a ground-breaking formula for Drinfeld $\FF_q[t]$-modules in 2012. See \cite{taelman2009special, taelman2012special}. His work expressed the special $L$-value as a product of a regulator and the class number, directly paralleling the Dedekind zeta function's behavior. 

Following Taelman's work, the theory was rapidly generalized to higher dimensions by Fang and Demeslay for Anderson $\FF_q[t]$-modules \cite{fang2015special, demeslay2015class}. For the significantly more complex case of general coefficient rings $\bA$, Mornev utilized shtuka cohomology to treat Drinfeld modules with good reduction everywhere \cite{mornev2018}, while Anglès, Ngo Dac, Pellarin and Tavares Ribeiro established the formula in full generality for admissible Anderson modules using the theory of Stark units in a series of papers \cite{angles2014functional,angles2017arithmetic,angles2017stark,angles2022class}. To state the formula in this setting, where the regulator and class number are involved, we require the notion of the Fitting ideals and ratio of co-volumes.

\subsubsection{Fitting ideals}
In this subsection, we recall the definition and basic properties of Fitting ideals. We follow the standard exposition found in the literature, for example \cite{BourbakiAlgComCh5to7Y06,EisenbudCA,stacks-project} and \cite{LangAlg}.

Let $R$ be a commutative ring and let $M$ be a finitely presented $R$-module. Note that if $R$ is Noetherian, being finitely presented is equivalent to being finitely generated. By definition, there exists a presentation of the form:
$$R^a \stackrel{T}{\longrightarrow} R^b \longrightarrow M \longrightarrow 0,$$
where the map $R^a \to R^b$ is represented by a matrix $T$ with entries in $R$.

\begin{Def}
The \emph{Fitting ideal} of $M$, denoted by $\Fitt_R(M)$, is defined as the ideal of $R$ generated by the minors of size $b \times b$ of the matrix $T$.
\begin{itemize}
    \item If $b > a$, we set $\Fitt_R(M) = 0$.
    \item If $b \le a$, it is the ideal generated by all $b \times b$ minors.
\end{itemize}
It is a standard fact that this ideal is independent of the choice of the finite presentation of $M$. In the literature, this is sometimes referred to as the initial or zero-th Fitting ideal.
\end{Def}

The Fitting ideal serves as a refined invariant of the module structure, carrying more information than the annihilator ideal. See \cite{angles2022class} for further properties of Fitting ideals.

\subsubsection{The symbol $\left[\frac{\mcO_E}{\cdot}\right]_A$}
	For a Dedekind domain $R$, denote by $\mcI(R)$ the abelian group of fractional ideals of $R$ and $\mcP(R)$ the subgroup of principal fractional ideals of $R$. Denote by $\Cl(R) = \mcI(R)/\mcP(R)$ the class group of $R$ and $h=h_R=|\Cl(R)|$ the \textit{class number} of $R$ whenever it is finite. Also, we denote by $\mcI(R)_+$ the semigroup of integral ideals of $R$, i.e. $\mcI(R)_+ = \{\mfa\in \mcI(R): \mfa\subset R\}$.

	Following \cite{angles2017stark}, we will define a completely multiplicative map $$\left[\frac{\mcO_E}{\cdot}\right]_A: \mcI(\mcO_E)_+\to \conj{K}_\infty^*$$ such that $\dis \left[\frac{\mcO_E}{\wp}\right]_A\in A$ for all $\wp\in \Spec(\mcO_E)$.
	
	For $I\in\mcI(A)$ and $I\subset A$, define $\deg I:= \dim_{\FF_q}A/I$ and extend it to a group homomorphism $\deg: \mcI(A)\to \ZZ$. Clearly, for $x\in K^\times$, $\deg(x) = \deg(xA) = -d_\infty v_\infty(x)$. Since $\Cl(A)$ is a finite group, for any $I\in \mcI(A)$, there exists an integer $h\geq 1$ such that $I^h = xA$ for some $x\in K^\times$. Set $\langle I\rangle = \langle x\rangle^{1/h}\in U_\infty.$ Then we have a group homomorphism $[\cdot]: \mcI(A)\to \conj{K}_\infty^\times$ via $I\mapsto \langle I\rangle \pi^{-\frac{\deg I}{d_\infty}}$ such that $[xA] = \frac{x}{\sgn(x)}$ for all $x\in K^\times$. By the construction, one clearly sees that $\sgn([I]) = 1$ for any $I\in \mcI(A)$. If $M$ is a finitely generated $A$-module, we set $[M]_A:=[\fitt_A(M)]$, where $\fitt_A(M)$ denotes the Fitting ideal of $M$.
	
	It is well-known that $\fitt_A(\mcO_E/\mfa) = N_{E/K}(\mfa)$ for all $\mfa\in \mcI(\mcO_E)_+$ and $N_{E/K}: \mcI(\mcO_E)\to \mcI(A)$ is a group homomorphism such that $$N_{E/K}(\wp) = \mfp^{f(\wp|\mfp)},$$ where $\mfp = \wp\cap A$ and $f(\wp|\mfp) = [\mcO_E/\wp: A/\mfp]$ is the inertia degree of $\wp|\mfp$. Thus, we have a completely multiplicative map $$\left[\frac{\mcO_E}{\cdot}\right]_A: \mcI(\mcO_E)_+\to \conj{K}_\infty^\times,$$ $$\wp\mapsto [N_{E/K}(\wp)].$$
	\begin{lem}\label{normprin}
		Let $H$ be the Hilbert class field of $A$ and suppose $E\supseteq H$. For any $\wp\in \Spec(\mcO_E)$, $N_{E/K}(\wp)$ is a principal ideal of $A$.
	\end{lem}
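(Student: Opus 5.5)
The approach is class field theory: identify the subgroup of $\Cl(A)$ cut out by $H$, and show that $N_{E/K}(\wp)$ lands in the trivial class.

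The key input is the defining property of the Hilbert class field $H$ of $A$. It is the maximal unramified abelian extension of $K$ in which $\infty$ splits completely. Artin reciprocity gives an isomorphism
$$\Cl(A)\isoto \Gal(H/K),\qquad \mfp\mapsto \Frob_\mfp.$$
Under this isomorphism, a prime $\mfp$ of $A$ is principal if and only if $\Frob_\mfp$ is trivial, i.e. if and only if $\mfp$ splits completely in $H$.

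The steps are as follows. Let $\wp\in\Spec(\mcO_E)$, set $\mfp=\wp\cap A$, and put $\mfP=\wp\cap\mcO_H$. Then
$$N_{E/K}(\wp)=\mfp^{f(\wp|\mfp)}, \qquad f(\wp|\mfp)=f(\wp|\mfP)\,f(\mfP|\mfp),$$
by multiplicativity of residue degrees in the tower $K\subseteq H\subseteq E$.

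Since $H/K$ is unramified at $\mfp$ and abelian, the residue degree $f(\mfP|\mfp)$ equals the order of $\Frob_\mfp$ in $\Gal(H/K)$. Under Artin reciprocity, this is the order of the class $[\mfp]$ in $\Cl(A)$.

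It follows that $\mfp^{f(\mfP|\mfp)}$ is principal. Hence $N_{E/K}(\wp)=\big(\mfp^{f(\mfP|\mfp)}\big)^{f(\wp|\mfP)}$ is principal as well.

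Alternatively, one can argue using the transitivity of the norm, $N_{E/K}=N_{H/K}\circ N_{E/H}$. The image $N_{H/K}(\mcI(\mcO_H))$ lies in the kernel of the Artin map, which is $\mcP(A)$.

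The only delicate point is the precise normalization of the Hilbert class field in the function field setting: one needs that $\infty$ splits completely in $H$, so that $\Gal(H/K)\cong\Cl(A)=\Pic(A)$ rather than a ray or divisor class group of degree zero. I would state this normalization explicitly, citing Rosen or Goss. With it in place, the argument above is a direct consequence of class field theory.
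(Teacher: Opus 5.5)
Your proposal is correct and follows essentially the same route as the paper: write $N_{E/K}(\wp)=\mfp^{f(\wp|\mfP)f(\mfP|\mfp)}$ and observe that $\mfp^{f(\mfP|\mfp)}=N_{H/K}(\mfP)$ lies in the kernel $\mcP(A)$ of the Artin map for $H/K$. Your main argument, that the order of $\Frob_\mfp$ equals the order of $[\mfp]$ in $\Cl(A)$, is just an unpacking of that same fact.
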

	\begin{proof}
		Let $P = \wp\cap \mcO_H$ and $f(P|\mfp)$ denote the inertia degree of $P|\mfp$. Then $P$ is lying above $\mfp$, i.e. $P\cap A = \mfp$. Consider the Artin reciprocity map $$\Phi_{H/K}: \mcI(A)\to \Gal(H/K),$$ whose kernel is $\mcP(A)$. By class field theory, we know that $N_{H/K}(P) = \mfp^{f(P|\mfp)}\in \ker(\Phi_{H/K}) = \mcP(A)$. Since $E\supseteq H$, we see $N_{E/K}(\wp) = \mfp^{f(\wp|\mfp)} = \mfp^{f(\wp|P)f(P|\mfp)}$ is a principal ideal of $A$.
	\end{proof}

	\begin{cor}
		Let $E\supseteq H$ as above. For any integral ideal $I\subseteq \mcO_E$, we have $\dis \left[\frac{\mcO_E}{I}\right]_A\in A_+$.
	\end{cor}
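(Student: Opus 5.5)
Since $\left[\frac{\mcO_E}{\cdot}\right]_A$ is completely multiplicative on $\mcI(\mcO_E)_+$, I will reduce to prime ideals. Every nonzero integral ideal $I\subseteq\mcO_E$ factors uniquely as $I=\prod_i \wp_i^{e_i}$ with $\wp_i\in\Spec(\mcO_E)$ and $e_i\geq 0$, so
$$\left[\frac{\mcO_E}{I}\right]_A=\prod_i\left[\frac{\mcO_E}{\wp_i}\right]_A^{e_i}.$$
(If $I=\mcO_E$, the empty product is $1\in A_+$.) Since $A_+$ is closed under multiplication, it suffices to show that $\left[\frac{\mcO_E}{\wp}\right]_A\in A_+$ for each nonzero prime $\wp$.

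Fix such a $\wp$. By definition, $\left[\frac{\mcO_E}{\wp}\right]_A=[N_{E/K}(\wp)]$. By Lemma \ref{normprin}, which uses the hypothesis $E\supseteq H$, the ideal $N_{E/K}(\wp)=\mfp^{f(\wp|\mfp)}$ is principal, say $N_{E/K}(\wp)=xA$ with $x\in K^\times$. Because $\mfp^{f}\subseteq A$ is an integral ideal, we have $x\in A\setminus\{0\}$. The construction of the map $[\cdot]$ gives $[xA]=x/\sgn(x)$. Here $\sgn(x)\in\FF_\infty^\times$ is a nonzero constant lying in $A$: since $\FF_\infty$ is the algebraic closure of $\FF_q$ in $K$, it is contained in $A$. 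Hence $x/\sgn(x)\in A$, and it has sign $1$, so it lies in $A_+$ by the definition of monic (sign-one) elements.

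The only point needing care is that $\sgn(x)^{-1}$ lies in $A$ rather than merely in $K_\infty$, that is, that the sign function takes values in the constants of $A$. This holds because $\FF_\infty\subseteq A$, since elements of $\FF_\infty$ have no poles. It also relies on the normalization that $[xA]=x/\sgn(x)$ is well defined, independent of the choice of generator $x$. This follows from the construction of $[\cdot]$: replacing $x$ by $ux$ with $u\in A^\times=\FF_q^\times$ (more generally $\FF_\infty^\times$) changes $\sgn$ by the same factor, so the quotient $x/\sgn(x)$ is unchanged.
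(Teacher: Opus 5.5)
Your route is the one the paper intends: reduce to primes by multiplicativity, use Lemma~\ref{normprin} to get $N_{E/K}(\wp)=xA$, then use $[xA]=x/\sgn(x)$. The gap is in the step you yourself flag as the crux, which is false as written. $\FF_\infty$ is not the algebraic closure of $\FF_q$ in $K$. Since $X$ is geometrically connected, $\FF_q$ is already algebraically closed in $K$. The residue field $\FF_\infty$, of degree $d_\infty$ over $\FF_q$, embeds in $K_\infty$ but not in $K$ unless $d_\infty=1$. Likewise $A^\times=\FF_q^\times$, not $\FF_\infty^\times$.

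Because the generator $x$ is unique up to $A^\times=\FF_q^\times$, one has $x/\sgn(x)\in A$ if and only if $\sgn(x)\in\FF_q^\times$; otherwise $x/\sgn(x)$ is not even in $K$. Equivalently, $x/\sgn(x)\in A$ exactly when $N_{E/K}(\wp)$ lies in the group $\mcP^+$ of ideals with a positive generator. Lemma~\ref{normprin} only places $N_{E/K}(\wp)$ in $\mcP$, and $\mcP/\mcP^+\cong\FF_\infty^\times/\FF_q^\times$ is nontrivial as soon as $d_\infty>1$.

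This cannot be patched under the stated hypotheses. Let $H^+$ be the narrow Hilbert class field, the class field of $\mcI(A)/\mcP^+$. For $d_\infty>1$, Chebotarev applied to $H^+/K$ gives primes $\mfp$ of $A$ whose Artin symbol is a nontrivial element of $\Gal(H^+/H)\cong\mcP/\mcP^+$, i.e.\ principal but not in $\mcP^+$. Such $\mfp$ split completely in $H$, so with $E=H$ and $P\mid\mfp$ you get $\left[\frac{\mcO_H}{P}\right]_A=[\mfp]=x/\sgn(x)\notin A$. Hence the corollary as stated fails for $d_\infty>1$.

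Your proof is complete precisely when $d_\infty=1$, e.g.\ for $\bA=\FF_q[t]$, where $\FF_\infty=\FF_q\subseteq A$ and every step you wrote goes through; this is also the setting in which the paper actually uses the symbol. For general $d_\infty$ the right hypothesis is $E\supseteq H^+$. Then the argument of Lemma~\ref{normprin}, run with $\Phi_{H^+/K}$ (whose kernel is $\mcP^+$), gives $N_{E/K}(\wp)=xA$ with $\sgn(x)=1$. Hence $[N_{E/K}(\wp)]=x\in A_+$, and your multiplicativity reduction finishes the proof.
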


\subsubsection{Ratio of covolumes}
Let $V$ be a finite-dimensional vector space over $K_\infty$. An $A$-submodule $M \subset V$ is termed an $A$-lattice if it is discrete and generates $V$ over $K_\infty$. 

Let $n = \dim_{K_\infty} V$ and $M, N$ be two $A$-lattices in $V$. Since $A$ is a Dedekind domain, there exist two $K_\infty$-bases $e_1,\cdots,e_n$ and $f_1,\cdots,f_n$ of $V$ such that $$M = \bigoplus_{i=1}^{n-1}Ae_i\oplus Ie_n$$ and $$N = \bigoplus_{i=1}^{n-1}Af_i\oplus Jf_n,$$ where $I$ and $J$ are two non-zero fractional ideals of $A$. Let $\gamma:V\to V$ be the $K_\infty$-linear map given by $\gamma: e_i\mapsto f_i$. We set $$[M:N]_A = \det(\gamma) JI^{-1}.$$

We have the following properties:
\begin{enumerate}
    \item $[M:N]_A = [N:M]_A^{-1}$;
    \item $[M_1:M
_3]_A = [M_1:M_2]_A[M_2:M_3]_A$ for three $A$-lattices $M_1,M_2,M_3$ in $V$;
\item if $N \subset M$, then $[M: N]_A = \Fitt_A(M/N)$.
\end{enumerate}

This invariant allows for a unified formulation of the class number formula. For the rest of this section, we assume $\bA = \FF_q[t]$ for simplicity. Note that in the setting where $\bA = \mcO_X(X-\infty)$ for a general curve $X$, this has been worked out in \cite{angles2022class}. Let $\mcE$ be an admissible Anderson module defined over $\mcO_E$.

We define $E_\infty = E\otimes_KK_\infty$ and view $E$ as contained in $E_\infty$ via the diagonal embedding $x\mapsto x\otimes 1$. Furthermore, the $\exp_\mcE$ induces a homomorphism of $\bA$-modules $$\exp_\mcE: \Lie_\mcE(E_\infty)\to \mcE(E_\infty).$$
We define
\begin{enumerate}
    \item the unit module $$U(\mcE/\mcO_E) = \exp_\mcE^{-1}(\mcE(\mcO_E)) = \{x\in \Lie_\mcE(E_\infty):\exp_\mcE(x)\in \mcE(\mcO_E)\},$$ which is an $A$-lattice in $\Lie_\mcE(E_\infty)$
    \item the class module $$H(\mcE/\mcO_E):= \frac{\mcE(E_\infty)}{\mcE(\mcO_E)+\exp_\mcE(\Lie_\mcE(E_\infty))},$$ which fits into the exact sequence
\begin{equation} \label{eq:exact_sequence}
    0 \longrightarrow U(\mcE/\mcO_E) \longrightarrow \Lie_\mcE(E_\infty) \xrightarrow{\exp_\mcE} \frac{\mcE(E_\infty)}{\mcE(\mcO_E)} \longrightarrow H(\mcE/\mcO_E) \longrightarrow 0.
\end{equation}
This is a finite $\bA$-module
\end{enumerate}

The Taelman class number formula asserts that the special $L$-value $$\mcL(\mcE/\mcO_E): = \prod_{\wp\subseteq \mcO_E}\frac{[\Lie_{\conj{\mcE}}(\mcO_E/\wp)]_A}{[\conj{\mcE}(\mcO_E/\wp)]_A} \in K_\infty$$ satisfies
\begin{equation} \label{eq:taelman_formula}
    \mcL(\mcE/\mcO_E) = \Reg(\mcE/\mcO_E) \cdot h(\mcE/\mcO_E),
\end{equation} where $\Reg(\mcE/\mcO_E):= [\Lie_\mcE(\mcO_E) : U(\mcE/\mcO_E)]_A$ is called the regulator of $\mcE/\mcO_E$ and $h(\mcE/\mcO_E)$ is the monic generator of $\Fitt_A \left( H(\mcE/\mcO_E) \right)\subseteq A$.

\section{Goss $L$-series}\label{sec:GossLseries}

\subsection{Goss $L$-series of $\bA$-motives and Anderson $\bA$-modules}
Let $E$ be a finite extension of $K$ and $F/\FF_q$ an extension. We call a place of $E$ finite if it corresponds to a prime of $\mcO_E$ and infinite otherwise.
Let $(\mcE,\phi)$ be an Anderson $\bA$-module defined over $E$. For simplicity, we will use the notation $\phi/E$ throughout this paper.

By an \emph{Artin representation} $(\rho,V)$ over $E$ with coefficients in $F$, we mean a continuous Galois representation $\rho: \Gal(E^\sep/E)\to \aut_{F}(V)$ with $V$ an $F$-vector space. Here, continuity means $\rho$ factors through a finite quotient. In other words, there is a finite Galois extension $L/E$ such that $\rho|_{\Gal(E^\sep/L)}$ is trivial and hence can be identified with a representation of $\Gal(L/E)$.

In this paper, we shall always consider the case that $(\rho,V)$ has coefficients in $\conj{\FF}_q$, i.e. after choosing an $\conj{\FF}_q$-basis of $V$, $$\rho: \Gal(E^\sep/E)\to \GL_n(\conj{\FF}_q).$$ 

Let $G_E = \Gal(E^\sep/E)$. We denote the primes of $K$, $E$, and $L$ by $\mfp$, $\wp$, and $\mfP$, respectively. Generally, we do not require that $p \nmid [L:E]$; however, in certain applications (e.g., Corollary~\ref{corE}), we will assume this condition to avoid technical complications.
\begin{Def}
	By a system of $\mfl$-adic representations $V = \{V_\mfl\}_\mfl$ over $E$, we mean a collection $\rho = (\rho_\mfl)$ of continuous homomorphisms $\rho_\mfl: G_E\to \GL(V_\mfl)$ one for every prime $\mfl$ of $K$, where the $V_\mfl$ are finite dimensional vector spaces over $K_\mfl$.
	
	We say such a system is \emph{strictly compatible} if it satisfies the condition that there exists a finite set $S$ of places of $E$ such that 
	\begin{enumerate}
		\item[(i)] for every finite place $\wp\notin S$ and all $\mfl$ not under $\wp$, the representation $\rho_\mfl$ is unramified at $\mfl$.
        \item[(ii)] for these $\mfl$ and $\wp$ the characteristic polynomial of the Frobenius $\Frob_\wp$ at $\wp$ has coefficients in $K$ and is independent of $\mfl$.
\end{enumerate}

\end{Def}
\begin{eg}\label{egDrin}
    Suppose $\iota: \bA\to E$ is an $\bA$-field that has generic characteristic, i.e. $\ker(\iota) = 0$. Let $\varphi/E$ be a Drinfeld $\bA$-module defined over $E$ of rank $r$. Suppose $\varphi$ has good reduction everywhere.  For a prime $\mfl$ of $K$, we write $T_\mfl(\varphi)$ for the $\mfl$-adic Tate module of $\varphi$, $$T_\mfl(\varphi) = \inlim_n \varphi[\mfl^n].$$ We write $V_\mfl(\varphi) = T_\mfl(\varphi)\otimes_{A_\mfl} K_\mfl$, which is an $K_\mfl$-vector space of dimension $r$. It is well-known that $\{V_\mfl(\varphi)\}_\mfl$ forms a strictly compatible system of $\mfl$-adic Galois representations of $G_E$. (See e.g. \cite[\S 4.12, \S 8.6]{Goss}.)
\end{eg}

To a compatible system of algebraic $\mfl$-adic Galois representations $\rho = \{\rho_\mfl\}_\mfl$ over $E$, we can associate an $L$-series $L(\rho/E,s)$ as follows. 

For a prime $\wp$ of $E$, we choose a prime $\mfl$ of $K$ that is not under $\wp$ and consider the characteristic polynomial $$P_\wp(\rho,X) = \det(X-\rho_\mfl(\Frob_\wp) |V_\mfl^{I_{\wp}})\in K[X],$$ which does not depend on $\mfl$ and $\conj{\wp}$ over $\wp$. Let $$\begin{aligned}
	Q_\wp(\rho,X) &= X^{\deg P_\wp}P_\wp(V/E, X^{-1}) \\
	&= \det(1-X\rho_\mfl(\Frob_{\conj{\wp}})|V_\mfl^{I_{\wp}})\in K[X]
\end{aligned}$$ be the reciprocal polynomial of $P_\wp(\rho,X)$. 

We adopt Goss's framework for $L$-series, which extends the domain of definition from $\ZZ$ to the topological space $\SS_\infty$ \cite{goss1992lseries, Goss}. Also see \cite{kramermiller2023} for additional perspective. While this theory relies on the exponentiation of ideals $I^s$ for $s \in \SS_\infty$, we restrict our attention primarily to integer values $s = n \in \ZZ$. 

Let $\pi_\infty$ a uniformizer for $v_\infty$. Let $\sgn:K_\infty^*\to \FF_\infty^*$ be the sign function given by $\sgn(x) = \zeta$ if $x = \zeta\cdot \pi_\infty^m\cdot \langle x\rangle$ with $m\in \ZZ$, $\zeta\in \FF_\infty^*$ and $\langle x\rangle$ the 1-unit part of $x$. We say $x\in K_\infty^*$ is positive if $x$ has sign 1, i.e. $\sgn(x) = 1$. Let $\mcP$ be the subgroup of principal ideals of $\mcI(\bA)$, the group of fractional ideals of $\bA$.  Let $\mcP^+\subseteq \mcP$ be the subgroup of principal fractional ideals generated by positive elements. Let $\hat{U}_1\subseteq \CC_\infty^*$ be the group of 1-units. Let $\pi_*\in \CC_\infty^*$ be a fixed $d_\infty$-root of $\pi_\infty$. Then by \cite[Corollary 8.2.4]{Goss}, the map $\mcP\to \hat{U}_1$, $(\alpha)\mapsto \langle\alpha\rangle$ extends to a map $\langle\cdot \rangle: \mcI(\bA)\to \hat{U}_1$ uniquely. For any fractional ideal $I\subseteq \bA$, we define $I^n: = \pi_*^{-n\deg I}\langle I\rangle^n\in \hat{U}_1\subseteq \CC_\infty^*$.

Let $\wp$ be a prime ideal of $\mcO_E$ lying above the prime $\mfp$ of $A$. We denote the norm ideal by $n\wp = \mfp^{f(\wp|\mfp)}$. If $E$ contains the Hilbert class field of $A$, which holds trivially when $\bA = \FF_q[t]$, then the ideal $n\wp$ is principal. In this case, its unique monic generator is given by the symbol
\[
    \left[\frac{\mcO_E}{\wp}\right]_A \in A.
\]
and if $s\in \SS_\infty$ is an integer, then $(n\wp)^s=\left[\frac{\mcO_E}{\wp}\right]_A^s$ in the sense of Goss's exponentiation of ideals.
\begin{Def} Let $S$ be a finite set of places of $E$ containing all infinite places.
	Let $\wp\notin S$ be a prime of $E$ over $\mfp$ of $K$.
	For any integer $s$, we define the local factor at $\wp$ by $$L_\wp(\rho,s) = Q_\wp(\rho,n\wp^{-s})^{-1}.$$ 
	
	The \emph{Goss $L$-series} attached to $\rho$ is given by $$L_S(\rho, s) = \prod_{\wp\notin S}L_\wp(\rho,s).$$
\end{Def}

This converges to an element of $\CC_\infty$ for all sufficiently large integers $n$. In the case of $\bA = \FF_q[t]$, it takes values in the subfield $K_\infty$. See \cite{taelman2009special} for example.

By the theory of $\bA$-motives, for an $\bA$-motive $M$ over $E$, the system $\{\rho_{M,\mfl}\}$ forms a strictly compatible system, where $\rho_{M,\mfl}:G_E\to \GL(H^1_\mfl(M,K_\mfl))$, see \cite[Proposition 3.36]{hartl2020pinks}. We can thus define a motivic $L$-series $L_S(M,s)$.

Let $\mfl$ and $\wp$ be as above and $\Frob_\wp$ the arithmetic Frobenius at $\wp$. Let $I_\wp$ be the inertia subgroup at $\wp$. Let $M$ be an $\bA$-motive over $E$. We define $$P_\wp(M,X) = \det(X - \rho_{M,\mfl}(\Frob_\wp)|H_\mfl^1(M,K_\mfl)^{I_{\wp}})$$ and $Q_{\wp}(M,X)$ the reciprocal polynomial of $P_\wp(M,X)$. Similarly, we define $$P^\vee_\wp(M,X) = \det(X - \rho_{M,\mfl}(\Frob^{-1}_\wp)|H_\mfl^1(M,K_\mfl)^{I_{\wp}})$$ and $Q^\vee_{\wp}(M,X)$ the reciprocal polynomial of $P^\vee_\wp(M,X)$. See \cite[\S 4.12]{Goss} or \cite[\S 3.6]{papikian2023drinfeld} for additional perspective.
\begin{Def}
    Let $S$ be a finite set of finite places. For any integer $s$, we define the following Goss $L$-series $$L_S(M,s) = \prod_{\wp\notin S}Q_\wp(M,n\wp^{-s})^{-1}$$ and $$L_S(M^\vee,s) = \prod_{\wp\notin S}Q^\vee_\wp(M,n\wp^{-s})^{-1}.$$ 
\end{Def}

Similarly, let $(\mcE,\phi)$ be an Anderson $\bA$-module over $E$. Then, we have a system of strictly compatible $\mfl$-adic Galois representations
$$\rho_{\mcE,\mfl}: G_E\to \GL(V_\mfl(\mcE)).$$
We define $$P_\wp(\mcE,X) = \det(X - \rho_{\mcE,\mfl}(\Frob_\wp)|V_\mfl(\mcE)^{I_{\wp}})$$ and $Q_{\wp}(\mcE,X)$ the reciprocal polynomial of $P_\wp(\mcE,X)$. We further define $$\begin{aligned}
    P^\vee_\wp(\mcE,X) &= \det(X -\rho_{\mcE,\mfl}(\Frob^{-1}_\wp)|V_\mfl(\mcE)^{I_{\wp}})\\
    & = \det(X -\rho^\vee_{\mcE,\mfl}(\Frob_\wp)|V^\vee_\mfl(\mcE)^{I_{\wp}})
\end{aligned}$$ and $Q^\vee_{\wp}(\mcE,X)$ the reciprocal polynomial of $P^\vee_\wp(\mcE,X)$.
\begin{Def}
    Let $S$ be a finite set of finite places. For any integer $s$, we define the following Goss $L$-series $$L_S(\mcE,s) = \prod_{\wp\notin S}Q_\wp(\mcE,n\wp^{-s})^{-1}$$ and $$L_S(\mcE^\vee,s) = \prod_{\wp\notin S}Q^\vee_\wp(\mcE,n\wp^{-s})^{-1}.$$ 
\end{Def}

For the question of convergence, see \cite[\S 2.9]{taelman2009special}.

Let $M = \MM(\mcE)$ be its associated $\bA$-motive. By Proposition \ref{coh_and_mod}(3), we have $$H^1_{\mfl}(M,K_\mfl)\cong H_\mfl^1(\mcE,K_\mfl)\cong V^\vee_\mfl(\mcE).$$ Thus, we have \begin{equation}\label{L-series_motivic_module}
    L_S(\MM(\mcE),s) = L_S(\mcE^\vee,s)
\end{equation} and similarly \begin{equation}
    L_S(\MM(\mcE)^\vee,s) = L_S(\mcE,s).
\end{equation} Compare with \cite[\S Example 8.6.6.2]{Goss} or \cite{taelman2009special}.
\subsection{Computation of Goss $L$-series}
We first review Gazda's theory on maximal integral models, which serves as an analogue of N\'eron models for abelian varieties. We restrict our attention to the global situation following \cite[\S 4]{Gazda21motcoh}.
	
	Let $R$ be a Dedekind domain which is an $\bA$-algebra via $\iota: \bA\to R$. Let $F = \Frac(R)$. We assume that $\iota$ is injective and compatible with the prime ideals in the sense that $\iota^{-1}(\mfp)=(0)$ if and only if $\mfp=(0)$. A typical example is when $F$ is a local or global function field and $R$ is the ring of integers or its localization at a prime.
	
	\subsubsection{Maximal integral models and reductions of $\bA$-motives}
	
	\begin{Def}[Gazda, see {\cite[\S 4]{Gazda21motcoh}}]
    Let $(M,\tau_M)$ be an abelian $\bA$-motive over $F$. An \emph{$R$-integral model} for $M$ is a finitely generated $\bA_R$-submodule $\Lambda \subset M$ such that:
		\begin{enumerate}
			\item $\Lambda$ generates $M$ over $F$;
			\item $\tau_M(\tau^*\Lambda) \subset \Lambda[\mfj^{-1}]$.
		\end{enumerate}
		We say $\Lambda$ is \emph{maximal} if it is not strictly contained in any other $R$-integral model.
	\end{Def}
	
	For a maximal ideal $\mfp \subset R$, let $R_\mfp$ denote the completion of $R$ at $\mfp$, and $F_\mfp$ its fraction field. The existence and local-global compatibility of maximal models are established by Gazda:
	
	\begin{thm}[Gazda, see {\cite[\S 4]{Gazda21motcoh}}] \label{thm:maximal-model-existence}
    Let $M$ be an abelian $\bA$-motive over $F$.
		\begin{enumerate}
			\item There exists a unique maximal $R$-integral model for $M$, denoted by $M_R$.
			\item Let $M_{R_\mfp}$ be the unique maximal $R_\mfp$-integral model of the base change $M_{F_\mfp}$. Then we have
			\[ M_R \otimes_R R_\mfp \cong M_{R_\mfp}. \]
			\item $M_R$ is a projective $\bA_R$-module.
		\end{enumerate}
	\end{thm}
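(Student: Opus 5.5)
The plan is to follow Gazda's approach, which mimics the construction of N\'eron lattices: build integral models locally and glue them. The main tool is that $R$ is Dedekind and $M$ is a finitely generated projective $\bA_F$-module.

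\emph{Existence of some integral model.} I would first choose generators $m_1,\dots,m_k$ of $M$ over $\bA_F$ and let $\Lambda_0$ be the $\bA_R$-span. Since $\tau_M(\tau^*m_i)\in M[\mfj^{-1}]$ is a finite $\bA_F[\mfj^{-1}]$-combination of the $m_j$, clearing denominators of finitely many coefficients gives a nonzero $c\in R$. The rescaled lattice $\Lambda=c'\Lambda_0$, for a suitable power $c'$ of $c$, then satisfies $\tau_M(\tau^*\Lambda)\subset\Lambda[\mfj^{-1}]$. This works because $\tau$ raises $c$ to the $q$-th power, so multiplying by $c$ gains a factor $c^{q-1}$.

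\emph{Sums and boundedness.} The key structural fact is that if $\Lambda_1,\Lambda_2$ are integral models, then so is $\Lambda_1+\Lambda_2$. This is immediate from condition (2), since $\tau^*$ and $\tau_M$ are additive. It follows that a maximal model is unique if it exists, because it must contain every integral model.

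To get existence, I need a uniform bound: every integral model is contained in a fixed finitely generated $\bA_R$-module $c^{-1}\Lambda$. I expect this to be the main obstacle. The strategy is to work locally at each maximal ideal $\mfp$ of $R$ via completion, where $\tau$ is contracting on $\mfp$-adic denominators. The iterate $\tau_M^n$ on $\tau^{n*}\Lambda'$ multiplies the $\mfp$-adic denominator exponent by $q^n$. Meanwhile the ``$\mfj$-poles'' stay bounded, because $\mfj$ is prime to $\mfp$-adic denominators away from the single divisor $\mfj$.

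Concretely, I would compare an arbitrary model $\Lambda'$ with $\Lambda$ at $\mfp$. The inverse $\tau_M^{-1}$ also has bounded poles, since $\tau_M$ is an isomorphism after inverting $\mfj$. Invariance of $\Lambda'$ under this semilinear, almost-contracting map then forces the $\mfp$-adic valuation of the elements of $\Lambda'$ to be bounded below by a constant depending only on $\Lambda$ and $\tau_M$.

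Only finitely many $\mfp$ are bad, namely those where $\Lambda$ is not already stable with invertible $\tau_M$ away from $\mfj$. At the remaining primes, the same argument shows that $\Lambda$ is already maximal locally.

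\emph{Conclusion of (1) and (3).} Since $R$ is Noetherian and the union of all models sits inside a fixed finitely generated $\bA_R$-module, the sum of all integral models is finitely generated. It is therefore the unique maximal model $M_R$.

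For projectivity, note that $M_R$ is torsion-free over $\bA_R$. The ring $\bA_R$ is a regular ring of dimension 2 when $R$ is Dedekind, so this alone is not enough. Instead I would show that $M_R$ is reflexive: its double dual $M_R^{\vee\vee}$ is still an integral model, because condition (2) can be checked in codimension one. Maximality then gives $M_R=M_R^{\vee\vee}$. Reflexive modules over a regular two-dimensional ring are projective.

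\emph{Proof of (2).} Base change $M_R\otimes_R R_\mfp$ is an $R_\mfp$-integral model, so it is contained in $M_{R_\mfp}$. Conversely, $M_{R_\mfp}\cap M$ is an $R$-lattice that agrees with $M_R$ away from $\mfp$. It is therefore an $R$-integral model, which gives equality by maximality and density of $M$ in $M_{F_\mfp}$.
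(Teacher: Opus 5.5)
The paper gives no proof of this statement; it is quoted from Gazda, so your outline has to be judged on its own. Its architecture is sound, and several steps are correct as written:
\begin{itemize}
\item one model exists by clearing denominators;
\item models are closed under finite sums, which gives uniqueness;
\item a uniform bound makes the sum of all models finitely generated, since $\bA_R$ is Noetherian;
\item (3) follows from reflexivity over the two-dimensional regular ring $\bA_R$.
\end{itemize}
The codimension-one check in (3) is fine because $\tau=\mathrm{id}\otimes\mathrm{Frob}_R$ is flat by Kunz's theorem, so $\tau^*$ commutes with double duals. Incidentally, $\Lambda$ is reflexive iff $M/\Lambda$ is $\bA$-torsion-free, which is the form of Gazda's criterion the paper invokes later. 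However, the two steps that carry the real content are misstated.

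\emph{The uniform bound.} The relevant fact is not that $\mfj$ is ``prime to'' $\mfp$. The graph $V(\mfj)$ does meet the fibre over $\mfp$, at the point above $\iota^{-1}(\mfp)$. What holds is $\mfj\not\subseteq\mfp\bA_R$. So you should measure denominators in the discrete valuation ring $\mathcal{V}_\mfp:=(\bA_R)_{\mfp\bA_R}$ at the generic point of that fibre. There $\mfj$ becomes the unit ideal, and $v(\tau(x))=q\,v(x)$ for a uniformizer $\varpi\in R$ at $\mfp$.

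You do not need iterates, whose poles lie along $\mfj,\tau^*\mfj,\dots$ and so do not ``stay bounded''. One step suffices. Let $\Lambda\subseteq\Lambda'$ be models with $\tau_M^{-1}(\Lambda_{\mathcal{V}_\mfp})\subseteq\varpi^{-e_\mfp}\tau^*\Lambda_{\mathcal{V}_\mfp}$. Let $x\in\Lambda'$ attain the minimal valuation $-m$ relative to $\Lambda_{\mathcal{V}_\mfp}$. Then $-m\le v(\tau_M(\tau^*x))\le -qm+e_\mfp$, so $m\le e_\mfp/(q-1)$. Moreover $e_\mfp=0$ whenever $\mfp\nmid c_0$, where $0\neq c_0\in R$ kills the cokernel of $\tau^*\Lambda[\mfj^{-1}]\to\Lambda[\mfj^{-1}]$.

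This controls $\Lambda'$ only at generic points of fibres. What you actually get is $\Lambda'\subseteq(\mfa^{-1}\Lambda)^{\vee\vee}$ for a fixed nonzero ideal $\mfa\subseteq R$. This is finitely generated because $\bA_R$ is Noetherian and normal, but it is not directly $\Lambda'\subseteq c^{-1}\Lambda$.

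For the same reason, your claim that $\Lambda$ is ``already maximal locally'' at good primes is false unless $\Lambda$ is reflexive there. A non-reflexive $\Lambda$ can be enlarged at closed points of a good fibre, for instance to $\Lambda^{\vee\vee}$, which is again a model by your own argument in (3). Replacing $\Lambda$ by $\Lambda^{\vee\vee}$ at the outset repairs this.

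\emph{Part (2).} $M_{R_\mfp}\cap M$ is not an $R$-lattice. It imposes no condition at primes $\mfp'\neq\mfp$, so it is not finitely generated over $\bA_R$, and maximality of $M_R$ cannot be applied to it. The module you mean is
\[
\Lambda'':=M_{R_\mfp}\cap(M_R\otimes_R R'),
\]
where $R'$ is the ring of functions on $\Spec R\smallsetminus\{\mfp\}$. The argument then runs as follows.
\begin{itemize}
\item Since $M_R\otimes_R R_\mfp\subseteq M_{R_\mfp}\subseteq\varpi^{-k}(M_R\otimes_R R_\mfp)$ for some $k$, one has $M_R\subseteq\Lambda''\subseteq\mfp^{-k}M_R$, so $\Lambda''$ is finitely generated.
\item $\Lambda''$ satisfies the $\tau$-condition, because localizing at $\mfj$ is flat and commutes with the intersection. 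Hence $\Lambda''\subseteq M_R$ by maximality.
\item $\Lambda''\otimes_R R_\mfp=M_{R_\mfp}$, because $\mfp^{-k}M_R/M_R\cong\varpi^{-k}(M_R\otimes_R R_\mfp)/(M_R\otimes_R R_\mfp)$. This is the precise content of your ``density'' remark.
\end{itemize}
Together these give the reverse inclusion $M_{R_\mfp}\subseteq M_R\otimes_R R_\mfp$. With these corrections your outline becomes a complete proof.
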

	
	We now define good reduction. Let $\mfp$ be a maximal ideal of $R$ and $\FF_\mfp = R/\mfp$ be the residue field.
	\begin{Def}
		We say that $M$ has \emph{good reduction at $\mfp$} if the maximal model $M_{R_\mfp}$ satisfies
		\[ \tau_M(\tau^* M_{R_\mfp})[\mfj^{-1}] = M_{R_\mfp}[\mfj^{-1}]. \]
		In this case, we say $\mfp$ is a \emph{good prime} for $M$ and define the \emph{reduction} of $M$ at $\mfp$ as the $\bA$-motive $(M_\mfp, \tau_\mfp)$ over $\FF_\mfp$, where $M_\mfp := M_{R_\mfp} \otimes_{R_\mfp} \FF_\mfp$, and $\tau_\mfp$ is induced by the diagram:
		$$
		\xymatrix{
			\tau^*M_{R_{\mfp}}[\mfj^{-1}] \ar[r]^{\sim}\ar@{->>}[d] & M_{R_{\mfp}}[\mfj^{-1}] \ar@{->>}[d]\\
			\tau^*M_{\mfp}[\mfj^{-1}] \ar[r]^{\tau_{\mfp}} & M_{\mfp}[\mfj^{-1}]
		}
		$$
	\end{Def}
	Consequently, $\tau_\mfp$ is an isomorphism, hence $M_\mfp$ is an $\bA$-motive over $\FF_\mfp$.
	\subsubsection{Base change of Anderson modules}
	Let $R$ and $S$ be two $\bA$-algebras with structure maps $\iota_R: \bA \to R$ and $\iota_S: \bA \to S$. Let $f: R \to S$ be a ring homomorphism compatible with the $\bA$-algebra structures, i.e., $f \circ \iota_R = \iota_S$.
	
	Let $(\mcE, \phi)$ be an abelian Anderson $\bA$-module over $R$ of dimension $d$ such that $\mcE\cong \GG_{a,R}^{\oplus d}$, i.e. we assume that $R' = R$ in the definition. We define the \emph{base change} of $\mcE$ to $S$, denoted by $\mcE \otimes_R S$, or simply $\mcE_S$, as follows:
	\begin{enumerate}
		\item The underlying group scheme is $\mcE_S \cong \mcE \times_{\Spec R} \Spec S\cong \GG_{a,S}^{\oplus d}$.
		\item The action $\phi_S: \bA \to \End_{\FF_q, S}(\mcE_S)$ is given by base change of morphisms in the sense that if
		\[ \phi_a = \sum_{i=0}^N A_i \tau^i \in \Mat_{d\times d}(R[\tau]), \] for $a \in \bA$, then the action of $a$ on $\mcE_S$, via $\phi_{S}$, is given by applying $f$ to the coefficients:
		\[ \phi_{S,a} := \sum_{i=0}^N f(A_i) \tau^i \in \Mat_{d\times d}(S[\tau]), \]
		where $f(A_i)$ is the matrix obtained by applying $f$ to each entry of $A_i$.
	\end{enumerate}
	
	\begin{prop}
		The pair $\mcE_S = (\GG_{a,S}^d, \phi_S)$ defined above is an Anderson $\bA$-module over $S$.
	\end{prop}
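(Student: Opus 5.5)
We check directly that the pair $\mcE_S=(\GG_{a,S}^{\oplus d},\phi_S)$ satisfies each clause in the definition of an Anderson $\bA$-module over $S$, with $S'=S$. Smoothness, affineness, commutativity and relative dimension $d$ are clear, since $\GG_{a,S}^{\oplus d}$ is the base change of $\GG_{a,R}^{\oplus d}$ along $\Spec S\to\Spec R$. The trivialization condition (a) also holds tautologically with the identity coordinate system as an isomorphism of $\FF_q$-module schemes. The substantive points are two:
\begin{enumerate}
\item[(i)] $a\mapsto\phi_{S,a}$ is an $\FF_q$-algebra homomorphism $\bA\to\End_{\FF_q,S}(\mcE_S)$;
\item[(ii)] the nilpotence condition on $\Lie_{\mcE_S}$ holds.
\end{enumerate}

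For (i), I will use Lemma \ref{LemmaLucas} to identify $\End_{\FF_q,R}(\GG_{a,R}^{\oplus d})$ with $\Mat_{d\times d}(R)[\tau]$, and likewise over $S$. The map $f$ induces a map $f_*:\Mat_{d\times d}(R)[\tau]\to\Mat_{d\times d}(S)[\tau]$ by acting on coefficients. This is additive and $\FF_q$-linear, because $f$ is a ring map and fixes $\FF_q$.

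The key step is that $f_*$ is multiplicative. The product is $\sum_{i+j=k}M_iN_j^{(i)}\tau^k$, and $f$ commutes with the $q$-power Frobenius, since $f(x^q)=f(x)^q$. Hence $f(M_iN_j^{(i)})=f(M_i)f(N_j)^{(i)}$, and $f_*$ is a ring homomorphism sending $1\mapsto 1$. Then $\phi_S=f_*\circ\phi$ is a composite of $\FF_q$-algebra homomorphisms, which gives (i). One should also remark that $f_*$ agrees with the scheme-theoretic base change of morphisms. This follows because under Lemma \ref{LemmaLucas} the comorphism $Y_i\mapsto\sum_j f_{ij}(X_j)$ base-changes by applying $f$ to its coefficients.

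For (ii), note that $\partial\phi_{S,a}=f(\partial\phi_a)=f(E_0(a))$, since taking the constant term commutes with $f_*$. Also $f(\iota_R(a))=\iota_S(a)$ by the compatibility $f\circ\iota_R=\iota_S$. Hence
$$\bigl(\partial\phi_{S,a}-\iota_S(a)\operatorname{I}_d\bigr)^d=f\Bigl(\bigl(E_0(a)-\iota_R(a)\operatorname{I}_d\bigr)^d\Bigr)=f(0)=0,$$
using that $f$ applied entrywise respects matrix products. Here I use that $\Lie_{\mcE_S}=\Lie_\mcE\otimes_RS\cong S^d$, with $\partial$ computed by the constant term as in the paper's conventions.

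I expect no real obstacle. The only point needing care is the multiplicativity of $f_*$, that is, that coefficientwise application of $f$ respects the twisted multiplication. That is exactly where commutation of $f$ with Frobenius is used.
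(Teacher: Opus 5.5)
Your proposal is correct and follows the same route as the paper, whose entire proof is your step (ii): apply $f$ entrywise to $(\partial\phi_a-\iota_R(a)\cdot I_d)^d=0$ and use $f\circ\iota_R=\iota_S$ together with $\partial\phi_{S,a}=f(\partial\phi_a)$. Your step (i) is a check the paper takes for granted: coefficientwise application of $f$ respects the twisted multiplication because $f$ commutes with the $q$-power Frobenius, so $a\mapsto\phi_{S,a}$ is genuinely an $\FF_q$-algebra homomorphism. It completes the argument rather than changing its approach.
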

	
	\begin{pf}
		We only need to verify the nilpotent condition on the Lie algebra.
		Recall that for $\mcE$, the condition is $(\partial \phi_a - \iota_R(a)\cdot I_d)^d = 0$ in $\Mat_d(R)$.
		Note that $\partial \phi_{S,a} =f(A_0) =  f(\partial \phi_a)$. Apply the homomorphism $f$ to the equation over $R$:
		\[
		f\left( (\partial \phi_a - \iota_R(a)\cdot I_d)^d \right) = f(0) = 0.
		\]
		Since $f$ is a ring homomorphism, it commutes with matrix addition, multiplication. Thus:
		\[
		\left( f(\partial \phi_a) - f(\iota_R(a))\cdot f(I_d) \right)^d = 0.
		\]
		Using $f \circ \iota_R = \iota_S$ and $\partial \phi_{S,a} = f(\partial \phi_a)$, we obtain:
		\[
		(\partial \phi_{S,a} - \iota_S(a)\cdot I_d)^d = 0 \quad \text{in } \Mat_d(S).
		\]
		Thus, $\mcE_S$ satisfies the nilpotent condition on its Lie algebra and is an Anderson $\bA$-module over $S$.
	\end{pf}
\begin{prop}\label{basechangeM}
	Let $f: R \to S$ be a homomorphism of $\bA$-algebras. Let $\mcE$ be an Anderson $\bA$-module over $R$. Let $\mcE_S := \mcE \otimes_R S$ be its base change.
	Then there is a natural isomorphism of $\bA$-motives over $S$:
	\[
	\MM(\mcE_S) \cong S\otimes_R\MM(\mcE)\cong \bA_S\otimes_{\bA_R}\MM(\mcE).
	\]
\end{prop}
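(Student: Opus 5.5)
We will construct the isomorphism explicitly, using the coordinate description of $\MM(\mcE)$ furnished by Lemma~\ref{LemmaLucas}. We assume, as in the definition of base change, that $\mcE\cong\GG_{a,R}^{\oplus d}$. Then Lemma~\ref{LemmaLucas} identifies
\[
\MM(\mcE)=\hom_{\FF_q,R}(\mcE,\GG_{a,R})\cong \Mat_{1\times d}(R[\tau]),
\qquad
\MM(\mcE_S)\cong \Mat_{1\times d}(S[\tau]).
\]
Under these identifications:
\begin{enumerate}
\item the $R$-action is left multiplication;
\item $a\in\bA$ acts by right composition $m\mapsto m\,\phi_a$ (respectively $m\mapsto m\,\phi_{S,a}$);
\item $\tau$ acts by left multiplication by $\tau$.
\end{enumerate}

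The candidate map is
\[
\alpha\colon S\otimes_R\MM(\mcE)\to\MM(\mcE_S),\qquad s\otimes m\mapsto s\cdot f(m),
\]
where $f(m)$ means applying $f$ to every coefficient of the row vector $m=\sum_i m_i\tau^i$. This map is well defined, since $f(rm)=f(r)f(m)$. The second isomorphism in the statement, $S\otimes_R\MM(\mcE)\cong\bA_S\otimes_{\bA_R}\MM(\mcE)$, is formal because $\bA_S=\bA_R\otimes_RS$.

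The steps, in order, are as follows.

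First, we check that $\alpha$ is bijective. As a left $R$-module, $R[\tau]$ is free on $\{\tau^i\}_{i\ge0}$. Hence $\Mat_{1\times d}(R[\tau])$ is free over $R$ on the elements $e_j\tau^i$, and $S\otimes_R(-)$ carries this basis to the corresponding basis $e_j\tau^i$ of $\Mat_{1\times d}(S[\tau])$ over $S$.

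Second, we check $\bA$-linearity. We need $f(m\phi_a)=f(m)\,\phi_{S,a}$. This holds because $f$ commutes with Frobenius, $f(x^q)=f(x)^q$, so coefficientwise application of $f$ is a ring homomorphism $R[\tau]\to S[\tau]$ and respects the twisted multiplication $\sum_k\sum_{i+j=k} a_ib_j^{(i)}\tau^k$. Together with $S$-linearity, this shows $\alpha$ is $\bA_S$-linear.

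Third, we check compatibility with $\tau$. The semilinear $\tau$ on the base change is the diagonal one, $\tau(s\otimes m)=s^q\otimes\tau m$, and
\[
\alpha(s^q\otimes\tau m)=s^q\,\tau f(m)=\tau\bigl(s f(m)\bigr),
\]
so $\alpha$ commutes with $\tau$. Equivalently, $\alpha\circ\tau_{M_S}=\tau_{\MM(\mcE_S)}\circ\tau^*\alpha$ once we unwind the map $\operatorname{can}$. Naturality in $\mcE$ follows the same way, since morphisms of Anderson modules are also twisted matrices and $f$ is a ring map on them.

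The only real point is the second step, namely that coefficientwise $f$ is multiplicative on twisted polynomials; this uses that every ring homomorphism between $\FF_q$-algebras commutes with $x\mapsto x^q$. In the general case, where $\mcE$ is only trivialized after a faithfully flat extension $R\to R'$, we would apply the free case over $R'$ and $S'=S\otimes_RR'$ and descend. This is the step I expect to require some care, but with the convention $R'=R$ used here it is unnecessary.
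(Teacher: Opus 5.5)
Your proposal is correct and follows essentially the same route as the paper. Both identify the two sides with $\Mat_{1\times d}(R[\tau])$ and $\Mat_{1\times d}(S[\tau])$ via Lemma~\ref{LemmaLucas}, use the coefficientwise map $s\otimes m\mapsto s\,f(m)$, and check $\tau$-equivariance and compatibility with the $\bA$-action. The paper proves bijectivity by writing down an explicit inverse $\mu$ on the standard basis, which is the same as your free-basis argument on the $e_j\tau^i$. You also make explicit that $f$ commutes with $x\mapsto x^q$, which the paper uses only implicitly.
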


\begin{pf}
	By definition, the underlying group scheme of $\mcE$ is isomorphic to $\GG_{a,R}^{\oplus d}$.
	Recall from Lemma \ref{LemmaLucas} that we have an isomorphism of $R$-modules
	\[
	\MM(\mcE) = \hom_{\FF_q, R}(\mcE, \GG_{a,R}) \cong \Mat_{1\times d}(R[\tau]).
	\]
	
	By definition of $\mcE_S$, its underlying scheme is $\GG_{a,S}^{\oplus d}$. Applying Lemma \ref{LemmaLucas} over $S$, we have:
	\[
	\MM(\mcE_S) = \hom_{\FF_q, S}(\mcE_S, \GG_{a,S}) \cong \Mat_{1\times d}(S[\tau]).
	\]
	
	On the other hand, consider the base change of the motive $\MM(\mcE) \otimes_R S$. Using the identification above:
	\[
	S\otimes_R\MM(\mcE)   \cong S\otimes_R\Mat_{1\times d}(R[\tau]).
	\]
	We have a natural map
	\[
	\begin{aligned}
		\lambda_f: S\otimes_R\Mat_{1\times d}(R[\tau]) &\longrightarrow \Mat_{1\times d}(S[\tau]) \\
		s\otimes \left(\sum \bm{v}_i \tau^i\right)  &\longmapsto \sum (s \cdot f(\bm{v}_i)) \tau^i
	\end{aligned}
	\]
	where $f: R \to S$ is the structure map applied to the coefficients.

    Let $\mu: \Mat_{1\times d}(S[\tau]) \to \Mat_{1\times d}(R[\tau])\otimes_R S$ be the homomorphism of $S$-modules given by $$\sum_{k}\bm{v}_k\tau^k\mapsto \sum_k\sum_{j = 1}^dv_{kj} \otimes (\bm{e}_j\tau^k),$$ where $\bm{v}_k = (v_{k1},\cdots,v_{kd})$ and $\bm{e}_1,\cdots,\bm{e}_d\in \Mat_{1\times d}(R)$ is the standard $R$-basis, i.e., $\bm{e}_j= (0,\cdots,1,\cdots,0)$ has $1$ in the $j$-th entry.
    Then, one immediately sees that $\mu\circ \lambda_f = \id$ and $\lambda_f\circ \mu = \id$. Thus, $\lambda_f$ is an isomorphism of $S$-modules.
	
	It remains to verify that this isomorphism respects the $\bA$-module structure and the $\tau$-action.
	The $\tau$-action on $\MM(\mcE)$ corresponds to left multiplication by $\tau$ in the matrix representation. Thus, $$\begin{aligned}
	    \lambda_f\left(\tau\left(s\otimes \sum_i \bm{v}_i\tau^i\right)\right) & = \lambda_f\left(\left(\tau(s)\otimes \sum_i \bm{v}_i^{(1)}\tau^{i+1}\right)\right)\\
        & = \sum_i s^{(1)}f(v_i^{(1)})\tau^{i+1}\\
        & = \tau\left(\lambda_f\left(\sum_i\bm{v}_i\tau^i\right)\right).
	\end{aligned}$$ Hence, $\lambda_f$ is $\tau$-equivariant.
	
    The action of $a \in \bA$ on $\MM(\mcE)$ is given by composition with $\phi_a$ on the right. Thus, for all $a\in\bA$, we have $$\begin{aligned}
        \lambda_f\left(a\cdot \left(s\otimes \sum_i \bm{v}_i\tau^i\right)\right)& = \lambda_f\left( s\otimes \left(\sum_i \bm{v}_i\tau^i\right)\circ \phi_a\right)\\
        & = \left(\sum_i sf(\bm{v}_i)\tau^i\right)\circ \phi_{S,a}\\
        & = a\cdot \lambda_f\left(s\otimes \sum_i \bm{v}_i\tau^i\right)
    \end{aligned}$$

	Therefore, $\lambda_f:S\otimes_R\MM(\mcE)\to \MM(\mcE_S)$ is an isomorphism of Anderson $\bA$-motives.
\end{pf}
	\subsubsection{Integral models and reduction of Anderson modules}
	In this subsection, we develop the notion of integral models for Anderson modules and establish their relation to the reduction theory of $\bA$-motives.
    
	Let $R$ be a Dedekind domain which is an $\bA$-algebra via $\iota: \bA\to R$. Let $F=\Frac(R)$. Let $\mfp \subset R$ be a maximal ideal and let $\FF_\mfp = R/\mfp$ be the residue field.
	
	\begin{Def}\label{def:module-integral-model}
		Let $(\mcE, \phi)$ be an Anderson $\bA$-module of dimension $d$ over $F$. An $R$-\emph{integral model} for $\mcE$ is a pair $(\mcE_R, \alpha)$, where:
		\begin{enumerate}
			\item $\mcE_R = (\GG_{a, R}^{\oplus d}, \psi)$ is an Anderson $\bA$-module over $R$. This means $\psi_a \in \Mat_d(R[\tau])$ for all $a \in \bA$ and satisfies the nilpotent condition over $R$: $(\partial \psi_a - \iota(a) \cdot I_d)^d = 0$.
			\item $\alpha: \mcE_R \otimes_R F \stackrel{\sim}{\longrightarrow} \mcE$ is an isomorphism of Anderson $\bA$-modules over $F$.
		\end{enumerate}
	\end{Def}
	
	\begin{rmk}
		Since the underlying group schemes are isomorphic to a power of additive group schemes, the isomorphism $\alpha$ is given by a matrix $P \in \GL_d(F)$. The condition that $\alpha$ is a morphism of Anderson modules translates to:
		\[ \psi_a = P^{-1} \phi_a P \quad \text{for all } a \in \bA. \]
		Thus, finding an integral model is equivalent to finding a change of coordinates $P \in \GL_d(F)$ such that all conjugated matrices $\psi_a: = P^{-1} \phi_a P$ have coefficients in $R$ and satisfy the nilpotent condition $(\partial \psi_a - \iota(a) \cdot I_d)^d = 0$.
	\end{rmk}
	Let $\mfp$ be a prime of $R$. Denote by $R_{(\mfp)}$ its localization at $\mfp$ of $R$, which is still a (local) Dedekind domain. Let $\FF_\mfp = R/\mfp\cong R_{(\mfp)}/\mfp R_{(\mfp)}$ be the residue field at $\mfp$.
	\begin{Def}\label{def:module-good-reduction}
		We say that $\mcE$ has \emph{good reduction} at $\mfp$ if there exists an $R_{(\mfp)}$-integral model $(\mcE_{R_{(\mfp)}}, \psi)$ such that $\MM(\mcE_{R_{(\mfp)}})$ is finite projective over $\bA_{R_{(\mfp)}}$. We call such $\mfp$ a \emph{good prime} for $\mcE$.
        
        In this case, $\MM(\mcE_{R_{(\mfp)}})$ is an abelian $\bA$-motive over $R_{(\mfp)}$ by \cite[Theorem 3.5]{hartl2017isogenies} and the reduction modulo $\mfp$
		\[ \bar{\mcE} := \mcE_{R_{(\mfp)}} \otimes_{R_{(\mfp)}} \FF_\mfp = (\GG_{a, \FF_\mfp}^d, \bar{\psi}) \]
		is an abelian Anderson $\bA$-module of dimension $d$ over $\FF_\mfp$ and $\rk(\bar{\mcE}) = \rk(\mcE)$.
	\end{Def}
	\begin{rmk}
        The terms \emph{bad reduction} and \emph{bad prime} make sense in the obvious way.
    \end{rmk}
	This definition is compatible with the motivic reduction defined by Gazda.
	
	\begin{lem}\label{lem:reduction-compatibility}
		Let $F$ be a global function field and $R$ its ring of integers. Let $\mfp$ be a maximal ideal of $R$.
		Let $\mcE$ be an abelian Anderson $\bA$-module over $F$.
		
        If $\mcE$ has good reduction at $\mfp$, then its associated motive $\MM(\mcE)$ has good reduction at $\mfp$.
			In this case, let $(\mcE_{R_{(\mfp)}}, \alpha)$ be the integral model of $\mcE$ witnessing the good reduction. Then we have a canonical isomorphism of $\bA$-motives over the residue field $\FF_\mfp$:
			\[ \MM(\bar{\mcE}) \cong \MM(\mcE)_\mfp, \]
			where $\MM(\mcE)_\mfp$ is the reduction of the $\bA$-motive $\MM(\mcE)$ at $\mfp$.
	
	\end{lem}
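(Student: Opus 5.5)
The plan is to transport the good integral model of $\mcE$ to the motive side and check that it lands inside Gazda's maximal model with $\tau$ an isomorphism after inverting $\mfj$. Let $(\mcE_{R_{(\mfp)}},\alpha)$ be an $R_{(\mfp)}$-integral model as in Definition \ref{def:module-good-reduction}, and put $\Lambda:=\MM(\mcE_{R_{(\mfp)}})$. By Proposition \ref{basechangeM} applied to $R_{(\mfp)}\to F$, composed with the isomorphism $\MM(\alpha)$, we obtain
\[
\Lambda\otimes_{R_{(\mfp)}}F\;\cong\;\MM(\mcE_{R_{(\mfp)}}\otimes F)\;\cong\;\MM(\mcE).
\]
Concretely, with $\alpha$ given by $P\in\GL_d(F)$, the module $\Lambda=\Mat_{1\times d}(R_{(\mfp)}[\tau])$ sits inside $\MM(\mcE)=\Mat_{1\times d}(F[\tau])$ via $m\mapsto m\circ P^{-1}$. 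Since $\Lambda$ is finite projective over $\bA_{R_{(\mfp)}}$, it is an $R_{(\mfp)}$-integral model of $\MM(\mcE)$: it generates $\MM(\mcE)$ over $F$, and $\tau_\Lambda(\tau^*\Lambda)\subset\Lambda$ because $\Lambda$ is effective (left multiplication by $\tau$ preserves $R_{(\mfp)}[\tau]$-coefficients). Moreover $\Lambda$ is an abelian $\bA$-motive over $R_{(\mfp)}$ by \cite[Theorem 3.5]{hartl2017isogenies}, so $\tau_\Lambda:\tau^*\Lambda[\mfj^{-1}]\to\Lambda[\mfj^{-1}]$ is an isomorphism.

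Next I pass to the completion. Base change along $R_{(\mfp)}\to R_\mfp$, again using Proposition \ref{basechangeM}, gives an integral model $\Lambda_\mfp:=\Lambda\otimes R_\mfp$ of $\MM(\mcE)_{F_\mfp}$ on which $\tau$ is still an isomorphism after inverting $\mfj$. The main step is to show $\Lambda_\mfp=M_{R_\mfp}$, the maximal model of Theorem \ref{thm:maximal-model-existence}. One inclusion, $\Lambda_\mfp\subseteq M_{R_\mfp}$, is immediate from the definitions: the sum of two integral models is again an integral model, so the unique maximal model contains every integral model.

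The reverse inclusion is the obstacle. Take $N:=M_{R_\mfp}\supseteq\Lambda_\mfp$; both are projective of the same rank and agree after $\otimes F_\mfp$. Since $\tau_{\Lambda_\mfp}$ is an isomorphism, $\tau_N(\tau^*N)\subseteq N[\mfj^{-1}]$ induces a $\tau$-semilinear endomorphism of the finite-length quotient $Q=N/\Lambda_\mfp$, roughly after twisting by $\mfj^{e}$. I would first show this map is injective on the $\mfp$-torsion module $Q$, using that $\tau_{\Lambda_\mfp}$ is bijective and that $\mfj$ is coprime to $\mfp$ in the relevant sense; this makes the map bijective by a length count. Then I would use $\tau$-stability of a finite-length $\bA_{R_\mfp}$-module to force $Q=0$: iterating $\tau$ raises the $\mfp$-adic denominators by a factor $q$ each time, contradicting boundedness. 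This argument is essentially Gazda's criterion that a model with $\tau$ an isomorphism is maximal \cite[\S4]{Gazda21motcoh}, and I would cite it directly if it is available. Hence $M_{R_\mfp}=\Lambda_\mfp$, and the good-reduction condition for $\MM(\mcE)$ holds.

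Finally, for the reduction statement, Proposition \ref{basechangeM} applied to $R_{(\mfp)}\to\FF_\mfp$, which factors through $R_\mfp$, gives
\[
\MM(\bar\mcE)\cong\Lambda\otimes_{R_{(\mfp)}}\FF_\mfp\cong M_{R_\mfp}\otimes_{R_\mfp}\FF_\mfp=\MM(\mcE)_\mfp.
\]
These isomorphisms are compatible with $\tau$, because the $\tau$-action on both sides is induced from the one on $\Lambda$ as in the defining diagram of $\tau_\mfp$.
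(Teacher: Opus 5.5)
Your plan is essentially the paper's proof. The paper also takes $\Lambda=\MM(\mcE_{R_{(\mfp)}})$, notes that it is a Gazda integral model of $\MM(\mcE)$ on which $\tau$ becomes an isomorphism after inverting $\mfj$, and gets maximality by citing \cite[Corollary 4.44]{Gazda21motcoh}, exactly as you propose; it then reads off $\MM(\bar{\mcE})\cong\MM(\mcE)_\mfp$ from Proposition~\ref{basechangeM}. Your explicit passage from $R_{(\mfp)}$ to the completion $R_\mfp$ is a step the paper glosses over.

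Rely on that citation rather than your fallback sketch, which does not work as written. Let $\varpi$ be a uniformizer of $R_\mfp$. First, the quotient $N/\Lambda_\mfp$ need not have finite length (e.g.\ $N=\varpi^{-1}\Lambda_\mfp$). Second, $\mfj$ is not coprime to $\mfp$, since $\mfj+\varpi\bA_{R_\mfp}$ is the maximal ideal of the characteristic point of the special fibre. The denominator-growth argument should instead be run at the height-one prime $\varpi\bA_{R_\mfp}$, where $\mfj$ is a unit and $\tau^*$ multiplies colengths by $q$. It must then be combined with $\Lambda_\mfp=\bigcap_{\mathfrak q}(\Lambda_\mfp)_{\mathfrak q}$, the intersection over height-one primes $\mathfrak q$ of the regular ring $\bA_{R_\mfp}$.
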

	\begin{proof}
	    Let $S = R_{(\mfp)}$ be the localization at $\mfp$ and $\FF_\mfp = S/\mfp S = R/\mfp$ be the residue field.
		Assume $\mcE$ has good reduction at $\mfp$. By Definition \ref{def:module-good-reduction}, there exists an integral model $(\mcE_S, \alpha)$ over $S$, where $\mcE_S \cong \GG_{a,S}^{\oplus d}$ as group schemes, such that its reduction $\bar{\mcE} := \mcE_S \otimes_S \FF_\mfp$ is an abelian Anderson $\bA$-module over $\FF_\mfp$ and $\rk(\bar{\mcE}) = \rk(\mcE)$.

        Define
		\[ \Lambda := \MM(\mcE_S) = \hom_{\FF_q, S}(\mcE_S, \GG_{a,S}). \]
        The isomorphism $\alpha: \mcE_S \otimes_S F \stackrel{\sim}{\to} \mcE$ induces an isomorphism of left $F[\tau]$-modules:
		\[ \alpha^*: \MM(\mcE) \xrightarrow{\sim} \MM(\mcE_S \otimes_S F) \cong \Lambda \otimes_S F. \]
		Through $(\alpha^*)^{-1}$, we identify $\Lambda$ as an $S$-submodule of $\MM(\mcE)$ which generates $\MM(\mcE)$ over $F$. Furthermore, write $M = \MM(\mcE)$, then for any $m \in \Lambda$, by definition the action $\tau_M(\tau^*m) = \tau_\Lambda(\tau^*m) \subseteq \Lambda$ because $\mcE_S$ is defined over $S$.
		Thus, $\Lambda$ is an $S$-integral model for the motive $\MM(\mcE)$ in the sense of Gazda. By \cite[Corollary 4.44]{Gazda21motcoh}, $\Lambda$ is the unique maximal integral model for $\MM(\mcE)$. 

        Since $\Lambda$ is an $\bA$-motive over $R_{(\mfp)}$, we have an isomorphism $\tau_\Lambda:\tau^*\Lambda[\mfj^{-1}] \stackrel{\sim}{\to} \Lambda[\mfj^{-1}]$. This means that $\MM(\mcE)$ has a good reduction at $\mfp$. The second statement follows immediately from the definition and Proposition \ref{basechangeM}.
	\end{proof}
\begin{rmk}
    Asking whether the converse to Lemma \ref{lem:reduction-compatibility} holds is equivalent to seeking a Néron-Ogg-Shafarevich criterion for $\mcE$. Due to the technical subtleties involved for general Anderson modules, and because the converse is not needed for our applications, we do not address this question in the present paper.
\end{rmk}
\begin{prop}\label{finite_bad}
    Let $E$ be a global function field containing $\FF_q$, and let $\mcO_E$ be its ring of integers. Let $(\mcE,\phi)$ be an abelian Anderson $\bA$-module of rank $r$ defined over $E$.
    
    Then, the set of maximal ideals $\mfp \subset \mcO_E$ where $\mcE$ has bad reduction is finite.
\end{prop}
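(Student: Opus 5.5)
Since $\mcE$ is abelian, I would take the definition of good reduction in Definition \ref{def:module-good-reduction} at face value: at a prime $\mfp$, good reduction means that $\MM$ of some $R_{(\mfp)}$-integral model is finite projective over $\bA_{R_{(\mfp)}}$. The plan is to produce one integral model over a localization $\mcO_E[1/f]$ that works at all primes not dividing a single nonzero element $f$. Then every bad prime divides $(f)$, and there are only finitely many such primes.

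First I fix coordinates $\mcE\cong\GG_{a,E}^{\oplus d}$. The ring $\bA$ is a finitely generated $\FF_q$-algebra, say generated by $a_1,\dots,a_k$. The matrices $\phi_{a_1},\dots,\phi_{a_k}\in\Mat_d(E[\tau])$ have finitely many coefficients. I choose $0\neq f\in\mcO_E$ so that all these coefficients lie in $R:=\mcO_E[1/f]$.

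Then $\phi$ restricts to a ring homomorphism $\bA\to\Mat_d(R[\tau])$, because $R[\tau]$ is a subring and $\phi$ is determined by the generators. The nilpotence condition $(\partial\phi_a-\iota(a)I_d)^d=0$ holds in $\Mat_d(E)$, hence in the subring $\Mat_d(R)$. So $\mcE_R=(\GG_{a,R}^{d},\phi)$ is an Anderson $\bA$-module over $R$, and for every $\mfp\nmid f$ its localization is an $R_{(\mfp)}$-integral model, with $\alpha$ the identity.

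It remains to show that $\MM(\mcE_R)=\Mat_{1\times d}(R[\tau])$ is finite projective over $\bA_R$, after possibly enlarging $f$. This is the main step. I would pick a nonconstant $a\in\bA$ and use Lemma \ref{LemmaLucas} to view $\MM(\mcE_R)$ as a left $R[\tau]$-module, with $\FF_q[a]$ acting through right multiplication by $\phi_a$.

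Write $\phi_a=\sum_{i\le m}A_i\tau^i$. Over $E$, abelianness means $\MM(\mcE)$ is finite over $E[a]$. Concretely, there are finitely many elements $m_1,\dots,m_s\in\MM(\mcE)$ of bounded $\tau$-degree such that every $\bm e_j\tau^n$ can be written as an $E[a]$-combination of them, by a division algorithm against $\phi_a$. That division needs the inverse of a leading-coefficient-type matrix, or more generally a finite list of denominators; I enlarge $f$ to invert all of them. Then the same division works over $R$, so $\MM(\mcE_R)$ is finitely generated over $R[a]$, and a fortiori over $\bA_R$.

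For projectivity, $\bA_R$ is a noetherian regular ring of dimension $2$, so I would argue as follows. $\MM(\mcE_R)$ is torsion-free, being contained in $\MM(\mcE)$. Its generic rank equals $r$, and the $\tau$-structure forces it to be locally free. The cleanest route is to invoke Anderson–Hartl's criterion from \cite[Theorem 3.5]{hartl2017isogenies}: $\mcE_R$ is abelian exactly when $\MM(\mcE_R)$ is finite over $R[a]$, and projectivity then follows. Alternatively, generic freeness (finitely generated $R$-module, flat after inverting one more element) gives flatness over $R$ after enlarging $f$. Combined with fibrewise rank $r$, this gives projectivity over $\bA_R$.

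The hardest step is this one, making the finiteness and projectivity uniform over $R$. I expect the division-algorithm bookkeeping, namely which denominators need to be inverted, to be the real content. Once it is done, every prime $\mfp\nmid f$ is good by definition, and the bad primes lie among the finitely many prime divisors of $(f)$.
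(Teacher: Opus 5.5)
Your outline (one model over $R=\mcO_E[1/f]$, finite generation of $\MM(\mcE_R)=\Mat_{1\times d}(R[\tau])$ over $\bA_R$, then projectivity) matches the paper's, but the projectivity step, which you rightly call the crux, is not actually proved.

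\textbf{Projectivity.} Torsion-freeness plus generic rank $r$ does not suffice over the two-dimensional ring $\bA_R$: a height-two maximal ideal is torsion-free of rank one and not projective. The phrase ``the $\tau$-structure forces it'' is exactly what needs an argument, and the appeal to \cite[Theorem 3.5]{hartl2017isogenies} does not supply one. As used in this paper, that theorem takes finite projectivity of $\MM(\mcE)$ as input (this is the definition of abelian here). Invoking it for ``finite over $R[a]$ $\Rightarrow$ projective'' over a non-field base therefore assumes the point at issue.

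Your fallback aims at the wrong property. $\Mat_{1\times d}(R[\tau])$ is already free over $R$ on the $\bm e_j\tau^n$, and it is not finitely generated over $R$, so generic freeness adds nothing. What the fibrewise flatness criterion needs is that each fibre $\Mat_{1\times d}(\kappa(\mfp)[\tau])$ be \emph{projective} over $\bA_{\kappa(\mfp)}$, not merely of rank $r$.

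The missing observation is that this fibre, which is finitely generated over the Dedekind domain $\bA_{\kappa(\mfp)}$, is torsion-free.
\begin{itemize}
\item Its torsion submodule is a finite-dimensional $\kappa(\mfp)$-subspace.
\item That submodule is stable under left multiplication by $\tau$, because $\tau(b\cdot x)=b^{(1)}\cdot\tau(x)$ and $b^{(1)}\neq 0$ for $b\neq 0$.
\item For $x\neq 0$, the elements $\tau^n x$ have $\tau$-degree $\deg x+n$, so they are linearly independent, which is impossible inside a finite-dimensional subspace.
\end{itemize}
The local (fibrewise) flatness criterion then makes $\MM(\mcE_R)$ flat, hence projective, over $\bA_R$. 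Its rank is constant equal to $r$ since $\bA_R$ is a domain. Completed this way, your route is a self-contained alternative to the paper's saturation argument and its use of \cite[Proposition 4.33]{Gazda21motcoh}.

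\textbf{Finite generation.} This step also needs a uniformity argument that your ``finite list of denominators'' only gestures at. A division algorithm works after inverting the determinant of an invertible leading matrix of some $\phi_a$; this is what the paper does. For abelian modules such as $C\oplus C^{\otimes 2}$ over $\FF_q[t]$, however, no nonconstant $\phi_a$ has an invertible leading matrix. The expressions for the infinitely many $\bm e_j\tau^n$ could then a priori need infinitely many denominators.

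One fix:
\begin{enumerate}
\item Choose $n_0$ so that elements of $\tau$-degree $\le n_0$ generate $\Mat_{1\times d}(E[\tau])$ over $E[a]$.
\item Write the finitely many $\bm e_j\tau^{n_0+1}$ in terms of them, and invert the denominators of those coefficients.
\item Use $\bm e_j\tau^n=\tau^{\,n-n_0-1}\cdot\bm e_j\tau^{n_0+1}$. Left multiplication by $\tau$ commutes with right multiplication by $\phi_a$, only Frobenius-twisting the coefficients, which stay in $R$.
\item Induction on $n$ then shows that elements of degree $\le n_0$ generate $\Mat_{1\times d}(R[\tau])$ over $R[a]$.
\end{enumerate}
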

\begin{proof}
    Consider the motive $\MM(\mcE)$ over  $E$, identified with $\Mat_{1\times d}(E[\tau])$. Since $\mcE$ is abelian of rank $r$, $M := \MM(\mcE)$ is a finitely generated projective $\bA_E$-module of rank $r$. Let $a_1,\cdots,a_m$ be a set of generators of the $\FF_q$-algebra $\bA$. Then, the Anderson module $\mcE$ is determined by $$\phi_{a_j} = \sum_{i = 0}^{N_j}A_{ij}\tau^i\textnormal{ with }A_{ij}\in \Mat_{d\times d}(E)[\tau], 1\leq j\leq m.$$ Each entry of $A_{ij}$ can be written as a fraction of elements in $R = \mcO_E$ and we fix a choice of these numerators and denominators once for all. 
    
    Let $S_{0}$ be the finite set of primes $\mfp \subset R$ dividing the denominators of these entries. Then, for any $\mfp \notin S_{0}$,  $\mcE$ is defined over the local ring $R_{(\mfp)}$, hence itself is an $R_{(\mfp)}$-integral model of $\mcE$. We denote this integral model by $\mcE_{R_{(\mfp)}}$. Let $S_1$ be the finite set of primes $\mfp$ dividing the determinant of highest leading matrices of $\phi_{a_i}$. Then we see that these matrices are invertible in $\Mat_{d\times d}(R_{(\mfp)})$ and hence $\tau^{m}e_1,\cdots,\tau^me_d\in \MM(\mcE_{R_{(\mfp)}})$ can be written in lower degree terms, where $e_i = (0,\cdots,1,\cdots,0)$ is a standard $E[\tau]$-basis for $\Mat_{d\times d}(E[\tau])$. Thus, for $\mfp\notin S_0\cup S_1$, $\MM(\mcE_{R_{(\mfp)}})$ is finitely generated over $\bA_{R_{(\mfp)}} $. 

    Let $f$ be the product of numerators, denominators appearing in entries of $A_{ij}$ and determinants of leading matrices. Consider $R_f$, then a similar argument yields an $R_f$-integral model for $\mcE$, denoted by $\mcE_{R_f}$ and let $N = \MM(\mcE_{R_f})$. By our construction, as a submodule of $M= \MM(\mcE)$, its image inside $M$ is exactly $ \Mat_{1\times d}(R_f[\tau])$. Note that $N$ is finitely generated over $\bA_{R_f}$ for the same reason but not necessarily projective over $\bA_{R_f}$. However, if we consider the saturation module $\widetilde{N} = \{x\in M: a\cdot x \in N \textnormal{ for all }a\in\bA\}$, then $N\subseteq \widetilde{N}\subseteq M$ and $\widetilde{N}$ is also finitely generated over $\bA_{R_{f}}$. Note that $N\otimes_{R_f} E = M$, we must have $N\otimes_{R_f} E = \widetilde{N}\otimes_{R_f} E = M$. Then, $(\widetilde{N}/N)\otimes_{R_f}E = 0$.

    Let $Q = \widetilde{N}/N$. Since $\widetilde{N}$ is finitely generated over $\bA_{R_f}$, the quotient module $Q$ is also finitely generated over $\bA_{R_f}$. Let $\{q_1, \dots, q_k\}$ be a set of generators of $Q$.

    The condition $Q \otimes_{R_f} E = 0$ implies that for each generator $q_i$, its image in the tensor product is zero. Since $E = \Frac(R_f)$, this implies that each $q_i$ is a torsion element. Specifically, for each $i \in \{1, \dots, k\}$, there exists a non-zero element $s_i \in R_f$ such that $s_i \cdot q_i = 0$ in $Q$. Let $s = \prod_{i=1}^k s_i$. Then $s$ is a non-zero element of $R_f$ that annihilates every generator $q_i$. Consequently, $s$ annihilates the entire module $Q$, i.e., $s Q = 0$. Since $s \in R_f$, we can write $s = r/f^m$ for some $r \in R \setminus \{0\}$ and integer $m \ge 0$. Let $S_2$ be the finite set of prime ideals of $R$ consisting of primes dividing $r$ or $f$. For any prime $\mfp \notin S_0\cup S_1\cup S_2$ the localization map $R_f \to R_{(\mfp)}$ is well-defined and flat as $\mfp$ does not divide $f$. Since $\mfp$ does not divide $r$, the image of $r$ is a unit in $R_\mfp$. Thus, the image of $s = r/f^m$ is a unit in $R_{(\mfp)}$. Thus, $Q \otimes_{R_f} R_\mfp= 0$. Thus, $\MM(\mcE_{R_{(\mfp)}}) = N\otimes_{R_f} R_{(\mfp)}= \widetilde{N}\otimes_{R_f}R_{(\mfp)}$ and it verifies the condition $\MM(\mcE_{R_{(\mfp)}})\cap aM = a\MM(\mcE_{R_{(\mfp)}})$ for all $a\in \bA$. Thus it is projective over $\bA_{R_{(\mfp)}}$ by \cite[Proposition 4.33]{Gazda21motcoh}.

    Therefore, for all $\mfp\notin S_0\cup S_1\cup S_2$, $\mcE$ has a good reduction at $\mfp$.
\end{proof}
Now, we can compute these local $L$-factors by considering the Frobenius action on reductions, thanks to the following result.
\begin{prop}\label{reduction_computation_lemma}
    Let $M$ be an $\bA$-motive over $E$ and $M_{\mcO_E}$ its maximal integral model over $\mcO_E$. Let $\wp\in \Spec \mcO_E$ be a good prime for $M$ and $\FF_\wp$ the residue field at $\wp$. Then, for all but a finite number of places $\mfl$ of $K$, we have $$P^\vee_\wp(M,X) = \det(X- \tau^{\deg \wp}|M_{\wp}),$$ where $M_\wp$ denotes the reduction of $M$ at $\wp$.
\end{prop}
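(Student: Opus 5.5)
We reduce the computation to the reduction $M_\wp$ over the finite field $\FF_\wp$ and then compare Frobenius on $\mfl$-adic cohomology with the $\deg\wp$-fold power of $\tau$. Fix a finite place $\mfl$ of $K$ not lying under $\wp$; we exclude the finitely many $\mfl$ dividing the characteristic of $\FF_\wp$. Let $\mcO = \mcO_{E,\wp}$ be the completion, with maximal unramified extension $\mcO^{\nr}$ and residue field $\conj{\FF}_\wp$.

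\textbf{Step 1: unramifiedness.} By Theorem \ref{thm:maximal-model-existence}(2), the maximal integral model localizes: $M_{\mcO_E}\otimes \mcO \cong M_{\mcO}$. Good reduction means $\tau$ is an isomorphism on $M_{\mcO}[\mfj^{-1}]$, and $\mfj$ becomes a unit after $\mfl$-adic completion since $\mfl\nmid\wp$. Hence $M_{\mcO,\mfl}:=M_\mcO\otimes_{\bA_\mcO}\bA_\mfl(\mcO)$ is an étale $\tau$-module over $\bA_\mfl\widehat\otimes\mcO$. Étale $\tau$-modules over a strictly henselian base are trivialized by their $\tau$-invariants, which follows from Lang's theorem modulo $\mfl^n$ together with Hensel lifting. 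Therefore $H^1_\mfl(M)=(M_{\mcO,\mfl}\widehat\otimes\mcO^{\nr})^\tau$ already, so $I_\wp$ acts trivially.

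\textbf{Step 2: comparison with the reduction.} Reducing modulo the maximal ideal gives a $G_{\FF_\wp}$-equivariant isomorphism
\[ H^1_\mfl(M)\cong \big(M_{\wp}\otimes_{\bA_{\FF_\wp}}\bA_\mfl(\conj{\FF}_\wp)\big)^{\tau}=H^1_\mfl(M_\wp). \]
Injectivity and surjectivity both come from the same Hensel/Lang argument. The map is equivariant because reduction commutes with the Galois action on coefficients.

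\textbf{Step 3: the finite field computation.} Over $\FF_\wp$ with $q^f=|\FF_\wp|$, where $f=\deg\wp$ is measured over $\FF_q$, the semilinear map $\tau^f$ on $M_\wp$ is $\bA_{\FF_\wp}$-linear. By Proposition (1), $H^1_\mfl(M_\wp)\otimes\bA_\mfl(\conj{\FF}_\wp)\cong M_{\wp,\mfl}(\conj{\FF}_\wp)$. For $v\otimes\lambda$ in this tensor product, $\tau^f$ acts by $v\otimes\lambda^{q^f}=\Frob^{-1}_\wp$-action (geometric Frobenius), since $\tau$ fixes $v$. Comparing characteristic polynomials gives
\[ \det(X-\rho(\Frob_\wp^{-1})\mid H^1_\mfl)=\det(X-\tau^f\mid M_\wp\otimes\bA_\mfl). \]
The latter polynomial has coefficients in $\bA_{\FF_\wp}$, which we can descend. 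Here $M_\wp$ is projective over the Dedekind ring $\bA_{\FF_\wp}$, and the characteristic polynomial of a linear endomorphism of a projective module is compatible with the flat base change to $\bA_\mfl\otimes\FF_\wp$. Finally, the coefficients are $\tau$-invariant because $\tau^f$ commutes with $\tau$, so they lie in $\bA$, equivalently in $K$. This gives exactly $P^\vee_\wp(M,X)$.

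The main obstacle is Step 2, namely justifying rigorously that $\tau$-invariants commute with reduction over the strictly henselian base. The plan is to work modulo $\mfl^n$, where the modules are finite over $\mcO^{\nr}$, apply Lang's theorem to the finite étale $\FF_q$-group scheme of solutions $\tau m=m$, and then pass to the limit.
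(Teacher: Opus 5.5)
Your argument is correct. The paper gives no proof of its own here: it cites Taelman's Proposition 7 and points to Gardeyn's Theorem 7.3. Your three steps form the standard argument behind statements of this kind:
\begin{itemize}
\item Good reduction makes $M_{\mcO,\mfl}$ an \'etale $\tau$-module, because $\iota(\mfl)\mcO=\mcO$ forces $\mfj$ to become a unit after $\mfl$-adic completion.
\item \'Etale $\tau$-modules over the strictly henselian ring $\mcO^{\nr}$ are trivialized by their $\tau$-invariants. This gives both the triviality of $I_\wp$ and the specialization isomorphism $H^1_\mfl(M)\cong H^1_\mfl(M_\wp)$.
\item The finite-field comparison then finishes the computation.
\end{itemize}
Compared with the citation, your version is self-contained and works for general $\bA$. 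It also works for every $\mfl$ not under $\wp$, so only the single place $\wp\cap\bA$ is excluded; your phrase about ``finitely many $\mfl$ dividing the characteristic'' should be dropped. The citation is shorter, but it leaves implicit the passage from $M$ over $E$ to $M_\wp$ over $\FF_\wp$, which is exactly what your Steps 1--2 supply.

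One point in Step 3 needs correcting. The map $v\otimes\lambda\mapsto v\otimes\lambda^{q^f}$ is semilinear, so it is not itself the Galois action of $\Frob_\wp^{-1}$. The correct statement is as follows. $\Frob_\wp$ acts on $M_\wp\otimes_{\bA_{\FF_\wp}}\bA_\mfl(\conj{\FF}_\wp)$ through the coefficients. Hence the $\bA_\mfl(\conj{\FF}_\wp)$-linear operator $\tau^f\circ\Frob_\wp^{-1}$ equals $\tau^f_{M_\wp}\otimes\id$ in that description. In the description $H^1_\mfl(M_\wp)\otimes_{\bA_\mfl}\bA_\mfl(\conj{\FF}_\wp)$ the same operator equals $\rho(\Frob_\wp)^{-1}\otimes\id$.

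Concretely, write $h=Cm$, with $h$ a basis of $\tau$-invariants and $m$ a basis of $M_\wp\otimes\bA_\mfl(\FF_\wp)$. Then $\tau^f$ has matrix $T=(C^{(f)})^{-1}C$ on $m$, and $\Frob_\wp$ has matrix $C^{(f)}C^{-1}=C\,T^{-1}C^{-1}$ on $h$. This gives your displayed identity with $\Frob_\wp^{-1}$, as $P^\vee_\wp$ requires. The same comparison also shows directly that the coefficients lie in $\bA_{\FF_\wp}\cap\bA_\mfl=\bA$, so the separate $\tau$-invariance argument is not needed; it would in any case require inverting $\mfj$.
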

\begin{proof}
     This is \cite[Proposition 7]{taelman2009special}. Also the proof is similar to that of \cite[Theorem 7.3]{gardeyngalois}.
\end{proof}

\begin{cor}\label{reduction_at_good} Assume $\bA = \FF_q[t]$.
    Let $\mcE$ be an abelian Anderson $\bA$-module of dimension $d$ over $E$. Let $\wp\in \Spec \mcO_E$ be a good prime. Then, $$P_\wp(\mcE,X) = \det(X- \tau^{\deg \wp}|\MM(\mcE_{\wp})) = P_\wp(\mcE_\wp,X),$$ where $\mcE_\wp$ denotes the reduction of $\mcE$ modulo $\wp$ and $P_\wp(\mcE_\wp,X)$ denotes the characteristic polynomial of the Frobenius element acting on the Tate module $T_\mfl(\mcE_\wp)$ for some $\mfl$ not lying below $\wp$.
\end{cor}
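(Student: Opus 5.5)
The plan is to deduce the corollary from Proposition~\ref{reduction_computation_lemma}, applied to the motive $M=\MM(\mcE)$, and then translate back from motives to Anderson modules using the identification $H^1_\mfl(M,K_\mfl)\cong V^\vee_\mfl(\mcE)$ from Proposition~\ref{coh_and_mod}(3). The steps are carried out in the order below.

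\emph{Step 1: good reduction of the motive.} Since $\wp$ is a good prime for $\mcE$, Lemma~\ref{lem:reduction-compatibility} shows that $\MM(\mcE)$ has good reduction at $\wp$. It also gives a canonical isomorphism $\MM(\mcE)_\wp\cong\MM(\mcE_\wp)$ of $\bA$-motives over $\FF_\wp$, compatible with $\tau$ and hence with $\tau^{\deg\wp}$. Consequently
\[
\det\bigl(X-\tau^{\deg\wp}\mid \MM(\mcE)_\wp\bigr)=\det\bigl(X-\tau^{\deg\wp}\mid \MM(\mcE_\wp)\bigr).
\]

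\emph{Step 2: unramifiedness and the Frobenius polynomial.} The maximal integral model $M_{\mcO_{E,(\wp)}}$ has $\tau$ invertible after inverting $\mfj$. By Proposition~\ref{coh_and_mod}(1),(4) applied over the strict henselization, the $\tau$-invariants can therefore be computed over the maximal unramified extension, so $I_\wp$ acts trivially on $H^1_\mfl(M)$ for $\mfl$ not under $\wp$. Proposition~\ref{reduction_computation_lemma} then gives
\[
P^\vee_\wp(M,X)=\det\bigl(X-\tau^{\deg\wp}\mid M_\wp\bigr).
\]

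\emph{Step 3: translation to $\mcE$.} I next identify $P^\vee_\wp(M,X)$ with $P_\wp(\mcE,X)$. By Proposition~\ref{coh_and_mod}(3), $H^1_\mfl(M,K_\mfl)\cong V_\mfl(\mcE)^\vee$ as $G_E$-modules, where $g$ acts on the dual through the contragredient $\rho^\vee(g)=\rho(g^{-1})^{\top}$. Hence $\Frob_\wp^{-1}$ acts on $H^1_\mfl(M)$ by $\rho_{\mcE,\mfl}(\Frob_\wp)^{\top}$. A matrix and its transpose have the same characteristic polynomial, so $P^\vee_\wp(M,X)=P_\wp(\mcE,X)$. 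Combined with Steps 1 and 2 this yields the first equality of the corollary.

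\emph{Step 4: the second equality.} I run the same argument over the finite field $\FF_\wp$ for the Anderson module $\mcE_\wp$. There $G_{\FF_\wp}$ is topologically generated by $\Frob_\wp$, and the Frobenius polynomial on $T_\mfl(\mcE_\wp)$ is again $\det(X-\tau^{\deg\wp}\mid\MM(\mcE_\wp))$. This is the classical statement for finite fields, namely Proposition~\ref{reduction_computation_lemma} with trivial reduction step, and it goes through the same duality as in Step 3. This step also gives independence of $\mfl$, since the right-hand side does not involve $\mfl$.

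\emph{Main obstacle.} The delicate point is Step 3: keeping track of arithmetic versus geometric Frobenius and of dual versus contragredient actions, so that no stray inverse or transpose appears. I would handle it by checking the rank-one case of the Carlitz module, where $\MM(C)=K[t]$ with $\tau_M=t-\theta$. In that case $\tau^{\deg\wp}$ acts on the reduction by the monic generator of $\wp$ evaluated at $t$, and this must equal the Frobenius eigenvalue on $T_\mfl(C)$. This check fixes the conventions. A secondary point is the unramifiedness in Step 2, which I would take from the good-reduction hypothesis via the maximal model as indicated.
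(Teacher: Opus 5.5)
Your proposal is correct and follows the paper's route. The paper obtains the first equality by applying Proposition~\ref{reduction_computation_lemma} to $M=\MM(\mcE)$ and the second from the finite-field result \cite[Corollary 3.7.3]{huang2022convolutions}; your Steps 1 and 3 make explicit two identifications the paper leaves implicit, namely $\MM(\mcE)_\wp\cong\MM(\mcE_\wp)$ via Lemma~\ref{lem:reduction-compatibility} and $P^\vee_\wp(\MM(\mcE),X)=P_\wp(\mcE,X)$ via Proposition~\ref{coh_and_mod}(3). The separate unramifiedness argument in Step 2 is unnecessary, since it is already built into Proposition~\ref{reduction_computation_lemma}, and the items (1),(4) you cite there come from the unlabeled proposition on $\mfl$-adic cohomology of motives, not from Proposition~\ref{coh_and_mod}.
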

\begin{proof}
    The first equality follows from Proposition \ref{reduction_computation_lemma} by taking $M = \MM(\mcE)$ and the second equality follows from \cite[Corollary 3.7.3]{huang2022convolutions}. 
\end{proof}
\subsection{Twisted $L$-series}
Let $\varphi: \bA\to E[\tau]$ be a Drinfeld $\bA$-module, which will be denoted by $\varphi/E$ for simplicity. Let $\rho: G_E\to \GL(V)$ be an Artin representation with coefficients in $\conj{\FF}_q$. In this section, we will define the twisted $L$-series $L(\varphi^\vee/E,\rho,s)$.

Fix a prime $\wp$ of $\mcO_E$ of degree $d = [\FF_\wp:\FF_q]$. Let $D_{\wp}$, $I_{\wp}$ be the decomposition group and inertia group respectively. Let $\FF_\wp$ be the corresponding residue field at $\wp$. Write $G_\wp = D_{\wp}/I_{\wp}\cong \Gal(\conj{\FF}_{\wp}/\FF_\wp)$. The quotient group $G_\wp$ is therefore generated by the Frobenius automorphism $\Frob_{\wp}$, i.e. the $q^d$-th power map $x\mapsto x^{q^d}$. $\Frob_{\wp}$ is an endomorphism of the module $V^{I_\wp}$ of invariants and the characteristic polynomial $$P_\wp(\rho,X):= \det(X - \rho(\Frob_{\wp})|V^{I_\wp})\in \conj{\FF}_q[X]$$ depends only on the prime ideal $\wp$ and not on the choice of the prime ideals above $\wp$.

Let $\mfl$ be a prime in $A$ such that $\wp$ does not lie above $\mfl$. Recall that we have the $\mfl$-adic Galois representation $\rho_{\varphi,\mfl}$ and its dual representation $\rho_{\varphi,\mfl}^\vee$:
$$\rho_{\varphi,\mfl}: G_E\to \GL(V_\mfl(\varphi))\quad\quad \textnormal{and}\quad\quad \rho^\vee_{\varphi,\mfl}: G_E\to \GL(V^\vee_\mfl(\varphi)),$$ where $T_\mfl(\varphi)$ is the $\mfl$-adic Tate module of $\varphi$ and $V_\mfl(\varphi) = T_\mfl(\conj{\varphi})\otimes_{A_\mfl}K_\mfl$. 

Let $\conj{K}_\mfl$ denote a fixed algebraic closure of the local field $K_\mfl$ and fix an embedding $$\iota: \conj{\FF}_q\hookrightarrow \conj{K}_\mfl.$$ We can extend the scalar to $\conj{K}_\mfl$ by defining $$\conj{V}_\mfl(\varphi): = V_\mfl(\varphi) \otimes_{K_\mfl}\conj{K}_\mfl\quad\textnormal{  and  } \quad \conj{V}_\mfl(\rho): = V\otimes_{\conj{\FF}_q,\iota}\conj{K}_\mfl.$$ This gives us representations over $\conj{K}_\mfl$. Now, define the tensor product representation $$W_\mfl:= \conj{V}_\mfl(\varphi)\otimes_{\conj{K}_\mfl} \conj{V}_\mfl(\rho)\quad \textnormal{ and }\quad W^\vee_\mfl:= \conj{V}^\vee_\mfl(\varphi)\otimes_{\conj{K}_\mfl} \conj{V}_\mfl(\rho)$$ and hence we have $\mfl$-adic Galois representations $$\Theta_{\varphi,\rho,\mfl}: G_E\to \GL(W_\mfl)\quad \textnormal{ and }\quad \Theta^\vee_{\varphi,\rho,\mfl}: G_E\to \GL(W^\vee_\mfl).$$ Note that here $W^\vee_\mfl$ is not the dual representation of $W_\mfl$.
\begin{Def}
    We define the following characteristic polynomials at $\wp$:
    \begin{enumerate}
        \item[(1)] $\bP_{\wp}(\varphi,\rho,X) = \det(X- \Theta_{\varphi,\rho,\mfl}(\Frob_\wp)|W_\mfl^{I_\wp})$;
        \item[(2)] $\bP^\vee_{\wp}(\varphi,\rho,X) = \det(X- \Theta_{\varphi,\rho,\mfl}^\vee(\Frob_\wp)|(W_\mfl^\vee)^{I_\wp})$.   
    \end{enumerate}
\end{Def}

One can compare our construction with \cite[\S 1]{dokchitser2005l}.

Let $\bQ_{\wp}:=\bQ_{\wp}(\varphi,\rho,X)$ and $\bQ_{\wp}^\vee:=\bQ^\vee_{\wp}(\varphi,\rho,X)$ be the reciprocal polynomial of $\bP_{\wp}(\varphi,\rho,X)$ and $\bP^\vee_{\wp}(\varphi,\rho,X)$ respectively.
\begin{Def}
    Let $\varphi:A\to E[\tau]$ be a Drinfeld $A$-module and $\rho: G_E\to \GL(V)$ be an Artin representation with coefficients in $\conj{\FF}_q$. Let $S$ be a finite set of finite places containing the bad primes of $E$. We define $$L_S(\varphi,\rho,s) = \prod_{\wp\notin S} \bQ_{\wp}\left(n\wp^{-s}\right)^{-1}$$ and $$L_S(\varphi^\vee,\rho,s) = \prod_{\wp\notin S} \bQ^\vee_{\wp}\left(n\wp^{-s}\right)^{-1}$$ for each integer $s$.
\end{Def}

	\begin{prop} \label{prop:good_primes}
		Suppose that the prime $\wp$ satisfies the following conditions:
		\begin{enumerate}
			\item The Drinfeld module $\varphi$ has good reduction at $\wp$.
			\item The Artin representation $\rho$ is unramified at $\wp$, i.e., $\rho(I_{\wp}) = \{1\}$.
		\end{enumerate}
		Then the tensor product representation $W_{\mfl}$ is unramified at $\wp$. Consequently, $$\bP_\wp(\varphi,\rho,X) = P_\wp(\varphi,X)\otimes P_\wp(\rho,X)\quad \textnormal{ and }\quad \bP^\vee_\wp(\varphi,\rho,X) = P^\vee_\wp(\varphi,X)\otimes P_\wp(\rho,X)$$
		
		In other words, if $\{\alpha_1, \dots, \alpha_r\}$ are the eigenvalues of $\rho_{\varphi, \mfl}(\Frob_{\wp})$ acting on $V_\mfl(\varphi)$, and $\{\beta_1, \dots, \beta_n\}$ are the eigenvalues of $\rho(\Frob_{\wp})$ acting on $V$, then:
		\begin{equation} \label{eq:product_formula}
			\bP_{\wp}(\varphi, \rho, X) = \prod_{i=1}^r \prod_{j=1}^n (X - \alpha_i \beta_j ).
		\end{equation}
	\end{prop}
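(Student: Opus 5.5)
The plan is to first show that $W_\mfl$ (and $W_\mfl^\vee$) is unramified at $\wp$, and then compute the Frobenius characteristic polynomial on a tensor product by simultaneous triangularization.

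\textbf{Step 1: $W_\mfl$ and $W_\mfl^\vee$ are unramified at $\wp$.} Since $\varphi$ has good reduction at $\wp$ and $\wp$ does not lie above $\mfl$, the Tate module $T_\mfl(\varphi)$ is unramified at $\wp$. This is the Drinfeld-module analogue of N\'eron--Ogg--Shafarevich (cf.\ \cite[\S 4.10]{Goss}). It can also be seen directly: the $\mfl^n$-torsion of the good model is étale over $\mcO_{E,(\wp)}$, because $\varphi_a$ has invertible constant term $\iota(a)$ modulo $\wp$ for $a$ generating a power of $\mfl$. Hence every $\varphi[\mfl^n]$ is rational over the maximal unramified extension, and $I_\wp$ acts trivially on $V_\mfl(\varphi)$, hence also on its dual $V_\mfl^\vee(\varphi)$ and on $\conj V_\mfl(\varphi)$. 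By hypothesis (2), $I_\wp$ acts trivially on $V$, hence on $\conj V_\mfl(\rho)=V\otimes_{\conj{\FF}_q,\iota}\conj K_\mfl$. Since $g\in G_E$ acts on $W_\mfl$ by $\rho_{\varphi,\mfl}(g)\otimes\iota(\rho(g))$, every $g\in I_\wp$ acts as $\id\otimes\id$. Therefore $W_\mfl^{I_\wp}=W_\mfl$, and likewise $(W_\mfl^\vee)^{I_\wp}=W_\mfl^\vee$.

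\textbf{Step 2: characteristic polynomial of a tensor product.} It follows that $\bP_\wp(\varphi,\rho,X)$ is the characteristic polynomial of $F_1\otimes F_2$, where $F_1=\rho_{\varphi,\mfl}(\Frob_\wp)$ acts on $\conj V_\mfl(\varphi)$ and $F_2=\iota(\rho(\Frob_\wp))$ acts on $\conj V_\mfl(\rho)$. Over the algebraically closed field $\conj K_\mfl$, choose bases in which $F_1$ and $F_2$ are upper triangular, with diagonals $\alpha_1,\dots,\alpha_r$ and $\iota(\beta_1),\dots,\iota(\beta_n)$. In the lexicographically ordered tensor basis $e_i\otimes f_j$, the Kronecker product $F_1\otimes F_2$ is again upper triangular with diagonal entries $\alpha_i\iota(\beta_j)$. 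This gives \eqref{eq:product_formula}, where $\beta_j$ is identified with $\iota(\beta_j)$; the notation $P_\wp(\varphi,X)\otimes P_\wp(\rho,X)$ is read as exactly this ``tensor product of polynomials''. The dual statement follows by the same argument with $F_1$ replaced by $\rho^\vee_{\varphi,\mfl}(\Frob_\wp)$, whose eigenvalues are the roots of $P^\vee_\wp(\varphi,X)$ (the $\alpha_i^{-1}$).

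\textbf{Main obstacle.} The only nontrivial input is the unramifiedness of $V_\mfl(\varphi)$ at a good prime $\wp\nmid\mfl$. I would cite it from \cite{Goss}, or argue via étaleness of the torsion scheme of the good model as above, noting that $\varphi_a'=\iota(a)$ is a unit at $\wp$ when $a\in\mfl^m$ and $\wp\nmid\mfl$. Step 2 and the independence of the chosen prime above $\wp$ are routine once this is in place.
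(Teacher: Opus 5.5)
Your proposal is correct and follows essentially the same route as the paper. Both arguments get triviality of $I_\wp$ on $V_\mfl(\varphi)$ from the N\'eron--Ogg--Shafarevich criterion for Drinfeld modules at a good prime $\wp\nmid\mfl$, combine it with $\rho(I_\wp)=1$ to conclude $W_\mfl^{I_\wp}=W_\mfl$, and then use that the eigenvalues of a tensor product of operators are the pairwise products $\alpha_i\beta_j$. Your simultaneous-triangularization argument, the \'etaleness justification, and the explicit treatment of $W_\mfl^\vee$ only spell out details the paper leaves implicit.
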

	
	\begin{proof}
		Since $\wp \nmid \mfl\mcO_E$ and $\varphi$ has good reduction at $\wp$, the N\'eron-Ogg-Shafarevich criterion for Drinfeld modules (see \cite[Corollary 4.10.10]{Goss}) implies that the action of $I_{\wp}$ on $V_\mfl(\varphi)$ is trivial. Similarly, since $\rho$ is unramified at $\wp$, $I_{\wp}$ acts trivially on $V$. Thus, the diagonal action of $I_{\wp}$ on $W_{\mfl}$ is therefore trivial, which implies $W_{\mfl}^{I_{\wp}} = W_{\mfl}$. 
        
        Recall that the eigenvalues of the tensor product of two linear operators are precisely the pairwise products of their individual eigenvalues. Thus, the eigenvalues of $\Theta_{\varphi,\rho,\mfl}(\Frob_\wp)$ acting on $W_{\mfl}$ are $\{\alpha_i \beta_j\}_{i,j}$. The result now follows from the definition.
	\end{proof}

\begin{remark}
    Let $S_{\mathrm{bad}}$ be the set of primes of $E$ consisting of:
    \begin{enumerate}
        \item The primes where $\varphi$ has bad reduction;
        \item The primes at which $\rho$ is ramified.
    \end{enumerate}
    The set $S_{\mathrm{bad}}$ is finite. For all primes $\wp \notin S_{\mathrm{bad}}$ (i.e., for all but finitely many primes), the explicit formula \eqref{eq:product_formula} holds. 
    Consequently, the Goss $L$-series, defined as the Euler product over all primes
    \[
        L(\varphi, \rho, s) := \prod_{\wp} \bQ_{\wp}(\varphi, \rho, n\wp^{-s})^{-1},
    \]
    is well-defined as a formal series or convergent product in a suitable half-plane, and its factors are explicitly computable for almost all $\wp$.
\end{remark}
\section{Artin twists of Drinfeld modules}\label{sec:twist}
\subsection{Construction of $\MM(\varphi,\rho)$}\label{construction}
We let $\varphi:\bA\to E[\tau]$ be a Drinfeld $\bA$-module over $E$ and $\rho: G_E\to \GL_n(\conj{\FF}_q)$ be an Artin representation.

Consider the field $\FF_q(\rho): = \FF_q(\rho(g)_{ij}: \forall g\in G_E, 1\leq i,j\leq n)$. Since $\rho$ factors through a finite quotient, $\FF_q(\rho)/\FF_q$ must be a finite extension. We may assume that $\FF_q(\rho) = \FF_{q^d}$ for some integer $d\geq 1$, i.e. $d = d_\rho: = [\FF_q(\rho): \FF_q]$.

Let $V_\rho$ be the $\FF_q(\rho)$-representation of $G_E$, i.e. $V_\rho : = \bigoplus_{i=1}^n \FF_q(\rho) v_i$ for an $\FF_q(\rho)$-basis and $$g\left(v_j\right) := \sum_{i=1}^n \rho(g)_{ij}v_i\textnormal{ for all }1\leq j\leq n.$$

We first review the theory of \'etale $\tau$-modules over a field $E\supset \FF_q$. Fix $E^\sep$ and $G_E = \Gal(E^\sep/E)$. We follow \cite{FontaineOuyang2022} as an exposition.
\begin{Def}
	A $\tau$-module over $E$ is a pair $(V,\tau)$ where $V$ is a finite-dimensional $E$-vector space and $\tau:V\to V$ is a semi-linear map with respect to the Frobenius endomorphism $\tau: x\mapsto x^q$, i.e. 
	\begin{equation}
		\tau(v+w) = \tau(v)+\tau(w), \forall v,w\in V;
	\end{equation}
\begin{equation}
	\tau(\lambda v) = \lambda^q\tau(v), \forall \lambda\in E, v\in V.
\end{equation}
\end{Def}

For an $E$-vector space $V$, we denote by $\tau^*V = E^{\tau}\otimes_{E}V$, where $E^{\tau} = E$ denotes the $E$-module structure via the Frobenius endomorphism $\tau: E\to E$. In other words, we have \begin{equation}
	\lambda(\mu\otimes v) = \lambda\mu\otimes v, \lambda\otimes \mu v = \mu^q\lambda\otimes v
\end{equation} for all $\lambda,\mu\in E$ and $v\in V$.
Clearly, $\tau^*V$ is also an $E$-vector space. 

We observe that if $\{v_1,\cdots,v_n\}$ is an $E$-basis for $V$, then $\{1\otimes v_1,\cdots,1\otimes v_n\}$ is an $E$-basis for $\tau^*V$. Furthermore, we have the following lemma:
\begin{lem}[cf. {\cite[Lemma 3.16]{FontaineOuyang2022}}]
	If $V$ is an $E$-vector space, giving a semi-linear map $\tau: V\to V$ is the same as giving a linear map \begin{equation}
		\begin{aligned}
			\tau_V: \tau^*V&\to V,\\
			\lambda\otimes v&\mapsto \lambda\tau(v)
		\end{aligned}
	\end{equation}
\end{lem}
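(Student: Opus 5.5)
We must show that a Frobenius-semilinear map $\tau\colon V\to V$ and an $E$-linear map $\tau_V\colon \tau^*V\to V$ determine each other, via $\tau_V(\lambda\otimes v)=\lambda\tau(v)$ and, conversely, $\tau(v):=\tau_V(1\otimes v)$. The plan is to build both directions explicitly from the universal property of the scalar-extension tensor product $\tau^*V=E^{\tau}\otimes_E V$, and then check that the two constructions are mutually inverse.

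First, given a semilinear $\tau$, I would define $\tau_V$ on pure tensors by $\lambda\otimes v\mapsto \lambda\tau(v)$. To see this is well defined, I would use the universal property of $E^\tau\otimes_E V$, where $E$ acts on $E^\tau$ through the Frobenius. So I need the map $E^\tau\times V\to V$, $(\lambda,v)\mapsto\lambda\tau(v)$, to be biadditive and $E$-balanced. Biadditivity follows from the additivity of $\tau$. Balancedness is the identity $\lambda\tau(\mu v)=\lambda\mu^q\tau(v)=(\mu^q\lambda)\tau(v)$. This is exactly the relation $\lambda\otimes\mu v=\mu^q\lambda\otimes v$ in $\tau^*V$. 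The resulting map is $E$-linear for the left $E$-structure $\lambda(\mu\otimes v)=\lambda\mu\otimes v$, since it is visibly linear in the first factor.

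Conversely, given an $E$-linear $\tau_V$, I would set $\tau(v):=\tau_V(1\otimes v)$. Additivity follows from $1\otimes(v+w)=1\otimes v+1\otimes w$ together with the additivity of $\tau_V$. For semilinearity, $1\otimes\lambda v=\lambda^q\otimes v=\lambda^q(1\otimes v)$, so $\tau(\lambda v)=\lambda^q\tau_V(1\otimes v)=\lambda^q\tau(v)$.

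Finally, I would check that the two constructions are inverse to each other. Starting from $\tau$, the composite construction returns $\tau_V(1\otimes v)=\tau(v)$, so $\tau$ is recovered. Starting from $\tau_V$, the composite construction sends $\lambda\otimes v$ to $\lambda\tau_V(1\otimes v)=\tau_V(\lambda\otimes v)$. Since pure tensors generate $\tau^*V$, the two linear maps agree everywhere.

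The only step needing genuine care is the well-definedness in the first direction, that is, the balancedness check with respect to the twisted module structure. This step should be written out explicitly; everything else is routine.

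Alternatively, I could fix a basis $v_1,\dots,v_n$ and use the basis $1\otimes v_i$ of $\tau^*V$ noted just before the statement. Both kinds of data then correspond to a single matrix in $\Mat_n(E)$, with column $j$ given by the coordinates of $\tau(v_j)$. This gives a quick sanity check, but it depends on the chosen basis, so the basis-free argument above is the one to present.
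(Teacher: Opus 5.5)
Your argument is correct: the balancedness check $\lambda\tau(\mu v)=\lambda\mu^q\tau(v)=(\lambda\mu^q)\tau(v)$ is exactly what makes $\tau_V$ well defined on $E^{\tau}\otimes_E V$, and your converse construction and mutual-inverse checks are complete. The paper gives no proof of its own here and simply cites Fontaine--Ouyang; your universal-property argument is the standard proof of this fact.
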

\begin{Def}
	We say a $\tau$-module $V$ over $E$ is \emph{\'etale} if $\tau_V:\tau^*V\to V$ is an isomorphism.
\end{Def}
Let $V$ be a $\tau$-module of dimension $n$ and $\{v_1,\cdots,v_n\}$ an $E$-basis of $V$. Then, we can write
\begin{equation}
	\tau v_j = \sum_{i=1}^{n}a_{ij}v_i,
\end{equation} for some unique $a_{ij}\in E$. 
So, $$\tau_V(1\otimes v_j) = \sum_{i=1}^{n} a_{ij}v_i.$$ Since $\tau_V: \tau^*V \to V$ is an $E$-linear map between $E$-vector spaces with the same finite dimension, then we have the following result
\begin{prop}[cf. {\cite[Proposition 3.17]{FontaineOuyang2022}}]
	Let $V$ be a $\tau$-module over $E$ of dimension $n$ and keep the notation as above. The following are equivalent:
	\begin{enumerate}
		\item[(1)] the $\tau$-module $(V,\tau)$ over $E$ is \'etale;
		\item[(2)] $\tau_V$ is injective;
		\item[(3)] $\tau_V$ is surjective;
		\item[(4)] the matrix $(a_{ij})\in \GL_n(E)$;
		\item[(5)] $V = \Span_E (\tau(V))$.
	\end{enumerate}
\end{prop}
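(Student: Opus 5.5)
The plan is to reduce every condition to a statement about the single $E$-linear map $\tau_V\colon \tau^*V\to V$ and its matrix in suitable bases, and then apply rank–nullity.

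First, I would check that $\tau_V$ is represented by the matrix $(a_{ij})$. By the observation preceding the lemma, $\{1\otimes v_1,\dots,1\otimes v_n\}$ is an $E$-basis of $\tau^*V$. This holds because $V\cong E^n$ is free, so $\tau^*V=E^{\tau}\otimes_E V\cong (E^{\tau})^n$ even when $E$ is not perfect. By the lemma (cf. \cite[Lemma 3.16]{FontaineOuyang2022}), $\tau_V(1\otimes v_j)=\tau(v_j)=\sum_i a_{ij}v_i$. Hence the matrix of $\tau_V$ with respect to the bases $\{1\otimes v_j\}$ and $\{v_i\}$ is exactly $(a_{ij})$.

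Next, since $\dim_E\tau^*V=\dim_E V=n$, rank–nullity shows that $\tau_V$ is injective if and only if it is surjective, if and only if it is bijective. A linear map is bijective exactly when its matrix lies in $\GL_n(E)$. This gives $(1)\Leftrightarrow(2)\Leftrightarrow(3)\Leftrightarrow(4)$, recalling that (1) is by definition the bijectivity of $\tau_V$.

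Finally, for $(3)\Leftrightarrow(5)$ I would identify the image of $\tau_V$. Since $\tau_V(\lambda\otimes v)=\lambda\tau(v)$ and $\tau^*V$ is spanned over $E$ by the pure tensors $\lambda\otimes v$, we get
\[
\operatorname{im}(\tau_V)=\Span_E(\tau(V)).
\]
So $\tau_V$ is surjective precisely when $V=\Span_E(\tau(V))$.

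No step is a real obstacle. The only point needing care is the basis claim for $\tau^*V$ when Frobenius on $E$ is not surjective, and this is settled by the base-change-of-free-modules argument in the second paragraph.
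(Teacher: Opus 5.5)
Your proposal is correct and matches the paper's reasoning. The paper gives no proof beyond citing Fontaine--Ouyang and noting that $\tau_V\colon\tau^*V\to V$ is an $E$-linear map between $E$-vector spaces of the same finite dimension, with matrix $(a_{ij})$ in the bases $\{1\otimes v_j\}$ and $\{v_i\}$; your rank--nullity argument and the identification $\operatorname{im}(\tau_V)=\Span_E(\tau(V))$ supply exactly the omitted details, including the correct observation that the basis claim for $\tau^*V$ does not require $E$ to be perfect.
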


\begin{Def}
	Let $\mcM^{\et}_\tau(E)$ be the category of \'etale $\tau$-modules over $E$ consisting of 
	\begin{enumerate}
		\item[$\bullet$] Objects: \'etale $\tau$-modules $(V,\tau)$ over $E$.
		\item[$\bullet$] Morphisms: $E$-linear maps $f: V\to W$ such that $$\xymatrix{
			\tau^*V \ar[r]^{\tau^*f} \ar[d]_{\tau_V}& \tau^*W\ar[d]^{\tau_W}\\
			V \ar[r]^{f} & W
		}$$ commutes.
	\end{enumerate}
\end{Def}
\begin{Def}
	Let $G = G_E = \Gal(E^\sep/E)$. An $\conj{\FF}_q$-representation of $G$ is a finite dimensional $\conj{\FF}_q$-vector space $W$ together with a linear and continuous action of $G$.

Denote by $\Rep_{\conj{\FF}_q}(G)$ the category of all Artin $\conj{\FF}_q$-representations of $G$.
\end{Def}

Let $\rho$ be an Artin $\conj{\FF}_q$-representation of $G$ of dimension $n$. Define the $G$-action on $E^\sep\otimes_{\FF_q}V_\rho$ diagonally, i.e. $$g\cdot(\lambda\otimes v) = g(\lambda)\otimes g(v).$$ Let $$\bD(V_\rho):= (E^\sep\otimes_{\FF_q}V_\rho)^G.$$ 

First we observe that the Frobenius $\tau:x\mapsto  x^q$ on $E^\sep$ commutes with the $G$-action $\tau(g(x)) = g(\tau(x)),\forall g\in G, x\in E^\sep.$ We extend the Frobenius on $E^\sep\otimes_{\FF_q}V_\rho$ via
$\tau(\lambda\otimes v) = \tau(\lambda)\otimes v= \lambda^q\otimes v .$ Then for all $x\in E^\sep\otimes_{\FF_q}V_\rho$, we have $\tau(g(x)) = g(\tau(x)), \forall g\in G.$ One immediately sees that $x\in \bD(V_\rho)$ implies $\tau(x)\in \bD(V_\rho)$, hence by restricting to $\bD(V_\rho)$ we get $$\tau:\bD(V_\rho)\to \bD(V_\rho).$$
\begin{lem}[cf. {\cite[Proposition 3.20]{FontaineOuyang2022}}]\label{G-inv_bc_to_Esep}
	If $\rho$ is an Artin $\conj{\FF}_q$-representation of $G$ of dimension $d$, then
	the map
	$$\alpha_\rho: E^\sep \otimes_E\bD(V_\rho)\to E^\sep\otimes _{\FF_q}V_\rho,$$ $$\lambda\otimes x\mapsto \lambda\cdot x$$
	is a $G_E$-equivariant isomorphism, $\bD(V_\rho)$ is an \'etale $\tau$-module over $E$ and $\dim_E\bD(V_\rho) = nd_\rho$.

    Furthermore, if we equip the $\tau$-action on left hand side diagonally, this isomorphism is also $\tau$-equivariant.
\end{lem}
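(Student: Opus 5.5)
The plan is to treat $V_\rho$ as an $\FF_q$-vector space of dimension $nd$ with a continuous $\FF_q$-linear action of $G=G_E$, and then run the usual Galois descent argument for étale $\tau$-modules with $\FF_q$-coefficients. Since $\rho$ factors through $\Gal(L/E)$ for a finite Galois extension $L/E$, the action of $G$ on $E^\sep\otimes_{\FF_q}V_\rho$ factors through the finite group $H=\Gal(L/E)$ on the subspace $L\otimes_{\FF_q}V_\rho$. It then suffices to show
\[
\bD(V_\rho)=(L\otimes_{\FF_q}V_\rho)^H, \qquad L\otimes_E(L\otimes_{\FF_q}V_\rho)^H\stackrel{\sim}{\to} L\otimes_{\FF_q}V_\rho ,
\]
and afterwards base change from $L$ to $E^\sep$.

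\textbf{Step 1 (descent).} I will invoke Hilbert 90 for $\GL_{nd}$ over $L/E$, equivalently Speiser's lemma. Let $W=L\otimes_{\FF_q}V_\rho$. This is an $L$-vector space of dimension $nd$ carrying a semilinear $H$-action, and $h\mapsto$ (the matrix of $h$ in the basis $1\otimes e_i$ for an $\FF_q$-basis $e_i$ of $V_\rho$) is a $1$-cocycle with values in $\GL_{nd}(L)$. Since $H^1(H,\GL_{nd}(L))=1$, there is an $L$-basis of $W$ consisting of $H$-invariant vectors. This gives $L\otimes_E W^H\cong W$, and also $\dim_E W^H=nd$.

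\textbf{Step 2 (passage to $E^\sep$).} Taking $\Gal(E^\sep/L)$-invariants first, we get $(E^\sep\otimes V_\rho)^{G_L}=L\otimes V_\rho$, because $G_L$ acts trivially on $V_\rho$ and $(E^\sep)^{G_L}=L$. Hence $\bD(V_\rho)=W^H$, and $\alpha_\rho$ is the base change of the isomorphism of Step 1 along $L\to E^\sep$. Therefore $\alpha_\rho$ is an isomorphism, and it is visibly $G$-equivariant when $G$ acts on $E^\sep\otimes_E\bD(V_\rho)$ through the first factor.

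\textbf{Step 3 (Frobenius).} Under $\alpha_\rho$, the diagonal $\tau$ on the left, namely $\lambda\otimes x\mapsto\lambda^q\otimes\tau(x)$, corresponds to $\lambda x\mapsto\lambda^q\tau(x)$. This equals $\tau(\lambda x)$ because $\tau$ on $E^\sep\otimes V_\rho$ is multiplicative with respect to the $E^\sep$-scalars, so $\alpha_\rho$ is $\tau$-equivariant.

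\textbf{Step 4 (étaleness).} On $E^\sep\otimes_{\FF_q}V_\rho$, the basis $1\otimes e_i$ is $\tau$-fixed, so the matrix of $\tau$ in that basis is the identity and the module is étale over $E^\sep$. Étaleness descends because it amounts to $\tau_V$ being an isomorphism, and $\tau^*$ commutes with the faithfully flat base change $E\to E^\sep$. Concretely, if $P\in\GL_{nd}(E)$ is the matrix of $\tau$ on $\bD(V_\rho)$, then after base change it is $B^{(1)}B^{-1}$-conjugate to the identity, so $\det P\neq0$.

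The main obstacle is Step 1: ensuring that the semilinear action is genuinely a cocycle, with the right twist by $\rho$ as opposed to its $\FF_q(\rho)$-structure. This is handled by viewing $V_\rho$ purely as an $\FF_q$-space of dimension $nd$. The remaining steps are formal.
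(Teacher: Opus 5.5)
Your proposal is correct and takes essentially the paper's route. The paper views $V_\rho$ as an $\FF_q$-representation of dimension $nd$ and cites Fontaine--Ouyang, Proposition 3.20, for the isomorphism $\alpha_\rho$ and \'etaleness; that result is proved by exactly the Hilbert 90 descent you carry out. It then checks $G_E$- and $\tau$-equivariance by the same one-line computations as your Steps 2--3. Your reduction to the finite quotient $\Gal(L/E)$ and your explicit \'etaleness check via $P=B^{(1)}B^{-1}$ just spell out what the paper delegates to the citation.
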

\begin{proof}
    We can regard $V_\rho$ as a finite dimensional $\FF_q$-representation of $G$. Then, the statement that $\alpha_\rho$ is an isomorphism follows from \cite[Proposition 3.20]{FontaineOuyang2022}. Hence, $$\dim_E \bD(V_\rho) = \dim_{\FF_q} V_\rho = [\FF_q(\rho):\FF_q]\cdot \dim_{\FF_q(\rho)}V_\rho = nd_\rho.$$

     We now verify the this isomorphism is $G_E$-equivariant. Indeed, since $g(x) = x$,
    \[
        \alpha_\rho(g(\lambda\otimes x)) = \alpha_\rho(g(\lambda)\otimes x)
        = g(\lambda)\cdot x
        = g(\lambda\cdot x)
        = g(\alpha_\rho(\lambda\otimes x)).
    \]

    It remains to verify that this isomorphism is $\tau$-equivariant. Indeed, \[
        \alpha_\rho(\tau (\lambda\otimes x))  = \alpha_\rho(\lambda^q\otimes \tau(x))
         = \lambda^q\cdot \tau(x)
         = \tau(\lambda\cdot x)
         = \tau(\alpha_\rho(\lambda\otimes x)).
    \]
\end{proof}
Thus, we get an additive functor $$\bD: \Rep_{\conj{\FF}_q}(G_L)\to \mcM^\et_\tau(L).$$

Also, since the $\FF_q(\rho)$-vector space $V_\rho$ can be viewed as an $\FF_q$-vector space, denoted by $\Res_{\FF_q(\rho)/\FF_q}V_\rho$, we see that $\bD(\Res_{\FF_q(\rho)/\FF_q} V_\rho)$ is an \'etale $E$-module. We can therefore form an $\bA$-motive over $E$ by $$\MM(\rho) = (\bD(\Res_{\FF_q(\rho)/\FF_q} V_\rho)\otimes_{E}\bA_{E},\tau\otimes \tau_{\bA_{E}}),$$ where $\tau_{\bA_{E}}: \bA_{E}\to \bA_{E}$ is the Frobenius twist $a\mapsto a^{(1)}$.

\begin{remark}\label{rem:taelman_inverse}
    It is instructive to compare our construction with the theory of Artin $t$-motives developed by Taelman in \cite{taelman2009artin}. Taelman's construction is based on an equivalence of categories between the category of \'etale $\tau$-modules over $E$ and the category of finite dimensional $\FF_q$-linear representations of the absolute Galois group, see \cite[Theorem 4.1.1]{taelman2009artin}. The functor $\bD$ is the quasi-inverse of the fiber functor in \cite[Theorem 4.1.1]{taelman2009artin} in the case $\bA = \FF_q[t]$ and $d = [\FF_q(\rho):\FF_q] = 1$. Our construction of $\MM(\rho)$ is motivated by Taelman's construction of $M(V)$ in \cite[\S 4]{taelman2009artin}.
\end{remark}
\begin{Def}
    Let $\varphi$ be a Drinfeld $\bA$-module over $E$ and $\rho$ an Artin $\conj{\FF}_q$-representation of $G_E$. We define the \textit{Artin twisted $\bA$-motive} of $\varphi$ by $\rho$ to be $$\MM(\varphi,\rho) = \MM(\varphi)\otimes \MM(\rho).$$
\end{Def}

Our goal in this section is to study this $\bA$-motive and its associated motivic $L$-series.

Recall $G = G_E = \Gal(E^\sep/E)$ where $E$ is a field containing $\FF_q$.  Let $W$ be an $\FF_{q^d}[G]$-module via $\rho$, i.e. $\rho: G\to \GL(W)$. For each $0\leq j\leq d-1$, we define $$W^{(j)} = E^\sep\otimes_{\FF_{q^d},\tau^j}W,$$ i.e. for all $\lambda\in \FF_{q^d}$, we have $$ a\otimes \lambda w = \tau^j(\lambda)  a\otimes w=\lambda^{q^j} a\otimes w$$
\begin{lem}\label{split_base_change}
    We have an isomorphism of $E^\sep$-vector spaces $$\Psi: E^\sep\otimes_{\FF_q}W\to \bigoplus_{j=0}^{d-1} W^{(j)},$$
$$x\otimes w\mapsto (x\otimes w, x\otimes w, \cdots, x\otimes w).$$
\end{lem}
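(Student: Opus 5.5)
The plan is to reduce to the case $W=\FF_{q^d}$ and then use the invertibility of a Moore matrix. Throughout, $\FF_{q^d}\subset E^\sep$, since a separable closure of $E\supseteq\FF_q$ contains every finite extension of $\FF_q$; this is what makes each $W^{(j)}=E^\sep\otimes_{\FF_{q^d},\tau^j}W$ meaningful.

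\emph{Step 1: well-definedness and linearity.} The assignment $(x,w)\mapsto (x\otimes w,\dots,x\otimes w)$ is $\FF_q$-bilinear: $\FF_q$ is fixed by every $\tau^j$, so scalars in $\FF_q$ move freely across each tensor sign. Hence $\Psi$ is a well-defined additive map. It is $E^\sep$-linear because $E^\sep$ acts on the left factor on both sides.

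\emph{Step 2: reduction to one dimension.} Choose an $\FF_{q^d}$-basis $w_1,\dots,w_n$ of $W$, so $W\cong\bigoplus_i \FF_{q^d}w_i$.
\begin{itemize}
\item The source satisfies $E^\sep\otimes_{\FF_q}W\cong\bigoplus_i E^\sep\otimes_{\FF_q}\FF_{q^d}w_i$.
\item Each target summand satisfies $W^{(j)}\cong\bigoplus_i E^\sep\otimes_{\FF_{q^d},\tau^j}\FF_{q^d}w_i$.
\item $\Psi$ respects these decompositions.
\end{itemize}
So it suffices to treat $W=\FF_{q^d}$. In that case $W^{(j)}=E^\sep\otimes_{\FF_{q^d},\tau^j}\FF_{q^d}\cong E^\sep$ via $a\otimes\lambda\mapsto \lambda^{q^j}a$. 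Under these identifications $\Psi$ becomes
$$E^\sep\otimes_{\FF_q}\FF_{q^d}\to (E^\sep)^{d},\qquad x\otimes\lambda\mapsto (x\lambda,x\lambda^{q},\dots,x\lambda^{q^{d-1}}).$$

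\emph{Step 3: bijectivity.} Both sides have $E^\sep$-dimension $d$, so it suffices to show that the images of an $E^\sep$-basis are linearly independent. Take the basis $1\otimes\alpha_1,\dots,1\otimes\alpha_d$ coming from an $\FF_q$-basis $\alpha_1,\dots,\alpha_d$ of $\FF_{q^d}$. The matrix of $\Psi$ in this basis is the Moore matrix $(\alpha_i^{q^j})_{i,j}$. This is the main point of the argument, and I would establish its invertibility by one of two routes:
\begin{itemize}
\item Use the classical identity $\det(\alpha_i^{q^j})=\prod_{c}\bigl(c_1\alpha_1+\dots+c_d\alpha_d\bigr)$, the product taken over nonzero $c\in\FF_q^d$ up to scalars. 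This is nonzero because the $\alpha_i$ are $\FF_q$-independent.
\item Alternatively, apply Dedekind's independence of the characters $\tau^0,\dots,\tau^{d-1}$ of $\FF_{q^d}^\times$. A nonzero vector $(c_j)$ in the left kernel would give $\sum_j c_j\alpha_i^{q^j}=0$ for all $i$, hence $\sum_j c_j\lambda^{q^j}=0$ for all $\lambda\in\FF_{q^d}$ by $\FF_q$-linearity, which contradicts independence.
\end{itemize}
Hence $\Psi$ is an isomorphism of $E^\sep$-vector spaces.
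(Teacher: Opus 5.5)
Your proof is correct and follows the paper's own argument. Both expand along an $\FF_{q^d}$-basis $w_1,\dots,w_n$ of $W$ and an $\FF_q$-basis $\alpha_1,\dots,\alpha_d$ of $\FF_{q^d}$, reduce bijectivity to the invertibility of the Moore matrix $(\alpha_i^{q^j})$, and conclude by comparing $E^\sep$-dimensions. Your preliminary reduction to $W=\FF_{q^d}$ and the Dedekind-independence alternative are only cosmetic variations. Incidentally, your form of the Moore determinant identity (one representative $c$ per line, so exact up to a constant in $\FF_q^\times$) is the right one. The paper's product over all nonzero $c$ is instead, up to sign, the $(q-1)$-st power of the determinant, which still gives nonvanishing.
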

\begin{proof}
    First, we check $\Psi$ is well-defined. Indeed, for any $c\in \FF_q$, $$\Psi(cx\otimes w) = (cx\otimes w,\cdots, cx\otimes w) $$ and $$\Psi(x\otimes cw) = (x\otimes cw,\cdots, x\otimes cw)= (cx\otimes w, c^qx\otimes w,\cdots, c^{q^{d-1}}x\otimes w).$$ Since $c^{q^j} = c$, we see that $\Psi(cx\otimes w) = \Psi(x\otimes cw)$ for all $c\in\FF_q$.

    Injectivity: We take an $\FF_{q^d}$-basis $\{w_1,\cdots,w_n\}$ for $W$ and an $\FF_q$-basis $\{\alpha_1,\cdots,\alpha_d\}$ for $\FF_{q^d}$. Then, $\{\alpha_iw_j: 1\leq i\leq d,1\leq j\leq n\}$ is an $\FF_q$-basis for $W$. Thus, any element of $E^\sep\otimes_{\FF_q}W$ has the form of $$y = \sum_{i=1}^d\sum_{j=1}^n x_{ij}\otimes \alpha_iw_j$$ with $x_{ij}\in E^\sep.$

    If $\Psi(y) = 0$, then $$\Psi(y) = \sum_{i=1}^d\sum_{j=1}^n(\alpha_ix_{ij}\otimes w_j,\cdots ,\alpha_{i}^{q^{d-1}}x_{ij}\otimes w_j) = 0.$$ Thus, for each $0\leq k\leq d-1$, we must have $$\sum_{i=1}^d\sum_{j=1}^n \alpha_i^{q^k}x_{ij}\otimes w_j = 0 \textnormal{ in }W^{(k)}.$$ Since $\{1\otimes w_1,\cdots , 1\otimes w_n\}$ is an $\FF_{q^d}$-basis for $W^{(k)}$, we see that $$\sum_{i=1}^d \alpha_i^{q^k} x_{ij} = 0$$ for each $1\leq j\leq n$ and $0\leq k\leq d-1$.

    Thus, if we fix $j$, then we have $$\begin{pmatrix}
        \alpha_1 & \alpha_2 & \cdots & \alpha_d\\
        \alpha_1^q & \alpha_2^q & \cdots &\alpha_d^q\\
        \vdots & \vdots & \ddots & \vdots \\
        \alpha_1^{q^{d-1}} & \alpha_2^{q^{d-1}}& \cdots & \alpha_d^{q^{d-1}}
    \end{pmatrix}\begin{pmatrix}
        x_{1j}\\
        x_{2j}\\
        \vdots \\
        x_{dj}
    \end{pmatrix} = \begin{pmatrix}
        0\\
        0\\
        \vdots\\
        0
    \end{pmatrix}.$$

    By the result of the Moore determinant, we have $$\det \begin{pmatrix}
        \alpha_1 & \alpha_2 & \cdots & \alpha_d\\
        \alpha_1^q & \alpha_2^q & \cdots &\alpha_d^q\\
        \vdots & \vdots & \ddots & \vdots \\
        \alpha_1^{q^{d-1}} & \alpha_2^{q^{d-1}}& \cdots & \alpha_d^{q^{d-1}}
    \end{pmatrix} = \prod_{(c_1,\cdots, c_d)\in \FF_q^d\backslash (0,\cdots,0)}(c_1\alpha_1+ \cdots c_d\alpha_d).$$ 

    Thus, $\det M\neq 0$ if and only if $\alpha_1,\cdots,\alpha_d$ are linearly independent over $\FF_q$. The latter is true as it is an $\FF_q$-basis of $\FF_{q^d}$.

    Thus, $x_{ij} = 0$ for all $i,j$, i.e. $y=0$. Thus, we see that $\Psi$ is injective.

    Now, by counting the dimension over $E^\sep$ of both sides, we see $\Psi$ is an $E^\sep$-linear isomorphism.
\end{proof}

Now, notice that if $x\in \FF_{q^d}\subseteq E^\sep$, then $x^{q^d}-x = 0$ and for any $g\in G = \Gal(E^\sep/E)$ we must have $g(x)^{q^d} - g(x) = 0$, hence $g(x)\in \FF_{q^d}$. Hence, $$g|_{\FF_{q^d}}\in \Gal(\FF_{q^d}/\FF_q) \cong \langle x\mapsto x^q\rangle$$
\begin{Def}
    For $g\in G$, we define $m_g\in \ZZ/d\ZZ$ to be the unique integer such that $$g(x) = x^{q^{m_g}}$$ for all $x\in \FF_{q^d}$.
\end{Def}
Now, we are ready to define a $G$-action on $\dis \bigoplus_{j=0}^{d-1} W^{(j)}$.

Let $\{w_1,\cdots,w_n\}$ be an $\FF_{q^d}$-basis for $W$ as above. Then each $y_j\in W^{(j)} = E^\sep\otimes_{\FF_{q^d},\tau^j}W$ has the form $$y_j = \sum_{i=1}^n x_{ij}\otimes w_i$$ for $x_{ij}\in E^\sep$.

Write $\dis g(y_j) = \sum_{i=1}^n g(x_{ij})\otimes \rho(g)w$.

Let $\dis Y = (y_0,y_1,\cdots,y_{d-1})\in \bigoplus_{j=0}^{d-1} W^{(j)}$, then for each $0\leq j\leq d-1$, we define $$(g\cdot Y)_j = \sum_{i=1}^n g(x_{i,[j-m_g]})\otimes\rho(g)w_i,$$ where $0\leq [j-m_g]\leq d-1$ is the unique integer such that $$[j-m_g]\equiv j-m_g \mod d.$$

For example, if $m_g= 1$, then 
$$g: (y_0,y_1,\cdots, y_{d-1}) \mapsto (g(y_{d-1}), g(y_0), g(y_1),\cdots, g(y_{d-2}))$$

Then, we have the following Lemma.
\begin{lem}
    The isomorphism $$\Psi: E^\sep\otimes_{\FF_q}W\to \bigoplus_{j=0}^{d-1} W^{(j)},$$
$$x\otimes w\mapsto (x\otimes w, x\otimes w, \cdots, x\otimes w)$$ is $G$-equivariant.
\end{lem}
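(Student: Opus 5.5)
The plan is to reduce everything to pure tensors. Before doing that, I would check that the $G$-action on $\bigoplus_{j=0}^{d-1}W^{(j)}$ defined above is intrinsic, i.e. independent of the chosen $\FF_{q^d}$-basis $\{w_1,\dots,w_n\}$. On $E^\sep\otimes_{\FF_q}W$ the action is the diagonal one, $g(x\otimes w)=g(x)\otimes \rho(g)w$. For each $j$ I would define a map
$$g_j\colon W^{([j-m_g])}\to W^{(j)},\qquad x\otimes w\mapsto g(x)\otimes \rho(g)w,$$
and then verify two things. First, $g_j$ is well defined on the twisted tensor product. Second, it reproduces the formula $(g\cdot Y)_j=\sum_i g(x_{i,[j-m_g]})\otimes\rho(g)w_i$ when evaluated on $y_{[j-m_g]}=\sum_i x_{i,[j-m_g]}\otimes w_i$. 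The second point is immediate from additivity once $g_j$ exists.

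The well-definedness of $g_j$ is the only real content and the step I expect to need care. Write $k=[j-m_g]$. In $W^{(k)}$ we have the relation $x\otimes\lambda w=\lambda^{q^{k}}x\otimes w$ for $\lambda\in\FF_{q^d}$, so I must show both sides have the same image in $W^{(j)}$. The right side maps to
$$g(\lambda^{q^k}x)\otimes\rho(g)w=g(\lambda)^{q^k}g(x)\otimes\rho(g)w=\lambda^{q^{m_g+k}}g(x)\otimes\rho(g)w=\lambda^{q^{j}}g(x)\otimes\rho(g)w,$$
using $g(\lambda)=\lambda^{q^{m_g}}$ and $\lambda^{q^d}=\lambda$. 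The left side maps to
$$g(x)\otimes\rho(g)(\lambda w)=g(x)\otimes\lambda\,\rho(g)w=\lambda^{q^j}g(x)\otimes\rho(g)w,$$
because $\rho(g)$ is $\FF_{q^d}$-linear on $W$ and we are now in $W^{(j)}$. The two agree, so $g_j$ descends to the tensor product. By the universal property, $g_j$ is additive and $g$-semilinear in the $E^\sep$-variable. Hence the action is well defined and basis-free. The same computation also confirms that it is indeed a group action, since $m_{gh}=m_g+m_h$ in $\ZZ/d\ZZ$.

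With this in hand, equivariance is a direct check on pure tensors $x\otimes w$. These span $E^\sep\otimes_{\FF_q}W$, and both $\Psi\circ g$ and $g\circ\Psi$ are additive, so this suffices. On one hand, $\Psi(g(x\otimes w))=(g(x)\otimes\rho(g)w,\dots,g(x)\otimes\rho(g)w)$. On the other hand, $\Psi(x\otimes w)$ has every component equal to $x\otimes w$. Its $j$-th component after acting by $g$ is therefore $g_j$ applied to the $[j-m_g]$-th component, namely $g(x)\otimes\rho(g)w$. The cyclic shift is thus invisible on the image of $\Psi$, and the two sides coincide, so $\Psi$ is $G$-equivariant.
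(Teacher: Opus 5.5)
Your proof is correct and follows the paper's approach: both verify $\Psi\circ g=g\circ\Psi$ directly on pure tensors. The paper evaluates on $x\otimes cw$ with $c\in\FF_{q^d}$ and does not show the cyclic shift. Your preliminary check that $g_j$ is well defined, using $g(\lambda)=\lambda^{q^{m_g}}$ and $m_g+[j-m_g]\equiv j \pmod d$, makes explicit the step the paper's displayed computation leaves implicit.
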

\begin{proof}
    Indeed, for any $g\in G$, let $m = m_g$, then $$\Psi(g(x\otimes cw)) = \Psi(g(x)\otimes c\rho(g)w) = (cg(x)\otimes \rho(g)w,\cdots, c^{q^{d-1}}g(x)\otimes \rho(g)w)$$ and $$\begin{aligned}
        g\Psi(x\otimes cw) &= g(cx\otimes w,\cdots, c^{q^{d-1}}x\otimes w) \\
        &= (cg(x)\otimes \rho(g)w,\cdots, c^{q^{d-1}}g(x)\otimes \rho(g)w) \\
        &= \Psi(g(x\otimes cw)).
    \end{aligned}$$
\end{proof}

For $$y_j = \sum_{i=1}^n x_{ij}\otimes w_i$$ with $x_{ij}\in E^\sep$. We write $$\Frob(y_j) = \sum_{i=1}^n x_{ij}^q\otimes w_i.$$

Now, we can define the $\tau$-action on $\dis \bigoplus_{j=0}^{d-1}W^{(j)}$ via $$\tau \cdot \begin{pmatrix}
    y_0\\
    \vdots\\
    y_{d-2}\\
    y_{d-1}
\end{pmatrix} = \begin{pmatrix}
    0 &  \cdots & 0& \Frob\\
    \Frob  &\cdots &0& 0\\
    \vdots  & \ddots & 0& \vdots\\
    0 &\cdots & \Frob & 0
\end{pmatrix}\begin{pmatrix}
    y_0\\
    \vdots\\
    y_{d-2}\\
    y_{d-1}
\end{pmatrix}.$$
Then, we have the following result.
\begin{prop}
     The isomorphism $$\Psi: E^\sep\otimes_{\FF_q}W\to \bigoplus_{j=0}^{d-1} W^{(j)},$$
$$x\otimes w\mapsto (x\otimes w, x\otimes w, \cdots, x\otimes w)$$ is $\tau$-equivariant.
\end{prop}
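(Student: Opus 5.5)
The plan is to verify equivariance directly on pure tensors and then extend by additivity. The main point to settle first is that the operator $\Frob$ in the block matrix really is a well-defined semilinear map $W^{(j-1)}\to W^{(j)}$, with indices taken mod $d$; for this I will \emph{not} rely on the basis formula. Here the $\tau$-action on $E^\sep\otimes_{\FF_q}W$ is $x\otimes w\mapsto x^q\otimes w$, as for $E^\sep\otimes_{\FF_q}V_\rho$ before Lemma \ref{G-inv_bc_to_Esep}. On the target, the displayed matrix says $(\tau Y)_0=\Frob(y_{d-1})$ and $(\tau Y)_j=\Frob(y_{j-1})$ for $1\le j\le d-1$.

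\textbf{Step 1: $\Frob$ is well defined.} I define $\Frob\colon W^{(j-1)}\to W^{(j)}$ on pure tensors by $a\otimes w\mapsto a^q\otimes w$ and check that it respects the relations defining the tensor products. In $W^{(j-1)}$ we have $a\otimes\lambda w=\lambda^{q^{j-1}}a\otimes w$ for $\lambda\in\FF_{q^d}$. On one side, $a\otimes\lambda w$ maps to $a^q\otimes\lambda w=\lambda^{q^{j}}a^q\otimes w$ in $W^{(j)}$. On the other side, $\lambda^{q^{j-1}}a\otimes w$ maps to $(\lambda^{q^{j-1}}a)^q\otimes w=\lambda^{q^j}a^q\otimes w$. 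These agree. For the wrap-around $W^{(d-1)}\to W^{(0)}$, the same computation uses $\lambda^{q^d}=\lambda$. Biadditivity is clear, so $\Frob$ is well defined and $q$-semilinear. In the basis $\{1\otimes w_i\}$ it is $\sum x_i\otimes w_i\mapsto\sum x_i^q\otimes w_i$, which recovers the paper's formula. Consequently the $\tau$-action on $\bigoplus_j W^{(j)}$ is well defined and semilinear.

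\textbf{Step 2: check on pure tensors.} Take $x\otimes w$ with $x\in E^\sep$ and $w\in W$. Then
\[
\Psi(\tau(x\otimes w))=\Psi(x^q\otimes w)=(x^q\otimes w,\dots,x^q\otimes w).
\]
On the other hand, $\Psi(x\otimes w)=(x\otimes w,\dots,x\otimes w)$ has all components given by the same symbol. Applying the cyclic shift followed by $\Frob$, each component $j$ of $\tau\Psi(x\otimes w)$ is $\Frob(x\otimes w)=x^q\otimes w$, now read in $W^{(j)}$. This gives $\tau\Psi(x\otimes w)=\Psi(\tau(x\otimes w))$. Notice that the scalars $c\in\FF_{q^d}$ hidden in $w$ need no separate treatment: Step 1 already shows that moving them across the tensor sign is compatible with $\Frob$.

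\textbf{Step 3: extend.} Both $\Psi\circ\tau$ and $\tau\circ\Psi$ are additive, and pure tensors generate $E^\sep\otimes_{\FF_q}W$ additively, so the two maps agree everywhere. I expect the only real obstacle to be the well-definedness in Step 1, specifically the twist bookkeeping $\lambda^{q^{j-1}}\mapsto\lambda^{q^j}$ and the wrap-around at $j=d$. Once that is settled, the equivariance check itself is immediate.
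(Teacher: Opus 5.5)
Your proof is correct and follows essentially the same route as the paper: a direct check of $\Psi\circ\tau=\tau\circ\Psi$ on pure tensors, extended by additivity. The only difference is that you first show $\Frob\colon W^{(j-1)}\to W^{(j)}$ is well defined independently of any basis, so you can check on $x\otimes w$ directly and the cyclic shift is explicit; the paper instead checks on $x\otimes cw$ and tracks the twisted scalars $c^{q^j}$ by hand, with its display merging the shift and the $q$-th power on $c$ into a single step.
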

\begin{proof}
    Indeed, for any $c\in\FF_{q^d}$, we have $$\begin{aligned}
        \Psi(\tau(x\otimes cw))&= \Psi(x^q\otimes cw) = (cx^q\otimes w, c^qx^q\otimes w,\cdots, c^{q^{d-1}}x^q\otimes w)
    \end{aligned}$$ and $$\begin{aligned}
        \tau(\Psi(x\otimes cw)) &= \tau(cx\otimes w, c^qx\otimes w,\cdots, c^{q^{d-1}}x\otimes w)\\
        & = (cx^q\otimes w, c^qx^q\otimes w,\cdots, c^{q^{d-1}}x^q\otimes w).
    \end{aligned}$$

    Thus, $\Psi\circ \tau  = \tau\circ \Psi$ as desired.
\end{proof}
\begin{cor}
    $\dis \widehat{W}:=\left(\bigoplus_{j=0}^{d-1}W^{(j)}\right)^G$ is an \'etale $\tau$-module over $E$ of dimension $nd$.
\end{cor}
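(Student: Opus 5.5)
The plan is to transport the known structure of $\bD(\Res_{\FF_q(\rho)/\FF_q}W)$ through the isomorphism $\Psi$. Regard $W$ as an $\FF_q$-representation of $G$ via restriction of scalars, so that $G$ acts diagonally on $E^\sep\otimes_{\FF_q}W$. By Lemma \ref{G-inv_bc_to_Esep}, $\bD(W)=(E^\sep\otimes_{\FF_q}W)^G$ is an étale $\tau$-module over $E$. Its dimension is $\dim_{\FF_q}W = nd$.

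The preceding two results show that $\Psi\colon E^\sep\otimes_{\FF_q}W\to\bigoplus_{j}W^{(j)}$ is an $E^\sep$-linear isomorphism that is both $G$-equivariant and $\tau$-equivariant. From this I would argue in three steps.

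First, $G$-equivariance gives that $\Psi$ restricts to a bijection on $G$-invariants:
\[
\Psi\colon \bD(W)\xrightarrow{\sim}\widehat{W}.
\]
Indeed, $g(\Psi(y))=\Psi(g(y))$, so $y$ is invariant if and only if $\Psi(y)$ is, using that $\Psi$ is injective.

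Second, $\widehat{W}$ is an $E$-subspace and this restricted bijection is $E$-linear. The point is that for $\lambda\in E$ we have $g(\lambda)=\lambda$, and $\Psi$ is $E^\sep$-linear. Hence $\dim_E\widehat{W}=\dim_E\bD(W)=nd$.

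Third, $\tau$ preserves $\widehat{W}$, because the $\tau$-action on $\bigoplus_j W^{(j)}$ commutes with $G$. One can verify this directly, or more simply transport it: $\tau=\Psi\tau\Psi^{-1}$, and on the source $\tau$ commutes with $G$. The restricted $\Psi$ is then an isomorphism of $\tau$-modules $\bD(W)\cong\widehat{W}$, and étaleness transfers along it. Concretely, $\tau_{\widehat{W}}\circ\tau^*\Psi=\Psi\circ\tau_{\bD(W)}$, so $\tau_{\widehat{W}}$ is an isomorphism. Equivalently, by criterion (5) of the Fontaine–Ouyang proposition, $\Span_E\tau(\widehat{W})=\Psi(\Span_E\tau(\bD(W)))=\widehat{W}$.

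The only delicate point is the well-definedness of the twisted $G$-action on $\bigoplus_j W^{(j)}$. This involves the index shift by $m_g$ and the compatibility $\rho(g)(cw)=c\,\rho(g)w$ for $c\in\FF_{q^d}$, while $g$ moves scalars by $q^{m_g}$. I would sidestep it entirely by defining the $G$-action on the target as the transport $g\mapsto\Psi g\Psi^{-1}$, which the equivariance lemma identifies with the stated formula. After that, everything above is formal.
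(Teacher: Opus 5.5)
Your argument is correct and follows the intended route. The paper states this corollary without a separate proof, as an immediate consequence of Lemma \ref{G-inv_bc_to_Esep} together with the $G$- and $\tau$-equivariance of the $E^\sep$-linear isomorphism $\Psi$; restricting $\Psi$ to $G$-invariants gives an isomorphism of $\tau$-modules $\bD(\Res_{\FF_q(\rho)/\FF_q}W)\cong\widehat{W}$, which is exactly what you do. Your added care about the index shift by $m_g$ (defining the action on the target by transport of structure), which the paper's equivariance computations suppress, makes the argument more rigorous without changing the route.
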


Consider $Y = (y_0,y_1,\cdots,y_{d-1})\in \widehat{W}$. Each $y_\ell\in W^{(\ell)}$ and $\{w_1,\cdots,w_n\}$ is an $\FF_{q^d}$-basis for $W$. By the bijectivity of $\Psi$, we can write 
$$
\begin{aligned}
    Y &= \Psi\left(\sum_{j=1}^{d}\sum_{i=1}^n x_{ij}\otimes \alpha_j w_i\right) \\
    & = \left(\sum_{j=1}^{d}\sum_{i=1}^n x_{ij}\otimes \alpha_j w_i,\sum_{j=1}^{d}\sum_{i=1}^n x_{ij}\otimes \alpha_j w_i,\cdots,\sum_{j=1}^{d}\sum_{i=1}^n x_{ij}\otimes \alpha_j w_i\right)\\
    & = \left(\sum_{k=1}^n \left(\sum_{j=1}^{d} x_{kj}\alpha_j\right)\otimes w_k,\; \cdots,\; \sum_{k=1}^n \left(\sum_{j=1}^{d} x_{kj}\alpha_j^{q^{d-1}}\right)\otimes w_k\right)\\
\end{aligned}
$$ 
with $x_{ij}\in E^{\text{sep}}$ unique for $1\le i \le n$ and $1\le j \le d$.

Also notice that 
$$
\begin{aligned}
    g(Y) & = \Psi\left(g\left(\sum_{j=1}^{d}\sum_{i=1}^n x_{ij}\otimes \alpha_j w_i\right)\right)\\
    & = \Psi\left(\sum_{j=1}^{d}\sum_{i=1}^n g(x_{ij})\otimes \alpha_j g(w_i)\right)\\
    & = \Psi\left(\sum_{j=1}^{d}\sum_{i=1}^n g(x_{ij})\otimes \alpha_j\left(\sum_{k=1}^n \rho(g)_{ki}w_k\right)\right)\\
     & =\sum_{j=1}^{d}\sum_{i=1}^n\sum_{k=1}^n\left( \alpha_j\rho(g)_{ki} g(x_{ij})\otimes  w_k, \dots, \alpha_j^{q^{d-1}}\rho(g)_{ki}^{q^{d-1}}g(x_{ij})\otimes w_k\right)\\
\end{aligned}
$$
Here we use the standard convention for matrix action on basis vectors: $g(w_i) = \sum_{k=1}^n \rho(g)_{ki} w_k$.

Thus, $g(Y) = Y$ if and only if for each $0\leq \ell\leq d-1$, we have 
$$
\sum_{k=1}^n \left( \sum_{i=1}^n \rho(g)_{ki}^{q^{\ell}} \sum_{j=1}^{d} g(x_{ij}) \alpha_j^{q^\ell} \right) \otimes w_k = \sum_{k=1}^n \left( \sum_{j=1}^{d} x_{kj} \alpha_j^{q^\ell} \right) \otimes w_k
$$ 
Comparing the coefficients of the basis vectors $w_k$, this holds if and only if for each $0\leq \ell\leq d-1$ and $1\leq k\leq n$:

\begin{equation}\label{G_invariant_y_transposed}
    \sum_{i=1}^n \rho(g)_{ki}^{q^{\ell}} \left( \sum_{j=1}^{d} g(x_{ij}) \alpha_j^{q^\ell} \right) = \sum_{j=1}^{d} x_{kj} \alpha_j^{q^\ell}.
\end{equation}

Now define:
\begin{itemize}
    \item $\vec{\boldsymbol{\alpha}} = (\alpha_1, \alpha_2, \cdots, \alpha_d)^\top \in \Mat_{d\times 1}(\FF_{q^d})$,
    \item $\vec{\mathbf{x}} = (x_{ij}) = \begin{pmatrix}
    x_{11} & x_{12} & \cdots & x_{1d}\\
    x_{21} & x_{22} & \cdots & x_{2d}\\
    \vdots & \vdots & \ddots & \vdots\\
    x_{n1} & x_{n2} & \cdots & x_{nd}
    \end{pmatrix}\in \Mat_{n\times d}(E^{\text{sep}})$,
    \item $\rho^{(\ell)}(g) = (\rho(g)_{ij}^{q^\ell}) \in \GL_n(\FF_{q^d})$.
\end{itemize}

Then the system of equations (\ref{G_invariant_y_transposed}) can be written in matrix form as:
$$
\rho^{(\ell)}(g) g(\vec{\mathbf{x}}) \vec{\boldsymbol{\alpha}}^{(\ell)} = \vec{\mathbf{x}} \vec{\boldsymbol{\alpha}}^{(\ell)}.
$$

Write $\rho_\ell = \rho^{(\ell)}$ the $\ell$-th Frobenius twist of $\rho$. Then, we have \begin{equation}
    g(\vec{\mathbf{x}})\vec{\boldsymbol{\alpha}}^{(\ell)} = \rho_\ell(g)^{-1}\vec{\mathbf{x}}\vec{\boldsymbol{\alpha}}^{(\ell)}
\end{equation} for all $0\leq \ell \leq d-1$.

Denote $\bm{w} = \{w_1,w_2,\cdots,w_n\}$ the given $\FF_{q^d}$-basis of $W$ aforementioned, and define \begin{equation}\label{sol_space}
\operatorname{Sol}_E(\rho,\vec{\boldsymbol{\alpha}},\bm{w}) = \left\{\vec{\mathbf{x}}\in \Mat_{n\times d}(E^\sep)\middle|\begin{aligned}
   & \ \ \ g(\vec{\mathbf{x}})\vec{\boldsymbol{\alpha}}^{(\ell)} = \rho_\ell(g)^{-1}\vec{\mathbf{x}}\vec{\boldsymbol{\alpha}}^{(\ell)}\\
 &   \forall g\in G\textnormal{ and }0\leq \ell\leq d-1
\end{aligned}  \right\}.
\end{equation}
We may simply write $\operatorname{Sol}_E(\rho)$ if $\vec{\boldsymbol{\alpha}}$ and $\bm{w}$ is clear from the context.

The following results are immediate from the construction.
\begin{prop}
    The semilinear map $\tau: \vec{\mathbf{x}}\mapsto \vec{\mathbf{x}}^{(1)}$ makes $\operatorname{Sol}_E(\rho,\vec{\boldsymbol{\alpha}},\bm{w})$ into an \'etale $\tau$-module over $E$ and we have an isomorphism of \'etale $\tau$-modules over $E$ $$\begin{aligned}
    \widehat{\Psi}: (E^\sep\otimes_{\FF_q}W)^G & \stackrel{\sim}{\to} \operatorname{Sol}_E(\rho,\vec{\boldsymbol{\alpha}},\bm{w}), \\
    \bm{x}&\mapsto \vec{\mathbf{x}} = (x_{ij}) ,
\end{aligned}$$ where $\dis \bm{x}:=\sum_{j=1}^{d}\sum_{i=1}^n x_{ij}\otimes \alpha_j w_i$.

Furthermore, $\dim_E \operatorname{Sol}_E(\rho,\vec{\boldsymbol{\alpha}},\bm{w}) = nd$.
\end{prop}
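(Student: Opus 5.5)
The plan is to transport the known structure of $(E^\sep\otimes_{\FF_q}W)^G$ through the coordinate map $\widehat{\Psi}$, so that all three claims reduce to earlier results. Here $W=V_\rho$ is viewed as an $\FF_q$-representation via $\Res_{\FF_q(\rho)/\FF_q}$.

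The first step is to record what $\widehat{\Psi}$ is. Since $\{\alpha_j w_i\}$ is an $\FF_q$-basis of $W$, every element of $E^\sep\otimes_{\FF_q}W$ can be written uniquely as $\bm{x}=\sum_{j,i}x_{ij}\otimes\alpha_jw_i$. Hence $\bm{x}\mapsto(x_{ij})$ is an $E^\sep$-linear bijection $E^\sep\otimes_{\FF_q}W\to\Mat_{n\times d}(E^\sep)$.

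The second step identifies the image of the $G$-invariants. We use the $G$-equivariant isomorphism $\Psi$ of Lemma \ref{split_base_change} together with the $G$-equivariance lemma that follows it. The computation leading to (\ref{G_invariant_y_transposed}) shows that $g(\Psi(\bm{x}))=\Psi(\bm{x})$ if and only if $\rho^{(\ell)}(g)\,g(\vec{\mathbf{x}})\vec{\boldsymbol{\alpha}}^{(\ell)}=\vec{\mathbf{x}}\vec{\boldsymbol{\alpha}}^{(\ell)}$ for all $\ell$. This condition is exactly the defining condition of $\operatorname{Sol}_E(\rho,\vec{\boldsymbol{\alpha}},\bm{w})$. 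It follows that $\widehat{\Psi}$ restricts to a bijection $(E^\sep\otimes_{\FF_q}W)^G\to\operatorname{Sol}_E(\rho)$. This bijection is $E$-linear, because $E$ is fixed by $G$.

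The third step handles $\tau$. On $E^\sep\otimes_{\FF_q}W$ we have $\tau(x\otimes w)=x^q\otimes w$. In the basis $\{\alpha_jw_i\}$ this reads $\tau(\bm{x})=\sum x_{ij}^q\otimes\alpha_jw_i$, so $\widehat{\Psi}(\tau\bm{x})=\vec{\mathbf{x}}^{(1)}$. We should still check directly that $\operatorname{Sol}_E(\rho)$ is stable under $\vec{\mathbf{x}}\mapsto\vec{\mathbf{x}}^{(1)}$. Raising the $\ell$-th defining equation to the $q$-th power gives the $(\ell+1)$-th equation for $\vec{\mathbf{x}}^{(1)}$, using that $g$ commutes with Frobenius and that $\vec{\boldsymbol{\alpha}}^{(d)}=\vec{\boldsymbol{\alpha}}$ and $\rho^{(d)}=\rho$. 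The indices are read modulo $d$, so this is consistent. Thus $\widehat{\Psi}$ intertwines the two $\tau$-actions, and the semilinear map on $\operatorname{Sol}_E(\rho)$ is the transport of $\tau$ on $\bD(W)$.

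The fourth step concludes. By Lemma \ref{G-inv_bc_to_Esep}, $\bD(W)=(E^\sep\otimes_{\FF_q}W)^G$ is an étale $\tau$-module over $E$ of dimension $nd$. A $\tau$-equivariant $E$-linear isomorphism transports étaleness, since the matrix of $\tau$ in corresponding bases is the same invertible matrix. Therefore $\operatorname{Sol}_E(\rho)$ is étale of dimension $nd$.

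I expect the only delicate point to be keeping track of the index shift in the $\tau$-stability check, namely the cyclic relabeling of $\ell$ modulo $d$. Everything else is bookkeeping with the fixed bases.
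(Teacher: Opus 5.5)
Your proposal is correct and follows the route the paper intends. The paper calls the proposition immediate from the construction: the coordinate computation giving (\ref{G_invariant_y_transposed}) via the injective, $G$- and $\tau$-equivariant map $\Psi$, together with Lemma \ref{G-inv_bc_to_Esep} for \'etaleness and $\dim_E = nd$, which is what you assemble. Your direct check that $\operatorname{Sol}_E(\rho,\vec{\boldsymbol{\alpha}},\bm{w})$ is stable under $\vec{\mathbf{x}}\mapsto\vec{\mathbf{x}}^{(1)}$, with $\ell$ read modulo $d$, is redundant given the transport argument but harmless.
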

\begin{cor}
    We have an isomorphism of $\bA$-motives over $E$, $$\MM(\rho) \cong \operatorname{Sol}_E(\rho,\vec{\boldsymbol{\alpha}},\bm{w})\otimes_E \bA_E.$$
\end{cor}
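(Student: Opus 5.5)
The plan is to obtain the isomorphism by base change along $E\to\bA_E$ from the isomorphism of étale $\tau$-modules furnished by the preceding proposition. By definition,
$$\MM(\rho)=\bigl(\bD(\Res_{\FF_q(\rho)/\FF_q}V_\rho)\otimes_E\bA_E,\ \tau\otimes\tau_{\bA_E}\bigr),$$
and $\bD(\Res_{\FF_q(\rho)/\FF_q}V_\rho)=(E^\sep\otimes_{\FF_q}W)^G$ with $W=V_\rho$ regarded as an $\FF_q$-representation. The preceding proposition gives an $E$-linear isomorphism $\widehat{\Psi}\colon(E^\sep\otimes_{\FF_q}W)^G\isoto\operatorname{Sol}_E(\rho,\vec{\boldsymbol{\alpha}},\bm{w})$ intertwining the two semilinear Frobenii. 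I will then take $\widehat{\Psi}\otimes\id_{\bA_E}$ as the candidate isomorphism, where $\operatorname{Sol}_E(\rho)\otimes_E\bA_E$ carries the $\tau$-action $\tau\otimes\tau_{\bA_E}$, that is, $\vec{\mathbf{x}}\otimes a\mapsto\vec{\mathbf{x}}^{(1)}\otimes a^{(1)}$.

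The verification proceeds in three steps. First, $\widehat{\Psi}\otimes\id$ is an isomorphism of $\bA_E$-modules, since $\widehat{\Psi}$ is an $E$-linear bijection and $-\otimes_E\bA_E$ is a functor; in particular, both sides are free of rank $nd$ over $\bA_E$. Second, it commutes with the $\tau$-actions. On pure tensors,
$$(\widehat{\Psi}\otimes\id)\bigl(\tau(x)\otimes a^{(1)}\bigr)=\widehat{\Psi}(\tau x)\otimes a^{(1)}=\tau\bigl(\widehat{\Psi}(x)\bigr)\otimes a^{(1)},$$
using the $\tau$-equivariance of $\widehat{\Psi}$. Equivalently, the linearized maps $\tau^*(-)\to(-)$ correspond under $\tau^*(\widehat{\Psi}\otimes\id)$, so the square in the definition of a morphism of $\bA$-motives commutes.

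Third, one must check that $\operatorname{Sol}_E(\rho)\otimes_E\bA_E$ is genuinely an $\bA$-motive, meaning that its linearized $\tau$ is an isomorphism even before inverting $\mfj$. This follows from étaleness: if $\tau_V\colon\tau^*V\to V$ is an isomorphism of $E$-vector spaces, then $\tau^*(V\otimes_E\bA_E)\cong\tau^*V\otimes_E\bA_E$ canonically, and the linearization becomes $\tau_V\otimes\id$, which is again an isomorphism. Alternatively, this step can be bypassed by transporting the structure from $\MM(\rho)$ along the isomorphism.

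The only point needing care is the compatibility of $\bA_E$-linearity with the twist. The $\bA$-factor is untouched by $\tau$, while the $E$-factor is twisted on both sides in the same way; this amounts to the routine identity
$$\tau^*(V\otimes_E\bA_E)\cong\tau^*V\otimes_E\bA_E,$$
which I would state in one line rather than belabor.
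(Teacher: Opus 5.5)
Your proposal is correct and follows the paper's argument. The paper treats this corollary as immediate from the preceding proposition: it base-changes the isomorphism of \'etale $\tau$-modules $\widehat{\Psi}$ along $E\to\bA_E$, with the diagonal Frobenius $\tau\otimes\tau_{\bA_E}$ on both sides, exactly as you do. Your extra check that the linearized Frobenius stays an isomorphism after applying $-\otimes_E\bA_E$ (so $\mfj$ never even needs to be inverted) is correct but not strictly needed.
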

\begin{lem}
    Let $\vec{\mathbf{x}}\in \operatorname{Sol}_E(\rho,\vec{\boldsymbol{\alpha}},\bm{w})$ and $N = nd$. Then $\vec{\mathbf{x}}, \vec{\mathbf{x}}^{(1)}, \cdots, \vec{\mathbf{x}}^{(N-1)}$ are linearly independent over $E$ if and only if all entries of $\vec{\mathbf{x}}$ are linearly independent over $\FF_q$.
\end{lem}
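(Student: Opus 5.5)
The plan is to reduce the statement to the Moore determinant criterion already used in the proof of Lemma \ref{split_base_change}. The one subtlety is the passage between linear independence over $E$ and over $E^\sep$, and I will handle it with the descent isomorphism of Lemma \ref{G-inv_bc_to_Esep}.

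First, I will flatten $\vec{\mathbf{x}}$ into a vector. List its $N=nd$ entries $x_{ij}$ in a fixed order as $z_1,\dots,z_N\in E^\sep$, so that $\vec{\mathbf{x}}^{(k)}$ corresponds to $(z_1^{q^k},\dots,z_N^{q^k})$. The $N$ vectors $\vec{\mathbf{x}},\dots,\vec{\mathbf{x}}^{(N-1)}$ then form the rows of the $N\times N$ Moore matrix $(z_m^{q^k})_{0\le k\le N-1,\,1\le m\le N}$. By the Moore determinant identity quoted in the proof of Lemma \ref{split_base_change}, this determinant is, up to a nonzero scalar, $\prod (c_1z_1+\cdots+c_Nz_N)$ over nonzero $(c_1,\dots,c_N)\in\FF_q^N$. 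That identity holds over any field containing $\FF_q$, in particular over $E^\sep$. Hence the rows are linearly independent over $E^\sep$ if and only if $z_1,\dots,z_N$ are linearly independent over $\FF_q$.

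Second, I must show that independence over $E$ is equivalent to independence over $E^\sep$. Independence over $E^\sep$ trivially implies independence over $E$. For the converse, I will use that $\operatorname{Sol}_E(\rho)$ is stable under $\tau$ by the preceding Proposition, so every $\vec{\mathbf{x}}^{(k)}$ lies in $\operatorname{Sol}_E(\rho)$. Via $\widehat\Psi$, $\operatorname{Sol}_E(\rho)$ is identified with $\bD(V)=(E^\sep\otimes_{\FF_q}W)^G$. Lemma \ref{G-inv_bc_to_Esep} then says that $E^\sep\otimes_E\bD(V)\to E^\sep\otimes_{\FF_q}W$ is an isomorphism, in particular injective. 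The $\FF_q$-basis $\{\alpha_jw_i\}$ of $W$ identifies $E^\sep\otimes_{\FF_q}W$ with $\Mat_{n\times d}(E^\sep)$ compatibly with $\widehat\Psi$, so the natural map $E^\sep\otimes_E\operatorname{Sol}_E(\rho)\to\Mat_{n\times d}(E^\sep)$ is injective. Consequently, elements of $\operatorname{Sol}_E(\rho)$ that are linearly independent over $E$ remain linearly independent over $E^\sep$ inside $\Mat_{n\times d}(E^\sep)$.

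Combining the two steps gives the chain: the $\vec{\mathbf{x}}^{(k)}$ are independent over $E$, if and only if they are independent over $E^\sep$, if and only if the Moore determinant is nonzero, if and only if the entries of $\vec{\mathbf{x}}$ are independent over $\FF_q$. The main point needing care is the second step, specifically checking that the identification of $E^\sep\otimes_{\FF_q}W$ with $\Mat_{n\times d}(E^\sep)$ matches $\widehat\Psi$ and its $E^\sep$-linear extension. Beyond that, everything is bookkeeping.
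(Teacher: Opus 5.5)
Your proof follows the paper's, which likewise reduces the statement to the Moore determinant criterion \cite[Lemma 1.3.3]{Goss} applied to the $N$ entries of $\vec{\mathbf{x}}$. The paper simply asserts that $E$-linear independence of $\vec{\mathbf{x}},\dots,\vec{\mathbf{x}}^{(N-1)}$ is equivalent to $\det M(\vec{\mathbf{x}})\neq 0$. You correctly supply the nontrivial direction, namely that $E$-independence inside $\operatorname{Sol}_E(\rho)$ implies $E^\sep$-independence, via the descent isomorphism of Lemma~\ref{G-inv_bc_to_Esep}, which is exactly what that step needs. One small inaccuracy: the product over all nonzero $c\in\FF_q^N$ is really $\pm\det M(\vec{\mathbf{x}})^{q-1}$, not the determinant up to a scalar. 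This is harmless, since only the vanishing criterion is used.
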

\begin{proof}
    We treat $\vec{\mathbf{x}}$ as a column vector of length $N$ in $(E^\sep)^N$. Then, $\vec{\mathbf{x}}, \vec{\mathbf{x}}^{(1)}, \cdots, \vec{\mathbf{x}}^{(N-1)}$ are linearly independent over $E$ if and only if the Moore matrix $M(\vec{\mathbf{x}}) := [\vec{\mathbf{x}}, \vec{\mathbf{x}}^{(1)}, \cdots, \vec{\mathbf{x}}^{(N-1)}]$ has non-zero determinant. By \cite[Lemma 1.3.3]{Goss}, the latter holds if and only if $\{x_{11}, \cdots, x_{nd}\}$ are linearly independent over $\FF_q$.
\end{proof}
\begin{Def}
    We call an element $\vec{\mathbf{u}}\in \operatorname{Sol}_E(\rho,\vec{\boldsymbol{\alpha}},\bm{w})$ a \textit{fundamental solution} if all its entries are linearly independent over $\FF_q$. We denote by $\operatorname{FS}_E(\rho,\vec{\boldsymbol{\alpha}},\bm{w})$ the subset of all fundamental solutions.
\end{Def}
\begin{lem}
    There exists a fundamental solution $\vec{\mathbf{u}}\in \operatorname{Sol}_E(\rho,\vec{\boldsymbol{\alpha}},\bm{w})$.
\end{lem}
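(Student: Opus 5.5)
Since $\operatorname{Sol}_E(\rho,\vec{\boldsymbol{\alpha}},\bm{w})$ is an étale $\tau$-module of dimension $N=nd$ over $E$, isomorphic to $\bD(\Res_{\FF_q(\rho)/\FF_q}V_\rho)$ via $\widehat{\Psi}$, I will produce a fundamental solution from the comparison isomorphism of Lemma \ref{G-inv_bc_to_Esep}. I view $\vec{\mathbf{x}}\mapsto$ (its $N$ entries) as an $E$-linear map $\operatorname{Sol}_E(\rho)\to (E^\sep)^N$. By the preceding lemma, $\vec{\mathbf{u}}$ is fundamental iff the Moore matrix $[\vec{\mathbf{u}},\vec{\mathbf{u}}^{(1)},\dots,\vec{\mathbf{u}}^{(N-1)}]$ is invertible, i.e. iff $\vec{\mathbf{u}}$ is a cyclic vector of the $\tau$-module, in the sense that $\vec{\mathbf{u}},\tau\vec{\mathbf{u}},\dots,\tau^{N-1}\vec{\mathbf{u}}$ are $E$-linearly independent (independence in $\operatorname{Sol}_E(\rho)$ is independence of the column vectors in $(E^\sep)^N$).

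So the plan is to prove a cyclic vector lemma for étale $\tau$-modules over $E$. First, $E$ is infinite (it contains $K$), and the étale $\tau$-module $D=\operatorname{Sol}_E(\rho)$ satisfies $E^\sep\otimes_E D\cong (E^\sep)^N$ with $\tau$ acting coordinatewise as Frobenius. Concretely, by Lemma \ref{G-inv_bc_to_Esep} there is an $E^\sep$-basis $e_1,\dots,e_N$ of $E^\sep\otimes_E D$ fixed by $\tau$, namely the $\FF_q$-basis $\alpha_j w_i$ of $W$. Choose an $E$-basis $d_1,\dots,d_N$ of $D$ and write $d_k=\sum_m c_{mk}e_m$ with $C=(c_{mk})\in\GL_N(E^\sep)$. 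For $\lambda=(\lambda_1,\dots,\lambda_N)\in E^N$, put $v_\lambda=\sum\lambda_kd_k$. Its coordinates in the basis $e_m$ are $y=C\lambda$, and $\tau^iv_\lambda$ has coordinates $y^{(i)}$. Hence $v_\lambda$ is cyclic iff the Moore determinant $\Delta(y)=\det[y,y^{(1)},\dots,y^{(N-1)}]$ is nonzero, iff the entries $y_1,\dots,y_N$ are $\FF_q$-linearly independent (again by \cite[Lemma 1.3.3]{Goss}).

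Now $\Delta(C\lambda)$ is a polynomial in $\lambda_1,\dots,\lambda_N$ with coefficients in $E^\sep$. Moore's formula gives $\Delta(y)=c\prod_{0\neq a\in\FF_q^N}(a\cdot y)$ with $c\in\FF_q^\times$, so $\Delta(C\lambda)=c\prod_{a\neq 0}(a^\top C\lambda)$. Each factor $a^\top C\lambda$ is a nonzero linear form in $\lambda$ because $C$ is invertible, so the product is a nonzero polynomial. An infinite field cannot make a nonzero polynomial vanish on all of $E^N$, so some $\lambda\in E^N$ gives $\Delta(C\lambda)\neq0$. The corresponding $v_\lambda$, transported back via $\widehat{\Psi}$, is a fundamental solution $\vec{\mathbf{u}}$.

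The main obstacle is making sure that entries of $\vec{\mathbf{u}}$ really are the coordinates $y$ up to an $\FF_q$-linear (indeed invertible) change. They are: the $x_{ij}$ are precisely the coefficients of $\bm{x}$ in the $\FF_q$-basis $\alpha_jw_i$, which is $\tau$-fixed. With this, $\FF_q$-independence of the entries is exactly the nonvanishing of the Moore determinant, and the argument closes.
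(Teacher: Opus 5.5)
Your argument is correct and is essentially the paper's proof: the nonvanishing of each linear form $\lambda\mapsto a^\top C\lambda$ (from the invertibility of $C$, i.e.\ from $E^\sep\otimes_E\operatorname{Sol}_E(\rho)\cong\Mat_{n\times d}(E^\sep)$) is exactly the paper's check that each hyperplane $H_{\bm{c}}$ is a proper $E$-subspace, and choosing $\lambda\in E^N$ off the zero locus of their product is the paper's ``a vector space over an infinite field is not a finite union of proper subspaces'' step. One small correction: Moore's formula is $\Delta(y)=c\prod_{a}(a\cdot y)$ with $a$ running over one representative of each line in $\FF_q^N$ (the product over all nonzero $a$ equals $\pm\Delta(y)^{q-1}$), which does not affect your conclusion.
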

\begin{proof}
    For a non-zero $\bm{c} = (c_{ij})\in \Mat_{n\times d}(\FF_q)$, we define $$H_{\bm{c}} = \left\{\vec{\mathbf{x}}\in \operatorname{Sol}_E(\rho,\vec{\boldsymbol{\alpha}},\bm{w}): \sum_{i,j} c_{ij}x_{ij} = 0\right\}.$$

    Clearly, this is an $E$-subspace of $\operatorname{Sol}_E(\rho,\vec{\boldsymbol{\alpha}},\bm{w})$. We first claim that $H_{\bm{c}}\neq \operatorname{Sol}_E(\rho,\vec{\boldsymbol{\alpha}},\bm{w})$.  Assume the contrary. By Lemma \ref{G-inv_bc_to_Esep}, we have  \[
        E^\sep\otimes_E H_{\bm{c}} 
        \cong E^\sep\otimes_E (E^\sep\otimes_{\FF_q}W)^G
        \cong E^\sep\otimes_{\FF_q}W 
        \cong \Mat_{n\times d}(E^\sep)
    \] and thus there exists an $E^\sep$-basis for $\Mat_{n\times d}(E^\sep)$ whose elements lie in $H_{\bm{c}}$. Call them $\vec{\mathbf{v}}_1, \cdots, \vec{\mathbf{v}}_N$. Write $L_{\bm{c}}(\vec{\mathbf{x}}) = \sum_{i,j} c_{ij}x_{ij}$. Then, we have $L_{\bm{c}}(\vec{\mathbf{v}}_i) = 0$ for all $1\leq i\leq N$. Since this is an $E^\sep$-basis for $\Mat_{n\times d}(E^\sep)$, we see that $$L_{\bm{c}}(\vec{\mathbf{x}}) = 0\quad \textnormal{ for all }\vec{\mathbf{x}}\in \Mat_{n\times d}(E^\sep).$$ This is absurd as the entire space $\Mat_{n\times d}(E^\sep)$ cannot be covered by a hyperplane. Hence, $H_{\bm{c}}\subsetneq \operatorname{Sol}_E(\rho,\vec{\boldsymbol{\alpha}},\bm{w})$ is a proper subspace for any non-zero $\bm{c}\in \Mat_{n\times d}(\FF_q)$.

    Notice that $$\operatorname{FS}_E(\rho,\vec{\boldsymbol{\alpha}},\bm{w}) = \operatorname{Sol}_E(\rho,\vec{\boldsymbol{\alpha}},\bm{w})\backslash \left(\bigcup_{\bm{c}\in \Mat_{n\times d}(\FF_q)\backslash\{0\}}H_{\bm{c}}\right).$$ Since a vector space over an infinite field is not a finite union of proper subspaces, we see that $\operatorname{FS}_E(\rho,\vec{\boldsymbol{\alpha}},\bm{w})\neq \varnothing$. Thus, there exists a fundamental solution $\vec{\mathbf{u}}\in \operatorname{Sol}_E(\rho,\vec{\boldsymbol{\alpha}},\bm{w})$.
\end{proof}
\begin{cor}
    Let $\vec{\mathbf{u}}\in \operatorname{Sol}_E(\rho,\vec{\boldsymbol{\alpha}},\bm{w})$ be a fundamental solution. Then, $\vec{\mathbf{u}}, \vec{\mathbf{u}}^{(1)}, \cdots, \vec{\mathbf{u}}^{(N-1)}$ is an $E$-basis for $\operatorname{Sol}_E(\rho,\vec{\boldsymbol{\alpha}},\bm{w})$.
\end{cor}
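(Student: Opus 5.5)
The claim has two parts. First, the $N$ vectors $\vec{\mathbf{u}}, \vec{\mathbf{u}}^{(1)}, \cdots, \vec{\mathbf{u}}^{(N-1)}$ lie in $\operatorname{Sol}_E(\rho,\vec{\boldsymbol{\alpha}},\bm{w})$ and are $E$-linearly independent. Second, their number equals $\dim_E \operatorname{Sol}_E(\rho,\vec{\boldsymbol{\alpha}},\bm{w}) = N$. Both are essentially already available, so the plan is to assemble them.

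\emph{Membership.} By the preceding Proposition, $\tau\colon \vec{\mathbf{x}}\mapsto \vec{\mathbf{x}}^{(1)}$ preserves $\operatorname{Sol}_E(\rho,\vec{\boldsymbol{\alpha}},\bm{w})$. One can also check this directly: apply $\tau$ to the defining relation $g(\vec{\mathbf{x}})\vec{\boldsymbol{\alpha}}^{(\ell)} = \rho_\ell(g)^{-1}\vec{\mathbf{x}}\vec{\boldsymbol{\alpha}}^{(\ell)}$. Since $\tau$ commutes with $g$, and since $\vec{\boldsymbol{\alpha}}^{(\ell)}$ becomes $\vec{\boldsymbol{\alpha}}^{(\ell+1)}$ and $\rho_\ell$ becomes $\rho_{\ell+1}$, we obtain the relation for $\vec{\mathbf{x}}^{(1)}$ at index $\ell+1$. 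The indices are taken mod $d$, which is legitimate because $\alpha_j^{q^d}=\alpha_j$ and $\rho^{(d)}=\rho$, both having entries in $\FF_{q^d}$. Iterating, every $\vec{\mathbf{u}}^{(k)}$ lies in $\operatorname{Sol}_E(\rho,\vec{\boldsymbol{\alpha}},\bm{w})$.

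\emph{Independence.} Since $\vec{\mathbf{u}}$ is a fundamental solution, its $N$ entries are $\FF_q$-linearly independent. By the preceding Lemma (the Moore determinant criterion, \cite[Lemma 1.3.3]{Goss}), the Moore matrix $[\vec{\mathbf{u}}, \vec{\mathbf{u}}^{(1)}, \cdots, \vec{\mathbf{u}}^{(N-1)}]$ is nonsingular. Hence these vectors are linearly independent over $E^\sep$, and a fortiori over $E$.

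\emph{Spanning.} The Proposition gives $\dim_E \operatorname{Sol}_E(\rho,\vec{\boldsymbol{\alpha}},\bm{w}) = nd = N$, via the isomorphism $\widehat\Psi$ and Lemma \ref{G-inv_bc_to_Esep}. So $N$ linearly independent vectors in this $N$-dimensional $E$-space form a basis.

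The only point needing care is the well-definedness of the index shift mod $d$ in the membership step. This is immediate from $\FF_q(\rho)=\FF_{q^d}$, so I expect no real obstacle.
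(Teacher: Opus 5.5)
Your proposal is correct and is essentially the argument the paper intends (it states the corollary without a written proof, as an immediate consequence): $\tau$-stability of $\operatorname{Sol}_E(\rho,\vec{\boldsymbol{\alpha}},\bm{w})$ and $\dim_E \operatorname{Sol}_E(\rho,\vec{\boldsymbol{\alpha}},\bm{w}) = nd = N$ come from the preceding Proposition, and $E$-linear independence comes from the Moore-determinant Lemma. You only use the easy direction of that Lemma: a nonzero Moore determinant gives independence over $E^\sep$, hence over $E$. That direction is all that is needed here.
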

\begin{Def}
    We say an $E$-basis for $\operatorname{Sol}_E(\rho,\vec{\boldsymbol{\alpha}},\bm{w})$ is \textit{fundamental} if it is of the form $\vec{\mathbf{u}}, \vec{\mathbf{u}}^{(1)}, \cdots, \vec{\mathbf{u}}^{(N-1)}$ for a fundamental solution $\vec{\mathbf{u}}$.
\end{Def}
Now, we are ready to write down the matrix of the $\tau$-action with respect to a fundamental basis.

Since $\operatorname{Sol}_E(\rho,\vec{\boldsymbol{\alpha}},\bm{w})$ has dimension $N$ over $E$, there exists $f_0,f_1,\cdots,f_{N-1}\in E$ such that $$\vec{\mathbf{u}}^{(N)} = f_0\vec{\mathbf{u}} + f_1 \vec{\mathbf{u}}^{(1)} + \cdots + f_{N-1} \vec{\mathbf{u}}^{(N-1)} = [\vec{\mathbf{u}}, \vec{\mathbf{u}}^{(1)}, \cdots, \vec{\mathbf{u}}^{(N-1)}] \begin{pmatrix}
    f_0\\
    f_1\\
    \vdots\\
    f_{N-1}
\end{pmatrix}.$$

Write $M(\vec{\mathbf{u}}) = [\vec{\mathbf{u}}, \vec{\mathbf{u}}^{(1)}, \cdots, \vec{\mathbf{u}}^{(N-1)}]$, where we consider $\vec{\mathbf{u}}$ as a column vector of length $N$ in $(E^\sep)^N$. Then, \begin{equation}\label{f_i}
    \begin{pmatrix}
    f_0\\
    f_1\\
    \vdots\\
    f_{N-1}
\end{pmatrix} = M(\vec{\mathbf{u}})^{-1} \vec{\mathbf{u}}^{(N)}.
\end{equation}

So, under the basis $\vec{\mathbf{u}}, \vec{\mathbf{u}}^{(1)}, \cdots, \vec{\mathbf{u}}^{(N-1)}$, the $\tau$-action has a matrix \begin{equation}\label{Phimatrix}
    \Phi = \Phi({\vec{\mathbf{u}}}) = \begin{pmatrix}
    0 &  \cdots & 0 & f_0\\
    1 &  \cdots & 0 & f_1\\
    \vdots &  \ddots & \vdots & \vdots\\
    0 &  \cdots & 1 & f_{N-1}
\end{pmatrix}.
\end{equation}
\begin{prop} Keep the notation as above, we have the following.\label{Phi_in_terms_of_Mu}
\begin{enumerate}
    \item The matrix $\Phi = M(\vec{\mathbf{u}})^{-1}M(\vec{\mathbf{u}})^{(1)}$ and in particular, it is invertible. 
    \item $f_0 = (-1)^{N-1}(\det M(\vec{\mathbf{u}}))^{q-1}\neq 0.$
\end{enumerate}
\end{prop}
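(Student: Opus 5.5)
For part (1), I would compare $M(\vec{\mathbf{u}})\Phi$ with $M(\vec{\mathbf{u}})^{(1)}$ column by column. By definition, $M(\vec{\mathbf{u}})^{(1)} = [\vec{\mathbf{u}}^{(1)}, \vec{\mathbf{u}}^{(2)}, \cdots, \vec{\mathbf{u}}^{(N)}]$, since twisting acts entrywise. Multiplying $M(\vec{\mathbf{u}})$ on the right by the companion matrix $\Phi$ in (\ref{Phimatrix}) does two things. For $1\le k\le N-1$, its $k$-th column is $\vec{\mathbf{u}}^{(k)}$, because the $k$-th column of $\Phi$ is the standard basis vector $e_{k+1}$. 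Its last column is $\sum_i f_i\vec{\mathbf{u}}^{(i)}$, which equals $\vec{\mathbf{u}}^{(N)}$ by (\ref{f_i}). Hence
\[
M(\vec{\mathbf{u}})\Phi = M(\vec{\mathbf{u}})^{(1)}.
\]
Since $\vec{\mathbf{u}}$ is a fundamental solution, the entries of $\vec{\mathbf{u}}$ are $\FF_q$-linearly independent. By the Moore determinant criterion (\cite[Lemma 1.3.3]{Goss}, as in the preceding lemma), $\det M(\vec{\mathbf{u}})\neq 0$. We may therefore write $\Phi = M(\vec{\mathbf{u}})^{-1}M(\vec{\mathbf{u}})^{(1)}$. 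This is a product of two invertible matrices, because $\det(M^{(1)}) = (\det M)^q\neq 0$, so $\Phi$ is invertible.

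For part (2), I would take determinants in the identity from part (1):
\[
\det\Phi = (\det M(\vec{\mathbf{u}}))^{-1}(\det M(\vec{\mathbf{u}}))^{q} = (\det M(\vec{\mathbf{u}}))^{q-1}.
\]
On the other hand, I would compute $\det\Phi$ for the companion matrix directly by cofactor expansion along the first row. The only nonzero entry there is $f_0$, in position $(1,N)$. The complementary minor is the identity matrix $I_{N-1}$, coming from the subdiagonal ones. This gives $\det\Phi = (-1)^{1+N}f_0 = (-1)^{N-1}f_0$. Equating the two expressions yields $f_0 = (-1)^{N-1}(\det M(\vec{\mathbf{u}}))^{q-1}$, and this is nonzero because $\det M(\vec{\mathbf{u}})\neq 0$.

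No step is a real obstacle. The only points needing care are bookkeeping ones. First, twisting commutes with forming $M$ and with determinants. Second, the sign from the cofactor expansion must be tracked correctly. A third remark is optional: $f_0\in E$ can also be confirmed from $\det M(\vec{\mathbf{u}})^{q-1}$ being Galois-invariant, but this is already implied by the construction of the $f_i\in E$.
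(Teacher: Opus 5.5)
Your proposal is correct and takes essentially the same approach as the paper. The paper verifies $M(\vec{\mathbf{u}})^{-1}M(\vec{\mathbf{u}})^{(1)}=\Phi$ column by column using (\ref{f_i}), which is equivalent to your $M(\vec{\mathbf{u}})\Phi=M(\vec{\mathbf{u}})^{(1)}$, and then takes determinants with $\det\Phi=(-1)^{N-1}f_0$; you also spell out the invertibility of $M(\vec{\mathbf{u}})$ via the Moore criterion and the cofactor sign, which the paper leaves implicit.
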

\begin{proof}
    Notice that $$M(\vec{\mathbf{u}})^{-1}M(\vec{\mathbf{u}})^{(1)} = M(\vec{\mathbf{u}})^{-1} [\vec{\mathbf{u}}^{(1)}, \cdots, \vec{\mathbf{u}}^{(N)}] =  [M(\vec{\mathbf{u}})^{-1}\vec{\mathbf{u}}^{(1)}, \cdots, M(\vec{\mathbf{u}})^{-1}\vec{\mathbf{u}}^{(N)}].$$ Furthermore, for each $1\leq i\leq N-1$, $M(\vec{\mathbf{u}})^{-1}\vec{\mathbf{u}}^{(i)}$ is the $(i+1)$-th column of the identity $I_N = M(\vec{\mathbf{u}})^{-1}M(\vec{\mathbf{u}})$. Together with (\ref{f_i}), we see that $$M(\vec{\mathbf{u}})^{-1}M(\vec{\mathbf{u}})^{(1)} = \Phi.$$

    Now, taking the determinant both sides, we get $$(\det M(\vec{\mathbf{u}}))^{-1} (\det M(\vec{\mathbf{u}}))^q = (-1)^{N-1}f_0.$$ The result now follows.
\end{proof}
Summarizing what we have discussed above, we get the following result.
\begin{prop}
    Let $\vec{\mathbf{u}}\in \operatorname{Sol}_E(\rho,\vec{\boldsymbol{\alpha}},\bm{w})$ be a fundamental solution and $N = nd$. Then, we have an isomorphism of $\bA$-motives $$\MM(\rho)\stackrel{\sim}{\to} \bigoplus_{i=0}^{N-1}\bA_E \vec{\mathbf{u}}^{(i)}$$ over $E$. The matrix of $\tau$-action with respect to  $\vec{\mathbf{u}}, \vec{\mathbf{u}}^{(1)}, \cdots, \vec{\mathbf{u}}^{(N-1)}$ is $$\Phi = \Phi({\vec{\mathbf{u}}}) = \begin{pmatrix}
    0 &  \cdots & 0 & f_0\\
    1 &  \cdots & 0 & f_1\\
    \vdots &  \ddots & \vdots & \vdots\\
    0 &  \cdots & 1 & f_{N-1}
\end{pmatrix},$$ where $(
    f_0,
    f_1,
    \cdots,
    f_{N-1}
)^\top = M(\vec{\mathbf{u}})^{-1} \vec{\mathbf{u}}^{(N)}\in \Mat_{N\times 1}(E)$ and $\vec{\mathbf{u}}$ is viewed as a column vector.
\end{prop}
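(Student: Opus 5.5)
This statement assembles the preceding pieces, so I will chain them together. First I use the earlier Corollary, which gives $\MM(\rho)\cong \operatorname{Sol}_E(\rho,\vec{\boldsymbol{\alpha}},\bm{w})\otimes_E\bA_E$. Here the $\tau$-action on the left is $\tau\otimes\tau_{\bA_E}$, and on the right it is $\vec{\mathbf{x}}\mapsto\vec{\mathbf{x}}^{(1)}$ tensored with the Frobenius twist on $\bA_E$. This isomorphism comes from $\widehat{\Psi}$, which is an isomorphism of étale $\tau$-modules over $E$. Base change along $E\to\bA_E$ therefore preserves both the $\bA_E$-module structure and the semilinear $\tau$.

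Next, the existence lemma provides a fundamental solution $\vec{\mathbf{u}}$. The subsequent Corollary, which rests on the Moore-determinant lemma, shows that $\vec{\mathbf{u}},\vec{\mathbf{u}}^{(1)},\dots,\vec{\mathbf{u}}^{(N-1)}$ is an $E$-basis of $\operatorname{Sol}_E(\rho)$. Tensoring with $\bA_E$ over $E$ makes $1\otimes\vec{\mathbf{u}}^{(i)}$ an $\bA_E$-basis, since $\bA_E$ is free over $E$-scalars in the obvious sense. This gives the decomposition $\MM(\rho)\cong\bigoplus_{i=0}^{N-1}\bA_E\vec{\mathbf{u}}^{(i)}$.

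For the matrix, I compute $\tau$ on the basis elements. For $i\le N-2$ we have $\tau(\vec{\mathbf{u}}^{(i)})=\vec{\mathbf{u}}^{(i+1)}$. For the last element, $\tau(\vec{\mathbf{u}}^{(N-1)})=\vec{\mathbf{u}}^{(N)}=\sum_j f_j\vec{\mathbf{u}}^{(j)}$ with $(f_j)=M(\vec{\mathbf{u}})^{-1}\vec{\mathbf{u}}^{(N)}$ as in (\ref{f_i}). The $f_j$ lie in $E$ and not merely in $E^\sep$: $\vec{\mathbf{u}}^{(N)}\in\operatorname{Sol}_E(\rho)$ because $\operatorname{Sol}_E(\rho)$ is $\tau$-stable, and the $\vec{\mathbf{u}}^{(i)}$ form an $E$-basis. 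This yields the companion matrix $\Phi(\vec{\mathbf{u}})$. Proposition~\ref{Phi_in_terms_of_Mu} then shows $\Phi$ is invertible, with $f_0\neq0$, so the $\tau$-action is an isomorphism $\tau^*M\to M$, consistent with $\MM(\rho)$ being an (étale) $\bA$-motive.

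The only delicate point is the bookkeeping of conventions. Taking the $\vec{\mathbf{u}}^{(i)}$ as columns reproduces the displayed $\Phi$ under the rule that the $j$-th column records $\tau(m_j)$. The paper elsewhere writes $\tau\mathbf{m}=\Phi^\top\mathbf{m}$, so I would state explicitly which convention is used. Beyond that, there is no real obstacle.
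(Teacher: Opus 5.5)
Your proposal is correct and follows the paper's argument, which likewise just assembles the corollary $\MM(\rho)\cong\operatorname{Sol}_E(\rho,\vec{\boldsymbol{\alpha}},\bm{w})\otimes_E\bA_E$, the fact that a fundamental solution gives the $E$-basis $\vec{\mathbf{u}},\ldots,\vec{\mathbf{u}}^{(N-1)}$, and the expansion (\ref{f_i}) of $\vec{\mathbf{u}}^{(N)}$ in that basis. Your concern about conventions is unnecessary: the paper's rule $\tau(m_j)=\sum_i a_{ij}m_i$ (column $j$ records $\tau(m_j)$) is the same rule as $\tau\mathbf{m}=\Phi^\top\mathbf{m}$, so the displayed companion matrix agrees with both formulations.
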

\subsection{The Anderson $\bA$-module $\mcE(\varphi,\rho)$}
Our goal in this section is to prove that there exists an Anderson $\bA$-module $\mcE = \mcE(\varphi,\rho)$ over $E$ such that $$\MM(\varphi,\rho)\cong_E \MM(\mcE).$$
\subsubsection{The existence of $\mcE$}
\begin{lem}\label{reduce_to_t-module}
	Let $H$ be an $\bA$-field with structure map $\gamma:\bA\to H$. Let $M$ and $N$ be two $\bA$-motives over $H$ and $B\hookrightarrow \bA$ a subring of $\bA$ such that $M$ and $N$ are finite free over $B_H$. Let $\mcM = (M\otimes_{A_H} N,\tau_M\otimes \tau_N)$ be the tensor product in the category of $\bA$-motives and $\mcM_B = (M\otimes_{B_H} N,\tau_M\otimes \tau_N)$ be the tensor product in the category of $B$-motives. If $a\in B$ is an element such that $(a-\gamma(a))^k\mcM_B\subseteq \tau(\mcM_B)$ for some positive integer $k$, then $(a-\gamma(a))^k \mcM\subseteq \tau(\mcM)$.
\end{lem}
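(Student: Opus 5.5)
The plan is to realise $\mcM$ as a quotient of $\mcM_B$ and push the inclusion through the quotient map. Both are built from the same $H$-vector spaces $M$ and $N$, with the $\tau$-action given diagonally by $\tau_M\otimes\tau_N$. Since $B_H\subseteq \bA_H$, there is a canonical surjection
$$\pi: M\otimes_{B_H}N\twoheadrightarrow M\otimes_{\bA_H}N,\qquad m\otimes n\mapsto m\otimes n.$$
Its kernel is generated by the elements $xm\otimes n-m\otimes xn$ with $x\in\bA_H$.

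The proof then has three steps.

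\textbf{Step 1.} Check that $\pi$ is well defined and $B_H$-linear. The only thing to note is that the relations defining $\otimes_{B_H}$ are among those defining $\otimes_{\bA_H}$.

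\textbf{Step 2.} Check that $\pi$ is $\tau$-equivariant, i.e. $\pi\circ(\tau_M\otimes\tau_N)=(\tau_M\otimes\tau_N)\circ\pi$. On pure tensors, $\tau(m\otimes n)=\tau_M(m)\otimes\tau_N(n)$ in both modules, so the identity holds there. The one point needing care is that $\tau$ on $\mcM$ is well defined. This holds because $\tau_M$ and $\tau_N$ are $\tau$-semilinear over $\bA_H$, so the relation $\tau(xm)\otimes\tau(n)=x^{(1)}\tau(m)\otimes\tau(n)=\tau(m)\otimes\tau(xn)$ is respected. Consequently $\pi(\tau(\mcM_B))=\tau(\pi(\mcM_B))=\tau(\mcM)$, where the last equality uses that $\pi$ is surjective. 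If one works with $\tau$ as a linearised map on $\tau^*$ (possibly after inverting $\mfj$), the same argument applies with $\tau^*\pi$, which is again surjective.

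\textbf{Step 3.} Transport the hypothesis. The element $a\in B$ acts on both tensor products, and $\gamma(a)\in H$ acts through $H\subseteq B_H$. Since $\pi$ is $B_H$-linear, it commutes with multiplication by $(a-\gamma(a))^k$. Therefore
$$(a-\gamma(a))^k\mcM=(a-\gamma(a))^k\pi(\mcM_B)=\pi\bigl((a-\gamma(a))^k\mcM_B\bigr)\subseteq\pi(\tau(\mcM_B))=\tau(\mcM).$$

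The main obstacle I expect is the bookkeeping in Step 2. One has to confirm that "$\tau(\mcM)$" in the statement means the image of the semilinear $\tau$, or equivalently the $H$-span or $B_H$-span of that image, and that $\pi$ respects whichever version is intended. If the intended meaning is the submodule generated by $\tau(\mcM)$, surjectivity and linearity of $\pi$ still carry generators to generators, so the same argument goes through. The freeness hypothesis over $B_H$ is not essential for this inclusion. It serves only to guarantee that $\mcM_B$ is a genuine $B$-motive, which is the setting in which the hypothesis on $\mcM_B$ is verified later, e.g. with $B=\FF_q[a]$.
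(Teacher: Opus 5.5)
Your proposal is correct and follows essentially the same route as the paper: both use the canonical $B_H$-linear, $\tau$-equivariant surjection $\pi\colon M\otimes_{B_H}N\to M\otimes_{\bA_H}N$ and push the hypothesis through it. The paper does this on the generators $e_i\otimes f_j$ coming from the $B_H$-bases, whereas you push the whole inclusion through $\pi$ at once, which also shows that the freeness hypothesis is not needed for this step.
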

\begin{proof}
	Let $\{e_1,\cdots,e_m\}$ be a $B_H$-basis for $M$ and $\{f_1,\cdots,f_n\}$ be an $B_H$-basis for $N$. Then, they generate $M$ and $N$ as $A_H$-modules respectively. Thus, we can consider the surjective $B_H$-linear map $$\pi: M\otimes_{B_H}N\rightarrow M\otimes_{A_H}N,$$ which is $\tau$-equivariant. We use the notation $m\otimes_B n$ for elements in the left hand side.
	
	By the assumption, for any $i$ and $j$, we have $$(a-\gamma(a))^k(e_i\otimes_B f_j) = \tau\left(\sum_{i,j}b_{ij}e_i\otimes_B f_j\right) = \sum_{i,j}b_{ij}^{(1)}\tau_M(e_i)\otimes_B\tau_N(f_j)$$ for some $b_{ij}\in B_H$. Applying the map $\pi$, we see that $$(a-\gamma(a))^k(e_i\otimes f_j) = \sum_{ij}b_{ij}^{(1)}\tau\left(e_i\otimes f_j\right) = \tau\left(\sum_{ij}b_{ij}e_i\otimes f_j\right)\in \tau(\mcM).$$
\end{proof}
\begin{thm}\label{existsAnderson}
	Let $\varphi/E$ be a Drinfeld $\bA$-module of rank $r$ and $\rho: G\to \GL(W)$ an $\FF_{q^d}$-representation of dimension $n$. Then, there exists an abelian $\bA$-module $\mcE$ of dimension $N = nd$ and rank $rN$ over $E$ such that $$\MM(\varphi,\rho)\cong_E \MM(\mcE).$$
\end{thm}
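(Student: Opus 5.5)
The plan is to apply Anderson's anti-equivalence, Theorem \ref{ThmAnderson}. It suffices to show that $\mcM:=\MM(\varphi,\rho)=\MM(\varphi)\otimes_{\bA_E}\MM(\rho)$ lies in $\bA\fgeffmot(E)$ and has the asserted invariants. Concretely, we need four things: $\mcM$ is an $\bA$-motive that is effective; it is projective of rank $rN$ over $\bA_E$; it is finite free of rank $N$ over $E[\tau]$, which gives the dimension of $\mcE$; and $\mfj^e$ kills $\operatorname{coker}\tau_{\mcM}$ for some $e$. Then $\mcE:=\MM^{-1}(\mcM)$ is an abelian $\bA$-module with $\MM(\mcE)\cong\mcM$. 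Its dimension is $\rk_{E[\tau]}\mcM$ and its rank is $\rk_{\bA_E}\mcM$.

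\emph{Rank and effectivity.} $\MM(\varphi)$ is projective of rank $r$ over $\bA_E$, and $\tau_{\MM(\varphi)}$ is injective with cokernel $\cong\Lie_\varphi^\vee$, which is one-dimensional over $E$ and killed by $\mfj$. By the last proposition of \S\ref{construction}, $\MM(\rho)=\bigoplus_{i=0}^{N-1}\bA_E\vec{\mathbf u}^{(i)}$ is free of rank $N$. Its $\tau$-matrix $\Phi(\vec{\mathbf u})$ has entries in $E$ and is invertible by Proposition \ref{Phi_in_terms_of_Mu}, so $\tau_{\MM(\rho)}:\tau^*\MM(\rho)\to\MM(\rho)$ is an isomorphism. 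Tensoring, $\mcM\cong\MM(\varphi)^{\oplus N}$ as $\bA_E$-modules, so $\mcM$ is projective of rank $rN$. Moreover $\tau_\mcM=\tau_{\MM(\varphi)}\otimes\tau_{\MM(\rho)}$ maps $\tau^*\mcM$ into $\mcM$, so $\mcM$ is effective. The cokernel is $\operatorname{coker}(\tau_{\MM(\varphi)})\otimes_{\bA_E}\MM(\rho)$, which has $E$-dimension $N$ and is killed by $\mfj$. This already gives that $\tau_\mcM$ is an isomorphism after inverting $\mfj$.

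\emph{Freeness over $E[\tau]$.} I would make this explicit. Choose the $E[\tau]$-basis $1$ of $\MM(\varphi)=E[\tau]$. Over $\bA_E$, $\mcM=\MM(\varphi)\otimes_{\bA_E}\bA_E^N$ is the same as $\MM(\varphi)^{N}$ with twisted $\tau$-action $\tau(m_i)_i=\Phi^{\top}(\tau m_i)_i$. Since $\Phi\in\GL_N(E)$, the elements $e_k=(0,\dots,1,\dots,0)\otimes 1$ should form an $E[\tau]$-basis. The reason is that $\tau^j e_k$ equals $\Phi^\top\Phi^{(1)\top}\cdots$ applied to $\tau^j$ in the $k$-th slot, and those products are invertible matrices over $E$. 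Hence $E[\tau]$-span and $E[\tau]$-independence transfer from the standard basis of $\MM(\varphi)^{N}\cong E[\tau]^N$. This yields $\rk_{E[\tau]}\mcM=N$. It is also where the shape $\mcE_t=\theta I_N+a_1\Psi\tau+\cdots$ of Theorem \ref{thmB} will come from.

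\emph{Main obstacle.} The delicate point is reconciling the $\bA_E$-tensor product with the $E[\tau]$-structure for general $\bA$. There, $\MM(\varphi)$ need not be free over $\bA_E$ and the nilpotence/cokernel condition must be checked. I plan to handle it with Lemma \ref{reduce_to_t-module}. Pick a nonconstant $a\in\bA$ and put $B=\FF_q[a]$, so that $\MM(\varphi)$ and $\MM(\rho)$ are finite free over $B_E$. For the $B$-motive $\mcM_B$, the explicit computation above (with $\varphi_a$ as a Drinfeld $\FF_q[a]$-module) gives $(a-\gamma(a))\mcM_B\subseteq\tau(\mcM_B)$. The lemma then transfers this to $\mcM$, and combining over generators of $\bA$ shows $\mfj^e$ kills $\operatorname{coker}\tau_\mcM$. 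Finite generation over $E[\tau]$ follows since $\mcM$ is a quotient of $\mcM_B$, which is finite over $E[\tau]$. Freeness holds because $E[\tau]$ is a left PID and $\mcM$ is torsion free, being projective over $\bA_E$. Its rank is $N$ by the dimension count of the cokernel of $\tau$.
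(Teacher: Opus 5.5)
Your proof is correct, but it takes a different route from the paper's.

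\textbf{The paper's route.} It fixes $t\in\bA\setminus\FF_q$ and uses Lemma~\ref{reduce_to_t-module} to reduce to $\bA=\FF_q[t]$. It then writes $\MM(\varphi,\rho)$ in an explicit $E[t]$-basis $e_{ij}$, on which $\tau$ acts through the block matrix $T$ built from the companion form of $\varphi_t$ and from $\Phi$. It checks by hand that $e_{11},\dots,e_{1N}$ is an $E[\tau]$-basis and that $(t-\theta)\MM(\varphi,\rho)\subseteq\tau\MM(\varphi,\rho)$, and finally quotes van der Heiden's criterion.

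\textbf{Your route.} You stay with a general $\bA$ and feed $\mcM$ directly into Theorem~\ref{ThmAnderson}, using two structural observations.
\begin{itemize}
\item Since $\Phi\in\GL_N(E)$, the map $\tau_{\MM(\rho)}\colon\tau^*\MM(\rho)\to\MM(\rho)$ is already an isomorphism. Hence $\operatorname{coker}\tau_{\mcM}\cong\operatorname{coker}(\tau_{\MM(\varphi)})\otimes_{\bA_E}\MM(\rho)\cong E^N$, which is killed by $\mfj$.
\item $\mcM\cong E[\tau]^N$ with $\tau$ twisted by the constant invertible matrix $\Phi$. Since the products $\Phi\Phi^{(1)}\cdots\Phi^{(k-1)}$ are invertible, $e_1,\dots,e_N$ is an $E[\tau]$-basis.
\end{itemize}
This is uniform in $\bA$ and avoids the reduction to $\FF_q[t]$ with its rank bookkeeping over $E[t]$. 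It also makes the $\mfj$-condition conceptual. The paper's explicit basis $e_{ij}$ and matrix $T$ are what it reuses for Proposition~\ref{prop:te1j} and Theorem~\ref{thmB}. However, reading off the $\bA$-action on your basis $e_1,\dots,e_N$ yields the coefficients $\Psi\Psi^{(1)}\cdots\Psi^{(k-1)}$ of Theorem~\ref{thmB} just as well.

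\textbf{Your final paragraph should be deleted.} There is no remaining obstacle for general $\bA$. Your second and third paragraphs use only $\MM(\varphi)=E[\tau]$ and the $\bA_E$-freeness of $\MM(\rho)$, never freeness of $\MM(\varphi)$ over $\bA_E$. They also already settle the cokernel condition without Lemma~\ref{reduce_to_t-module}.

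Moreover, the last step of that paragraph is wrong as a general principle. Since $E$ is imperfect, $E[\tau]$ is a left PID but not right Ore, and finitely generated torsion-free left $E[\tau]$-modules need not be free. For example, take $\alpha\in E\setminus E^q$ and let $M=E[\tau]+E[\tau]\alpha^{1/q}$, a left submodule of $E^{1/q}[\tau]$.
\begin{itemize}
\item $M$ has rank one, because $\tau\cdot\alpha^{1/q}=\alpha\tau\cdot 1$, so it could only be free if it were cyclic.
\item $M$ is not cyclic, because $M/E[\tau]\tau M\cong E\oplus E\alpha^{1/q}$ is $2$-dimensional over $E$.
\end{itemize}
Freeness of $\mcM$ is established by your explicit basis, not by that argument.
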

\begin{proof}
    Let $\vec{\mathbf{u}}\in \operatorname{Sol}_E(\rho,\vec{\boldsymbol{\alpha}},\bm{w})$ be a fundamental solution and $N = nd$. Then, we have an isomorphism of $\bA$-motives $$\MM(\rho)\stackrel{\sim}{\to} \bigoplus_{i=0}^{N-1}\bA_E \vec{\mathbf{u}}^{(i)}$$ over $E$. The matrix of $\tau$-action with respect to  $\vec{\mathbf{u}}, \vec{\mathbf{u}}^{(1)}, \cdots, \vec{\mathbf{u}}^{(N-1)}$ is $$\Phi = \Phi({\vec{\mathbf{u}}}) = \begin{pmatrix}
    0 &  \cdots & 0 & f_0\\
    1 &  \cdots & 0 & f_1\\
    \vdots &  \ddots & \vdots & \vdots\\
    0 &  \cdots & 1 & f_{N-1}
\end{pmatrix},$$ where $(
    f_0,
    f_1,
    \cdots,
    f_{N-1}
)^\top = M(\vec{\mathbf{u}})^{-1} \vec{\mathbf{u}}^{(N)}\in \Mat_{N\times 1}(E)$ and $\vec{\mathbf{u}}$ is viewed as a column vector.
	
	Let $t\in \bA-\FF_q$ and let $\theta = \gamma(t)$ be the image of $t$ under the structure map $\gamma: \bA\to K$. Replace $r$ by $r\deg t$ and according to Lemma \ref{reduce_to_t-module}, we may assume that $\bA = \FF_q[t]$. Thus, $\MM(\varphi,\rho)$ is a free $E[t]$-module of rank $rN$. Let $e_1,\cdots,e_N$ be the standard basis of $E[t]^N$, i.e. $$e_1 = \begin{pmatrix}
		1,
		0,
		\cdots,
		0
	\end{pmatrix}^\top, e_2 = \begin{pmatrix}
	0,
	1,
	\cdots,
	0
\end{pmatrix}^\top,\cdots,e_N = \begin{pmatrix}
0,
\cdots,
0,
1
\end{pmatrix}^\top.$$
Suppose $$\varphi_t = \theta + a_1\tau+\cdots+a_r\tau^r.$$
For each $1\leq i\leq r$ and $1\leq j\leq d$, let $$e_{ij} = \begin{pmatrix}
	0,\cdots,e_j,\cdots,0
\end{pmatrix}^\top\in E[t]^{rN}$$ with $e_j$ in the $i$-th block and other blocks 0. Let $$T = \begin{pmatrix}
0 & \cdots & 0 & \frac{t-\theta}{a_r}\Phi\\
\Phi & \cdots & 0 & -\frac{a_1}{a_r}\Phi\\
\vdots & \ddots & \vdots & \vdots\\
0 & \cdots & \Phi & -\frac{a_{r-1}}{a_r}\Phi
\end{pmatrix}\in \Mat_{rN\times rN}(E[t]).$$
For each $1\leq i\leq r$ and $1\leq j\leq N$, we define $$\tau e_{ij} = Te_{ij}.$$ 
First, we observe that \begin{equation}\label{Phi_ej_expression}
    \Phi e_j = \begin{cases}
    e_{j+1}, & \textnormal{ if }1\leq j\leq N-1\\
    \sum\limits_{k=0}^{N-1} f_ke_{k+1}, & \textnormal{ if }j=N.
\end{cases}
\end{equation}
Then, one sees that if $1\leq i\leq r-1$, we have $$Te_{ij} = 
\begin{pmatrix}
    0 & \cdots & 0 & \frac{t-\theta}{a_r}\Phi\\
\Phi & \cdots & 0 & -\frac{a_1}{a_r}\Phi\\
\vdots & \ddots & \vdots & \vdots\\
0 & \cdots & \Phi & -\frac{a_{r-1}}{a_r}\Phi
\end{pmatrix}\begin{pmatrix}
    \vdots\\
    \tikzmarknode{A}{e_j}\\
    0\\
    \vdots\\
    0 
\end{pmatrix}\ \ =\ \ \begin{pmatrix}
    \vdots\\
    0\\
   \tikzmarknode{B}{\Phi e_j} \\
    \vdots\\
    0 
\end{pmatrix}.\begin{tikzpicture}[overlay, remember picture]
    \draw[<-] (A.east) -- ++(0.35,0.35) node[right] {\scriptsize $i$-th};
    \draw[<-] (B.east) -- ++(0.35,0.35) node[right] {\scriptsize $(i+1)$-th};
\end{tikzpicture}$$
\noindent Thus, together with (\ref{Phi_ej_expression}), we have \begin{equation}
    \tau e_{ij} = \begin{cases}
	e_{i+1,j+1}, & 1\leq j\leq N-1\\
	\sum\limits_{k=0}^{N-1}f_ke_{i+1,k+1}, & j = N.
\end{cases}
\end{equation}
If $i=r$, then $$Te_{rj} = \begin{pmatrix}
    0 & \cdots & 0 & \frac{t-\theta}{a_r}\Phi\\
\Phi & \cdots & 0 & -\frac{a_1}{a_r}\Phi\\
\vdots & \ddots & \vdots & \vdots\\
0 & \cdots & \Phi & -\frac{a_{r-1}}{a_r}\Phi
\end{pmatrix}\begin{pmatrix}
    0\\
    \vdots\\
   e_j\\
\end{pmatrix} =\begin{pmatrix}
\frac{t-\theta}{a_r}\Phi e_j\\
-\frac{a_1}{a_r}\Phi e_j\\
 \vdots\\
 -\frac{a_{r-1}}{a_r}\Phi e_j
\end{pmatrix}.$$
Thus, $$\tau e_{rj} = \begin{cases}
\frac{t-\theta}{a_r} e_{1, j+1} - \sum\limits_{m=1}^{r-1} \frac{a_m}{a_r} e_{m+1, j+1}, & 1\leq j\leq N-1\\
\sum\limits_{k=0}^{N-1} f_k \left( \frac{t-\theta}{a_r} e_{1, k+1} - \sum\limits_{m=1}^{r-1} \frac{a_m}{a_r} e_{m+1, k+1} \right), & j= d.
\end{cases}$$
Thus, one can check that for every $k\geq 1$, $$\left\{e_{11},\cdots,e_{1N},\tau e_{11},\cdots,\tau e_{1N},\cdots,\tau^k e_{11},\cdots,\tau^k e_{1N}\right\}$$ are linearly independent over $E$. Thus, $\{e_{11},\cdots,e_{1N}\}$ are linearly independent over $E[\tau]$. So, $$\MM(\varphi,\rho)\cong \Span_{E[\tau]}\{e_{11},\cdots,e_{1N}\}.$$

Furthermore, from the above expression of $\tau e_{ij}$, we see that $(t-\theta)e_{1j}\in \tau \MM(\varphi,\rho)$, hence $(t-\theta)\MM(\varphi,\rho)\subseteq \tau \MM(\varphi,\rho)$. Thus, by \cite[Theorem 4.2.9]{vanderHeiden2003}, there exists an Abelian $\bA$-module $\mcE$ over $E$ such that $$\MM(\varphi,\rho)\cong \MM(\mcE).$$
Note that this isomorphism is defined over $E$.
\end{proof}
\begin{Def}
    Let $\varphi:\bA\to E[\tau]$ be a Drinfeld $\bA$-module and $\rho: G\to \GL(\conj{\FF}_q)$ an Artin representation of $G$. We call the abelian $\bA$-module $\mcE = \mcE(\varphi,\rho)$ in Theorem \ref{existsAnderson} an \emph{Artin twist} of $\varphi$ by $\rho$.
\end{Def}

\subsubsection{Defining equation of $\mcE$ in the case $\bA = \FF_q[t]$}
In the case $\bA = \FF_q[t]$, we can pin down the expression of $\mcE$ under a fundamental basis $\vec{\mathbf{u}},\vec{\mathbf{u}}^{(1)},\cdots, \vec{\mathbf{u}}^{(N-1)}$. In this subsection, we assume that $\bA = \FF_q[t]$. 

	We first summarize the $\tau$-action on the $\bA_E$-basis $\{e_{ij}\}_{1\le i\le r,\;1\le j\leq N}$, 
	\[
	\tau e_{ij}=\begin{cases}
		e_{i+1,j+1}, & 1\le i\le r-1,\ 1\le j\le N-1,\\[4pt]
		\dis \sum\limits_{k=0}^{N-1}f_ke_{i+1,k+1}, & 1\le i\le r-1,\ j=N,\\[4pt]
		\displaystyle\frac{t-\theta}{a_r}e_{1,j+1}-\sum_{m=1}^{r-1}\frac{a_m}{a_r}e_{m+1,j+1},
		& i=r,\ 1\le j\le N-1,\\[8pt]
		\displaystyle\sum\limits_{k=0}^{N-1} f_k \left( \frac{t-\theta}{a_r} e_{1, k+1} - \sum\limits_{m=1}^{r-1} \frac{a_m}{a_r} e_{m+1, k+1} \right),
		& i=r,\ j=N.
	\end{cases}
	\]
    We have the following result.
\begin{prop}\label{prop:te1j}
	Let $E_i = (e_{i1}, e_{i2}, \cdots, e_{iN})^\top$ for each $1\leq i\leq N$.
    
	Then for each $1\le j\le N$ we have the identity
	\begin{equation}\label{eq:te1j-general}
		\tau E_i = \begin{cases}
		    \Phi^\top E_{i+1}, & 1\leq i\leq r-1\\
            \dis \Phi^\top\left(\frac{t-\theta}{a_r}E_1 - \sum_{m=1}^{r-1}\frac{a_m}{a_r}E_{m+1}\right), & i = r
		\end{cases}
	\end{equation}

\end{prop}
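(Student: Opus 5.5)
The identity is a repackaging of the four-case table for $\tau e_{ij}$ displayed just before the proposition, so the plan is to compare, component by component, the vector $\tau E_i$ with the right-hand side. Note that the index $i$ runs over $1\le i\le r$; the statement's ``$1\le i\le N$'' and ``for each $1\le j\le N$'' are typos for this. Recall the convention of \S2: for a basis $\mathbf m$, $\tau\mathbf m=\Phi^\top\mathbf m$ means $\tau(m_j)=\sum_k \Phi_{kj}m_k$. So the $j$-th entry of $\Phi^\top E_{i+1}$ is $\sum_{k}\Phi_{kj}e_{i+1,k}$. It therefore suffices to read off the columns of $\Phi=\Phi(\vec{\mathbf u})$ from \eqref{Phimatrix}.
\begin{enumerate}
    \item For $1\le j\le N-1$, column $j$ of $\Phi$ is the standard vector with a $1$ in row $j+1$. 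Hence $\sum_k\Phi_{kj}e_{i+1,k}=e_{i+1,j+1}$.
    \item For $j=N$, column $N$ of $\Phi$ is $(f_0,\dots,f_{N-1})^\top$. Hence $\sum_k\Phi_{kN}e_{i+1,k}=\sum_{k=0}^{N-1}f_ke_{i+1,k+1}$.
\end{enumerate}
This is exactly \eqref{Phi_ej_expression}, rewritten as a statement about the $e_{i+1,\bullet}$.

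For $1\le i\le r-1$, compare these two cases with the first two rows of the table for $\tau e_{ij}$. They agree entrywise, which gives $\tau E_i=\Phi^\top E_{i+1}$.

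For $i=r$, introduce the auxiliary vector
$$F:=\frac{t-\theta}{a_r}E_1-\sum_{m=1}^{r-1}\frac{a_m}{a_r}E_{m+1},$$
with entries $F_\ell=\frac{t-\theta}{a_r}e_{1\ell}-\sum_m\frac{a_m}{a_r}e_{m+1,\ell}$. The same column reading applies to $F$, since the coefficients $\frac{t-\theta}{a_r},\frac{a_m}{a_r}$ are scalars in $\bA_E$ and pass through the $E$-linear combination. It gives $(\Phi^\top F)_j=F_{j+1}$ for $j<N$ and $(\Phi^\top F)_N=\sum_k f_kF_{k+1}$. These match the last two rows of the table, so $\tau E_r=\Phi^\top F$.

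There is no real obstacle; the computation is bookkeeping. The one point needing care is the transpose convention. The matrix $T$ acts on coordinate columns, and $T$ has $\Phi$ in its blocks. Rewriting the images of basis vectors in terms of the basis vectors themselves introduces the transpose. So I would check explicitly, via \eqref{Phi_ej_expression}, that $\Phi e_j$ corresponds to $(\Phi^\top E)_j$ and not to $(\Phi E)_j$.
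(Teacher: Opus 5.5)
Your proposal is correct and follows the paper's approach: the paper's proof simply states that the identity is immediate from the table of $\tau e_{ij}$, and your column-by-column reading of $\Phi$ is the bookkeeping it leaves implicit. Your correction of the index range to $1\le i\le r$ is also right, and your check of the transpose convention is consistent with the definition in \S2.
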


\begin{proof}
The result is immediate from the $\tau$-action.
\end{proof}
\begin{thm}\label{thmB}
    Let $\varphi: \bA\to E[\tau]$ be a Drinfeld module over $E$ with $$\varphi_t = \theta + a_1\tau +\cdots + a_r\tau^r$$ and $\rho:G_E\to \GL_n(\conj{\FF}_q)$ be an Artin representation. Let $N = nd$ with $d = d_\rho = [\FF_q(\rho): \FF_q]$. Then, $\mcE$ has a model over $E$ given by
    $$\mcE_t = \theta I_N + a_1\Psi \tau +a_2 \Psi\Psi^{(1)}\tau^2+\cdots+ a_r\Psi\Psi^{(1)}\cdots \Psi^{(r-1)}\tau^r,$$ where $\Psi = (\Phi(\vec{\mathbf{u}})^\top)^{-1}$ and $\vec{\mathbf{u}}\in \operatorname{Sol}_E(\rho,\vec{\boldsymbol{\alpha}},\bm{w})$ is a fundamental solution.
\end{thm}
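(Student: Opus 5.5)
The plan is to read off $\mcE_t$ directly from the $\tau$-action of Proposition \ref{prop:te1j}, using the fact, established in the proof of Theorem \ref{existsAnderson}, that $\MM(\varphi,\rho)$ is free over $E[\tau]$ with basis $e_{11},\dots,e_{1N}$. Under Anderson's anti-equivalence (Theorem \ref{ThmAnderson}), a $t$-module structure $\mcE_t=\sum_k B_k\tau^k\in\Mat_N(E[\tau])$ on $\GG_a^N$ corresponds to the motive $\Mat_{1\times N}(E[\tau])$. On this motive $t$ acts by right composition with $\mcE_t$. So if the standard row vectors are identified with $e_{11},\dots,e_{1N}$, i.e.\ with the column $E_1$, the required identity is
\[
  tE_1=\sum_k B_k\,\tau^kE_1 .
\]
Here $\tau^kE_1$ means $\tau^k$ applied entrywise, and the matrices $B_k$ have entries in $E$. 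The task is therefore to express $tE_1$ in this form.

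\textbf{Step 1: express every $E_{m+1}$ through $E_1$.} By Proposition \ref{prop:te1j}, for $1\le i\le r-1$ we have $\tau E_i=\Phi^\top E_{i+1}$, hence $E_{i+1}=\Psi\,\tau E_i$ with $\Psi=(\Phi^\top)^{-1}\in\GL_N(E)$. This uses Proposition \ref{Phi_in_terms_of_Mu}, which shows $\Phi$ is invertible. Because $\tau$ is semilinear, $\tau(\Psi X)=\Psi^{(1)}\tau X$ for any column $X$. Induction on $m$ then gives
\[
  E_{m+1}=\Psi\Psi^{(1)}\cdots\Psi^{(m-1)}\tau^mE_1 ,\qquad 0\le m\le r-1 .
\]
The extension to $m=r$ is the expression $\Psi\Psi^{(1)}\cdots\Psi^{(r-1)}\tau^rE_1=\Psi\,\tau E_r$.

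\textbf{Step 2: use the last relation.} The case $i=r$ of Proposition \ref{prop:te1j} gives $\tau E_r=\Phi^\top\bigl(\frac{t-\theta}{a_r}E_1-\sum_{m=1}^{r-1}\frac{a_m}{a_r}E_{m+1}\bigr)$. Since $t$ is $E$-linear and commutes with scalar matrices over $E$, this rearranges to
\[
  (t-\theta)E_1=a_r\Psi\,\tau E_r+\sum_{m=1}^{r-1}a_mE_{m+1}.
\]
Substituting Step 1 yields
\[
  tE_1=\theta E_1+\sum_{m=1}^{r}a_m\Psi\Psi^{(1)}\cdots\Psi^{(m-1)}\tau^mE_1 .
\]

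\textbf{Step 3: verify the model.} The displayed $\mcE_t$ has $\partial\mcE_t=\theta I_N$, so the nilpotence condition holds trivially and $\mcE_t$ defines an Anderson $t$-module over $E$. The map sending the $j$-th standard row vector of $\Mat_{1\times N}(E[\tau])$ to $e_{1j}$ is an $E[\tau]$-linear isomorphism $\MM(\mcE)\cong\MM(\varphi,\rho)$, because $E_1$ is an $E[\tau]$-basis. By Step 2 this map intertwines the $t$-actions, so it is an isomorphism of $t$-motives. By the anti-equivalence, $\mcE$ is isomorphic over $E$ to the Artin twist constructed in Theorem \ref{existsAnderson}.

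The main obstacle is bookkeeping rather than any conceptual step. One has to fix the convention matching column vectors of basis elements to rows of $\mcE_t$, which determines whether $\Psi$ or its transpose appears. One also has to make sure the semilinear twists accumulate as $\Psi\Psi^{(1)}\cdots\Psi^{(m-1)}$ in that order, and not the reverse. I would fix the convention first by checking the case $n=d=1$: then $\Psi=1$ and $\mcE_t=\varphi_t$ must be recovered.
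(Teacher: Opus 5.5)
Your proposal is correct and follows essentially the same route as the paper: solve $E_{i+1}=\Psi\,\tau E_i$ iteratively to get $E_{m+1}=\Psi\Psi^{(1)}\cdots\Psi^{(m-1)}\tau^mE_1$, substitute this into the $i=r$ relation of Proposition~\ref{prop:te1j}, and read off $tE_1=\bigl(\theta I_N+\sum_{m=1}^r a_m\Psi\Psi^{(1)}\cdots\Psi^{(m-1)}\tau^m\bigr)E_1$; your Step 3 just spells out the identification with $\Mat_{1\times N}(E[\tau])$ that the paper leaves as ``the result now follows.'' One small correction to your sanity check: for $n=d=1$ one has $\Psi=u^{1-q}$, which equals $1$ only when $\rho$ is trivial (and $u\in\FF_q^\times$), whereas for a nontrivial $\FF_q^\times$-valued character the formula gives $\mcE_t=u\,\varphi_t\,u^{-1}$, so the check should compare against that twist rather than against $\varphi_t$ itself.
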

\begin{proof}
    By Proposition \ref{prop:te1j}, we see that for each $2\leq i\leq r$, we have $$E_i = \Psi\Psi^{(1)}\cdots \Psi^{(i-2)}\tau^{i-1}E_1$$ and $$\Psi\tau E_r = \left(\frac{t-\theta}{a_r}E_1 - \sum_{m=1}^{r-1}\frac{a_m}{a_r}E_{m+1}\right).$$

    On the other hand, $$\Psi\tau E_r = \Psi\Psi^{(1)}\cdots \Psi^{(r-1)}\tau^r E_1.$$ Comparing the above two equalities, we obtain $$\begin{aligned}
        (t-\theta)E_1 &= \sum_{m=1}^{r-1} a_mE_{m+1} + a_r \Psi\Psi^{(1)}\cdots \Psi^{(r-1)}\tau^r E_1\\
        & = \sum_{m=1}^{r-1} a_m\Psi\Psi^{(1)}\cdots \Psi^{(m-1)}\tau^{m}E_1 + a_r \Psi\Psi^{(1)}\cdots \Psi^{(r-1)}\tau^r E_1\\
        & = \sum_{m=1}^{r} a_m\Psi\Psi^{(1)}\cdots \Psi^{(m-1)}\tau^{m}E_1
    \end{aligned}$$
    Thus, $$tE_1 =\left(\theta I_N+ \sum_{m=1}^{r} a_m\Psi\Psi^{(1)}\cdots \Psi^{(m-1)}\tau^{m}\right)E_1.$$ The result now follows.
\end{proof}
\begin{cor}\label{twsit_is_uniformizable}
    The Anderson $\bA$-module $\mcE$ over $E$ is uniformizable.
\end{cor}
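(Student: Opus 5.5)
The natural route is Anderson's criterion: an abelian $t$-module is uniformizable if and only if its $t$-motive is rigid analytically trivial, i.e.\ the $\tau$-invariants of $\MM(\mcE)\otimes_{E[t]}\TT$ span it over the Tate algebra $\TT=\CC_\infty\langle t\rangle$. Since $\MM(\mcE)\cong\MM(\varphi,\rho)=\MM(\varphi)\otimes\MM(\rho)$ by Theorem~\ref{existsAnderson}, it suffices to show that each tensor factor is rigid analytically trivial. Triviality is preserved under tensor products: if $\Psi_1,\Psi_2$ are rigid analytic trivializations, then $\Psi_1\otimes\Psi_2$ is one. Since uniformizability is insensitive to base change from $E$ to $\CC_\infty$, we may work over $\CC_\infty$ throughout.

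For $\MM(\varphi)$, we invoke the classical fact that every Drinfeld module over $\CC_\infty$ is uniformizable. Indeed, $\exp_\varphi$ is surjective, so by Anderson's theorem $\MM(\varphi)$ is rigid analytically trivial. Concretely, the trivialization is built from the Anderson generating functions of a basis of the period lattice $\Lambda_\varphi$.

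For $\MM(\rho)$, we use the basis $\vec{\mathbf{u}},\dots,\vec{\mathbf{u}}^{(N-1)}$, whose $\tau$-matrix is $\Phi=M(\vec{\mathbf{u}})^{-1}M(\vec{\mathbf{u}})^{(1)}$ by Proposition~\ref{Phi_in_terms_of_Mu}. The matrix $M(\vec{\mathbf{u}})$ has entries in $E^\sep\subset\CC_\infty$, is constant in $t$, and is invertible. The identity $M^{(1)}=M\Phi$ therefore says that $M(\vec{\mathbf{u}})$, or its inverse transpose depending on the convention, is a trivialization with entries in $\CC_\infty\subset\TT$. Equivalently, Lemma~\ref{G-inv_bc_to_Esep} already gives the trivialization over $E^\sep$, since $E^\sep\otimes_E\bD(V_\rho)\cong E^\sep\otimes_{\FF_q}V_\rho$ $\tau$-equivariantly and the $\tau$-invariants of the right side are $V_\rho$, which has full rank. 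Tensoring this with $\bA_{\CC_\infty}$ shows that $\MM(\rho)$ is trivial even over $\CC_\infty[t]$.

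Combining these, $\Upsilon_\varphi\otimes M(\vec{\mathbf{u}})$ (suitably transposed) trivializes $\MM(\varphi,\rho)\otimes\TT$. Anderson's theorem, in the form of Theorem~4 of \cite{anderson1986t} or its version for abelian $t$-modules, then yields that $\mcE$ is uniformizable. The only point needing care is bookkeeping of transposes and conventions for the $\tau$-matrices. An alternative, more direct check is possible via Theorem~\ref{thmB}: there $\mcE$ is visibly $\varphi^{\oplus N}$ conjugated over $E^\sep$ by a constant matrix built from $M(\vec{\mathbf{u}})$. One then gets $\exp_\mcE=P^{-1}\exp_{\varphi}^{\oplus N}P$, which is surjective because $\exp_\varphi$ is. I would present this second argument if the transpose conventions in the motivic argument become cumbersome.
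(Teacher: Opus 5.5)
Your proposal is correct. Your fallback argument is exactly the paper's proof. The paper sets $P=(M(\vec{\mathbf{u}})^\top)^{-1}$, so that Proposition~\ref{Phi_in_terms_of_Mu} gives $P^{-1}P^{(1)}=\Psi$. The products telescope to $\Psi\Psi^{(1)}\cdots\Psi^{(i-1)}=P^{-1}P^{(i)}$, so the model of Theorem~\ref{thmB} is $\mcE_t=P^{-1}\varphi_t^{\oplus N}P$. Hence $\mcE\cong\varphi^{\oplus N}$ over $E^\sep$, and uniformizability is inherited from $\varphi^{\oplus N}$.

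Your primary, motivic route is genuinely different. The paper's argument is elementary: it needs only the explicit model and the uniformizability of Drinfeld modules. However, it is tied to coordinates and to $\bA=\FF_q[t]$, with general $\bA$ handled by a ``without loss of generality''. Your route does not need Theorem~\ref{thmB}, and it explains conceptually why an Artin twist cannot affect uniformizability: the Artin factor is already trivial over $E^\sep[t]$. The cost is that it relies on Anderson's theorem that rigid analytic triviality implies uniformizability, a much deeper input. For general $\bA$ you would need the Hartl--Juschka version in \cite{hartl2020pinks}, since the tensor product is over $\bA_E$ and does not commute with restriction to $\FF_q[t]$.

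On the bookkeeping, take the paper's convention $\tau\mathbf{m}=\Phi^\top\mathbf{m}$. An invariant coordinate column $c$ satisfies $c=\Phi c^{(1)}$, so the matrix of invariant columns is $M(\vec{\mathbf{u}})^{-1}$. The matrix of invariant rows is $(M(\vec{\mathbf{u}})^\top)^{-1}=P$, which is precisely the paper's conjugating matrix; this is the bridge between the two proofs.

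Finally, you do not need to pass through $\TT$ at all. Your observation via Lemma~\ref{G-inv_bc_to_Esep} that $\MM(\rho)_{E^\sep}$ is a sum of $N$ unit motives gives $\MM(\mcE)_{E^\sep}\cong\MM(\varphi)_{E^\sep}^{\oplus N}$. The anti-equivalence of Theorem~\ref{ThmAnderson}, together with Proposition~\ref{basechangeM}, then yields $\mcE\cong\varphi^{\oplus N}$ over $E^\sep$. This recovers the paper's key step for arbitrary $\bA$ without Anderson's criterion.
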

\begin{proof}
    Without loss of generality, we may assume $\bA = \FF_q[t]$. Write $P = P(\vec{\mathbf{u}}) = (M(\vec{\mathbf{u}})^\top)^{-1}\in \Mat_{N\times N}(E^\sep)$. By Proposition \ref{Phi_in_terms_of_Mu}, we have $$P^{-1}P^{(1)} = \Psi.$$ Then, for each $i\geq 1$, we have $$\Psi\Psi^{(1)}\cdots \Psi^{(i-1)} = (P^{-1}P^{(1)})((P^{(1)})^{-1}P^{(2)})\cdots ((P^{(i-1)})^{-1}P^{(i)}) = P^{-1}P^{(i)}.$$

    Thus, $$\mcE_t = \theta I_N + \sum_{i=1}^ra_iP^{-1}P^{(i)}\tau^i = P^{-1}\varphi_t^{\oplus N}P.$$ Thus, $\mcE$ is isomorphic to $\varphi^{\oplus N}$ over $E^\sep$ (but not isomorphic over $E$). Since $\varphi$ is a Drinfeld module, $\varphi^{\oplus N}$ is uniformizable. Hence, $\mcE$ is also uniformizable.
\end{proof}
\begin{lem}
    Let $\varphi:\bA\to E[\tau]$ be a Drinfeld module over $E$ and $N$ a positive integer. Let $\varphi^{\oplus N}$ be the abelian $\bA$-module given by a direct sum of $N$ copies of $\varphi$. Then, we have $$\exp_{\varphi^{\oplus N}}: \Lie_{\varphi^{\oplus N}}(E)\to \varphi^{\oplus N}(E),$$ $$\begin{pmatrix}
        x_1\\
        \vdots\\
        x_N
    \end{pmatrix}\mapsto \begin{pmatrix}
        \exp_\varphi(x_1)\\
        \vdots\\
        \exp_{\varphi}(x_N)
    \end{pmatrix}$$ and $$\log_{\varphi^{\oplus N}}:  \varphi^{\oplus N}(E)\dashrightarrow \Lie_{\varphi^{\oplus N}}(E),$$ $$\begin{pmatrix}
        x_1\\
        \vdots\\
        x_N
    \end{pmatrix}\mapsto \begin{pmatrix}
        \log_\varphi(x_1)\\
        \vdots\\
        \log_{\varphi}(x_N)
    \end{pmatrix},$$ where by $\dashrightarrow$ we mean $\log$ is only defined on a neighborhood of 0.
\end{lem}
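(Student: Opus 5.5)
The plan is to use the explicit description of a direct sum of Anderson modules: $\varphi^{\oplus N}$ acts coordinatewise, and the uniqueness of the normalized exponential then forces the coordinatewise formulas.

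First, record the structure of $\varphi^{\oplus N}$. By definition, $\varphi^{\oplus N}_a = \operatorname{diag}(\varphi_a,\dots,\varphi_a)\in \Mat_{N\times N}(E[\tau])$ for every $a\in\bA$. In particular $\partial\varphi^{\oplus N}_a = \iota(a) I_N$.

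Second, identify the exponential. Define the candidate map $\mathcal{F}(\bm{x}) := (\exp_\varphi(x_1),\dots,\exp_\varphi(x_N))^\top$, i.e. the power series $\sum_i E_i\bm{x}^{(i)}$ with $E_i = c_i I_N$, where $\exp_\varphi = \sum_i c_i\tau^i$ and $c_0=1$. Then:
\begin{itemize}
\item it is entire, since each coordinate is;
\item its linear term is $I_N$, so it has the required normalization;
\item it satisfies $\varphi^{\oplus N}_a\circ\mathcal{F} = \mathcal{F}\circ \iota(a)I_N = \mathcal{F}\circ\partial\varphi^{\oplus N}_a$, because this holds coordinatewise by the functional equation $\varphi_a\exp_\varphi = \exp_\varphi\,\iota(a)$.
\end{itemize}
By the uniqueness of the normalized exponential recalled in \S\ref{SectDefAModules} (following \cite[\S 5]{hartl2020pinks}), $\exp_{\varphi^{\oplus N}} = \mathcal{F}$. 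If one prefers not to cite uniqueness, it can be checked directly: writing $\exp = \sum E_i\tau^i$ and comparing coefficients of $\tau^k$ in $\varphi_t^{\oplus N}\exp = \exp\,\theta I_N$ gives a recursion $E_k(\theta^{q^k}-\theta) = \sum_{j<k}(\cdots)$. This determines every $E_k$ from $E_0=I_N$, and the diagonal solution $c_kI_N$ satisfies the recursion.

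Third, handle the logarithm. Define $\log_{\varphi^{\oplus N}}$ as the formal inverse of $\exp_{\varphi^{\oplus N}}$ in $\Mat_{N}(E)[[\tau]]$. The formal inverse of $c(\tau)I_N$ is $c(\tau)^{-1}I_N$, and $\log_\varphi = \exp_\varphi^{-1}$, so $\log_{\varphi^{\oplus N}} = \log_\varphi\cdot I_N$, which acts coordinatewise. It converges on the polydisc where each $\log_\varphi(x_i)$ converges, namely a neighborhood of $0$.

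The only delicate point is convention: the theorem's statement of uniqueness of the exponential, and the matching of the normalization $E_0=I_d$. With coordinates fixed as the standard ones on $\GG_a^{\oplus N}$, this is automatic, so I expect no real obstacle.
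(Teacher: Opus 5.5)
Your proposal is correct and follows essentially the paper's argument: check that the coordinatewise series has constant term $I_N$ and satisfies the functional equation against $\partial\varphi^{\oplus N}_a=\iota(a)I_N$, then invoke uniqueness of the normalized exponential. The only cosmetic difference is that you obtain the logarithm as the formal inverse of the diagonal exponential, whereas the paper directly verifies the logarithm's functional equation; the two routes are equivalent.
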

\begin{proof}
    It is straightforward to check that the given expressions have identity matrix as their constant terms and satisfy the functional equations $\exp_{\varphi^{\oplus N}}\circ \partial \varphi^{\oplus N} = \varphi^{\oplus N}\circ \exp_{\varphi^{\oplus N}}$ and $\log_{\varphi^{\oplus N}}\circ \varphi^{\oplus N} = \partial \varphi^{\oplus N}\circ \log_{\varphi^{\oplus N}}$ respectively.
\end{proof}
A similar argument yields the following result.
\begin{cor}\label{P_exp_log}
    Let $P = P(\vec{\mathbf{u}}) = (M(\vec{\mathbf{u}})^\top)^{-1}\in \Mat_{N\times N}(E^\sep)$. Then the exponential map $\exp_{\mcE}:\Lie_\mcE(E)\to \mcE(E)$ is given by $$\bm{x} = \begin{pmatrix}
        x_1\\
        \vdots\\
        x_N
    \end{pmatrix}\mapsto P^{-1}\begin{pmatrix}
        \exp_\varphi((P\bm{x})_1)\\
        \vdots\\
       \exp_\varphi((P\bm{x})_N)
    \end{pmatrix},$$ where $(P\bm{x})_i$ denotes the $i$-th coordinate of the column vector $P\bm{x}$.

    Similarly, the logarithm map $\log_\mcE: \mcE(E)\dashrightarrow \Lie_\mcE(E)$ is given by $$\bm{x} = \begin{pmatrix}
        x_1\\
        \vdots\\
        x_N
    \end{pmatrix}\mapsto P^{-1}\begin{pmatrix}
        \log_\varphi((P\bm{x})_1)\\
        \vdots\\
       \log_\varphi((P\bm{x})_N)
    \end{pmatrix}.$$
\end{cor}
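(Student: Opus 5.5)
The natural route is to transport the exponential of $\varphi^{\oplus N}$ along the conjugation found in the proof of Corollary \ref{twsit_is_uniformizable}. That proof gives
\[
\mcE_t = P^{-1}\varphi_t^{\oplus N}P, \qquad P = P(\vec{\mathbf{u}}) = (M(\vec{\mathbf{u}})^\top)^{-1}\in \GL_N(E^\sep),
\]
using $P^{-1}P^{(i)} = \Psi\Psi^{(1)}\cdots\Psi^{(i-1)}$. So $P$, viewed as a degree-zero element of $\Mat_N(E^\sep)[\tau]$, is an isomorphism $\mcE\to\varphi^{\oplus N}$ over $E^\sep$ with $\partial P = P$.

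I would define the candidate
\[
e(\bm{x}) := P^{-1}\exp_{\varphi^{\oplus N}}(P\bm{x}).
\]
By the preceding lemma on $\varphi^{\oplus N}$, this is exactly the coordinatewise formula in the statement. Writing $\exp_{\varphi^{\oplus N}} = \sum_i \alpha_i I_N\tau^i$ with $\alpha_i$ the coefficients of $\exp_\varphi$, we get
\[
e = \sum_i \alpha_i P^{-1}P^{(i)}\tau^i,
\]
which is an entire $\FF_q$-linear series with constant term $I_N$.

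Next I would check the functional equation. We have $\partial\mcE_t = P^{-1}(\theta I_N)P = \theta I_N$, and
\[
\mcE_t\circ e = P^{-1}\varphi^{\oplus N}_t P P^{-1}\exp_{\varphi^{\oplus N}}P = P^{-1}\exp_{\varphi^{\oplus N}}\,\partial\varphi^{\oplus N}_t\, P = e\circ\partial\mcE_t.
\]
By the uniqueness of the normalized exponential (\cite[\S 5]{hartl2020pinks}, or Anderson's uniqueness for $\bA=\FF_q[t]$), it follows that $e = \exp_\mcE$. For general $\bA$, the same computation applies for every $a\in\bA$, because $\mcE_a = P^{-1}\varphi_a^{\oplus N}P$ for all $a$ once it holds for the generator $t$.

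The step needing the most care is that the coefficients $P^{-1}P^{(i)} = \Psi\Psi^{(1)}\cdots\Psi^{(i-1)}$ lie in $\Mat_N(E)$, not merely in $\Mat_N(E^\sep)$. This holds because $\Phi(\vec{\mathbf{u}})$ has entries $f_k\in E$ by construction, so $\Psi$ and all its twists are defined over $E$. Hence $e$ really is a map $\Lie_\mcE(E)\to\mcE(E)$ (and likewise over completions of $E$), even though $P$ itself is only defined over $E^\sep$.

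For the logarithm, the same argument works with $\log_{\varphi^{\oplus N}}$, defined near $0$. Multiplication by $P$ preserves a small enough neighborhood of $0$, since $P$ is a fixed invertible matrix and scaling only changes the norm boundedly. The identities $\log_\mcE\circ\exp_\mcE = \id$ near $0$ and $\log_\mcE\circ\mcE_t = \partial\mcE_t\circ\log_\mcE$ then transfer from $\varphi^{\oplus N}$ by conjugation, and uniqueness of the formal inverse of $\exp_\mcE$ identifies the result with $\log_\mcE$.
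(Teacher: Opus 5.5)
Your proposal is correct and is essentially the paper's argument. The paper only says that ``a similar argument'' to the lemma on $\varphi^{\oplus N}$ applies: one conjugates by $P$ using $\mcE_t = P^{-1}\varphi_t^{\oplus N}P$ from Corollary~\ref{twsit_is_uniformizable}, checks the constant term and the functional equation, and invokes uniqueness (for $\log$, via the formal inverse), exactly as you do. Your observation that the coefficients $P^{-1}P^{(i)}=\Psi\Psi^{(1)}\cdots\Psi^{(i-1)}$ lie in $\Mat_N(E)$ is a detail the paper leaves implicit. Your aside about general $\bA$ is unnecessary: the corollary sits in the $\bA=\FF_q[t]$ subsection, and uniqueness already follows from the functional equation for the single element $t$.
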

\subsection{Goss $L$-series of $\mcE(\varphi,\rho)$}
In this section, we will compute the Goss $L$-series of $\mcE=\mcE(\varphi,\rho)$. This can be done by a computation on the motivic side. The idea is motivated by \cite{FontaineOuyang2022}.

First, we observe the following result.

\begin{prop}\label{coh_rho}
    We have an isomorphism of $K_\mfl[\Gal(E^\sep/E)]$-modules.
    $$H^1_{\mfl}(\MM(\rho),K_\mfl) \cong K_\mfl\otimes_{\FF_q}(\Res_{\FF_q(\rho)/\FF_q}V_\rho).$$
\end{prop}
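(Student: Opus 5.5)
We compute $H^1_\mfl(\MM(\rho),K_\mfl)$ directly from the definition, reducing to the \'etale $\tau$-module $D:=\bD(\Res_{\FF_q(\rho)/\FF_q}V_\rho)$. Write $V:=\Res_{\FF_q(\rho)/\FF_q}V_\rho$, an $\FF_q$-vector space of dimension $nd$ with continuous $G=G_E$-action. By definition $\MM(\rho)=D\otimes_E\bA_E$ with the diagonal $\tau$-action, so
\[
\MM(\rho)_\mfl(E^\sep)=\MM(\rho)\otimes_{\bA_E}\bA_\mfl(E^\sep)\cong D\otimes_E \bA_\mfl(E^\sep)\cong (E^\sep\otimes_E D)\otimes_{E^\sep}\bA_\mfl(E^\sep).
\]
Here $\tau$ acts diagonally, and $G$ acts through $E^\sep$ and trivially on $D$.

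The key step applies Lemma~\ref{G-inv_bc_to_Esep}. The isomorphism $\alpha_\rho\colon E^\sep\otimes_E D\to E^\sep\otimes_{\FF_q}V$ is both $G$- and $\tau$-equivariant, where on the target $\tau$ acts only on $E^\sep$ and $G$ acts diagonally. Tensoring with $\bA_\mfl(E^\sep)\cong \bA_\mfl\widehat{\otimes}_{\FF_q}E^\sep$ over $E^\sep$ gives a $G$- and $\tau$-equivariant isomorphism
\[
\MM(\rho)_\mfl(E^\sep)\cong \bA_\mfl(E^\sep)\otimes_{\FF_q}V.
\]
On the right, $\tau$ acts as $\tau_{\bA_\mfl(E^\sep)}\otimes\id_V$ and $G$ acts diagonally. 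Since $V$ is finite-dimensional over $\FF_q$, taking $\tau$-invariants commutes with $-\otimes_{\FF_q}V$. Choosing an $\FF_q$-basis of $V$ reduces this to coordinatewise invariants. It then remains to check that $\bA_\mfl(E^\sep)^{\tau=\id}=\bA_\mfl$, which follows by passing to the limit from $(\bA/\mfl^n\otimes_{\FF_q}E^\sep)^{\tau}=\bA/\mfl^n\otimes_{\FF_q}(E^\sep)^{\tau}=\bA/\mfl^n$, because $(E^\sep)^{\tau=\id}=\FF_q$. Hence $H^1_\mfl(\MM(\rho))\cong \bA_\mfl\otimes_{\FF_q}V$. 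The $G$-action is trivial on $\bA_\mfl$ and equals the given action on $V$, since the diagonal $G$-action restricts this way on $\tau$-invariants. Tensoring with $K_\mfl$ over $\bA_\mfl$ gives the claimed isomorphism of $K_\mfl[G]$-modules.

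I expect the main obstacle to be bookkeeping rather than substance. First, one must check that the identification $\MM(\rho)_\mfl(E^\sep)\cong(E^\sep\otimes_E D)\otimes_{E^\sep}\bA_\mfl(E^\sep)$ is compatible with the semilinear $\tau$, in particular that $\tau$ on $\bA_\mfl(E^\sep)$ is the $\bA_\mfl$-linear continuous extension of $\tau_{\bA_{E^\sep}}$. Second, one must confirm that the completion commutes with the tensor product. This holds because $D$ is finite free over $E$, so $\MM(\rho)$ is finite free over $\bA_E$ and $\varprojlim_n$ commutes with finite direct sums. The invariants computation should be done level-wise modulo $\mfl^n$ and then passed to the limit, using that $\bA/\mfl^n$ is finite over $\FF_q$.
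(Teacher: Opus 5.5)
Your proposal is correct and follows essentially the same route as the paper. Both base-change the $G_E$- and $\tau$-equivariant isomorphism $\alpha_\rho$ of Lemma~\ref{G-inv_bc_to_Esep} along $E^\sep\to\bA_\mfl(E^\sep)$, take $\tau$-invariants coordinatewise in an $\FF_q$-basis of $\Res_{\FF_q(\rho)/\FF_q}V_\rho$, and tensor with $K_\mfl$. Your levelwise check that $\bA_\mfl(E^\sep)^{\tau=\id}=\bA_\mfl$, via $(E^\sep)^{\tau=\id}=\FF_q$, usefully makes explicit a step the paper's proof uses without comment.
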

\begin{proof}
    Let $M = \MM(\rho)$ and $V = \Res_{\FF_q(\rho)/\FF_q}V_\rho$. Let $R = \bA_\mfl(E^\sep) = \varprojlim (\bA\otimes E^\sep)/\mfl^n(\bA\otimes E^\sep)$. Thus, we have a natural inclusion $E^\sep\hookrightarrow R$, $x\mapsto 1\otimes x$. Hence, $R$ is an $E^\sep$-algebra.
		
		From Lemma \ref{G-inv_bc_to_Esep}, we have an isomorphism over $E^\sep$:
		$$ \alpha_\rho: E^\sep \otimes_E \bD(V) \to E^\sep \otimes_{\FF_q} V,$$ which is $G_E$-equivariant and $\tau$-equivariant.
        
		Apply the exact functor $R\otimes_{E^\sep}-$ to both sides, we get an isomorphism of $\bA_\mfl(E^\sep)$-modules
    $$\beta_\rho: \bA_\mfl(E^\sep) \otimes_E\bD(\Res_{\FF_{q}(\rho)/\FF_q}V_\rho)  \stackrel{\sim}{\to}\bA_\mfl(E^\sep)\otimes_{\FF_q}\Res_{\FF_q(\rho)/\FF_q}V_\rho.$$
    
    The $G_E$-equivariant and $\tau$-equivariant follows from the definition.
    
    Then, we may identify $M_\mfl(E^\sep) = M\otimes_{\bA_E}\bA_\mfl(E^\sep) = \bD(V)\otimes_E\bA_\mfl(E^\sep).$ Thus, we have a $G_E$-equivariant isomorphism $$\gamma_\rho: M_\mfl(E^\sep)\stackrel{\sim}{\to} \bA_\mfl(E^\sep)\otimes_{\FF_q}V,$$ which is also $\tau$-equivariant.

    Choose an $\FF_q$-basis $\{v_1,\cdots,v_\ell\}$ for $V$. Then, for every element $m$ of $M_\mfl(E^\sep)$ can be written as $$\gamma_\rho(m) = \sum_{i=1}^\ell x_i\otimes v_i\quad \textnormal{with unique }x_i\in \bA_\mfl(E^\sep).$$ Thus, $\gamma_\rho(\tau(m)) = \tau(\gamma_\rho(m)) = \sum_{i=1}^\ell x_i^{(1)}\otimes v_i$. We see that $\tau(m) = m$ if and only if $\gamma_\rho(\tau(m)) = \gamma_\rho(m)$ if and only if $x_i^{(1)} = x_i$ for all $1\leq i\leq \ell$. Hence, by letting $\gamma_\rho$ restrict to $M_\mfl^\tau$, we get an isomorphism
    $$M_\mfl^\tau \stackrel{\sim}{\to} \bA_\mfl\otimes_{\FF_q}V,$$ which is $G_E$-equivariant.

Now, tensor with $K_\mfl\otimes_{\bA_\mfl}-$ and by the definition of $M = \MM(\rho)$, we have a desired isomorphism of $K_\mfl[\Gal(E^\sep/E)]$-modules.
    $$H^1_{\mfl}(\MM(\rho),K_\mfl) \cong K_\mfl\otimes_{\FF_q}(\Res_{\FF_q(\rho)/\FF_q}V_\rho) .$$
\end{proof}

We also need the following lemmas. For a linear operator $T: V\to V$ on a vector space $V$, we shall use $\Char(T,V,X)$ to denote the characteristic polynomial of $T$ on $X$ in a variable $X$.
\begin{lem}\label{char_polys}
    Let $\wp$ be a prime in $E$, then $$\Char(\Frob_\wp, \Res_{\FF_q(\rho)/\FF_q}V_\rho, X) = \prod_{i=0}^{d-1} \Char(\Frob_\wp, V_\rho^{(i)},X)$$
\end{lem}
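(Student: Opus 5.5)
We want $\Char(\Frob_\wp,\Res V_\rho,X)=\prod_{i=0}^{d-1}\Char(\Frob_\wp,V_\rho^{(i)},X)$. Both sides are taken over $\conj{\FF}_q$. The strategy is to extend scalars from $\FF_q$ to $\FF_{q^d}$ (or to $\conj{\FF}_q$) on the restriction of scalars, and to decompose the result using the same Moore-determinant argument as in Lemma \ref{split_base_change}.

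\emph{Step 1: the splitting.} Since $\FF_{q^d}/\FF_q$ is Galois with group generated by $\tau$, the standard isomorphism
\[
\FF_{q^d}\otimes_{\FF_q}\FF_{q^d}\isoto \prod_{i=0}^{d-1}\FF_{q^d},\qquad x\otimes y\mapsto (x y^{q^i})_i,
\]
tensored over the second factor with $V_\rho$, gives an $\FF_{q^d}$-linear isomorphism
\[
\FF_{q^d}\otimes_{\FF_q}\Res_{\FF_q(\rho)/\FF_q}V_\rho\isoto\bigoplus_{i=0}^{d-1}\FF_{q^d}\otimes_{\FF_{q^d},\tau^i}V_\rho .
\]
Bijectivity is exactly the Moore determinant computation already carried out in Lemma \ref{split_base_change}, with $E^\sep$ replaced by $\FF_{q^d}$; alternatively one can count dimensions after checking injectivity.

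\emph{Step 2: equivariance.} Here we let $G$ act only on the right-hand factor $V_\rho$, and act trivially on the scalar $\FF_{q^d}$ on the left. This is the key point: it differs from the diagonal action used earlier, and it must be used because the statement concerns the $\FF_q$-linear operator $\Frob_\wp$ on $\Res V_\rho$, whose characteristic polynomial is unchanged by extending scalars $\FF_q\to\FF_{q^d}$. With this action the isomorphism is $G$-equivariant componentwise. In the basis $1\otimes w_1,\dots,1\otimes w_n$ of $\FF_{q^d}\otimes_{\tau^i}V_\rho$, the element $g$ acts by the matrix $\rho(g)^{(i)}$, since $1\otimes \rho(g)_{ki}w_k=\rho(g)_{ki}^{q^i}(1\otimes w_k)$. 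Hence the $i$-th summand is the representation $V_\rho^{(i)}$.

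\emph{Step 3: conclusion.} Characteristic polynomials are invariant under scalar extension and multiplicative on direct sums of invariant subspaces. Therefore
\[
\Char(\Frob_\wp,\Res V_\rho,X)=\prod_i\Char\bigl(\rho^{(i)}(\Frob_\wp),X\bigr).
\]
This holds for any choice of lift of $\Frob_\wp$, so it in particular holds for $\Frob_\wp$ in the unramified case. If $\rho$ is ramified at $\wp$, we should pass to inertia invariants. Taking $I_\wp$-invariants commutes with the decomposition, because the isomorphism is $G$-equivariant and $\FF_{q^d}$ is flat over $\FF_q$. Moreover $(V_\rho^{(i)})^{I_\wp}=(V_\rho^{I_\wp})^{(i)}$, so the identity persists.

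The main obstacle is bookkeeping rather than mathematics: making sure the twist convention $\otimes_{\tau^i}$ yields $\rho^{(i)}$ and not $\rho^{(-i)}$. Both conventions index the same set $\{0,\dots,d-1\}$ modulo $d$, so the product is unaffected in either case.
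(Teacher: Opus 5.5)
Your proposal is correct and is essentially the paper's argument: extend scalars to $\FF_q(\rho)$, split $(\Res_{\FF_q(\rho)/\FF_q}V_\rho)\otimes_{\FF_q}\FF_q(\rho)\cong\bigoplus_{i=0}^{d-1}V_\rho^{(i)}$ as $G$-representations by the argument of Lemma~\ref{split_base_change}, and then use that characteristic polynomials are invariant under scalar extension and multiplicative on direct sums. You also spell out two points the paper leaves implicit: $G$ must act trivially on the scalar factor (unlike the diagonal action in Lemma~\ref{split_base_change}), and the splitting is compatible with $I_\wp$-invariants; only the unramified case is needed later, in Proposition~\ref{motivic_L_series}.
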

\begin{proof}
    Indeed, note that using a similar argument as in Lemma \ref{split_base_change}, we can show that there exists an isomorphism of $\FF_q(\rho)$-representations of $G$: $$(\Res_{\FF_q(\rho)/\FF_q}V_\rho)\otimes_{\FF_q} \FF_q(\rho)\cong \bigoplus_{i=0}^{d-1} V_\rho^{(i)}.$$ 

    Thus, $$\begin{aligned}
        \Char(\Frob_\wp, \Res_{\FF_q(\rho)/\FF_q}V_\rho, X) & = \det_{\FF_q} (X - \rho(\Frob_\wp)| \Res_{\FF_q(\rho)/\FF_q}V_\rho)\\
    & = \det_{\FF_q(\rho)} (X - \rho(\Frob_\wp)| (\Res_{\FF_q(\rho)/\FF_q}V_\rho)\otimes_{\FF_q}\FF_q(\rho))\\
    & = \det_{\FF_q(\rho)} \left(X - \rho(\Frob_\wp)| \bigoplus_{i=0}^{d-1} V_\rho^{(i)}\right)\\
    & = \prod_{i=0}^{d-1} \det_{\FF_q(\rho)} \left(X - \rho(\Frob_\wp)|V_\rho^{(i)}\right)\\
    & = \prod_{i=0}^{d-1} \Char(\Frob_\wp, V_\rho^{(i)},X).
    \end{aligned}
    $$
\end{proof}
\begin{lem}\label{H1_tensorproduct_motives}
    Let $M_1$ and $M_2$ be two $\bA$-motives over $E$ of ranks $r_1$ and $r_2$ respectively. Let $H^1_{\mfl}(M, \bA_\mfl)$ denote the $\mfl$-adic realization. Then there exists a canonical isomorphism of $\bA_\mfl[\Gal(E^\sep/E)]$-modules
		\[
		\widetilde{\Pi}: H^1_{\mfl}(M_1, \bA_\mfl) \otimes_{\bA_\mfl} H^1_{\mfl}(M_2, \bA_\mfl) \xrightarrow{\sim} H^1_{\mfl}(M_1 \otimes M_2, \bA_\mfl).
		\]
\end{lem}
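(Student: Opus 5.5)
We will use the comparison isomorphism from the proposition on $\mfl$-adic cohomology of $\bA$-motives, part (1): for any $\bA$-motive $M$ over $E$, the natural map $H^1_\mfl(M)\otimes_{\bA_\mfl}\bA_\mfl(E^\sep)\to M_\mfl(E^\sep)$ is an isomorphism of $\bA_\mfl(E^\sep)$-modules. It is compatible with $G_E$ (acting diagonally) and with $\tau$ (acting on the second factor of the left side only). Write $R=\bA_\mfl(E^\sep)$ and $H_i=H^1_\mfl(M_i)$.

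First define $\widetilde{\Pi}$. An element $m_1\otimes m_2$ with $m_i\in H_i\subseteq (M_i)_\mfl(E^\sep)$ maps into
\[
(M_1)_\mfl(E^\sep)\otimes_R (M_2)_\mfl(E^\sep)\cong (M_1\otimes_{\bA_E}M_2)\otimes_{\bA_E}R=(M_1\otimes M_2)_\mfl(E^\sep).
\]
This identification holds because completion commutes with tensor products of finite projective modules. Since $\tau_{M_1\otimes M_2}=\tau_{M_1}\otimes\tau_{M_2}$ acts diagonally, we get $\tau(m_1\otimes m_2)=\tau(m_1)\otimes\tau(m_2)=m_1\otimes m_2$. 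So the image lands in the $\tau$-invariants $H^1_\mfl(M_1\otimes M_2)$. The map is $\bA_\mfl$-bilinear, hence descends to $H_1\otimes_{\bA_\mfl}H_2$. It is $G_E$-equivariant because $G_E$ acts diagonally through the $E^\sep$-coefficients and commutes with all the identifications used. It is canonical because no choices were made.

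Next show $\widetilde{\Pi}$ is an isomorphism. Extend scalars along $\bA_\mfl\to R$ and use the comparison isomorphism three times:
\[
(H_1\otimes_{\bA_\mfl}H_2)\otimes_{\bA_\mfl}R\cong (H_1\otimes R)\otimes_R(H_2\otimes R)\cong (M_1)_\mfl(E^\sep)\otimes_R(M_2)_\mfl(E^\sep)\cong (M_1\otimes M_2)_\mfl(E^\sep)\cong H^1_\mfl(M_1\otimes M_2)\otimes_{\bA_\mfl}R.
\]
One must check that this composite equals $\widetilde{\Pi}\otimes\id_R$. This is immediate on pure tensors, since every map is $v\otimes a\mapsto a v$. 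Hence $\widetilde{\Pi}\otimes_{\bA_\mfl}R$ is an isomorphism.

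To descend, note that both sides of $\widetilde{\Pi}$ are free $\bA_\mfl$-modules of rank $r_1r_2$ (by part (2) of the same proposition; $M_1\otimes M_2$ has rank $r_1r_2$). After choosing bases, $\widetilde{\Pi}$ is given by a square matrix $A$ over $\bA_\mfl$ that becomes invertible over $R$. So $\det A\in\bA_\mfl$ is a unit in $R$.

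The main obstacle is to conclude that $\det A$ is already a unit in $\bA_\mfl$. We handle it by taking $\tau$-invariants: if $\det A\cdot y=1$ with $y\in R$, then applying $\tau$ gives $\det A\cdot\tau(y)=1$, so $\tau(y)=y$ by uniqueness of inverses. It remains to show $R^{\tau=1}=\bA_\mfl$. This holds because $R=\varprojlim \bA/\mfl^n\otimes_{\FF_q}E^\sep$, and on each level the $\tau$-invariants are $\bA/\mfl^n\otimes_{\FF_q}(E^\sep)^{\tau=1}=\bA/\mfl^n$; equivalently, it is the case $M=$ unit motive of part (1). Hence $y\in\bA_\mfl$, so $\widetilde{\Pi}$ is an isomorphism of $\bA_\mfl[G_E]$-modules.
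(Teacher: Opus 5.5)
Your proof is correct. It builds $\widetilde{\Pi}$ the same way the paper does: restrict the natural identification $M_{1,\mfl}(E^{\sep})\otimes_{R}M_{2,\mfl}(E^{\sep})\cong (M_1\otimes M_2)_\mfl(E^{\sep})$, which is $\tau$-equivariant for the diagonal $\tau$, to $\tau$-invariants. Where you differ is the step proving bijectivity.

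The paper says $\widetilde{\Pi}$ is injective ``as $\Pi$ is''. It then concludes from the fact that source and target are free of the same rank $r_1r_2$ over the discrete valuation ring $\bA_\mfl$. Taken literally, that only shows the cokernel is torsion, since multiplication by a uniformizer is an injective map between free modules of equal rank. So it gives the isomorphism only after $\otimes_{\bA_\mfl}\bK_\mfl$, which is all Proposition~\ref{motivic_L_series} actually uses.

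You instead extend scalars to $R=\bA_\mfl(E^{\sep})$. Part (1) of the comparison proposition, applied to $M_1$, $M_2$ and $M_1\otimes M_2$, shows that $\widetilde{\Pi}\otimes\id_R$ is an isomorphism. You then descend via $R^{\tau=1}=\bA_\mfl$ to see that the determinant of $\widetilde{\Pi}$ is already a unit in $\bA_\mfl$. This makes a separate injectivity argument unnecessary.

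Your route costs one extra check, namely that the chain of identifications equals $\widetilde{\Pi}\otimes\id_R$, which you verify on pure tensors. In return it gives the integral isomorphism the lemma actually claims. The paper's route is shorter but is only complete with $\bK_\mfl$-coefficients.

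The $G_E$-equivariance you assert abstractly is what the paper checks explicitly, by writing elements as $\sum_i u_i\otimes\alpha_i$.
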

\begin{proof}
    Let $M = M_1\otimes M_2$. Recall that $$M_\mfl(E^\sep) = M\otimes_{\bA_E}\bA_\mfl(E^\sep).$$ Thus, we have a natural isomorphism of $\bA_\mfl(E^\sep)$-modules
		$$\Pi: M_{1,\mfl}(E^\sep)\otimes_{\bA_\mfl(E^\sep)}M_{2,\mfl}(E^\sep)\to M_\mfl(E^\sep),$$ given by $$(m_1\otimes \alpha)\otimes (m_2\otimes \beta)\mapsto (m_1\otimes m_2)\otimes (\alpha\beta).
		$$ Moreover, if we equip left hand side with diagonal $\tau$-action, then this isomorphism is $\tau$-equivariant by the definition of tensor product of $\bA$-motives.

    By restriction to submodules $M_1^\tau\otimes_{\bA_\mfl} M_2^\tau$, it induces an $\bA_\mfl$-linear map $$\widetilde{\Pi}: M_1^\tau\otimes_{\bA_\mfl}M_2^\tau\to M^\tau$$ on the tensor product of the $\mfl$-adic realizations. Indeed, let $x \in M_1^\tau$ and $y \in M_2^\tau$, then by definition, $\tau(x) = x$ and $\tau(y) = y$. Thus, $$\tau(\widetilde{\Pi}(x\otimes y)) = \widetilde{\Pi}(\tau(x\otimes y)) = \widetilde{\Pi}(\tau(x)\otimes \tau(y)) = \widetilde{\Pi}(x\otimes y).$$ Moreover, this map is injective as $\Pi$ is. Since $\rk_{\bA_\mfl}(M_1^\tau) = r_1$, $\rk_{\bA_\mfl}(M_2^\tau) = r_2$, $\rk_{\bA_\mfl}(M) = r_1r_2$. Since they are all free modules over the discrete valuation ring $\bA_\mfl$, $\widetilde{\Pi}$ is thus an isomorphism of $\bA_\mfl$-modules.

    It remains to check that $\widetilde{\Pi}$ commutes with the action of $G_E= \text{Gal}(E^{\sep}/E)$.
		Let $\sigma \in G_E$. The Galois action on $M_1^\tau\otimes_{\bA_\mfl}M_2^\tau$ is diagonal, i.e. for $x \in M_1^\tau$ and $y \in M_2^\tau$:
			\[
			\sigma \cdot (x \otimes y) := \sigma(x) \otimes \sigma(y).
			\]
	
   On the right hand side, the action of $G_E$ on $M_{\mfl}(E^\sep)$ is acting on the coefficients $\bA_\mfl(E^\sep)$. We may write $x$ and $y$ as $$x = \sum_i u_i \otimes \alpha_i\in M_1^\tau, \quad \textnormal{and}\quad y = \sum_j v_j \otimes \beta_j\in M_2^\tau,$$ where $u_i \in M_1, v_j \in M_2$, and $\alpha_i, \beta_j \in \bA_\mfl(E^\sep)$. Then $\widetilde{\Pi}(x \otimes y) = \sum_{i,j} (u_i \otimes v_j) \otimes (\alpha_i \beta_j)$.

    Thus, we have $$\begin{aligned}
\widetilde{\Pi}(\sigma(x\otimes y)) & = \widetilde{\Pi}\left(\sigma(x)\otimes \sigma(y)\right)\\
 &= \widetilde{\Pi}\left(\left(\sum_i u_i \otimes \sigma(\alpha_i)\right) \otimes \left(\sum_j v_j \otimes \sigma(\beta_j)\right)\right)\\
 &=\sum_{i,j} (u_i \otimes v_j) \otimes \sigma(\alpha_i)\sigma(\beta_j)\\
&= \sum_{i,j} (u_i \otimes v_j) \otimes \sigma(\alpha_i \beta_j) \\
   & = \sigma \cdot \widetilde{\Pi}(x \otimes y).
    \end{aligned}$$
	
This shows that $\widetilde{\Pi}$ is $G_E$-equivariant. Therefore, $\widetilde{\Pi}$ is an isomorphism of $\bA_\mfl[\Gal(E^\sep/E)]$-modules.
\end{proof}
As a consequence of the above results, we have the following proposition.
\begin{prop}\label{motivic_L_series}
    Let $\varphi:\bA\to E[\tau]$ be a Drinfeld $\bA$-module over $E$ and $\rho: G_E\to \GL(V_\rho)$ an Artin representation. Then, there exists a finite set $S$ of places such that $$L_S(\MM(\varphi,\rho),s) = \prod_{i=0}^{d-1} L_S(\varphi^\vee, \rho^{(i)}, s),$$ where $d = [\FF_q(\rho):\FF_q]$ and $\rho^{(i)}$ denotes the $i$-th Frobenius twist of the Artin representation $\rho$. 
\end{prop}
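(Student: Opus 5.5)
The plan is to compare Euler factors prime by prime on the Galois side. We identify the $\mfl$-adic realization of $\MM(\varphi,\rho)$ with a tensor product of realizations, compute characteristic polynomials of Frobenius there, and then match them with the local factors defining $L_S(\varphi^\vee,\rho^{(i)},s)$.

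\textbf{Choice of $S$.} Take $S$ to contain every prime at which $\varphi$ has bad reduction (finite by Proposition \ref{finite_bad}), every prime at which $\rho$ is ramified, and the infinite places. For $\wp\notin S$, inertia $I_\wp$ then acts trivially on $V_\mfl(\varphi)$ (N\'eron--Ogg--Shafarevich, as in the proof of Proposition \ref{prop:good_primes}) and on $V_\rho$. So no inertia invariants need to be taken on either side.

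\textbf{Realization of $\MM(\varphi,\rho)$.} Fix $\wp\notin S$ and a prime $\mfl$ not below $\wp$. By Lemma \ref{H1_tensorproduct_motives} (tensored with $K_\mfl$) and Proposition \ref{coh_rho},
\[
H^1_\mfl(\MM(\varphi,\rho),K_\mfl)\cong H^1_\mfl(\MM(\varphi),K_\mfl)\otimes_{K_\mfl}\bigl(K_\mfl\otimes_{\FF_q}\Res_{\FF_q(\rho)/\FF_q}V_\rho\bigr)
\]
as $G_E$-representations. By Proposition \ref{coh_and_mod}(3), $H^1_\mfl(\MM(\varphi),K_\mfl)\cong V_\mfl^\vee(\varphi)$. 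Extending scalars to $\conj K_\mfl$ via the fixed embedding $\iota$, the argument of Lemma \ref{char_polys} (equivalently Lemma \ref{split_base_change}) decomposes the second factor as $\bigoplus_{i=0}^{d-1}\conj V_\mfl(\rho^{(i)})$. The realization therefore becomes $\bigoplus_{i=0}^{d-1}\conj V^\vee_\mfl(\varphi)\otimes \conj V_\mfl(\rho^{(i)})=\bigoplus_i W_\mfl^\vee(\rho^{(i)})$.

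\textbf{Local factors.} Taking characteristic polynomials of $\Frob_\wp$ gives
\[
P_\wp(\MM(\varphi,\rho),X)=\prod_{i=0}^{d-1}\bP^\vee_\wp(\varphi,\rho^{(i)},X).
\]
Passing to reciprocal polynomials preserves products, so $Q_\wp(\MM(\varphi,\rho),X)=\prod_i\bQ^\vee_\wp(\varphi,\rho^{(i)},X)$. Substituting $X=n\wp^{-s}$ and taking the product over $\wp\notin S$ yields the claimed identity; it holds as an identity of formal Euler products, and in the convergence range as numbers.

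\textbf{Main obstacle.} The delicate step is making the middle identification precise at the level of characteristic polynomials. One subtlety is the dual: the definition of $L(\MM(\varphi,\rho),s)$ uses $\Frob_\wp$ on $H^1$, which is $V^\vee_\mfl(\varphi)$. This is exactly why the twisted series appearing is $L(\varphi^\vee,\cdot)$, so it must be tracked consistently with the definition of $W^\vee_\mfl$. The other subtlety is that the Galois-equivariant splitting of $\Res_{\FF_q(\rho)/\FF_q}V_\rho\otimes\FF_q(\rho)$ into Frobenius twists must be compatible with the chosen embedding $\iota:\conj\FF_q\hookrightarrow\conj K_\mfl$. Changing $\iota$ only permutes the twists $\rho^{(i)}$, so the product over $i$ is independent of this choice. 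I would handle both points by working with eigenvalues, as in Proposition \ref{prop:good_primes}: the eigenvalues on the tensor product are the products $\alpha_i^{-1}\beta$, where the $\alpha_i$ are the Frobenius eigenvalues on $V_\mfl(\varphi)$ and $\beta$ runs over the eigenvalues of all $\rho^{(i)}(\Frob_\wp)$, counted with multiplicity.
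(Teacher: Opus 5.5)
Your proposal is correct and follows essentially the same route as the paper: identify $H^1_\mfl(\MM(\varphi,\rho),K_\mfl)$ with $V^\vee_\mfl(\varphi)\otimes\bigl(K_\mfl\otimes_{\FF_q}\Res_{\FF_q(\rho)/\FF_q}V_\rho\bigr)$ via Lemma \ref{H1_tensorproduct_motives} and Proposition \ref{coh_rho}, split off the twists $V_\rho^{(i)}$ as in Lemma \ref{char_polys}, and match Euler factors using Proposition \ref{prop:good_primes}. The one minor difference concerns unramifiedness of the realization of $\MM(\varphi,\rho)$: you deduce it from the $G_E$-equivariance of the tensor decomposition, whereas the paper enlarges $S$ to include the bad primes of $\MM(\varphi,\rho)$ and $\MM(\varphi)$ and cites Gazda's criterion, so your version is slightly more economical.
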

\begin{proof}
Let $S$ contain all bad primes of $\MM(\varphi,\rho)$, $\MM(\varphi)$, and ramified primes of $\rho$ together with $\infty$. This is a finite set by Proposition $\ref{finite_bad}$.

    Let $\wp\notin S$, then by \cite[Proposition 4.49]{Gazda21motcoh}, $H^1_\mfl(\MM(\varphi,\rho)),H^1_\mfl(\MM(\varphi))$ and $H^1_\mfl(\rho)$ are unramified at $\mfp$.
    By the definition of $\MM(\varphi,\rho)$, Proposition \ref{coh_rho}, Lemma \ref{char_polys} and \ref{H1_tensorproduct_motives}, we have $$\begin{aligned}
        P_\wp(\MM(\varphi,\rho),X) & = \Char(\Frob_\wp, H^1_\mfl(\MM(\varphi,\rho),K_\mfl), X)\\
        & = \Char(\Frob_\wp, H^1_\mfl(\MM(\varphi),K_\mfl)\otimes_{K_\mfl} H^1_{\mfl}(\MM(\rho),K_\mfl), X)\\
        & = \Char(\Frob_\wp, H^1_\mfl(\MM(\varphi),K_\mfl), X)\otimes \Char(\Frob_\wp, \Res_{\FF_q(\rho)/\FF_q}V_\rho, X)\\
        & =  \prod_{i=0}^{d-1}\Char(\Frob_\wp, V_\mfl^\vee(\varphi), X)\otimes \Char(\Frob_\wp, V_\rho^{(i)},X).
    \end{aligned} $$

    As a result, by Proposition \ref{prop:good_primes}, for each integer $s$, we have $$\begin{aligned}
        L_S(\MM(\varphi,\rho),s) & = \prod_{i=0}^{d-1} L_S(\varphi^\vee, \rho^{(i)}, s).
    \end{aligned}$$
\end{proof}
\begin{thm}\label{thmC}
    Let $\varphi:\bA\to E[\tau]$ be a Drinfeld $\bA$-module over $E$ and $\rho: G_E\to \GL_n(\conj{\FF}_q)$ an Artin representation. Let $\mcE = \mcE(\varphi,\rho)$ be an Artin twist of $\varphi$ by $\rho$. Then, there exists a finite set $S$ of places such that $$L_S(\mcE^\vee,s) = \prod_{i=0}^{d-1}L_S(\varphi^\vee,\rho^{(i)},s).$$ 

    In the case $\bA = \FF_q[t]$, we have $$L_S(\mcE^\vee,s) = N_{K_\infty(\rho)/K_\infty}(L_S(\varphi^\vee,\rho,s))$$
\end{thm}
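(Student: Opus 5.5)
The first identity will follow from Proposition \ref{motivic_L_series} once we pass from the motive $\MM(\varphi,\rho)$ to the Anderson module $\mcE$. By Theorem \ref{existsAnderson} there is an isomorphism $\MM(\varphi,\rho)\cong_E \MM(\mcE)$ of $\bA$-motives over $E$. Since $H^1_\mfl$ is a functor on $\bA\mot(E)$, this yields an isomorphism of $K_\mfl[G_E]$-modules $H^1_\mfl(\MM(\mcE),K_\mfl)\cong H^1_\mfl(\MM(\varphi,\rho),K_\mfl)$. The local polynomials $P_\wp$, and hence the Goss $L$-series, therefore agree for every $\wp$. By \eqref{L-series_motivic_module}, which comes from Proposition \ref{coh_and_mod}(3), we have $L_S(\MM(\mcE),s)=L_S(\mcE^\vee,s)$. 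For the finite set $S$ I take the union of three sets: the set from Proposition \ref{motivic_L_series}, the finitely many bad primes of $\mcE$ (Proposition \ref{finite_bad}), and the primes where $\rho$ ramifies. With this $S$, chaining the equalities gives $L_S(\mcE^\vee,s)=\prod_{i=0}^{d-1}L_S(\varphi^\vee,\rho^{(i)},s)$. The only check needed is that inertia invariants are taken consistently on both sides, and this is automatic because the representations are isomorphic as $G_E$-modules.

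For $\bA=\FF_q[t]$, I interpret the right-hand side Euler factor by Euler factor. Fix $\wp\notin S$. By Proposition \ref{prop:good_primes}, the local factor $\bQ^\vee_\wp(\varphi,\rho,X)$ has coefficients in $K(\rho):=K\cdot\FF_q(\rho)=\FF_{q^d}(\theta)$. The coefficients of $\bP^\vee_\wp(\varphi,X)$ lie in $K$, while the eigenvalues of $\rho(\Frob_\wp)$ generate symmetric functions in $\FF_q(\rho)$. Moreover, the local factor for $\rho^{(i)}$ is obtained from the one for $\rho$ by applying $\sigma^i$ to the coefficients, where $\sigma$ is the $q$-Frobenius generator of $\Gal(\FF_{q^d}(\theta)/K)\cong\Gal(\FF_{q^d}/\FF_q)$. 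This works because $\sigma$ fixes $K$ and sends the characteristic polynomial of $\rho(\Frob_\wp)$ to that of $\rho^{(1)}(\Frob_\wp)$.

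Since $n\wp=\left[\frac{\mcO_E}{\wp}\right]_A^{\cdot}\in A$ and $s\in\ZZ$, evaluating at $X=n\wp^{-s}\in K$ commutes with $\sigma^i$. Hence $L_\wp(\varphi^\vee,\rho^{(i)},s)=\sigma^i\bigl(L_\wp(\varphi^\vee,\rho,s)\bigr)$.

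Next I pass to $K_\infty$. The field $K_\infty(\rho)=\FF_{q^d}((1/\theta))$ is the unramified extension of $K_\infty$ of degree $d$, and $\sigma$ extends continuously to a generator of $\Gal(K_\infty(\rho)/K_\infty)$. The norm is therefore $N=\prod_{i=0}^{d-1}\sigma^i$, and it is multiplicative and continuous. I take $s$ in the range where the Euler product converges in $K_\infty(\rho)$, which is the convergence region cited from \cite{taelman2009special}. There I can apply $\sigma^i$ termwise to the convergent product and interchange the finite product over $i$ with the infinite product over $\wp$. This gives
\[\prod_{i=0}^{d-1}L_S(\varphi^\vee,\rho^{(i)},s)=\prod_{i=0}^{d-1}\sigma^i\bigl(L_S(\varphi^\vee,\rho,s)\bigr)=N_{K_\infty(\rho)/K_\infty}\bigl(L_S(\varphi^\vee,\rho,s)\bigr).\]
Combining this with the first part proves the second identity.

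The main obstacle is the Galois-conjugation step. It requires verifying that the coefficients of $\bQ^\vee_\wp(\varphi,\rho,X)$ genuinely lie in $\FF_{q^d}(\theta)$ and not merely in $\conj{K}_\mfl$ via the embedding $\iota$. For this I would use $\bP^\vee_\wp(\varphi,\rho,X)=\prod_{i,j}(X-\alpha_i\beta_j)$ and express its coefficients polynomially in the coefficients of the two characteristic polynomials, for instance via the resultant $\operatorname{Res}_Y\bigl(P^\vee_\wp(\varphi,Y),\,Y^{n}P_\wp(\rho,X/Y)\bigr)$. This exhibits them as elements of $K\otimes\FF_{q^d}$ on which $\sigma$ acts as described.
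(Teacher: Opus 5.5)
Your proposal is correct and follows essentially the same route as the paper's proof. The first identity comes from Proposition \ref{motivic_L_series} together with $\MM(\varphi,\rho)\cong\MM(\mcE)$ and \eqref{L-series_motivic_module}, and the norm identity from recognizing the factors $L_S(\varphi^\vee,\rho^{(i)},s)$ as the $\Gal(K_\infty(\rho)/K_\infty)$-conjugates of $L_S(\varphi^\vee,\rho,s)$; you simply supply details the paper leaves implicit (Euler-factor-wise conjugation, coefficients lying in $\FF_{q^d}(\theta)$, and continuity of $\sigma$).
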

\begin{proof}
    This follows from Proposition \ref{motivic_L_series} and (\ref{L-series_motivic_module}).

    In the case $\bA = \FF_q[t]$, $L_S(\mcE^\vee,s)\in K_\infty$ and $L_S(\varphi^\vee,\rho^{(i)},s)\in K_\infty(\rho)$. Thus, for each integer $s$, we have \[L_S(\mcE^\vee,s) 
    = \prod_{\gamma\in \Gal(K_\infty(\rho)/K_\infty)} (L_S(\varphi^\vee,\rho,s))^\gamma
         = N_{K_\infty(\rho)/K_\infty} (L_S(\varphi^\vee,\rho,s)).\]
\end{proof}
    
By Taelman's class number formula, we obtain the following corollary. Note that although $\mcE$ in the theorem is defined over the field $E$, we may replace it by an integral model over $\mcO_E$ via clearing denominators. This modification only affects finitely many primes and introduces a rational factor in $K^*$.

For the rest of this section, we assume $\bA = \FF_q[t]$.
\begin{lem}\label{L-value_at0}
    Let $S$ denote the set of bad primes, then $$L_S(\mcE^\vee,0) = \prod_{\wp\notin S}\frac{[\Lie_{\mcE_\wp}(\FF_\wp)]_{A}}{[\mcE_\wp(\FF_\wp)]_A}.$$
\end{lem}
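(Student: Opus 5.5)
The plan is to prove the identity one Euler factor at a time. Fix $\wp\notin S$. The local factor of $L_S(\mcE^\vee,0)$ at $\wp$ is $Q^\vee_\wp(\mcE,1)^{-1}$, because $n\wp^{-0}=1$. We want to show
\[
Q^\vee_\wp(\mcE,1)=\frac{[\mcE_\wp(\FF_\wp)]_A}{[\Lie_{\mcE_\wp}(\FF_\wp)]_A}.
\]
The tool is the reduction of $\mcE$ at $\wp$ together with the motivic description of Frobenius. For $\wp$ good, Proposition \ref{reduction_computation_lemma} applied to $M=\MM(\mcE)$ gives $P^\vee_\wp(\MM(\mcE),X)=\det(X-\tau^{\deg\wp}\mid M_\wp)$. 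Here $M_\wp=\MM(\mcE_\wp)$ is a free $A\otimes\FF_\wp$-module of rank $rN$. Combined with Lemma \ref{lem:reduction-compatibility}, (\ref{L-series_motivic_module}) and Corollary \ref{reduction_at_good}, this expresses $Q^\vee_\wp(\mcE,X)$ as the reciprocal characteristic polynomial of $\tau^{\deg\wp}$ on the reduced motive. This polynomial lies in $A[X]$, so evaluating at $X=1$ gives, up to sign, $\det_{A}(1-\tau^{\deg \wp}\mid M_\wp)$, computed over $A$ after Weil restriction from $A\otimes\FF_\wp$.

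The key step is the standard Taelman-type ideal identity for the finite Anderson module $\mcE_\wp$ over $\FF_\wp$:
\[
\Fitt_A\big(\mcE_\wp(\FF_\wp)\big)\cdot\Fitt_A\big(\Lie_{\mcE_\wp}(\FF_\wp)\big)^{-1}=\Big(\det{}_{A}\big(1-\tau^{\deg\wp}\mid M_\wp\big)\Big).
\]
I would prove it using the exact sequence of $A[\tau]$-modules
\[
0\to M_\wp\xrightarrow{1-\tau^{\deg\wp}} M_\wp\to \textnormal{coker}\to 0 .
\]
The cokernel is identified with $\hom_{\FF_q}(\mcE_\wp(\FF_\wp),\FF_\wp)$, and its Fitting ideal therefore relates to $\mcE_\wp(\FF_\wp)$ by Pontryagin/$\FF_q$-duality. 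The Lie algebra enters through the comparison between $M_\wp$ and $\tau M_\wp$: we have $M_\wp/\tau M_\wp\cong \Lie_{\mcE_\wp}^*$. The difference between the Fitting ideal of the $A$-module $M_\wp/(1-\tau^{\deg\wp})M_\wp$ and the naive determinant is exactly $\Fitt_A(\Lie_{\mcE_\wp}(\FF_\wp))$, arising from $(t-\theta)^N$ on $\Lie$. This is precisely the argument of Taelman \cite{taelman2012special} and Fang \cite{fang2015special} for Anderson $t$-modules over finite fields, so I would cite their lemma rather than reprove it.

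It remains to pass from ideals to elements. Because $\bA=\FF_q[t]$, every nonzero ideal of $A$ is principal with a unique monic generator, and $[\cdot]_A$ takes exactly that generator. The value $Q^\vee_\wp(\mcE,1)$ is an element of $A$, and I would check that it is monic by looking at the leading $t$-coefficient of $\det(1-\tau^{\deg\wp})$ via the Weil-restriction computation. Then the ideal identity becomes the equality of elements, and taking the product over $\wp\notin S$ finishes the proof.

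The main obstacle I expect is this sign and normalization issue: matching $Q^\vee$ (the one built from $\Frob^{-1}$), rather than $Q$, with $\det(1-\tau^{\deg\wp})$ on the motive, and confirming monicity. The choice of $\vee$ is exactly what Proposition \ref{reduction_computation_lemma} provides, so the dualization is handled there, and only the leading-coefficient check remains.
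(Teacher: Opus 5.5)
There is a genuine gap. Both of your central identifications are wrong, and the two errors happen to cancel, which is why you still land on the true statement.

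\textbf{The Euler factor.} Proposition \ref{reduction_computation_lemma} together with (\ref{L-series_motivic_module}) (equivalently Corollary \ref{reduction_at_good}, which you cite) identifies $P(X):=\det(X-\tau^{\deg\wp}\mid M_\wp)$ with $P_\wp(\mcE,X)$, not with $P^\vee_\wp(\mcE,X)$. So the reciprocal characteristic polynomial of $\tau^{\deg\wp}$ is $Q_\wp(\mcE,X)$, the Euler factor of $L(\mcE,s)$. The factor you need is $Q^\vee_\wp(\mcE,X)=\det(1-X\tau^{-\deg\wp}\mid M_\wp)$. Hence $Q^\vee_\wp(\mcE,1)=P(1)/P(0)$, which in general is not an element of $A$.

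\textbf{The ideal identity.} Your ``Taelman-type'' identity is false. Take the Carlitz module ($\rho$ trivial, so $\mcE=C$) at a prime $\mfp$ of degree one. Then $\tau$ acts on $M_\mfp\cong\FF_q[t]$ as multiplication by $\mfp$ (written in $t$), so $\det(1-\tau\mid M_\mfp)=1-\mfp$. But $[C_\mfp(\FF_\mfp)]_A=\mfp-1$ and $[\Lie_{C_\mfp}(\FF_\mfp)]_A=\mfp$, and the actual Euler factor is $Q^\vee_\mfp(C,1)=1-\mfp^{-1}$.

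In fact your own exact sequence, with your identification of the cokernel, gives $\Fitt_A(\mcE_\wp(\FF_\wp))=(P(1))$ on the nose, with no Lie correction. This requires taking the determinant over $\FF_\wp[t]$; restricting scalars to $A$ first would produce $P(1)^{[\FF_\wp:\FF_q]}$. The Lie algebra enters separately, through $P(0)=\pm\det(\tau^{\deg\wp}\mid M_\wp)$. Filter $M_\wp/\tau^{\deg\wp}M_\wp$ by $\tau^iM_\wp/\tau^{i+1}M_\wp\cong(M_\wp/\tau M_\wp)^{(i)}$ and use $M_\wp/\tau M_\wp\cong\Lie^*_{\mcE_\wp}$. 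This gives $[\Lie_{\mcE_\wp}(\FF_\wp)]_A=\gamma P(0)$, while \cite{huang2022convolutions} gives $[\mcE_\wp(\FF_\wp)]_A=\gamma' P(1)$, with $\gamma,\gamma'\in\FF_q^\times$. This is the paper's route.

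\textbf{The normalization.} With the correct bookkeeping, the last step is not a monicity check of an element of $A$. What remains is to show $\gamma=\gamma'$, i.e.\ that $P(1)/P(0)$ has sign $1$, as a quotient of two monic polynomials does. This needs an input missing from your plan: the size of the Frobenius eigenvalues. The paper uses Gekeler's result \cite{gekeler91} that the roots of $P_\wp(\varphi,X)$ have absolute value $q^{\deg\wp/r}>1$. It combines this with $|\beta|=1$ for the eigenvalues $\beta$ of $\rho^{(i)}(\Frob_\wp)$ to write $Q^\vee_\wp(\mcE,X)=1+a_1X+\cdots+a_kX^k$ with all $|a_i|<1$. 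Then $Q^\vee_\wp(\mcE,1)$ is a $1$-unit, hence of sign $1$, which forces $\gamma=\gamma'$. Equivalently, the constant term of $P$ must strictly dominate its other coefficients in $t$-degree. That is a Riemann-hypothesis-type fact, and no Weil-restriction computation of leading coefficients will supply it.
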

\begin{proof}
    We give a sketch of proof. First, we identify $\bA\stackrel{\sim}{\to} A$ via $t\mapsto \theta$. For any finitely generated $\bA$-module $M$, set $[M]_A: = [M]_{\bA}|_{t=\theta}\in A_+$. Fix a prime $\wp\notin S$ and $m = [\FF_\wp:\FF_q]$. Suppose $\mcE$ has dimension $d$ and let $\Lie_{\mcE_\wp^*}(\FF_\wp) = \FF_{\wp}^{\oplus d}$ be equipped with the $\bA$-module structure via $a\cdot x = (\partial \mcE_a)^\top(x)$ for all $a\in \bA$. Then, $\Lie_{\mcE_\wp^*}(\FF_\wp)$ is naturally an $\bA_{\FF_\wp}$-module via scalar multiplication.

    Denote by $L$ the $\bA_{\FF_\wp}$-structure of $\Lie_{\mcE_\wp^*}(\FF_\wp)$ and $\Res_{\FF_\wp/\FF_q} L$ the $\bA$-module structure of $\Lie_{\mcE_\wp^*}(\FF_\wp)$. By \cite[Proposition 3.2.6]{huang2022convolutions}, we may identify $L$ with $M/\tau M$ as $\bA_{\FF_{\wp}}$-modules. Write $L^{(i)} = \tau^i(M/\tau M)$. Again, using a similar argument as in Lemma \ref{split_base_change}, we can show that there is an $\bA_{\FF_\wp}$-module isomorphism $$(\Res_{\FF_\wp/\FF_q}L)\otimes_{\FF_q}\FF_\wp\cong \bigoplus_{i=0}^{m-1}L^{(i)}.$$ Thus, $$\begin{aligned}
        [\Res_{\FF_\wp/\FF_q}L]_A &= [(\Res_{\FF_\wp/\FF_q}L)\otimes_{\FF_q}\FF_\wp]_{A_{\FF_\wp}}
        = \left[\bigoplus_{i=0}^{m-1}L^{(i)}\right]_{A_{\FF_\wp}} \\
       & = \prod_{i=0}^{m-1}[\tau^iM/\tau^{i+1}M)]_{A_{\FF_\wp}}
        = [M/\tau^d M]_{A_{\FF_\wp}}\\
       & = \gamma \det(\tau^d|M) 
        = \gamma P_\wp(\mcE_\wp, 0)
    \end{aligned}$$ for some $\gamma\in \FF_q^*$ such that $\gamma P_\wp(\mcE_\wp,0)$ is monic in $t$. In other words, $[\Lie_{\mcE_\wp}(\FF_\wp)]_A = [\Lie_{\mcE^*_\wp}(\FF_\wp)]_A = \gamma P_\wp(\mcE_\wp,0)$. By \cite[Corollary 3.7.8]{huang2022convolutions}, $[\mcE_\wp(\FF_\wp)]_A = \gamma' P_\wp(\mcE_\wp,1)$ for some $\gamma'\in \FF_q^*$. 
    
    Pick a prime $\mfl\in \Spec(\bA)$ such that $\wp$ is not lying above $\mfl$. Let $P_\wp(\varphi,X)$ be the characteristic polynomial of $\Frob_\wp$ acting on the $\mfl$-adic Tate module of $\varphi$. Let $P_\wp(\rho^{(i)},X)$ be the characteristic polynomial of $\rho^{(i)}$. Then, using a similar argument as in the proof of Proposition \ref{motivic_L_series}, we see that $P_\wp(\mcE_\wp,X) = P_\wp(\mcE,X)  = \prod_{i=0}^{d-1} P_\wp(\varphi,X)\otimes P^\vee_\wp(\rho^{(i)},X)$. By \cite[Theorem 5.1(iii)]{gekeler91}, the roots $\{x_k: 1\leq k\leq r\}$ of $P_\wp(\varphi,X)$ satisfies $|x_k|\leq q^{\deg \wp/r}$, where $r$ is the rank of the Drinfeld module $\varphi$. Since $\rho^{(i)}$ are representations defined over $\FF_q(\rho)$, the roots of its characteristic polynomial must have absolute value 1. Using a similar argument as in \cite[\S 3]{chang2018logalgebraic}, one can show that $Q_\wp^\vee(\mcE_\wp,X) = 1 + a_1X + \cdots + a_mX^m$ for some integer $m$ and $a_i\in \conj{K}$ satisfying $|a_i|<1$. Thus, $Q_\wp^\vee(\mcE_\wp,1)$ must have sign 1. Then, by a similar argument as in \cite[\S 3]{chang2018logalgebraic}, $\dis Q^\vee_\wp(\mcE_\wp,1) = \frac{P_\wp(\mcE_\wp,1)}{P_\wp(\mcE_\wp,0)}$. Hence $\gamma' = \gamma$ and $$L_\wp(\mcE_\wp^\vee,0) = Q^\vee_\wp(\mcE_\wp,1)^{-1} = \frac{P_\wp(\mcE_\wp,0)}{P_\wp(\mcE_\wp,1)} = \frac{[\Lie_{\mcE_\wp}(\FF_\wp)]_A}{[\mcE_\wp(\FF_\wp)]_A}.$$
    Now the statement follows from Corollary \ref{reduction_at_good}.
\end{proof}
\begin{cor}\label{corD}
	Let $\varphi$ be a Drinfeld module over $E$ of rank $r$ and $\rho: G\to \GL_n(\conj{\FF}_q)$ be an Artin representation with $d = d_\rho\geq 1$. Let $\mcE$ be a model over $\mcO_E$ of the Artin twist $\mcE(\varphi,\rho)$. Then, there exists a constant $C = C(\varphi,\rho)\in K^*$ such that $$N_{K_\infty(\rho)/K_\infty}(L_S(\varphi^\vee, \rho, 0)) = C\cdot \Reg(\mcE/\mcO_E)\cdot h(\mcE/\mcO_E).$$
\end{cor}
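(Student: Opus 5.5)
The plan is to combine Theorem \ref{thmC}, Lemma \ref{L-value_at0}, and Taelman's class number formula \eqref{eq:taelman_formula}. I will track the discrepancy between the field-level $L$-series and the integral special value as a finite product of local factors lying in $K^*$.

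\emph{Step 1: pass from the twisted $L$-value to $\mcE$.} By Theorem \ref{thmC}, with $\bA=\FF_q[t]$, there is a finite set $S$ such that $L_S(\mcE^\vee,0)=N_{K_\infty(\rho)/K_\infty}(L_S(\varphi^\vee,\rho,0))$. I will enlarge $S$ so that it also contains every prime at which the chosen $\mcO_E$-model $\mcE$ fails to have good reduction, and every prime dividing the denominators cleared to pass from $\mcE(\varphi,\rho)$ over $E$ to its model over $\mcO_E$. This set is still finite by Proposition \ref{finite_bad}. Enlarging $S$ only removes finitely many Euler factors on both sides of Theorem \ref{thmC}, and each of these is a ratio of values of characteristic polynomials with coefficients in $K$. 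The removed factors therefore multiply to an element of $K^*$, possibly after applying the norm, and this element is Galois-stable. So the identity persists up to a factor in $K^*$.

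\emph{Step 2: identify with Taelman's $\mcL$.} For $\wp\notin S$, Lemma \ref{L-value_at0} gives the local factor of $L_S(\mcE^\vee,0)$ as $[\Lie_{\mcE_\wp}(\FF_\wp)]_A/[\mcE_\wp(\FF_\wp)]_A$. This is exactly the $\wp$-factor of $\mcL(\mcE/\mcO_E)$. Hence
\[
\mcL(\mcE/\mcO_E)=L_S(\mcE^\vee,0)\cdot\prod_{\wp\in S}\frac{[\Lie_{\mcE}(\mcO_E/\wp)]_A}{[\mcE(\mcO_E/\wp)]_A}.
\]
The finite product lies in $K^*$. Each bracket is the monic generator of a Fitting ideal of a finite $A$-module, so it is a nonzero element of $A_+$; the numerator is nonzero because $\Lie$ is finite.

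\emph{Step 3: apply Taelman's formula.} The twist is admissible: by Corollary \ref{twsit_is_uniformizable} it is isomorphic over $E^\sep$ to $\varphi^{\oplus N}$, and the formula of \cite{fang2015special,angles2022class} applies to it. So \eqref{eq:taelman_formula} gives $\mcL(\mcE/\mcO_E)=\Reg(\mcE/\mcO_E)\,h(\mcE/\mcO_E)$. Combining the three steps yields the claim, with $C$ equal to the product of the inverse of the bad-prime correction from Step 2 and the factor from Step 1.

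The main obstacle is the convergence and meaning at $s=0$. Theorem \ref{thmC} is an identity of Euler products for large $s$, whereas $\mcL$ is defined as a product that converges at $s=0$ in $K_\infty$. To handle this, I would use the fact that the local identity holds factor-by-factor as polynomials in $n\wp^{-s}$. The identity at $s=0$ then follows from convergence of both products at $0$, which Taelman and Fang establish for $\mcL$. I would also verify that the sign normalization in Lemma \ref{L-value_at0} (the constant $\gamma$) is consistent, so that no unit discrepancy beyond $K^*$ appears.
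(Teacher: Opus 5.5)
Your proposal is correct and is essentially the paper's proof. It applies Theorem~\ref{thmC} at $s=0$, uses Lemma~\ref{L-value_at0} to match the Euler factors outside $S$ with those of $\mcL(\mcE/\mcO_E)$, multiplies by the finite product of bad-prime factors over $S$ (the paper's $C_1\in K^*$), and then invokes Taelman's formula \eqref{eq:taelman_formula}, giving $C=C_1^{-1}$.

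Your extra bookkeeping (enlarging $S$ to cover the bad primes of the chosen $\mcO_E$-model, and the discussion of $s=0$) only makes explicit what the paper leaves implicit. The one inaccuracy is your justification of admissibility, which has nothing to do with uniformizability: for $\bA=\FF_q[t]$ the class number formula of Fang and Demeslay applies to such $t$-modules over $\mcO_E$, and $\mcE$ is abelian by Theorem~\ref{existsAnderson}.
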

\begin{proof}
Let $S$ be the finite set of places as in Theorem \ref{thmC}. Set $$C_1 = \prod_{\wp\in S} \frac{[\Lie_{\mcE_\wp}(\FF_\wp)]_{\bA}}{[\mcE_\wp(\FF_\wp)]_\bA}\in K^*$$ and $$C_2 = \prod_{\wp\in S}L_\wp(\mcE^\vee,0).$$ These values depend only on $\varphi$ and $\rho$.
    By Theorem \ref{thmC} and Lemma \ref{L-value_at0}, we see that $$\begin{aligned}
        C_1\cdot N_{K_\infty(\rho)/K_\infty}(L_S(\varphi^\vee, \rho, 0)) &= C_1L_S(\mcE^\vee,0) = C_1 \prod_{\wp\notin S}\frac{[\Lie_{\mcE_\wp}(\FF_\wp)]_{\bA}}{[\mcE_\wp(\FF_\wp)]_\bA}\\
        &= \prod_{\wp\in \Spec \mcO_E\backslash\{0\}}\frac{[\Lie_{\mcE_\wp}(\FF_\wp)]_{\bA}}{[\mcE_\wp(\FF_\wp)]_\bA} = \Reg(\mcE/\mcO_E)\cdot h(\mcE/\mcO_E).
    \end{aligned}$$
    The result now follows.
\end{proof}
Let $\varphi: \bA\to \mcO_E[\tau]$ be a Drinfeld module over $\mcO_E$. Let $L/E$ be a finite Galois extension such that $p\nmid [L:E]$. Then, $\varphi$ is defined over $\mcO_L$.

Set $G = \Gal(L/E)$ and denote by $\operatorname{irr}(G)$ the set of all irreducible representations of $G$. Recall that $$L_S(\varphi^\vee/\mcO_L,0) = \prod_{\rho\in \operatorname{irr}(G)}L_S(\varphi^\vee/\mcO_E,\rho,0)$$ for any finite set $S$ of places containing infinity.

Let $[\rho]$ denote the orbit of $\rho$ under Frobenius twisting, i.e. $\{\rho,\rho^{(1)},\cdots \}$. Then, we have the following result.

\begin{cor}\label{corE}
Let $\varphi: \bA\to \mcO_E[\tau]$ be a Drinfeld module over $\mcO_E$. Let $L/E$ be a Galois extension of degree $m$ with $p\nmid m$. Then there exists an element $c\in K^*$ such that
    $$\Reg(\varphi/\mcO_L) = c\cdot \prod_{[\rho]}\Reg(\mcE(\varphi,\rho)/\mcO_E)^{\dim\rho},$$ where $\mcE(\varphi,\rho)$ denotes an integral model over $\mcO_E$ obtained by clearing denominators and the product runs through all Frobenius orbits $[\rho]$ of irreducible representations of $\Gal(L/E)$.
\end{cor}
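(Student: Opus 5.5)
The plan is to compare the two class number formulas, one for $\varphi$ over $\mcO_L$ and one for the family of twists $\mcE(\varphi,\rho)$ over $\mcO_E$, and to show that the class-number parts and the finitely many Euler factors at bad places contribute only an element of $K^*$.

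\emph{Step 1: Taelman's formula over $L$.} Apply the formula \eqref{eq:taelman_formula} to $\varphi/\mcO_L$. As in the proof of Corollary \ref{corD}, the product of local factors over all primes of $\mcO_L$ equals $L_S(\varphi^\vee/\mcO_L,0)$ up to the finitely many factors at $S$; each of these factors lies in $K^*$. This gives
\[\Reg(\varphi/\mcO_L)\,h(\varphi/\mcO_L)=c_1\,L_S(\varphi^\vee/\mcO_L,0),\qquad c_1\in K^*.\]

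\emph{Step 2: decompose the $L$-value and match with twists.} Use the Artin formalism recalled just before the statement:
\[L_S(\varphi^\vee/\mcO_L,0)=\prod_{\rho\in\operatorname{irr}(G)}L_S(\varphi^\vee/\mcO_E,\rho,0).\]
Here we need $p\nmid m$, so that $\Ind_{L}^{E}\mathbf 1=\bigoplus_\rho \rho^{\oplus\dim\rho}$ over $\conj{\FF}_q$. This semisimplicity is what produces the multiplicities $\dim\rho$. I would also check that the factors indeed appear with exponent $\dim\rho$, since the displayed recollection suppresses it.

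Next, $\rho\mapsto\rho^{(1)}$ permutes $\operatorname{irr}(G)$ and preserves dimensions. So group the product by Frobenius orbits $[\rho]$; an orbit has size $d_\rho=[\FF_q(\rho):\FF_q]$ after choosing $\rho$ realized over $\FF_q(\rho)$. By Theorem \ref{thmC}, the contribution of one orbit is
\[\prod_{i=0}^{d-1}L_S(\varphi^\vee,\rho^{(i)},0)^{\dim\rho}=L_S(\mcE(\varphi,\rho)^\vee,0)^{\dim\rho}.\]
Then Corollary \ref{corD} turns each such factor into $\Reg(\mcE(\varphi,\rho)/\mcO_E)\,h(\mcE(\varphi,\rho)/\mcO_E)$ times an element of $K^*$.

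\emph{Step 3: collect the class numbers.} Putting Steps 1 and 2 together gives
\[\Reg(\varphi/\mcO_L)=c\prod_{[\rho]}\Reg(\mcE(\varphi,\rho)/\mcO_E)^{\dim\rho},\]
where $c$ is a product of constants in $K^*$, of the class numbers $h(\mcE(\varphi,\rho)/\mcO_E)^{\dim\rho}$, and of $h(\varphi/\mcO_L)^{-1}$. Every $h$ is a monic element of $A$, hence nonzero, so $c\in K^*$.

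\emph{Main obstacle.} The delicate step is Step 2. We must make sure the sets $S$ in Theorem \ref{thmC} can be chosen uniformly, containing the primes of $E$ below the ramified primes of $L/E$ and the bad primes of every twist. We must also confirm that the Euler factors at primes of $L$ above $S$ are rational, lying in $K^*$. Since the extension is finite and the representations are unramified outside $S$, the missing factors are finite products of reciprocals of polynomials in $\theta$ evaluated at monic generators. This places them in $K^*$, and I would verify it directly from Proposition \ref{prop:good_primes}.
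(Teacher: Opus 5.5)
Your proposal is correct and follows the paper's own argument. The steps are the same: Taelman's formula for $\varphi/\mathcal{O}_L$, then the Artin factorization $L_S(\varphi^\vee/\mathcal{O}_L,0)=\prod_{\rho}L_S(\varphi^\vee,\rho,0)^{\dim\rho}$ (where $p\nmid m$ gives the multiplicities $\dim\rho$, as the paper's proof also uses), grouping by Frobenius orbits, and finally Theorem \ref{thmC} and Corollary \ref{corD}, with the class numbers and the finitely many local factors at $S$ absorbed into $c\in K^*$.

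One refinement: the orbit-size condition should say that $\rho$ is realized over the field generated by its character values (possible since Schur indices over finite fields are trivial), so that $[\rho]$ has exactly $d_\rho$ elements; and the factors at $S$ lie in $K^*$ simply because Taelman's local factors are ratios of monic elements of $A$, so Proposition \ref{prop:good_primes} is not needed there.
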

\begin{proof}
    Let $S$ be as before. Since $\dis L_S(\varphi^\vee/\mcO_L,0) = \prod_{\rho\in \operatorname{irr}(G)}L_S(\varphi^\vee,\rho,0)^{\dim \rho} $, we see that $$L_S(\varphi^\vee/\mcO_L,0) = \prod_{[\rho]}\prod_{i}L_S(\varphi^\vee,\rho^{(i)},0)^{\dim \rho} = \prod_{[\rho]}N_{K_\infty(\rho)/K_\infty}(L_S(\varphi^\vee,\rho,0))^{\dim \rho}.$$

    Now, for each $\rho$, we can form the $t$-module $\mcE(\varphi,\rho)$ and by the construction of $\operatorname{Sol}_E(\rho)$, it actually depends only on the class $[\rho]$. We can further clear the denominators and obtain an $\mcO_E$-integral model, denoted by $\mcE(\varphi,\rho)/\mcO_E$. Notice that $h(\mcE(\varphi,\rho)/\mcO_E)\in A$. By Corollary \ref{corD}, we have $$N_{K_\infty(\rho)/K_\infty}(L_S(\varphi^\vee,\rho,0)) = C_{[\rho]} \cdot \Reg(\mcE(\varphi,\rho)/\mcO_E),$$ where $C_{[\rho]} \in K^*$ is a constant that depends only on $\varphi$ and $[\rho]$. 

    Note also that $$L_S(\varphi^\vee/\mcO_L,0) = c'\Reg(\varphi/\mcO_L)$$ for some $c'\in K^*$.

    Now, comparing both sides and notice that $G$ is finite and there are only finitely many irreducible representations of $G$, the result follows.
\end{proof}

\section{An application in transcendence of special $L$-values}\label{sec:application}
Let $\bA=\FF_q[t]$ and let $K=\FF_q(\theta)$ throughout this section. Let $\varphi$ be a Drinfeld $\bA$-module over $K$ and let $\rho$ be an Artin representation of $G_K$. As an application of the theory of Artin twists, we prove that the special $L$-value $L(\varphi^\vee,\rho,0)$ is transcendental over $K$.

The transcendence of such special $L$-values was previously established by Chang, El-Guindy, and Papanikolas in the case of Dirichlet characters. More precisely, they proved in \cite[Corollary~4.6]{chang2018logalgebraic} that if $\varphi$ is a Drinfeld module defined over $A$, $\mfp\in A$ is a monic irreducible polynomial, and $\chi$ is a Dirichlet character modulo $\mfp$, then $L(\varphi^\vee,\chi,0)$ is transcendental over $K$. Since Dirichlet characters correspond to one-dimensional Artin representations factoring through the Galois groups of Carlitz cyclotomic extensions, our main theorem in this section can be understood as an extension of their result to arbitrary Artin representations.

By Corollary~\ref{corD}, $L(\varphi^\vee,\rho,0)\notin\overline K\iff\Reg(\mcE/A)\notin\overline K$. Therefore, it is enough to prove that the regulator $\Reg(\mcE/A)$ is transcendental over $\overline K$.  Recall that $\Reg(\mcE/A)$ is the determinant of the regulator matrix. Via the isomorphism $\mcE\cong\varphi^{\oplus N}$ over $K^{\sep}$, the logarithm map of $\mcE$ is identified with that of $\varphi^{\oplus N}$, allowing us to express the entries of the regulator matrix in terms of logarithms of $\varphi$. The desired transcendence then follows from the algebraic independence theorem of Chang and Papanikolas \cite[Theorem~1.1.1]{chang2011algebraica}.

Let $$B_\varphi:=\End_E(\varphi):=\left\{u\in\CC_\infty:u\circ\varphi_a=\varphi_a\circ u
\textnormal{ for all }a\in\bA\right\}$$ be the endomorphism ring of $\varphi$. Since $\varphi$ has generic characteristic, $B_\varphi$ is an integral domain. Furthermore,
$1\le \rk_A(B_\varphi)\le \rk(\varphi)$. Let $ H_\varphi:=\Frac(B_\varphi)$.

\begin{thm}[Chang--Papanikolas, {\cite[Theorem~1.1.1]{chang2011algebraica}}]\label{CPthm}
	Let $\varphi$ be a Drinfeld $\bA$-module defined over $K$. Let $u_1,\ldots,u_n\in\CC_\infty$ satisfy $\exp_\varphi(u_i)\in\overline K$ for all $i$.
	If $u_1,\ldots,u_n$ are linearly independent over $H_\varphi$, then they are algebraically independent over $\overline K$.
\end{thm}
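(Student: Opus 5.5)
The plan is to prove Theorem~\ref{CPthm} through Papanikolas' Tannakian theory of $t$-motives. Transcendence degrees of period fields are computed there as dimensions of difference Galois groups. First I would reduce to a convenient normal form. Since $\exp_\varphi(u_i)\in\oK$, after enlarging the set of $u_i$ by an $H_\varphi$-basis of the period lattice $\Lambda_\varphi$ (and discarding the redundant ones) it suffices to consider the case where $u_1,\dots,u_n$ are $H_\varphi$-linearly independent together with a maximal $H_\varphi$-independent set of periods. Then I would show that the field generated over $\oK$ by the periods, the quasi-periods and $u_1,\dots,u_n$ has transcendence degree $r^2/s+ rn/s\cdot s$ in the appropriate normalization, where $s=[H_\varphi:K]$. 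The algebraic independence of the $u_i$ then follows by counting: the periods and quasi-periods account for exactly $\dim$ of the Galois group of $\varphi$, and the remaining degrees of freedom are carried by the $u_i$.

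Concretely, I would attach to $\varphi$ its rigid analytically trivial dual $t$-motive $M_\varphi$, which has period matrix $\Psi_\varphi$ built from periods and quasi-periods. For each $u_i$ I would build an extension $0\to M_\varphi\to N_i\to \mathbf 1\to 0$ whose $\Phi$-matrix involves $\exp_\varphi(u_i)$, and whose rigid analytic trivialization involves Anderson generating functions, so that its period matrix contains $u_i$ and the associated quasi-logarithms. Set $N=\bigoplus N_i$. By Papanikolas' theorem, $\operatorname{trdeg}_{\oK}\oK(\Psi_N(\theta))=\dim\Gamma_N$. Next I would compute $\Gamma_{M_\varphi}$. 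It should equal the centralizer of $\End(M_\varphi)\otimes K\cong H_\varphi$ in $\GL_r$, that is, $\operatorname{Res}_{H_\varphi/K}\GL_{r/s}$. This uses the Tate-type theorem for $\End$ of $t$-motives (Taguchi/Tamagawa) together with the Tannakian fact that $\Gamma_{M}$ is reductive with commutant $\End(M)$; the lower bound comes from a Zariski-density argument or Pink's open-image results if necessary.

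The last step is the structure of $\Gamma_N$. It is an extension of $\Gamma_{M_\varphi}$ by a vector group $V\subseteq (\operatorname{Lie}\text{ of }M_\varphi)^{\oplus n}\cong (K^{r})^{\oplus n}$, and $V$ is stable under the natural $\Gamma_{M_\varphi}$-action. By the computed centralizer, $\Gamma_{M_\varphi}$-stable subspaces correspond to $H_\varphi$-submodules. Hence if $V$ were proper, there would be a nonzero $\End$-linear functional killing $V$, and Tannakian duality would turn it into a split quotient: a nontrivial morphism $\sum_i e_i N_i\to\mathbf 1$ with $e_i\in\End(\varphi)$ that splits. Unwinding this, $\sum e_i u_i$ would lie in $K\Lambda_\varphi$ (equivalently $\exp_\varphi(\sum e_iu_i)$ would be torsion), which contradicts $H_\varphi$-linear independence after the initial normalization. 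So $V$ is everything, and $\dim\Gamma_N=\dim\Gamma_{M_\varphi}+rn$.

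I expect the main obstacle to be this final step, namely translating a splitting of the extension $N$ into an honest linear relation among the $u_i$. One must check that the splitting is defined over $\oK(t)$ with the right $\sigma$-semilinearity, and that the resulting relation involves only the $u_i$ and not the quasi-logarithms. To do this I would mirror the Frobenius-difference equation argument: write the splitting as $\delta^{(-1)}\Phi=\delta+\text{(given cocycle)}$ and compare residues at $t=\theta$ to extract $\sum e_iu_i\in\Lambda_\varphi$.
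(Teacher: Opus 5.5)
The paper gives no proof of this statement; it is quoted from Chang--Papanikolas and used as a black box in Theorem~\ref{thm:transcendence_criterion}. Your plan is the architecture of their proof: Papanikolas' identity $\operatorname{trdeg}_{\oK}\oK(\Psi(\theta))=\dim\Gamma_\Psi$, identification of $\Gamma_{M_\varphi}$ with a centralizer, extensions of $\mathbf 1$ by $M_\varphi$ trivialized by Anderson generating functions, a $\Gamma_{M_\varphi}$-stable unipotent kernel, and residues at $t=\theta$. However, the step computing $\Gamma_{M_\varphi}$ does not go through as you justify it.

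The Tannakian formalism only gives the upper bound $\Gamma_{M_\varphi}\subseteq\operatorname{Cent}(H_\varphi)$, and ``reductive with commutant $H_\varphi$'' cannot upgrade this to equality. Such a group need not be the whole centralizer (e.g.\ $\Gm\cdot\mathrm{Sp}_r$ when $H_\varphi=K$). Reductivity is not even available, because $H_\varphi$ can be inseparable over $K$. Over $\oK$, the rank $p$ module $\varphi_t=(\theta^{1/p}+\tau)^p$ has the endomorphism $\theta^{1/p}+\tau$, so $H_\varphi=K(\theta^{1/p})$, and then the target $\Res_{H_\varphi/K}\GL_{r/s}$ is itself not reductive.

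The lower bound $\dim\Gamma_{M_\varphi}\ge r^2/s$ is the real content, and it needs an external input. That input is Pink's theorem that the $\mfl$-adic Galois image is Zariski dense in the centralizer of $\End(\varphi)\otimes K_\mfl$. You also need a bridge you do not mention: the $\mfl$-adic realization is a fiber functor on the Tannakian category generated by $M_\varphi$ (over a finitely generated field of definition). Hence Galois acts through the $K_\mfl$-points of $\underline{\operatorname{Aut}}^{\otimes}(\omega_\mfl)$, which is a form of $\Gamma_{M_\varphi}$ over $K_\mfl$. Without this you have neither the count $r^2/s$ nor the description of $\Gamma_{M_\varphi}$-stable subspaces on which your last step rests.

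Two characteristic-$p$ points in your final step also need an argument. First, the kernel of $\Gamma_N\to\Gamma_{M_\varphi}$ is a priori only a closed $\Gamma_{M_\varphi}$-stable subgroup scheme of $\Ga^{rn}$, and such a subgroup need not be a linear subspace (Frobenius kernels, zero loci of $p$-polynomials). One uses Papanikolas' smoothness and connectedness results, together with stability under the scalar $\Gm\subseteq\Gamma_{M_\varphi}$, to replace it by a vector subgroup of the same dimension. In the inseparable case, stable subspaces should be identified with $H_\varphi$-subspaces via density of $\GL_{r/s}(H_\varphi)$, not via representation theory of reductive groups. Second, an extension whose group maps isomorphically onto $\Gamma_{M_\varphi}$ does not split for free, since $H^1$ need not vanish in characteristic $p$. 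The stable complement comes from the central scalar torus, which acts nontrivially on $M_\varphi$ and trivially on $\mathbf 1$.

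Finally, the field of transcendence degree $r^2/s+rn$ must contain the quasi-logarithms, not just the $u_i$. Also, if a nonzero $H_\varphi$-combination of the $u_i$ lies in $H_\varphi\Lambda_\varphi$ (e.g.\ $u_1$ is a period), you cannot reach your normal form by discarding elements. You must change basis inside $\Span_{H_\varphi}(u_1,\dots,u_n)$, and then also use that an $H_\varphi$-basis of the periods is algebraically independent, which is the period half of the Chang--Papanikolas theorem.
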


\begin{thm}\label{thm:transcendence_criterion}
	Let $\varphi$ be a Drinfeld $\bA$-module over $K$ and let $	\rho:G_K\longrightarrow\GL_n(\overline{\FF}_q)$ be an Artin representation. Then the special $L$-value $L(\varphi^\vee,\rho,0)$ is transcendental over $K$.
\end{thm}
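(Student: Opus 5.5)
We argue by contradiction, reducing the statement about the $L$-value to a transcendence statement about the regulator of the Artin twist. Suppose $L(\varphi^\vee,\rho,0)\in\overline K$. Its Galois conjugates $L(\varphi^\vee,\rho^{(i)},0)$ are then algebraic as well, since they are obtained by applying Frobenius to the coefficients lying in $\FF_q(\rho)$. Hence the norm $N_{K_\infty(\rho)/K_\infty}(L_S(\varphi^\vee,\rho,0))$ is algebraic, after adjusting by the finitely many Euler factors at $S$, which are rational in $\overline K$.

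Let $\mcE=\mcE(\varphi,\rho)$, and choose an integral model over $A$ by clearing denominators. Corollary~\ref{corD} then gives
\[
N_{K_\infty(\rho)/K_\infty}(L_S(\varphi^\vee,\rho,0)) = C\cdot\Reg(\mcE/A)\cdot h(\mcE/A),
\]
with $C\in K^*$ and $h\in A_+$ nonzero. So it suffices to show that $\Reg(\mcE/A)\notin\overline K$.

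\textbf{Step 1: the regulator in terms of $\varphi$-logarithms.} The unit module $U(\mcE/A)$ is an $A$-lattice in $\Lie_\mcE(K_\infty)\cong K_\infty^N$. Choose an $A$-basis (up to a fractional ideal factor, which is harmless since $A$ is a PID) $\bm{y}_1,\dots,\bm{y}_N$. Each $\bm{y}_j$ satisfies $\exp_\mcE(\bm{y}_j)=\bm{z}_j\in A^N$. Up to an element of $K^*$, $\Reg(\mcE/A)$ is $\det[\bm{y}_1,\dots,\bm{y}_N]$. By Corollary~\ref{P_exp_log}, with $P=P(\vec{\mathbf u})\in\GL_N(K^\sep)$, we have $P\bm{y}_j=(u_{1j},\dots,u_{Nj})^\top$, where $\exp_\varphi(u_{kj})=(P\bm{z}_j)_k\in\overline K$. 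Therefore
\[
\Reg(\mcE/A)=c\,\det(P)^{-1}\det(u_{kj}),
\]
with $c\in K^*$ and $\det P\in\overline K^*$. The problem reduces to showing that $D:=\det(u_{kj})$ is transcendental over $\overline K$.

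\textbf{Step 2: algebraic independence.} Let $\{\lambda_1,\dots,\lambda_m\}$ be an $H_\varphi$-basis of the $H_\varphi$-span of the $u_{kj}$ together with the periods we may need; all of these have algebraic exponentials. Write each $u_{kj}=\sum_\ell c_{kj\ell}\lambda_\ell$ with $c_{kj\ell}\in H_\varphi\subset\overline K$. Then $D$ is a homogeneous polynomial of degree $N$ in the $\lambda_\ell$ with coefficients in $\overline K$. By Theorem~\ref{CPthm}, the $\lambda_\ell$ are algebraically independent over $\overline K$. Hence $D$ is algebraic only if this polynomial is identically zero, and in that case $D=0$. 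But $D\neq 0$: the vectors $\bm{y}_j$ are $K_\infty$-linearly independent and $P$ is invertible. A nonzero homogeneous polynomial of positive degree $N\geq1$ in algebraically independent variables cannot take an algebraic value. Indeed, if $D=\beta\in\overline K^*$, then $F(\lambda)-\beta$ would be a nonzero polynomial relation, because its constant term is $-\beta\neq0$ while $F$ has no constant term. So $D$ is transcendental.

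\textbf{Main obstacle.} The delicate point is making Step 1 rigorous. We need to identify $\Reg(\mcE/A)$ with $\det[\bm{y}_j]$ up to $K^*$, and check that the change of coordinates by $P$, whose entries lie in $K^\sep$ rather than $K_\infty$, is compatible with the $K_\infty$-structure. For the latter we only need $\det P\in\overline K^*$ and the entrywise formula of Corollary~\ref{P_exp_log} applied in $\CC_\infty$, so the check is mostly bookkeeping. The genuine content is the observation that homogeneity of $D$ prevents a nonzero algebraic value; this argument avoids having to show that $D$ is a nonzero polynomial in the $\lambda_\ell$ beyond $D\neq 0$ itself.
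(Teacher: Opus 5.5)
Your argument is correct and is essentially the paper's proof: you reduce via Corollary~\ref{corD} to the regulator of an $A$-integral model, conjugate to $\varphi^{\oplus N}$ so the regulator matrix becomes a matrix of $\varphi$-logarithms of algebraic points, and apply Theorem~\ref{CPthm} to an $H_\varphi$-basis together with the homogeneity of the determinant. Two small points to tidy: after clearing denominators the conjugating matrix is $Pb$ (the paper's $Q$) rather than $P$, and the $H_\varphi$-basis is most cleanly taken from among the $u_{kj}$ themselves (as the paper does with the $z_{ij}$), which makes the algebraicity of the exponentials immediate and lets you drop the stray reference to periods.
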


\begin{proof}
Choose an element $b\in A$ such that $\hat{\mcE}:=b^{-1}\mcE b$
is defined over $A$.  Then $\exp_{\hat{\mcE}}=b^{-1}\exp_{\mcE}b$. 
Let $e_1,\cdots,e_N$ be the standard $A$-basis of $\Lie_{\hat{\mcE}}(A)=A^{\oplus N}$. Let $f_1,\cdots,f_N$ be an $A$-basis of $\exp_{\hat{\mcE}}^{-1}(\hat{\mcE}(A))$.
Both bases extend to $K_\infty$-bases of $\Lie_{\hat{\mcE}}(K_\infty)$.  Let $$h:\Lie_{\hat{\mcE}}(K_\infty)\longrightarrow \Lie_{\hat{\mcE}}(K_\infty)$$ be the $K_\infty$-linear map defined by $h(e_j)=f_j$.  Then, $$\Reg(\hat{\mcE}/A)=\frac{\det(h)}{\sgn(\det(h))}.$$
Write $$f_j= \sum_{i=1}^Nf_{ij}e_i = \begin{pmatrix}
f_{1j}\\
\vdots\\
f_{Nj}
\end{pmatrix},
$$ for $f_{ij}\in K_\infty$. Let $$F:=[f_1,\cdots,f_N]\in \Mat_N(K_\infty).$$
Thus the matrix of $h$ with respect to these two bases is $F$ and $\det F\neq 0$ as $F$ is invertible.

Let $P=P(\mathbf{\vec{u}})$ be as in Corollary \ref{P_exp_log}, and put $Q:=Pb\in \GL_N(\conj K)$. Since $\mcE_t=P^{-1}\varphi_t^{\oplus N}P$, we have $$\hat{\mcE}_t=Q^{-1}\varphi_t^{\oplus N}Q.$$
Hence the exponential maps satisfy
$$\exp_{\hat{\mcE}}(\bx)=Q^{-1}\exp_{\varphi^{\oplus N}}(Q\bx)$$
for all $\bx\in \Lie_{\hat{\mcE}}(\CC_\infty)$.  Define $z_j:=Qf_j$ for each $1\leq j\leq N$ and put $$Z:=[z_1,\cdots,z_N]=QF.$$
Since $f_j\in \exp_{\hat{\mcE}}^{-1}(\hat{\mcE}(A))$, we have
$$\exp_{\varphi^{\oplus N}}(z_j)=Q\exp_{\hat{\mcE}}(f_j)\in \varphi^{\oplus N}(\conj K).$$
Writing $$z_j=\begin{pmatrix}
z_{1j}\\
\vdots\\
z_{Nj}
\end{pmatrix},$$ then we have $$\exp_{\varphi^{\oplus N}}(z_j) =\exp_{\varphi^{\oplus N}}(Qf_j) =   Q\exp_{\hat{\mcE}}(f_j)\in \varphi^{\oplus N}(\conj{K})$$ as $\exp_{\hat{\mcE}}(f_j)\in \hat{\mcE}(A)$ by definition. Thus, $$\begin{pmatrix}
\exp_\varphi(z_{1j})\\
\vdots\\
\exp_\varphi(z_{Nj})
\end{pmatrix} = \exp_{\varphi^{\oplus N}}(z_j)\in \varphi^{\oplus N}(\conj K)$$ and we get$$\exp_\varphi(z_{ij})\in \conj K$$
for all $1\leq i,j\leq N$.

Let $$\mcZ_0:=\{z_{ij}:1\leq i,j\leq N\}$$ and let $$V:=\Span_{H_\varphi}(\mcZ_0).$$
Choose an $H_\varphi$-basis $\{u_1,\cdots,u_m\}\subseteq \mcZ_0$ of $V$.  Since each $u_k$, $1\leq k\leq m$, is one of the $z_{ij}$, we have $$\exp_\varphi(u_k)\in \conj K$$
for all $1\leq k\leq m$.  By Theorem \ref{CPthm}, $u_1,\cdots,u_m$ are algebraically independent over $\conj K$.

For every $1\leq i,j\leq N$, we can find $c_{ij,k}\in H_\varphi\subseteq \conj K$ such that $$z_{ij}=\sum_{k=1}^m c_{ij,k}u_k.$$
Put $$L_{ij}(X_1,\cdots,X_m):=\sum_{k=1}^m c_{ij,k}X_k\in \conj K[X_1,\cdots,X_m]$$ and $$g(X_1,\cdots,X_m):=\det(L_{ij}(X_1,\cdots,X_m))_{1\leq i,j\leq N}.$$
Then, we see that $$\det Z=g(u_1,\cdots,u_m).$$
Moreover, $$\det Z=\det(Q)\det(F)\neq 0.$$
Hence $g$ is not the zero polynomial.  Since all $L_{ij}$ are homogeneous linear forms, $g$ is homogeneous of degree $N>0$.  Thus, $g$ is non-constant.

We claim that $\det Z$ is transcendental over $K$. Argue by contradiction, we assume that $\det Z$ is algebraic over $K$, say $\det Z=\alpha\in \conj K$, then $$g(X_1,\cdots,X_m)-\alpha$$ is a non-zero polynomial over $\conj K$ vanishing at $(u_1,\cdots,u_m)$, contradicting the algebraic independence of $u_1,\cdots,u_m$.  Thus $\det Z$ is transcendental over $K$.  Since $\det(Q)\in \conj K^\times$, $\det F=\det(Q)^{-1}\det Z$ is also transcendental over $K$.  Consequently $\Reg(\hat{\mcE}/A)$ is transcendental over $K$.

Finally, Corollary \ref{corD} gives $$N_{K_\infty(\rho)/K_\infty}(L_S(\varphi^\vee,\rho,0))
=C\cdot \Reg(\hat{\mcE}/A)\cdot h(\hat{\mcE}/A)$$
with $C\in K^\times$ and $h(\hat{\mcE}/A)\in A-\{0\}$.  Hence the norm on the left hand side is transcendental over $K$.  If $L(\varphi^\vee,\rho,0)$ were algebraic over $K$, then this norm would be algebraic over $K$, a contradiction.  Therefore, $L(\varphi^\vee,\rho,0)$ must be transcendental over $K$.
\end{proof}

\section{Examples}\label{sec:example}
\subsection{The $n$-th power extension}
In this section, let $n$ be a positive integer that divides $q-1$. We assume $\bA = \FF_q[t]$ and $A = \FF_q[\theta]$. Let $f\in A$ be an $n$-th power free polynomial of degree $d$. Then the $n$-power residue symbol $\chi_f = \left(\frac{f}{\cdot}\right)_n$ induces a character on $\Gal(K(\sqrt[n]{f})/K)$, still denoted by $\chi_f$ and called the $n$-th power character of $\Gal(K(\sqrt[n]{f})/K)$.

Consider the Carlitz module $$\begin{aligned}
	C: \bA&\to A[\tau]\\
	t&\mapsto \theta + \tau.
\end{aligned}$$ We can form the $\bA$-motive $\MM(C,\chi_f)$ and it has a model $\mcE = \mcE(C,\chi_f)$ defined over $A$, which satisfies $\mcE_t = \sqrt[n]{f}^{-1} C_t \sqrt[n]{f}$, i.e. $$\begin{aligned}
	\mcE: \sfA &\to A[\tau],\\
	t&\mapsto \theta + f^\frac{q-1}{n}\tau.
\end{aligned}$$
Recall that $\dis \mcL(\mcE/A) = \prod_{\mfp} \frac{[\Lie_{\mcE_\mfp}(\FF_\mfp)]_\bA}{[\mcE_\mfp(\FF_\mfp)]_\bA}$, where $\mfp$ runs through all monic irreducible polynomials of $A$ and $\mcE_\mfp$ denotes the reduction of $\mcE$ modulo $\mfp$. Then, we have the following result.
\begin{prop}
	$\mcL(\mcE/A) = L(\chi_f,1)$.
\end{prop}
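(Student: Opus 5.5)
We compute the Euler factor of $\mcL(\mcE/A)$ at each monic irreducible $\mfp$ directly, and compare it with the Euler factor of $L(\chi_f,1)=\prod_{\mfp\nmid f}(1-\chi_f(\mfp)\mfp^{-1})^{-1}$, where $\mfp$ is identified with its monic generator.

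\emph{Step 1: the Lie factor.} Since $\partial\mcE_t=\theta$, the Lie algebra $\Lie_{\mcE_\mfp}(\FF_\mfp)=\FF_\mfp$ has $t$ acting by $\theta$. Hence it is isomorphic to $\bA/\mfp$, and $[\Lie_{\mcE_\mfp}(\FF_\mfp)]_\bA=\mfp$ for every $\mfp$ (including $\mfp\mid f$, where $\mcE_\mfp$ degenerates to $t\mapsto\theta$).

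\emph{Step 2: good primes $\mfp\nmid f$.} Here $\mcE$ is the rank-one Drinfeld module $t\mapsto\theta+\mu\tau$, with $\mu=f^{(q-1)/n}$ a unit mod $\mfp$. Over $\FF_\mfp(\beta)$ with $\beta^{q-1}=\mu$, it is isomorphic to $C$ via $x\mapsto\beta x$. The $\FF_\mfp$-rational points of $\mcE_\mfp$ therefore correspond to $y\in C(\overline{\FF}_\mfp)$ with $\Frob(y)=\beta^{q^{\deg\mfp}-1}y$. Now
\[
\beta^{q^{\deg\mfp}-1}=\mu^{(q^{\deg\mfp}-1)/(q-1)}=f^{(q^{\deg\mfp}-1)/n}\equiv\left(\tfrac{f}{\mfp}\right)_n=\chi_f(\mfp)\pmod{\mfp}.
\]
Thus $\mcE_\mfp(\FF_\mfp)$ is the kernel of $C_{\mfp}-\chi_f(\mfp)$ acting on $C(\overline{\FF}_\mfp)$, using that Frobenius on the Carlitz module reduction equals $C_\mfp$ (its characteristic polynomial is $X-\mfp$). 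Cleaner: the rank-one motive has $\tau^{\deg\mfp}$ acting by $\chi_f(\mfp)^{-1}\mfp$ up to sign conventions. By Corollary \ref{reduction_at_good}, together with $[\mcE_\mfp(\FF_\mfp)]_A=\gamma' P_\mfp(\mcE_\mfp,1)$ as in the proof of Lemma \ref{L-value_at0}, we get $[\mcE_\mfp(\FF_\mfp)]_\bA=\mfp-\chi_f(\mfp)$, since $\mfp-c$ is monic for $c\in\FF_q$ once $\deg\mfp\ge1$. The factor is then $\mfp/(\mfp-\chi_f(\mfp))=(1-\chi_f(\mfp)\mfp^{-1})^{-1}$, as required.

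\emph{Step 3: bad primes $\mfp\mid f$.} Modulo $\mfp$, $\mcE_t\equiv\theta$, so $\mcE_\mfp(\FF_\mfp)=\FF_\mfp$ with $t$ acting by $\theta$, and $[\mcE_\mfp(\FF_\mfp)]_\bA=\mfp$. The factor is $1$, matching $\chi_f(\mfp)=0$. Multiplying the factors over all $\mfp$ gives the identity; convergence at $s=1$ holds since the factors are $1+O(\mfp^{-1})$.

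The main obstacle is the sign and twist bookkeeping in Step 2: one must show that $\mcE_\mfp(\FF_\mfp)\cong\bA/(\mfp-\chi_f(\mfp))$ rather than $\bA/(\mfp-\chi_f(\mfp)^{-1})$. I would verify this directly. Writing $\mcE=\beta^{-1}C\beta$, the module $\mcE_\mfp(\FF_\mfp)$ is identified with $\{y\in C(\overline{\FF}_\mfp):y^{q^{\deg\mfp}}=\chi_f(\mfp)y\}$, and on $C(\overline{\FF}_\mfp)$ the $q^{\deg\mfp}$-Frobenius agrees with $C_\mfp$. This set is therefore $\ker(C_{\mfp-\chi_f(\mfp)})$, which is cyclic isomorphic to $\bA/(\mfp-\chi_f(\mfp))$. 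The identification of Frobenius with $C_\mfp$ uses $C_\mfp\equiv\tau^{\deg\mfp}\pmod{\mfp}$.
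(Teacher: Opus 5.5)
Your argument is correct. Its skeleton matches the paper's: the Lie factor is $\mfp$, you show $[\mcE_\mfp(\FF_\mfp)]_\bA=\mfp-\chi_f(\mfp)$ for $\mfp\nmid f$, and you multiply the Euler factors. The decisive module computation, however, is done differently.

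\textbf{The paper's route.} It uses the twist by $\sqrt[n]{f}$ only to obtain the operator congruence $\mcE_\mfp\equiv\chi_f(\mfp)\tau^{\deg\mfp}\pmod{\mfp}$. It then quotes the rank-one formula for the Frobenius characteristic polynomial, $P(X)=X-\chi_f(\mfp)^{-1}\mfp$, together with the fact that $P(1)$ generates $\Fitt_\bA(\mcE_\mfp(\FF_\mfp))$. This is essentially the ``cleaner'' route you sketch through Corollary~\ref{reduction_at_good} and Lemma~\ref{L-value_at0}.

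\textbf{Your route.} Your main verification instead transports the $\FF_\mfp$-points along $x\mapsto\beta x$. You identify them with $\ker C_{\mfp-\chi_f(\mfp)}$ on $\overline{\FF}_\mfp$, using only $C_\mfp\equiv\tau^{\deg\mfp}\pmod{\mfp}$ and the structure of prime-to-characteristic torsion of a rank-one Drinfeld module.

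\textbf{Comparison.} Your route is more elementary and self-contained. It settles the $\chi_f(\mfp)$ versus $\chi_f(\mfp)^{-1}$ question by direct computation rather than by tracking normalizations. The paper's route, on the other hand, is the general mechanism and does not depend on $\mcE$ being a scalar twist of $C$. You also handle the Lie factor and the primes $\mfp\mid f$ explicitly, which the paper leaves implicit.

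\textbf{Two small points.}
\begin{enumerate}
\item The cyclicity $\ker C_{\mfp-\chi_f(\mfp)}\cong\bA/(\mfp-\chi_f(\mfp))$ uses that $\mfp-\chi_f(\mfp)$ is prime to $\mfp$. This holds because $\chi_f(\mfp)\in\FF_q^\times$; you should say so.
\item The hedge ``up to sign conventions'' can be dropped. On the motive $\FF_\mfp[\tau]$ one has $\mfp\cdot 1=\mcE_\mfp=\chi_f(\mfp)\tau^{\deg\mfp}$, so $\tau^{\deg\mfp}$ acts by exactly $\chi_f(\mfp)^{-1}\mfp$.
\end{enumerate}
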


\begin{proof}
	For an element $a\in \bA$ of degree $d$, if we write $C_a = a(\theta) + \alpha_1\tau +\cdots +\alpha_{d-1}\tau^{d-1} + \tau^d$, then 
	\[ \mcE_a = a(\theta) + f^{\frac{q-1}{n}}\alpha_1\tau +\cdots + f^{\frac{q^{d-1}-1}{n}}\alpha_{d-1}\tau^{d-1} + f^{\frac{q^d-1}{n}}\tau^d. \]
	Thus, for any $\mfp \nmid f$, the reduction $\mcE_\mfp$ of $\mcE$ modulo $\mfp$ is a Drinfeld module over $\FF_\mfp$ of rank 1, and it induces an $\bA$-module structure on $A/\mfp = \FF_\mfp$, which we denote by $\mcE_\mfp(\FF_\mfp)$.
	
	Since $C_\mfp \equiv \tau^{\deg \mfp} \pmod \mfp$, we see that 
	\[ \mcE_\mfp \equiv \left(\frac{f}{\mfp}\right)_n \tau^{\deg \mfp} \pmod \mfp = \chi_f(\mfp) \tau^{\deg \mfp} \pmod \mfp. \]
	So, for any $x\in \FF_\mfp$, we have 
	\[ (\mfp - \chi_f(\mfp))(x) = \chi_f(\mfp)(x^{q^{\deg \mfp}} - x) = 0. \]
	
	Let $P(X) = P_\mfp(\mcE_\mfp, X)$ be the characteristic polynomial of $\tau^{\deg \mfp}$ acting on the Tate module of $\mcE_\mfp$. By \cite[Theorem 4.2.7]{papikian2023drinfeld}, $P(X) = X + a$, where $a = - N_{\FF_\mfp/\FF_q}(\conj{f}^{\frac{q-1}{n}})^{-1} \cdot \mfp = -\left(\frac{f}{\mfp}\right)_n^{-1} \mfp = -\chi_f(\mfp)^{-1} \mfp$. 
	
	Thus, $P(1) = 1 - \chi_f(\mfp)^{-1} \mfp$, and the Fitting ideal is given by 
	\[ \Fitt_\bA(\mcE_\mfp(\FF_\mfp)) = (P(1)) = (\mfp - \chi_f(\mfp)). \]
	It follows that 
	\[ \mcE_\mfp(\FF_\mfp) \cong \frac{\bA}{\mfp - \chi_f(\mfp)} \]
	as $\bA$-modules. Equivalently, taking the monic generator of the ideal, we have 
	\[ [\mcE_\mfp(\FF_\mfp)]_\bA = \mfp - \chi_f(\mfp). \]
	
	Therefore, the special $L$-value is given by the Euler product:
	\[ \mcL(\mcE/A) = \prod_{\mfp} \frac{[\Lie_{\mcE_\mfp}(\FF_\mfp)]_\bA}{[\mcE_\mfp(\FF_\mfp)]_\bA} = \prod_{\mfp} \frac{\mfp}{\mfp - \chi_f(\mfp)} = \prod_{\mfp} \left(1 - \frac{\chi_f(\mfp)}{\mfp}\right)^{-1} = L(\chi_f, 1). \]
\end{proof}
\begin{rmk}
	We have $L(\chi_f,1) = L(\mcE^\vee,0)$.
\end{rmk}  

Recall that for $k\in \ZZ_+$, we define $$[k] = \theta^{q^k} - \theta,$$ $$D_0 = 1, D_k = [k][k-1]^q\cdots [1]^{q^{k-1}},$$ $$L_0 = 1, L_k = [k][k-1]\cdots [1].$$
\begin{prop}
	Let $$\exp_\mcE: = \sum_{k=0}^{\infty} e_k \tau^k\quad\textnormal{ and }\quad  \log_{\mcE} = \sum_{k=0}^{\infty}\ell_k\tau^k$$ be the exponentiation and logarithm of $\mcE$ respectively. Then for all $k\geq 0$, $$e_k = \frac{f^{\frac{q^k-1}{n}}}{D_k}\quad \textnormal{ and } \ell_k = (-1)^k\frac{f^{\frac{q^k-1}{n}}}{L_k}.$$
	
	In particular, the radius of convergence of $\log_\mcE$ is given by $$\rho(\log_\mcE) = q^{1+\frac{1}{q-1}-\frac{d}{n}},$$ where $d = \deg f$.
\end{prop}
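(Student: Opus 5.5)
The plan is to use the conjugation $\mcE_t=u^{-1}C_t u$ with $u=\sqrt[n]{f}$, where I use the normalisation $u^{q-1}=f^{(q-1)/n}$. This means $\exp_\mcE=u^{-1}\exp_C u$ and $\log_\mcE=u^{-1}\log_C u$ as $\FF_q$-linear power series. The reason is that $u^{-1}\exp_C(u x)$ has constant term $1$ and satisfies
\[
\mcE_t\circ(u^{-1}\exp_C u)=u^{-1}C_t\exp_C u=u^{-1}\exp_C\,\theta\, u=(u^{-1}\exp_C u)\circ\theta,
\]
so it equals $\exp_\mcE$ by the uniqueness of the normalised exponential. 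The same argument applies to the logarithm.

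Next I recall the Carlitz coefficients $\exp_C=\sum_k D_k^{-1}\tau^k$ and $\log_C=\sum_k (-1)^kL_k^{-1}\tau^k$. Composing with the scalars gives
\[
u^{-1}\Big(\sum_k c_k\tau^k\Big)u=\sum_k c_k\,u^{q^k-1}\tau^k .
\]
I then note that $u^{q^k-1}=f^{(q^k-1)/n}$, which makes sense because $n\mid q-1\mid q^k-1$. This yields $e_k=f^{(q^k-1)/n}/D_k$ and $\ell_k=(-1)^kf^{(q^k-1)/n}/L_k$. As a check, the $k=1$ term $f^{(q-1)/n}\tau$ matches $\mcE_t=\theta+f^{(q-1)/n}\tau$.

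For the radius of convergence, the argument runs as follows.
\begin{itemize}
\item Since $\log_\mcE(x)=\sum_k \ell_k x^{q^k}$, the radius is $\liminf_k|\ell_k|_\infty^{-1/q^k}$.
\item I have $\deg L_k=q+q^2+\cdots+q^k=q(q^k-1)/(q-1)$ and $\deg f^{(q^k-1)/n}=d(q^k-1)/n$.
\item Hence $-\log_q|\ell_k|/q^k=(1-q^{-k})\big(\tfrac{q}{q-1}-\tfrac{d}{n}\big)$.
\item This tends to $\tfrac{q}{q-1}-\tfrac dn=1+\tfrac1{q-1}-\tfrac dn$.
\end{itemize}
This gives $\rho(\log_\mcE)=q^{1+\frac1{q-1}-\frac dn}$.

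The step needing the most care is justifying the conjugation identity over $\conj K$, with $u\notin K$, while all coefficients land in $K$. This is handled by the explicit computation above, together with a consistent choice of the root $u$. The radius statement itself is routine: for a $q$-power series the limit above exists, and a Newton polygon check confirms convergence exactly on the open disc.
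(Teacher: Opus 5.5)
Your proposal is correct and takes the same route as the paper. The coefficient formulas come from conjugating the Carlitz exponential and logarithm by $\sqrt[n]{f}$. The paper simply invokes Corollary \ref{P_exp_log} for this, while you reprove it directly from uniqueness of the normalized exponential. The radius of convergence then follows from the same root-test computation using $|L_k|_\infty = q^{q(q^k-1)/(q-1)}$ and $|f|_\infty = q^d$.
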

\begin{proof}
	The first statement follows from Corollary \ref{P_exp_log}. To compute the radius of convergence, first notice that $|[k]|_\infty = |\theta^{q^k}| = q^{q^k}$ and hence $|L_k|_\infty = q^{q^k+q^{k-1}+\cdots+q} = q^{\frac{q(q^k-1)}{q-1}}$. Also, $|f|_\infty = q^d$. Thus, $$\rho(\log_\mcE)^{-1} = \limsup\limits_{k\to\infty}\left|\frac{f^{\frac{q^k-1}{n}}}{L_k}\right|_\infty^{1/q^k} = \frac{q^{d/n}}{q^{q/q-1}}.$$ Thus, the result follows.
\end{proof}

\begin{cor}
	Let $f\in A$ be an $n$-th power free element of degree $d$.
	
	$(1)$ If $d < n+ \frac{n}{q-1}$, then $\rho(\log_\mcE) > 1$ and if $\deg f > n+ \frac{n}{q-1}$, then $\rho(\log_\mcE)<1$.
	
	$(2)$ Suppose $\deg f \leq n$. Then, the unit module $U(\mcE/A) = A\log_{\mcE}(1)$ and the class module $H(\mcE/A) = 0$. In particular, $$L(\chi_f,1) = \log_{\mcE}(1)=1+ \sum_{k=1}^{\infty}(-1)^k\frac{f^{\frac{q^k-1}{n}}}{L_k}.$$ 
\end{cor}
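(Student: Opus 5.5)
The plan is to read everything off the explicit coefficients of the preceding proposition, together with Taelman's class number formula \eqref{eq:taelman_formula} and the identity $\mcL(\mcE/A)=L(\chi_f,1)$ proved above.

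\textbf{Part (1).} This is immediate from $\rho(\log_\mcE)=q^{1+\frac{1}{q-1}-\frac{d}{n}}$. The exponent is positive exactly when $\frac{d}{n}<\frac{q}{q-1}$, that is, when $d<n+\frac{n}{q-1}$. It is negative in the opposite strict case.

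\textbf{Part (2), the class module.} Assume $\deg f=d\le n$. Then the exponent is at least $\frac{q}{q-1}-1=\frac{1}{q-1}>0$, so $\rho(\log_\mcE)>1$. Moreover, for $k\ge 1$,
$$\left|\ell_k\right|_\infty=q^{\frac{d(q^k-1)}{n}-\frac{q(q^k-1)}{q-1}}<1,$$
and similarly $|e_k|_\infty<1$.

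Next I would show that on the open ball $B=\{x\in K_\infty:\ |x|_\infty\le 1\}$ (or a slightly larger ball still inside the disc of convergence) the maps $\exp_\mcE$ and $\log_\mcE$ are mutually inverse isometries. This uses the standard Newton-polygon argument for Drinfeld modules: for $|x|\le 1$ the terms satisfy $|e_kx^{q^k}|<|x|$ and $|\ell_kx^{q^k}|<|x|$ for $k\ge1$, and the formal identity $\exp_\mcE\circ\log_\mcE=\id$ holds wherever $\log_\mcE$ converges.

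Now use the decomposition $K_\infty=A\oplus\frac1\theta\FF_q[[\frac1\theta]]$. The second summand lies in $B$, so it lies in $\exp_\mcE(K_\infty)$. Hence $\mcE(K_\infty)=\mcE(A)+\exp_\mcE(\Lie_\mcE(K_\infty))$, which gives $H(\mcE/A)=0$ and $h(\mcE/A)=1$. I expect this step, making the isometry and surjectivity statement on the unit ball rigorous with the given coefficient bounds, to be the main technical point.

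\textbf{Part (2), the unit module.} Since $|1|=1<\rho(\log_\mcE)$, we have $\exp_\mcE(\log_\mcE(1))=1\in A$, so $\log_\mcE(1)\in U(\mcE/A)$. The unit module $U(\mcE/A)$ is an $A$-lattice of rank $1$ in $K_\infty$, say $U(\mcE/A)=A\lambda$. Then $\log_\mcE(1)=c\lambda$ for some nonzero $c\in A$. Because $|\ell_k|<1$ for $k\ge 1$, we get $|\log_\mcE(1)-1|<1$, so $|\log_\mcE(1)|=1$ and $\sgn(\log_\mcE(1))=1$.

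On the other hand, \eqref{eq:taelman_formula} with $h=1$ gives $L(\chi_f,1)=\mcL(\mcE/A)=\Reg(\mcE/A)=\lambda/\sgn(\lambda)$. Each Euler factor $\bigl(1-\chi_f(\mfp)/\mfp\bigr)^{-1}$ is a $1$-unit, so $|L(\chi_f,1)|=1$ and hence $|\lambda|=1$. Therefore $|c|=1$, which forces $c\in\FF_q^\times$, and so $U(\mcE/A)=A\log_\mcE(1)$.

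Finally, $\Reg(\mcE/A)=\log_\mcE(1)/\sgn(\log_\mcE(1))=\log_\mcE(1)$. Combining this with the formula $\ell_k=(-1)^k f^{\frac{q^k-1}{n}}/L_k$ from the preceding proposition yields the displayed series for $L(\chi_f,1)$.
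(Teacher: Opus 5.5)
Your proof is correct. Part (1) and the vanishing of $H(\mcE/A)$ follow the paper's argument: $\log_\mcE$ converges on $\{|x|_\infty\le 1\}$, $\exp_\mcE\circ\log_\mcE=\mathrm{id}$ there, and $K_\infty=A+\{|x|_\infty\le 1\}$.

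The genuine difference is in showing $U(\mcE/A)=A\log_\mcE(1)$. The paper writes $U(\mcE/A)=Au$ and $\log_\mcE(1)=au$, sets $b=\exp_\mcE(u)\in A$, and concludes $\deg a=0$ from $\mcE_a(b)=1$ ``by looking at the degree of both sides''. You instead insert $h(\mcE/A)=1$ into Taelman's formula. Since the Euler product for $L(\chi_f,1)$ is a $1$-unit, you get $|\lambda|_\infty=1$, and hence $|c|_\infty=1$.

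Your route uses the analytic side (the class number formula and $\mcL(\mcE/A)=L(\chi_f,1)$) to pin down an algebraic lattice, whereas the paper's is meant to be intrinsic. However, yours is the more robust of the two. As stated, the paper's degree comparison does not force $\deg a=0$, because $\mcE_a(b)$ can exhibit cancellation. For example, take $q\ge 3$, $n=q-1$ and $f=-\theta$, so that $d=1\le n$. Then $\mcE_t=\theta-\theta\tau$, so $\mcE_t(1)=0$ and $\mcE_{t+1}(1)=1$, even though $\deg(t+1)=1$.

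The isometry you establish also gives an intrinsic proof of this step without the class number formula. If $\deg c\ge 1$, then $0<|\lambda|_\infty\le q^{-1}$. Hence $|\exp_\mcE(\lambda)|_\infty=|\lambda|_\infty\in(0,1)$, which is impossible for the element $\exp_\mcE(\lambda)\in A$.
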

\begin{proof}
	$(1)$ is straightforward.
	
	$(2)$ Let $\mcO_\infty = \{a\in K_\infty: |a|_\infty\leq 1\}$ be the ring of integers at $\infty$. 
	So, for any $a\in \mcO_\infty$, $\log_\mcE(a)$ converges by $(1)$ and we have $$\exp_\mcE(\log_\mcE(a)) = a$$ and hence $\mcO_\infty\subseteq \exp_\mcE(K_\infty)$. Since $A+\mcO_\infty = K_\infty$, we have $A+\exp_\mcE(K_\infty) = K_\infty$. Thus, the $A$-module $H(\mcE/A) = \frac{\mcE(K_\infty)}{\mcE(A)+\exp_\mcE(K_\infty)} = 0$ and $h(\mcE/A) = [H(\mcE/A)]_\bA = 1$.
	
	We claim that $U(\mcE/A) = \{\alpha\in K_\infty: \exp_\mcE(\alpha)\in A\}$ is generated by $\log_\mcE(1)$. Clearly, $\exp_\mcE(\log_\mcE(1)) = 1\in A$ and hence $\log_\mcE(1)\in U(\mcE/A)$. We know that $U(\mcE/A) = Au$ for some $u\in K_\infty$ as it is an $A$-lattice in $K_\infty$ of rank 1. Then, $\log_\mcE(1) = au$ for some $a\in A$. Taking exponentiation, we get $1 = \exp_\mcE(au) = \mcE_a(\exp_\mcE(u)) = \mcE_a(b)$, where $b = \exp_\mcE(u)\in A$. By looking at the degree of both sides, we must have $\deg a = \deg b = 0$, i.e., $\log_\mcE(1)$ is an $\FF_q^*$-multiple of $u$ and hence $U(\mcE/A) = A\log_\mcE(1)$. Further, $$\log_\mcE(1) = 1+ \sum_{k=1}^{\infty}(-1)^k\frac{f^{\frac{q^k-1}{n}}}{L_k}$$ and $\left|(-1)^k\frac{f^{\frac{q^k-1}{n}}}{L_k}\right|_\infty < 1$ whenever $\deg f \leq n$. Thus, $\log_\mcE(1)$ has sign 1 and by definition $[\Lie_\mcE(A):U(\mcE/A)]_\bA = \log_\mcE(1)$. By Taelman's class number formula, we see that $$L(\chi_f,1) = \mcL(\mcE/A) = [\Lie_\mcE(A):U(\mcE/A)]_\bA\cdot h(\mcE/A) = \log_\mcE(1).$$
\end{proof}

\subsection{$S_3$-extension}
In this section we assume $d = d_\rho = [\FF_q(\rho):\FF_q] = 1$. Then, we have $$\operatorname{Sol}_E(\rho) = \{\vec{\mathbf{x}}\in \Mat_{n\times 1}(L): g(\vec{\mathbf{x}}) = \rho(g)^{-1}\vec{\mathbf{x}}: \forall g\in G\}.$$
We need the following Lemma to construct our example.

\begin{lem}\label{d=1_nonzero_is_fundamental}
	Suppose $\rho$ is irreducible of dimension $n$. Let $\vec{\mathbf{u}}\in \operatorname{Sol}_E(\rho)$ be a non-zero element, then $\vec{\mathbf{u}}$ is a fundamental solution.
\end{lem}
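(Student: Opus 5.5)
The plan is to turn any $\FF_q$-linear relation among the entries of $\vec{\mathbf{u}}$ into a subrepresentation of the contragredient of $\rho$, and then use irreducibility.

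Since $d=1$, the representation $\rho$ takes values in $\GL_n(\FF_q)$. A solution is a column vector $\vec{\mathbf{u}}=(u_1,\ldots,u_n)^\top\in (E^\sep)^n$ with $g(\vec{\mathbf{u}})=\rho(g)^{-1}\vec{\mathbf{u}}$ for all $g\in G$. By definition, $\vec{\mathbf{u}}$ is fundamental exactly when $u_1,\ldots,u_n$ are linearly independent over $\FF_q$. Accordingly, I would consider the space of linear relations
\[
W:=\{\bm{c}\in \FF_q^n : \bm{c}^\top\vec{\mathbf{u}}=0\},
\]
which is an $\FF_q$-subspace of $\FF_q^n$, and aim to show $W=0$.

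\emph{Step 1: $W$ is $G$-stable.} Take $\bm{c}\in W$ and $g\in G$, and apply $g$ to the identity $\bm{c}^\top\vec{\mathbf{u}}=0$. Since $\bm{c}$ has entries in $\FF_q\subseteq E$, it is fixed by $g$, so
\[
0=\bm{c}^\top g(\vec{\mathbf{u}})=\bm{c}^\top\rho(g)^{-1}\vec{\mathbf{u}}=\bigl((\rho(g)^{-1})^\top\bm{c}\bigr)^\top\vec{\mathbf{u}}.
\]
Hence $(\rho(g)^{-1})^\top\bm{c}\in W$. So $W$ is stable under the contragredient representation $\rho^\vee\colon g\mapsto(\rho(g)^{-1})^\top$ on $\FF_q^n$.

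\emph{Step 2: irreducibility of $\rho^\vee$.} $\rho^\vee$ is irreducible because $\rho$ is. Indeed, a proper nonzero $\rho^\vee$-stable subspace $W'$ has annihilator $(W')^\perp\subseteq\FF_q^n$ under the standard pairing, and $(W')^\perp$ is a proper nonzero $\rho$-stable subspace. This is the one point needing care: irreducibility must be understood over $\FF_q$, the field of definition when $d=1$. This follows from the hypothesis, since irreducibility over $\conj{\FF}_q$ implies irreducibility over $\FF_q$. Consequently $W=0$ or $W=\FF_q^n$.

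\emph{Step 3: conclusion.} If $W=\FF_q^n$, then taking $\bm{c}$ to be each standard basis vector gives $u_i=0$ for all $i$, so $\vec{\mathbf{u}}=0$, contradicting the hypothesis. Therefore $W=0$, so the entries of $\vec{\mathbf{u}}$ are $\FF_q$-linearly independent and $\vec{\mathbf{u}}$ is a fundamental solution. By the earlier corollary, this also means $\vec{\mathbf{u}},\vec{\mathbf{u}}^{(1)},\ldots,\vec{\mathbf{u}}^{(n-1)}$ is an $E$-basis of $\operatorname{Sol}_E(\rho)$.

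I expect no real obstacle. The only subtle points are that $g$ fixes the coefficients $\bm{c}$ because $d=1$, and the passage from $\rho$ to $\rho^\vee$ in Step 2.
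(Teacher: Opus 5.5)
Your proof is correct and follows essentially the same route as the paper, which also forms the $\FF_q$-space of linear relations among the entries of $\vec{\mathbf{u}}$, shows it is stable under $\rho^\vee(g)=(\rho(g)^{-1})^\top$, and uses irreducibility of $\rho^\vee$ together with $\vec{\mathbf{u}}\neq 0$ to conclude. Your explicit points that $g$ fixes $\bm{c}$ because $d=1$, and that irreducibility over $\overline{\FF}_q$ implies irreducibility over $\FF_q$, fill in details the paper leaves implicit.
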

\begin{proof}
	Write $\vec{\mathbf{u}} = (u_1,u_2,\cdots,u_n)^\top$. Assume the contrary. Then there exist a nonzero vector $\vec{\mathbf{c}} = (c_1,\cdots,c_n)^\top\in \FF_q^n$ such that $\vec{\mathbf{c}}^\top\vec{\mathbf{u}} = c_1u_1+\cdots+c_nu_n = 0$. Let $$H_{\vec{\mathbf{u}}} =\{\vec{\mathbf{c}}=(c_1,\cdots,c_n)^\top\in \FF_q^n:\vec{\mathbf{c}}^\top\vec{\mathbf{u}}=0 \}\subseteq V_\rho$$ be the $\FF_q$-subspace defined by $\vec{\mathbf{u}}$.
	
	Let $\rho^\vee$ be the dual representation of $\rho$, i.e. $\rho^\vee(g) = (\rho(g)^{-1})^\top$. Then, for any $\vec{\mathbf{c}}\in H_{\vec{\mathbf{u}}}$, we have $$ (\rho^\vee(g)\vec{\mathbf{c}})^\top\vec{\mathbf{u}} = \vec{\mathbf{c}}^\top\rho(g)^{-1}\vec{\mathbf{u}} = \vec{\mathbf{c}}^\top g(\vec{\mathbf{u}}) = g(\vec{\mathbf{c}}^\top \vec{\mathbf{u}}) = 0.$$ Thus, $\rho^\vee(g)\vec{\mathbf{c}}\in H_{\vec{\mathbf{u}}}$. It follows that $H_{\vec{\mathbf{u}}}$ is a $G$-invariant proper subspace of $V_{\rho^\vee}$. Since $\rho$ is irreducible, its dual representation $\rho^\vee$ is also irreducible. This contradicts the irreducibility of $\rho$. Hence, $u_1,u_2,\cdots,u_n$ must be linearly independent over $\FF_q$.
\end{proof}
\begin{cor}
	Suppose $\rho$ is irreducible of dimension $n$ and factors through a finite quotient group $G$. Choose a vector $\vec{\mathbf{y}}\in \Mat_{n\times 1}(E^\sep)$ such that $$\vec{\mathbf{u}}:= \sum_{g\in G}\rho(g)g(\vec{\mathbf{y}})\in \Mat_{n\times 1}(E^\sep)$$ is non-zero. Then, $\vec{\mathbf{u}}$ is a fundamental solution.
\end{cor}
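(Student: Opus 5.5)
The plan has two steps: first show that $\vec{\mathbf{u}}$ lies in $\operatorname{Sol}_E(\rho)$, then invoke Lemma \ref{d=1_nonzero_is_fundamental}. Since $d=1$, membership in $\operatorname{Sol}_E(\rho)$ is the single condition $h(\vec{\mathbf{u}}) = \rho(h)^{-1}\vec{\mathbf{u}}$ for all $h\in G_E$. Because $\rho$ factors through the finite quotient $G=\Gal(L/E)$, we may choose $\vec{\mathbf{y}}\in\Mat_{n\times 1}(L)$, or at least regard the action through $G$ after enlarging $L$ to contain the entries of $\vec{\mathbf{y}}$. In that case $\rho$ still factors through the larger Galois group, and the sum is then read over that group. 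With this in place, $h\in G_E$ acts on $\vec{\mathbf{u}}$ through its image in $G$.

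The core computation is the averaging trick. Using that $\rho(g)$ has entries in $\FF_q$, which are fixed by $h$, we get
\[
h(\vec{\mathbf{u}}) = \sum_{g\in G}\rho(g)\,hg(\vec{\mathbf{y}}).
\]
Reindex with $g' = hg$, so $g = h^{-1}g'$ and $\rho(g)=\rho(h)^{-1}\rho(g')$. This gives
\[
h(\vec{\mathbf{u}}) = \rho(h)^{-1}\sum_{g'\in G}\rho(g')g'(\vec{\mathbf{y}}) = \rho(h)^{-1}\vec{\mathbf{u}}.
\]
Hence $\vec{\mathbf{u}}\in\operatorname{Sol}_E(\rho)$.

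Since $\vec{\mathbf{u}}\neq 0$ by hypothesis and $\rho$ is irreducible, Lemma \ref{d=1_nonzero_is_fundamental} applies directly and shows that the entries of $\vec{\mathbf{u}}$ are $\FF_q$-linearly independent, i.e.\ $\vec{\mathbf{u}}$ is a fundamental solution.

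The only delicate point is bookkeeping in the first step: the symbol $G$ must denote a finite group through which both $\rho$ and the Galois action on the entries of $\vec{\mathbf{y}}$ factor. I would address this at the outset by replacing $L$ with a finite Galois extension containing those entries; this leaves everything else unchanged. No other obstacle is expected.
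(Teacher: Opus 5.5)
Your proposal is correct and follows essentially the same argument as the paper: reindexing $g\mapsto hg$ gives $h(\vec{\mathbf{u}})=\rho(h)^{-1}\vec{\mathbf{u}}$, so $\vec{\mathbf{u}}\in\operatorname{Sol}_E(\rho)$, and Lemma \ref{d=1_nonzero_is_fundamental} then finishes. Your remark that $G$ must be a finite quotient acting on the entries of $\vec{\mathbf{y}}$, and your use of $d=1$ to ensure the entries of $\rho(g)$ are fixed by $h$, are sensible clarifications that the paper leaves implicit.
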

\begin{proof}
	For any $h\in G$, we have $$\begin{aligned}
		h(\vec{\mathbf{u}}) & = h\left(\sum_{g\in G}\rho(g)g(\vec{\mathbf{y}})\right)  = \sum_{g\in G}\rho(g)(hg)(\vec{\mathbf{y}})  = \sum_{\sigma\in G}\rho(h^{-1}\sigma)\sigma(\vec{\mathbf{y}})\\
		& = \sum_{\sigma\in G}\rho(h)^{-1}\rho(\sigma)\sigma(\vec{\mathbf{y}}) = \rho(h)^{-1}\left(\sum_{\sigma\in G}\rho(\sigma)\sigma(\vec{\mathbf{y}})\right)\\
		& = \rho(h)^{-1}\vec{\mathbf{u}}.
	\end{aligned}$$
	Thus, $\vec{\mathbf{u}}\in \operatorname{Sol}_E(\rho)$ and by Lemma \ref{d=1_nonzero_is_fundamental}, the result follows.
\end{proof}

Assume $\text{char}(\FF_q) \neq 2, 3$. Let $A = \FF_q[\theta], K = \FF_q(\theta)$. Consider the irreducible cubic polynomial:
\[
f(x) = x^3 + \theta x + 1 \in A[x].
\]
It has discriminant $\Delta = -4\theta^3 - 27$. Let $L$ be the splitting field of $f$ over $K$. The Galois group is $G = \Gal(L/K) \cong S_3$. Let $\zeta_1, \zeta_2, \zeta_3\in \mcO_L$ be roots of $f(x)$. Then they satisfy
\[
\zeta_1 + \zeta_2 + \zeta_3 = 0.
\]

Let $\rho: G \to \GL_2(\conj{\FF}_q)$ be the standard representation defined on the generators $r = (123)$ and $s = (12)$ as follows:
\[
\rho(r) = \begin{pmatrix} 0 & -1 \\ 1 & -1 \end{pmatrix}, \quad 
\rho(s) = \begin{pmatrix} -1 & 1 \\ 0 & 1 \end{pmatrix}.
\]
We can extend $\rho$ to the absolute Galois group $\Gal(K^\sep/K)$ by sending elements outside $G$ to identity and obtain an Artin representation.

We record the following information:
$$\begin{aligned}
	& \rho(\id) = \begin{pmatrix}
		1 & 0\\
		0 & 1\\
	\end{pmatrix}\quad  &\rho(s) = \begin{pmatrix}
		-1 & 1\\
		0 & 1\\
	\end{pmatrix} \quad & \rho(r) = \begin{pmatrix} 0 & -1 \\ 1 & -1 \end{pmatrix}\\
	& \rho(r^2) = \begin{pmatrix} 
		-1 & 1 \\ -1 &0\end{pmatrix} \quad  & \rho(sr) = \begin{pmatrix} 
		1 & 0 \\ 1 & -1 \end{pmatrix} \quad & \rho(sr^2) = \begin{pmatrix} 0 & -1 \\ -1 & 0 \end{pmatrix}
\end{aligned}$$
Let $\vec{\mathbf{y}} = \begin{pmatrix}
	\zeta_1\\
	0 
\end{pmatrix}$. Then, a straightforward computation yields that $$\vec{\mathbf{u}}:= \frac{1}{3}\sum_{g\in G}\rho(g)g(\vec{\mathbf{y}}) = \begin{pmatrix}
	\zeta_1\\
	-\zeta_3
\end{pmatrix}.$$

Thus, $M(\vec{\mathbf{u}}) = [\vec{\mathbf{u}},\vec{\mathbf{u}}^{(1)}] = \begin{pmatrix}
	\zeta_1 & \zeta_1^q\\
	-\zeta_3 & -\zeta_3^q
\end{pmatrix}$ and $$\begin{pmatrix}
	f_0\\
	f_1
\end{pmatrix} = M(\vec{\mathbf{u}})^{-1}\vec{\mathbf{u}}^{(2)} = \frac{1}{\zeta_1^q\zeta_3-\zeta_1\zeta_3^q}\begin{pmatrix}
	\zeta_1^q\zeta_3^{q^2}-\zeta_1^{q^2}\zeta_3^q\\
	\zeta_1^{q^2}\zeta_3-\zeta_1\zeta_3^{q^2}
\end{pmatrix} $$

We also checked that $f_0,f_1\in K$ by brute force.

Now, we see that $$\dis \Phi = \begingroup 
\renewcommand{\arraystretch}{2.2} 
\begin{pmatrix}
	0 & \dfrac{\zeta_1^q\zeta_3^{q^2}-\zeta_1^{q^2}\zeta_3^q}{\zeta_1^q\zeta_3-\zeta_1\zeta_3^q} \\
	1 & \dfrac{\zeta_1^{q^2}\zeta_3-\zeta_1\zeta_3^{q^2}}{\zeta_1^q\zeta_3-\zeta_1\zeta_3^q}
\end{pmatrix}
\endgroup\quad\textnormal{ and }\quad \Psi = (\Phi^\top)^{-1}= \begin{pmatrix}
	\displaystyle \frac{\zeta_1 \zeta_3^{q^2} - \zeta_1^{q^2} \zeta_3}{\zeta_1^q \zeta_3^{q^2} - \zeta_1^{q^2} \zeta_3^q} & \displaystyle \frac{\zeta_1^q \zeta_3 - \zeta_1 \zeta_3^q}{\zeta_1^q \zeta_3^{q^2} - \zeta_1^{q^2} \zeta_3^q} \\[15pt]
	1 & 0
\end{pmatrix}.$$
Set $\mcE' = \mcE'(C,\rho)$, where $C$ is the Carlitz module. By Theorem \ref{thmB}, it has a model over $K$: $$\mcE'_t = \begin{pmatrix}
	\theta & 0\\
	0 & \theta
\end{pmatrix} + \begin{pmatrix}
	\displaystyle \frac{\zeta_1 \zeta_3^{q^2} - \zeta_1^{q^2} \zeta_3}{\zeta_1^q \zeta_3^{q^2} - \zeta_1^{q^2} \zeta_3^q} & \displaystyle \frac{\zeta_1^q \zeta_3 - \zeta_1 \zeta_3^q}{\zeta_1^q \zeta_3^{q^2} - \zeta_1^{q^2} \zeta_3^q} \\[15pt]
	1 & 0
\end{pmatrix}\tau\in \Mat_{2\times 2}(K)[\tau].$$

Now, let $q=5$. Then, using MAGMA, we get $$\begin{pmatrix}
	f_0\\
	f_1
\end{pmatrix} = \begin{pmatrix}
	4\theta^6 + 4\theta^3 + 1\\
	\theta^{10} + \theta^4 + 2\theta
\end{pmatrix}\in \Mat_{2\times 1}(A).$$
Moreover, 
$$\mcE'_t = \begin{pmatrix}
	\theta & 0\\
	0 & \theta
\end{pmatrix} + \begin{pmatrix}
	\displaystyle -\frac{f_1}{f_0} & \displaystyle \frac{1}{f_0} \\[15pt]
	1 & 0
\end{pmatrix}\tau\in \Mat_{2\times 2}(K)[\tau].$$

By clearing denominators, we get an integral model $\mcE/A$: $$\mcE_t = \begin{pmatrix}
	\theta & 0\\
	0 & \theta
\end{pmatrix} + \begin{pmatrix}
	\displaystyle -f_1f_0^3 & \displaystyle f_0^3 \\[15pt]
	f_0^4 & 0
\end{pmatrix}\tau\in \Mat_{2\times 2}(A)[\tau].$$


	By Theorem \ref{thmC}, there is a finite set $S$ of places such that $$L_S(\rho,s+1) = L_S(C^\vee,\rho,s) = L_S(\mcE^\vee,s).$$ Evaluating at $s = 0$, we get $$L(\rho,1) = c\cdot L(\mcE^\vee,0) = c'\cdot \Reg(\mcE/A) \quad \textnormal{ for some }c, c'\in K^*.$$
	\subsection{Carlitz cyclotomic extension}
	Let $\bA = \FF_q[t], A = \FF_q[\theta]$, $K = \FF_q(\theta)$. Let $f\in A$ be an irreducible polynomial of degree $d$ and $L = K(C[f]) = K(\lambda_f)$, where $\lambda_f = \exp_C(\widetilde{\pi}/f)$. Then $G = \Gal(L/K) \stackrel{\sim}{\to}(A/f)^*$ of order $\ell = q^d-1$. We see that $[L:K] = \ell = q^d-1\nmid q-1$ if and only if $d>1$.
	
	Fix an element $a\in \bA$ such that $\gcd(a,f) = 1$, and set $\sigma = \sigma_a\in G$ such that $$\sigma_a(\lambda_f) = C_a(\lambda_f),$$ where we identify $a\in \bA$ with its image in $A$ via $t\mapsto \theta$. Then, $G = \langle\sigma\rangle$. Let $\xi\in \conj{\FF}_q$ be a root of $f$ and consider the evaluation map $$\ev: A/f \to \conj{\FF}_q,$$ $$\conj{a}\mapsto a(\xi).$$
	We then have a character $$\chi: \Gal(L/K)\cong (A/f)^*\to \conj{\FF}_q^*,$$ $$\sigma_a\mapsto a(\xi).$$ Since the map $\ev$ is injective, we see that $\FF_q(\chi)$ contains a primitive root of unity $\zeta_\ell$ and hence $d_\chi = [\FF_q(\chi):\FF_q] = [\FF_q(\zeta_\ell):\FF_q]$ is the order of $q$ mod $\ell$, hence equals to $d$.
	\begin{eg}
		To ease the computation, we assume that $q = 2$ and take $f = \theta^2+1$. Denote by $\xi = \sqrt{-1}\in\conj{\FF}_2$ a root of $f$. Then, $\sigma_\theta$ is a generator of the cyclic group $\Gal(L/K)$. 
		
		Choose $u_1 = \lambda_f$ and $u_2 = \theta\lambda_f+\lambda_f^2 = C_t(\lambda_f) = \sigma_\theta(\lambda_f)$. Thus, we see that $\sigma_\theta(u_1) = u_2$ and $\sigma_\theta(u_2) = \sigma_{\theta^2}(u_1) = C_{t^2}(\lambda_f) = u_1$ as $C_f(\lambda_f) = 0$. Consider the $\FF_q$-basis $\{1,\sqrt{-1}\}$ for $\FF_q(\chi) = \FF_q(\sqrt{-1})$ and take $\vec{\boldsymbol{\alpha}} = (1,\sqrt{-1})^\top$. Then, one immediately sees that $\vec{\mathbf{u}}: = (u_1,u_2)$ is a solution of the system of equations $$\begin{cases}
			(g(u_1),g(u_2))\begin{pmatrix}
				1\\
				\sqrt{-1}
			\end{pmatrix} = \chi^{-1}(g) (u_1, u_2)\begin{pmatrix}
				1\\
				\sqrt{-1}
			\end{pmatrix}\\
			(g(u_1),g(u_2))\begin{pmatrix}
				1\\
				\sqrt{-1}^q
			\end{pmatrix} = \chi^{-q}(g) (u_1, u_2)\begin{pmatrix}
				1\\
				\sqrt{-1}^q
			\end{pmatrix}\\
		\end{cases}.$$ In other words, $\vec{\mathbf{u}}\in \operatorname{Sol}_E(\chi,\vec{\boldsymbol{\alpha}})$. Furthermore, $u_1,u_2$ are obviously linearly independent over $\FF_q$, hence a fundamental solution.
		
		Use our notation in previous sections, $N = 2$ and consider $\vec{\mathbf{u}}$ as a column vector, we have $$M(\vec{\mathbf{u}}) = [\vec{\mathbf{u}},\vec{\mathbf{u}}^{(1)}] = \begin{pmatrix}
			\lambda_f & \lambda_f^2\\
			\theta\lambda_f+\lambda_f^2 & \theta^2\lambda_f^2+\lambda_f^4
		\end{pmatrix}.$$ Use $C_f(\lambda_f) = 0$ or equivalently $$\lambda_f^4+(\theta^2+\theta)\lambda_f^2+ (\theta^2+1)\lambda_f = 0,$$
		we get 
		$$\begin{pmatrix}
			f_0\\
			f_1
		\end{pmatrix} = M(\vec{\mathbf{u}})^{-1}\vec{\mathbf{u}}^{(2)} = \begin{pmatrix}
			\theta^2+1\\
			\theta^2+\theta
		\end{pmatrix}\in \Mat_{2\times 1}(A).$$
		
		Moreover, set $\mcE' = \mcE'(C,\chi)$, by Theorem \ref{thmB}
		$$\mcE'_t = \begin{pmatrix}
			\theta & 0\\
			0 & \theta
		\end{pmatrix} + \begin{pmatrix}
			\displaystyle \frac{f_1}{f_0} & \displaystyle \frac{1}{f_0} \\[15pt]
			1 & 0
		\end{pmatrix}\tau\in \Mat_{2\times 2}(K)[\tau].$$
		
		By clearing denominators, we get an integral model $\mcE/A$: $$\mcE_t = \begin{pmatrix}
			\theta & 0\\
			0 & \theta
		\end{pmatrix} + \begin{pmatrix}
			\displaystyle f_1 & \displaystyle 1 \\[15pt]
			f_0 & 0
		\end{pmatrix}\tau\in \Mat_{2\times 2}(A)[\tau].$$
		
		By Theorem \ref{thmC}, there is a finite set $S$ of places such that $$L_S(\chi,s+1) = L_S(C^\vee,\chi,s) = L_S(\mcE^\vee,s).$$ Evaluating at $s = 0$, we get $$L(\chi,1) = c\cdot L(\mcE^\vee,0) = c'\cdot \Reg(\mcE/A) \quad \textnormal{ for some }c, c'\in K^*.$$
	\end{eg}
\bibliographystyle{alpha}
\bibliography{Bibliography/FField}



\end{document}